%%%%%%%%%%%%%%%%%%%% author.tex %%%%%%%%%%%%%%%%%%%%%%%%%%%%%%%%%%%
%
% sample root file for your "contribution" to a contributed volume
%
% Use this file as a template for your own input.
%
%%%%%%%%%%%%%%%% Springer %%%%%%%%%%%%%%%%%%%%%%%%%%%%%%%%%%

% RECOMMENDED %%%%%%%%%%%%%%%%%%%%%%%%%%%%%%%%%%%%%%%%%%%%%%%%%%%
\documentclass[graybox]{svmult}
\usepackage{etex}
% choose options for [] as required from the list
% in the Reference Guide
\usepackage{url}
\usepackage{amsmath, stmaryrd}
\usepackage{amssymb}
\usepackage{mathptmx}       % selects Times Roman as basic font
\usepackage{helvet}         % selects Helvetica as sans-serif font
\usepackage{courier}        % selects Courier as typewriter font
\usepackage{type1cm}        % activate if the above 3 fonts are
                            % not available on your system
\usepackage{placeins}                           
\usepackage{makeidx}         % allows index generation
\usepackage{graphicx}        % standard LaTeX graphics tool
                             % when including figure files
\usepackage{amscd}
\usepackage{multicol}        % used for the two-column index
\usepackage[bottom]{footmisc}% places footnotes at page bottom

\usepackage[all,cmtip]{xy}  %xypic
\usepackage{tikz}
%For big coproducts:
\let\coprod=\undefined

\DeclareSymbolFont{cmlargesymbols}{OMX}{cmex}{m}{n}

\DeclareMathSymbol{\coprod}{\mathop}{cmlargesymbols}{"60}

%For small coproducts:

\let\amalg=\undefined

\DeclareSymbolFont{cmsymbols}{OMS}{cmsy}{m}{n}

\DeclareMathSymbol{\amalg}{\mathbin}{cmsymbols}{"71}
% see the list of further useful packages
% in the Reference Guide

\makeindex             % used for the subject index
                       % please use the style svind.ist with
                       % your makeindex program

%%%%%%%%%%%%%%%%%%%%%%%%%%%%%%%%%%%%%%%%%%%%%%%%%%%%%%%%%%%%%%%%%%%%%%%%%%%%%%%%%%%%%%%%%

\usepackage{amssymb}
\usepackage{graphicx}
\usepackage{float}
\usepackage[nooneline]{caption}
\bibliographystyle{plain}
\usepackage{color}
\usetikzlibrary{snakes}

\newcommand {\shM}  {\mathcal{M}}

\newcommand {\shN}  {\mathcal{N}}
\newcommand {\shO}  {\mathcal{O}}

\newcommand {\shX}  {\mathcal{X}}

\renewcommand{\P}{\mathcal{P}}
\newcommand {\nnn} {\widetilde{\mathbb{N}}}
\newcommand {\ra} {\rightarrow}
\renewcommand{\AA}{\mathbb{A}}
\newcommand {\NN} {\mathbb{N}}
\newcommand {\ZZ} {\mathbb{Z}}
\newcommand {\DD} {\mathbb{D}}
\newcommand {\GG} {\mathbb{G}}
\newcommand{\PP}{\mathbb{P}}
\newcommand {\QQ} {\mathbb{Q}}
\newcommand {\RR} {\mathbb{R}}
\newcommand {\CC} {\mathbb{C}}
\newcommand {\Real} {\rm{Re}}
\newcommand{\Proj}{\rm{Proj\,}}
\newcommand {\Spec} {\rm{Spec\,}}
\newcommand {\Specf} {\rm{Specf\,}}
\newcommand {\Strata} {\operatorname{Strata}}
\newcommand {\stratum} {\operatorname{stratum}}

\newcommand {\Log} {\operatorname{Log}}
\newcommand {\val} {\operatorname{val}}
\newcommand {\Val} {\operatorname{Val}}
\newcommand {\Hom} {\rm{Hom}}
\newcommand {\ov} {\operatorname{ov}}
\newcommand {\op}[1] {\operatorname{#1}}
\newcommand {\trop}  {{\operatorname{trop}}}
\newcommand {\hol}  {\mathrm{hol}}
\newcommand {\Bl}  {\operatorname{Bl}}
\newcommand {\lra} {\longrightarrow}
\newcommand {\Map}  {\operatorname{Map}}
\newcommand {\Mult} {\operatorname{Mult}}
\newcommand {\coker}  {\operatorname{coker}}
\newcommand {\Tor}  {\operatorname{Tor}}

\newcommand{\IN}{\mathbb N}
\newcommand{\IC}{\mathbb C}

\newcommand{\mc}{\mathcal}
\newcommand{\aaa}{\mathbb{A}^1_k}
\newcommand{\aaaa}{\mathbb{A}^2_k}
\newcommand{\orig}{\left\{ 0 \right\}}

\DeclareMathOperator{\spec}{Spec}
\DeclareMathOperator{\diff}{d}
\DeclareMathOperator{\hhh}{H}
\DeclareMathOperator{\ddd}{D}
\DeclareMathOperator{\dddlog}{Dlog}

%making the numbering of theorem, definition, etc align and use the section number
\makeatletter
\let\c@proposition\c@theorem
\let\c@corollary\c@theorem
\let\c@lemma\c@theorem
\let\c@definition\c@theorem
\let\c@example\c@theorem
\let\c@exercise\c@theorem
\makeatother
\numberwithin{theorem}{section}
\numberwithin{proposition}{section}
\numberwithin{corollary}{section}
\numberwithin{lemma}{section}
\numberwithin{definition}{section}
\numberwithin{example}{section}
\numberwithin{exercise}{section}
\numberwithin{figure}{section}

\begin{document}

\title*{Enumerative aspects of the Gross-Siebert program}
% Use \titlerunning{Short Title} for an abbreviated version of
% your contribution title if the original one is too long
\author{Michel van Garrel
\and D. Peter Overholser
\and Helge Ruddat}
% Use \authorrunning{Short Title} for an abbreviated version of
% your contribution title if the original one is too long
\institute{Michel van Garrel \at KIAS, 85 Hoegiro, Dongdaemun-gu, Seoul 130-722, Republic of Korea\\ \email{vangarrel@kias.re.kr} %Helge Ruddat \at Mathematisches Institut, JGU Mainz, Staudingerweg 9, D-55128 Mainz\\ 
%\email{ruddat@uni-mainz.de}
\and D. Peter Overholser \at KU Leuven, Celestijnenlaan 200b,
3001 Leuven\\ \email{douglas.overholser@wis.kuleuven.be}
\and Helge Ruddat \at Mathematisches Institut, JGU Mainz, Staudingerweg 9, D-55128 Mainz\\ 
\email{ruddat@uni-mainz.de}
}
%
% Use the package "url.sty" to avoid
% problems with special characters
% used in your e-mail or web address
%
\maketitle

\abstract{We present enumerative aspects of the Gross-Siebert program in this
introductory survey.  After sketching the program's main themes and goals, we review the basic
definitions and results of logarithmic and tropical geometry.  We give examples and a proof for counting algebraic curves via tropical
curves.  To illustrate an application of tropical geometry and the Gross-Siebert
program to mirror symmetry, we discuss the mirror symmetry of the
projective plane.}

\section{Introduction}
\label{sec:introduction}
We begin with a brief description of the motivations and major ideas of the Gross-Siebert program.  These will serve as the target about which the rest of this exposition is roughly clustered.
\subsection{The Strominger-Yau-Zaslow conjecture and Gross-Siebert program}
\label{SYZ-GS}
A duality of special Lagrangian torus fibrations $X\ra B\leftarrow \check X$ of a Calabi-Yau $X$ and its mirror dual $\check X$ was conjectured by Strominger-Yau-Zaslow (SYZ) to be the geometric principle underlying mirror symmetry \cite{SYZ96}. This intrinsic approach overcomes the need to embed Calabi-Yau threefolds in toric Fano varieties to study their mirror duals and allows patching local constructions.  
Hitchin \cite{Hi97} noticed that, given such a fibration, both the complex and symplectic structure of $X$ give a real affine structure outside of the discriminant locus $\Delta$ on $B$.  Furthermore, the two are related by a Legendre transform. In such a fibration, the roles of the affine structures are swapped for the mirror dual $\check X$, e.g. the complex structure of $X$ and the symplectic structure of $\check X$ yield the same affine structure.  The discriminant locus of the fibration $\Delta$ in $B$ coincides with the locus of real affine singularities of $B$.  On the other hand, given an affine manifold $B$ without singularities, one can construct both a K\"ahler and a complex manifold torically fibered over $B$, suggesting that the base may contain the information necessary to describe the mirror relationship.  We will call the process of constructing a manifold from the affine base \emph{reconstruction}.

In practice, it can be difficult to find even a single special Lagrangian torus, let alone a fibration.  Nevertheless, families of Calabi-Yau's were observed to collapse to the base of such a fibration near suitably bad (large complex structure limit) degenerations.  More precisely, 
in \cite{GW00} Gross and Wilson studied the K3 case by combining the SYZ picture with the \emph{Gromov-Hausdorff limit}, a metric limit where the fibres of the SYZ fibration shrink to points such that the limit coincides (as a metric space) with $B$.  If one can recover the base of our desired fibration in such a way, and the base holds the information needed for mirror symmetry, this suggests a plan of attack.  In particular, one may dream of starting with a family of manifolds, degenerating to the base, and reconstructing a mirror family.

This is precisely the motivating principle behind the Gross-Siebert program.  The general large complex structure limit degeneration is replaced by a maximally unipotent degeneration of the Calabi-Yau manifold called a \emph{toric degeneration}, where the central fiber is (roughly) glued from toric varieties along toric strata.  Gross and Siebert succeeded in combining the SYZ approach with such degenerations, giving a versatile algebro-geometric framework for the study of mirror symmetry. The affine manifold appears in their work as the dual intersection complex of the special fibre.

The key concept is to encode information about the degeneration entirely in $B$.  A toric degeneration gives additional data on $B$ beyond the affine structure, namely a polyhedral decomposition $\P$ and discrete Legendre potential $\varphi$.
At the level of degeneration data, mirror symmetry is realized by a discrete Legendre transform
$$(B,\P,\varphi) \leftrightarrow (\check B,\check \P,\check\varphi)$$
discretizing Hitchin's Legendre duality.

Kontsevich and Soibelman \cite{KS06} demonstrated how one could reconstruct a K3 surface from an affine structure with singularities on $S^2$.
Using logarithmic geometry, Gross and Siebert were able to solve the reconstruction problem \cite{GS11} in any dimension, obtaining a degenerating family of Calabi-Yau manifolds $\shX \ra \DD$ over a holomorphic disk from the information of $(B,\P,\varphi)$ and a log structure. 
Furthermore, this family is parametrized by a canonical coordinate (in the usual sense in mirror symmetry). The construction features wall-crossings and scatterings, structures that encode enumerative information linking symplectic with complex geometry via tropical geometry. As will be hinted at in this exposition, Gromov-Witten theory \cite{loggw} can also be incorporated in this framework.
\subsection{Toric conventions}
We assume familiarity with toric geometry.  The interested reader is referred to the excellent exposition of Fulton \cite{fulton}.  As the following story is closely tied to toric geometry, it is convenient to begin by making a few conventions regarding notation.

Set $M:=\mathbb{Z}^n,$  $M_\mathbb{R}:=M\otimes_{\mathbb{Z}}\mathbb{R}$, $N:=\Hom_\mathbb{Z} (M, \mathbb{Z})$,  $N_\mathbb{R}:=N\otimes_{\mathbb{Z}}\mathbb{R}$.  For $n \in N,$ set $\langle n, m\rangle$ to be the evaluation of $n$ on $m$.
Set a toric fan $\Sigma$ in $M_\mathbb{R}$.  Let $\Sigma^{[n]}$ signify the set of $n$ dimensional cones of $\Sigma$.  Let $X_\Sigma$ be the toric variety defined by $\Sigma$.  

Denote by $T_\Sigma$ the free abelian group generated by $\Sigma^{[1]}$.  For $\rho\in \Sigma^{[1]}$, denote by $v_\rho$ the corresponding generator in $T_\Sigma$.   We will need the map
\begin{eqnarray*}
r:T_\Sigma\rightarrow M_\mathbb{R}\\
v_\rho \mapsto \hat{\rho}.
\end{eqnarray*}
where $\hat{\rho}$ is the integral vector generating $\rho$, that is $\rho\cap M=\mathbb{Z}_{\geq 0} \hat{\rho}$.

\subsection{Toric degenerations}\label{toriccon}
The object at the heart of the Gross-Siebert program is the \emph{toric degeneration}.  These are meant to be the algebro-geometric analogues of the large complex structure limit discussed above.  Let $R$ be a discrete valuation ring over an algebraically closed field $k$.
\begin{definition}
\label{toricdegeneration}
A \emph{toric degeneration} is a normal algebraic space $\mathcal{X}$ flat over $\spec R$
\[\xymatrixcolsep{0.5pc}\xymatrix{
\mathcal{X} \ar[d] & \supset & \mathcal{X}_0 \ar[d] \\
\spec R & \ni & 0
}
\]
such that:
\begin{enumerate}
\item The general fiber is irreducible and normal.
\item If $\nu :\widetilde{\mathcal{X}}_0\ra \mathcal{X}_0$ is the normalization, $\widetilde{\mathcal{X}}_0$ is a disjoint union $\coprod X_i$ of toric varieties that are glued along toric strata to form $\mathcal{X}_0$.  Furthermore, the conductor locus $C\subseteq \mathcal{X}_0$ is reduced, and the map $C\ra \nu (C)$ is unramified and generically two-to-one.  The square
$$\begin{CD}
C    @>>>  \widetilde{\mathcal{X}}_0\\
@VVV        @VV\nu V\\
\nu(C)     @>>>  \mathcal{X}_0
\end{CD}$$
is Cartesian and co-Cartesian.
\item $\mathcal{X}_0$ is a reduced Gorenstein space and $C$ restricted to each irreducible component of $\widetilde{\mathcal{X}}_0$ is the union of all toric Weil divisors of that component.
\item There exists a closed subset $Z \subseteq \mathcal{X}$ of relative codimension 2 such that it does not contain the image under $\nu$ of any toric stratum of $\widetilde{\mathcal{X}}_0$.  Furthermore, outside of $Z$, all points $x$ of $\mathcal{X}$ have a local toric model. More precisely, we require the existence of a monoid $M_x \supseteq \NN$ and an open set $U_x$ satisfying:
\[
\xymatrixcolsep{0.8pc}\xymatrix{
\spec k[M_x] \ar[dd] & & \ar[ll] U_x \ar[ld]|-{\text{smooth}} \ar@{^{(}->}[r] \ar[dd] & \mathcal{X} \ar[ddl]^f \\
 & \spec k[M_x]\times_{k[\mathbb{N}]}\spec R \ar[rd] \ar[lu] \ar@{}[ld]_>>>>>{\square}\\
\spec k[\mathbb{N}] & & \spec R \ar[ll]
}
\]
Furthermore the map $U_x\ra\spec k[M_x]$ identifies $X_0\cap U_x$ with the toric boundary divisor in $\spec k[M_x]$ near the origin.
\end{enumerate}

\begin{remark} 
Note that item 4 of the definition can be rephrased by just saying that $f:\shX\setminus Z\ra \spec R$ is log smooth, cf. \cite{Kato_log_struct}. See Section \ref{sec:loggeom} for more on log structures.
\end{remark}

\end{definition}
Let $j:\mathcal{X}\setminus \mathcal{X}_0 \hookrightarrow \mathcal{X}$ be the inclusion.  The monoid sheaf 
$$\mathcal{M}_{\mathcal{X}, \mathcal{X}_0}:= \mathcal{O}_\mathcal{X}\cap j_*\mathcal{O}^\times_{\mathcal{X}\setminus \mathcal{X}_0}$$ 
gives a log structure on $\mathcal{X}$ and, by pulling back, one on $\mathcal{X}_0$. See Section \ref{sec:loggeom}. 
We will spend much of our energies analyzing the affine structure derived from the combinatorial data of a degeneration, so we give a name for objects obtained in this fashion.   
\begin{definition}
\label{def-log-CY-space}
A \emph{toric log Calabi Yau space} is the type of log space $(\mathcal{X}_0, \mathcal{M}_{\mathcal{X}, \mathcal{X}_0}|_{\mathcal{X}_0})$ that can appear in the previous definition as a central fiber.  
\end{definition}

To reassure the reader that these technical definitions are not vacuous, we provide a concrete example.

\begin{example}
Let $\mathcal{X}:=\{ tf + z_0z_1z_2z_3=0\}\subseteq \mathbb{P}^3 \times \mathbb{A}_t^1$, with $f_4$ a generic quartic.  Note that $\mathcal{X}$ is the blowup of $\mathbb{P}^3$ along the union of the hypersurface defined by $f_4$ and that defined by $z_0z_1z_2z_3=0$.  
The singular locus is given  by $\{t=f_4=0\}\cap \rm{Sing}(\mathcal{X}_0)$.   As $\mathcal{X}_0$ is the coordinate tetrahedron, we expect four points of intersection of $\{f_4=0\}$ with each edge, giving a total of 24 singular points.  Defining $Z=Sing(\mathcal{X})$, it's easy to see that this is an example of a toric degeneration.
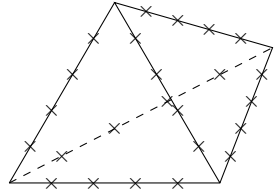
\begin{figure}
\centering
\[
\begin{tikzpicture}[xscale=0.7,yscale=0.6]
\draw (0,0) -- (4,0) -- (5,3) -- (2,4) -- (4,0);
\draw (0,0) -- (2,4);
\draw [dashed] (0,0) -- (5,3);
\node at (0.4,0.8) {$\times$};
\node at (0.8,1.6) {$\times$};
\node at (1.2,2.4) {$\times$};
\node at (1.6,3.2) {$\times$};
\node at (2.6,3.8) {$\times$};
\node at (3.2,3.6) {$\times$};
\node at (3.8,3.4) {$\times$};
\node at (4.4,3.2) {$\times$};
\node at (3.6,0.8) {$\times$};
\node at (3.2,1.6) {$\times$};
\node at (2.8,2.4) {$\times$};
\node at (2.4,3.2) {$\times$};
\node at (0.8,0) {$\times$};
\node at (1.6,0) {$\times$};
\node at (2.4,0) {$\times$};
\node at (3.2,0) {$\times$};
\node at (4.2,.6) {$\times$};
\node at (4.4,1.2) {$\times$};
\node at (4.6,1.8) {$\times$};
\node at (4.8,2.4) {$\times$};
\node at (1,.6) {$\times$};
\node at (2,1.2) {$\times$};
\node at (3,1.8) {$\times$};
\node at (4,2.4) {$\times$};
\end{tikzpicture}
\]
\caption{The set $Z\subseteq \mathcal{X}_0$ defined by the singularities of $\mathcal{X}$.}
\end{figure}
We set $Z=Sing(\mathcal{X})$.  Then $\mathcal{X}\rightarrow \mathbb{A}^1$ is a toric degeneration.  

Given $x\in \mathcal{X}_0\setminus Z$, what monoid $M_x$ is related to the local toric model?  Define $\stratum(x)\subset \Delta$ to be the manifestation of the toric stratum containing $x$ in the Newton polytope $\Delta$ of $\mathbb{P}^3$.

Define $\widehat{M}_x :=\mathbb{R}_{\geq 0}(\Delta - \stratum(x))\cap M$.  Then $M_x=\widehat{M}_x/\widehat{M}^\times_x.$  See Figure \ref{tgames}.
\begin{figure}
\centering
\[
\begin{tikzpicture}[xscale=1,yscale=1]
\draw[gray] (0,0.2) -- (1,0);
\draw (0.1,-0.2) -- (1,0) -- (0.5,1.3) -- (0,0.2) -- (0.1,-0.2) -- (0.5,1.3);
\draw [fill] (0.125,0.475) circle [radius=0.04];
\node [left] at (0.2,0.7) {$x$};
\draw [fill] (0.4,0.3) circle [radius=0.04];
\node at (0.6,0.35) {$0$};
\node at (1.5,0.5) {$\leadsto$};
\node at (2.2,0.5) {$\widehat{M}_x$};
\draw (2.7,1.5) -- (2.7,-0.5);
\draw [dashed] (2.7,-0.6) -- (2.7,-1.2);
\draw [dashed] (2.7,1.6) -- (2.7,2.2);
\draw [dashed] (2.7,-0.5) -- (3.8,-0.1);
\draw (2.7,-0.5) -- (3.9,-0.9);
\draw (2.7,1.5) -- (3.8,1.9);
\draw (2.7,1.5) -- (3.9,1.1);
\draw (3.9,1.1) -- (3.9,-0.9);
\draw [dashed] (3.8,1.05) -- (3.8,-0.1);
\draw (3.8,1.13) -- (3.8,1.9);
\draw [fill] (2.7,-0.5) circle [radius=0.04];
\draw [fill] (2.7,0.5) circle [radius=0.04];
\draw [fill] (2.7,1) circle [radius=0.04];
\draw [fill] (2.7,0) circle [radius=0.04];
\draw [fill] (3.3,0.6) circle [radius=0.04];
\node [right] at (3.3,0.6) {$0$};
\node at (0.5,-2) {$\leadsto$};
\node at (2,-2) {$\mathbb{N} \subseteq M_x=$};
\draw (3.2,-2) -- (4.45,-1.25);
\draw (3.2,-2) -- (4.45,-2.75);
\draw [dashed] (3.2,-2) -- (5.5,-2);
\draw [fill] (3.23,-2) circle [radius=0.04];
\draw [fill] (3.8,-2) circle [radius=0.04];
\node [below right] at (3.8,-2) {$0$};
\draw [fill] (4.4,-2) circle [radius=0.04];
\draw [fill] (5,-2) circle [radius=0.04];
\draw (3.7,-1.7) to [out=-54,in=54] (3.7,-2.3);
\draw (4.2,-1.4) to [out=-54,in=54] (4.2,-2.6);
\end{tikzpicture}
\]
\caption{The construction of $M_x$.}
\label{tgames}
\end{figure}
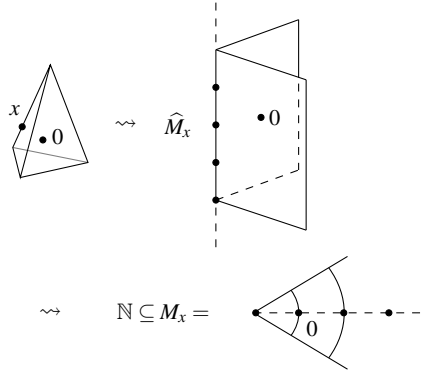
\end{example}

Toric degenerations are highly relevant to the theory of Batyrev-Borisov mirror duality \cite{BB96}, as evidenced by the following theorem of Gross \cite{G05}.  We will state it in the hypersurface case, though its generalization is true for complete intersections.
\begin{theorem}
Let $\mathcal{X}\subseteq \mathbb{P}_\Delta$ be a suitable one-parameter family of  Calabi-Yau hypersurfaces with $\mathcal{X}_0$ the toric boundary of $\mathbb{P}_\Delta$.  Then:
\begin{itemize}
\item $\mathcal{X}\rightarrow \mathbb{A}^1$ is a toric degeneration, with general fiber being a Calabi-Yau hypersurface in $\mathbb{P}_{\tilde{\Delta}} $, where $\pi: \mathbb{P}_{\tilde{\Delta}}\ra  \mathbb{P}_{{\Delta}}$ is a partial crepant projective resolution.
\item There exists a so-called maximal partial crepant projective (MPCP) resolution $\tilde{\mathbb{P}}_\Delta \rightarrow \mathbb{P}_\Delta$ such that the affine manifold determined by the degeneration (see Section \ref{affine}) is \emph{simple} (well behaved in a certain sense; see Section 1.5 of \cite{GS06}).   
\end{itemize}
\end{theorem}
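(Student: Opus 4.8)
I would prove this in three steps, following the strategy of Gross \cite{G05}. First, pin down the family: let $s_0$ be the anticanonical section of $\mathbb{P}_\Delta$ attached to the unique interior lattice point $0\in\Delta$, so that $\mathrm{div}(s_0)=\partial\mathbb{P}_\Delta$ is the toric boundary, let $s_f$ be a generic anticanonical section, and set $\mathcal{X}=\{t\,s_f+s_0=0\}\subseteq\mathbb{P}_\Delta\times\mathbb{A}^1_t$, so that $\mathcal{X}_t$ is a generic anticanonical hypersurface for $t\neq 0$ and $\mathcal{X}_0=\partial\mathbb{P}_\Delta$. The content of ``suitable'' is that one simultaneously fixes a projective crepant subdivision of the normal fan of $\Delta$ and replaces $\mathbb{P}_\Delta$ by $\mathbb{P}_{\tilde\Delta}$ and $\mathcal{X}$ by its proper transform (so that ``$\mathcal{X}\subseteq\mathbb{P}_\Delta$'' in the statement refers to the unmodified total space). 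Because the subdivision is crepant, $K_{\mathbb{P}_{\tilde\Delta}}=\pi^*K_{\mathbb{P}_\Delta}$ stays Cartier, so $\mathbb{P}_{\tilde\Delta}$ is Gorenstein, and the section $s_0$ still cuts out the full toric boundary; hence $\mathcal{X}_0=\partial\mathbb{P}_{\tilde\Delta}$ and, by adjunction, $\mathcal{X}_t$ is a Calabi--Yau hypersurface in $\mathbb{P}_{\tilde\Delta}$ for $t\neq 0$.

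Second, check the four axioms of Definition~\ref{toricdegeneration} (reading $\mathbb{A}^1_t$ for $\spec R$). Conditions (1)--(3) are formal toric facts together with Bertini: $\mathcal{X}_t$ is irreducible and normal for generic $s_f$; $\partial\mathbb{P}_{\tilde\Delta}$ is a reduced union of toric varieties glued along toric strata, with the conductor square automatically Cartesian and co-Cartesian and the conductor map generically two-to-one and unramified; $\mathcal{X}_0$ is Gorenstein, being a Cartier anticanonical divisor in the Gorenstein variety $\mathbb{P}_{\tilde\Delta}$; and the conductor meets each component $D_\rho$ in the union of the toric Weil divisors of $D_\rho$. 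For condition (4) take $Z=\{s_f=0\}\cap\mathrm{Sing}(\mathcal{X}_0)$: for generic $s_f$ this has relative codimension $2$ and meets each toric orbit closure in a proper subset, so it contains the image of no toric stratum of $\widetilde{\mathcal{X}}_0$. Off $Z$, if $x$ lies in the smooth locus of a component then $f$ is manifestly a submersion near $x$; otherwise $x$ sits on a stratum locally modelled by a Gorenstein cone $\sigma$, where $s_0=\chi^{m_0}\cdot(\text{unit})$ for $m_0$ the Gorenstein point of $\sigma^\vee$ and $s_f$ is a unit, so $\mathcal{X}$ is cut out near $x$ by $t\cdot(\text{unit})+\chi^{m_0}=0$. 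After absorbing the unit this is exactly the fibred product $\spec k[M_x]\times_{k[\mathbb{N}]}\mathbb{A}^1_t$, with $M_x$ the local monoid of $\mathbb{P}_{\tilde\Delta}$ along this stratum and $\mathbb{N}\hookrightarrow M_x$ given by $m_0$, and it identifies $\mathcal{X}_0\cap U_x$ with the toric boundary near the origin. (The deepest-stratum case recovers the $24$-point computation in the quartic example above.) This gives the first bullet.

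Third, pass to the affine geometry to prove the second bullet. Feeding $\mathcal{X}_0=\partial\mathbb{P}_{\tilde\Delta}$ into the construction of Section~\ref{affine} produces a triple $(B,\P,\varphi)$: $B$ is the $(n-1)$-dimensional dual intersection complex (topologically a sphere), $\P$ the associated polyhedral decomposition, and $\varphi$ the multivalued convex piecewise-linear function encoding the polarization. One then analyses the discriminant locus of the affine structure on $B$: near each of its strata one reads off local combinatorial data --- polytopes attached to the adjacent maximal cells of $\P$ --- and simplicity in the sense of \cite[\S1.5]{GS06} is the requirement that the relevant ones be elementary (i.e. unimodular) simplices. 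The key point is that taking the subdivision \emph{maximal} --- an MPCP resolution --- forces these local cells to be unimodular, hence every local model elementary and the affine structure simple; once this is known, one may run the Gross--Siebert reconstruction on $(B,\P,\varphi)$ and on its discrete Legendre transform and verify that the latter is the triple arising from $\mathbb{P}_{\check\Delta}$, which is what exhibits this construction as a form of Batyrev--Borisov duality. Translating the maximality hypothesis into the combinatorial conclusion that every local model along the entire discriminant locus is elementary is the main obstacle; the remainder is standard toric geometry and Bertini-type genericity.
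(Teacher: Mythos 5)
The paper does not prove this statement: it is quoted as a theorem of Gross, with the reader referred to \cite{G05}, so there is no in-paper argument to compare yours against. Judged on its own terms, your sketch follows the strategy of the cited reference. The first bullet is handled correctly: setting up the pencil $\{t\,s_f+s_0=0\}$, passing to a projective crepant subdivision, and verifying Definition \ref{toricdegeneration} via the local model $t\cdot(\text{unit})+\chi^{m_0}=0 \leadsto \spec k[M_x]\times_{k[\mathbb{N}]}\mathbb{A}^1_t$ is exactly the generalization of the quartic example worked out earlier in the section, and your choice of $Z$ matches it.

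The gap is in the second bullet. The sentence ``taking the subdivision maximal forces these local cells to be unimodular, hence every local model elementary and the affine structure simple'' asserts precisely the part of the theorem that carries the mathematical content, and it does not follow formally from maximality. Simplicity in the sense of \cite[\S 1.5]{GS06} is a condition on the \emph{monodromy polytopes} attached to the strata of the discriminant locus $\Delta\subset B$, not directly on the cells of $\mathcal{P}$; to verify it one must first compute the local monodromy of the affine structure around each stratum (which is governed by pairs of faces $\sigma\subseteq\Delta$, $\check\sigma\subseteq\Delta^*$ and the chosen triangulations of them), identify the resulting polytopes $\Delta_i$, $\check\Delta_i$ with explicit lattice simplices determined by this combinatorial data, and only then check that maximality of the MPCP subdivision makes them elementary. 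That computation occupies the bulk of \cite{G05} and is not a routine consequence of ``maximal implies unimodular triangulation of $\Delta$'': the unimodularity of the cells of the subdivision of $\partial\Delta$ and the elementarity of the monodromy simplices are different statements, and conflating them is where your argument would need to be filled in.
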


\subsubsection{Reconstruction Theorem}
Now that we've seen the applicability of toric degenerations, one may wonder if it is possible to reconstruct a degeneration given the information of the special fiber.  
Due to work of Gross and Siebert \cite{GS11}, it is possible to answer this in the affirmative.  
\begin{theorem}
\label{recthm}
Let $(\mathcal{X}_0, \mathcal{M}_{\mathcal{X}_0})$ be a locally rigid (a technical condition weaker than simplicity) log Calabi-Yau space. Then there exists a canonical toric degeneration $\mathcal{X}\rightarrow \spec\mathbb{C}\llbracket t\rrbracket$, and $t$ is a canonical coordinate \cite{RS14}.
\end{theorem}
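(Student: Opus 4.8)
The plan is to build the family as a limit of thickenings: construct a compatible sequence $\shX_k\ra\spec\CC[t]/(t^{k+1})$ with $\shX_k|_{t=0}=(\shX_0,\shM_{\shX_0})$ and set $\shX=\varinjlim\shX_k$ over $\spec\CC\llbracket t\rrbracket$. The central fibre already arrives glued from toric pieces, and Definition~\ref{toricdegeneration}(4) dictates the shape of every local model, so the real content is threefold: that these local models glue compatibly to all orders, that the gluing is forced by the data $(\shX_0,\shM_{\shX_0})$, and that the whole process can be read off from combinatorics on the affine base $B$.

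First I would set up the infinitesimal deformation theory on $\shX_0$. To each toric stratum one assigns the standard thickening coming from its local toric model; on overlaps two such thickenings differ by a log automorphism, and organizing these into a sheaf $\Theta$ of infinitesimal log automorphisms, the obstruction to promoting $\shX_k$ to $\shX_{k+1}$ lies in $H^2(\shX_0,\Theta)$ while the ambiguity is a torsor under $H^1(\shX_0,\Theta)$. This is where \emph{local rigidity} enters: it is precisely the hypothesis guaranteeing vanishing of the relevant \emph{local} cohomology, so that obstructions and ambiguities become global combinatorial objects supported on lower-dimensional strata of $B$ rather than genuinely geometric ones.

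The heart of the argument is the bookkeeping device that makes this effective. One introduces a \emph{structure} on $B$: a locally finite collection of codimension-one \emph{walls}, each decorated with a wall-crossing automorphism in the tropical vertex group, and one shows that structures consistent modulo $t^{k+1}$ correspond to thickenings $\shX_k$. Consistency is a purely local condition around the codimension-two \emph{joints} of $B$: the ordered product of wall-crossings encountered along a small loop about a joint must be the identity. The inductive step is then \emph{scattering} (Kontsevich--Soibelman \cite{KS06}; Gross, Pandharipande and Siebert): given consistency to order $k$, the order-$(k+1)$ discrepancy at each joint is annihilated by inserting finitely many new walls emanating from that joint, and the insertion is unique up to the equivalence that leaves $\shX_{k+1}$ unchanged. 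Establishing this existence-and-uniqueness of the scattering, together with the fact that freshly produced walls do not create new inconsistencies elsewhere (wall propagation, and vanishing in codimension $\ge 3$), is the main obstacle, and is where essentially all of \cite{GS11} is spent. Running the induction to all orders produces a \emph{canonical} consistent structure --- canonical because each scattering step was unique --- hence a canonical formal toric degeneration with the prescribed central fibre.

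Finally, that $t$ is a canonical coordinate is a separate statement, established in \cite{RS14}: with the family presented explicitly via its consistent structure, one evaluates the period over a vanishing cycle and identifies it with the flat coordinate of mirror symmetry. The computation is tractable because in the relevant chart no wall passes through the distinguished $0$-stratum, so the period has a preferred primitive; the argument is then a direct tropical evaluation of periods rather than a further obstruction-theoretic problem.
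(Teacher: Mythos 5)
The paper does not prove Theorem~\ref{recthm}; it is stated as a quoted result, with the construction attributed to \cite{GS11} and the canonical-coordinate statement to \cite{RS14}. So there is no in-paper argument to compare against, and the only meaningful question is whether your sketch faithfully reflects the cited proofs. On the whole it does: the order-by-order construction of thickenings $\shX_k$ over $\spec\CC[t]/(t^{k+1})$, encoded by a \emph{structure} of walls and slabs on $B$ with attached automorphisms in the tropical vertex group, consistency as a local condition around codimension-two joints, the inductive insertion of new walls by scattering in the spirit of \cite{KS06}, and the identification of $t$ with the flat coordinate via period integrals in \cite{RS14} --- all of this is the correct architecture, and it matches the informal description the paper itself gives later when discussing scattering diagrams and Figure~\ref{gluing}.

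One caveat on emphasis: your opening paragraph casts the construction as classical obstruction theory, with obstructions in $H^2(\shX_0,\Theta)$ and ambiguities a torsor under $H^1(\shX_0,\Theta)$ for a sheaf $\Theta$ of infinitesimal log automorphisms, with local rigidity killing the local cohomology. That is a reasonable heuristic, but it is not how \cite{GS11} actually runs: the log structure is singular along the set $Z$, the naive deformation functor is badly behaved there, and the whole point of the structure/scattering formalism is to \emph{replace} abstract deformation theory by an explicit gluing of standard thickenings of the local models, with consistency checked by hand around joints. Local rigidity enters not as a cohomology-vanishing hypothesis but as the condition ensuring that the normalization and uniqueness of the wall insertions at each joint (and the automatic consistency in codimension $\geq 3$) go through. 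As a blind reconstruction of a proof the paper only cites, your sketch is accurate at the level of strategy; just be aware that the cohomological packaging in your first step is your gloss rather than the argument of \cite{GS11}.
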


\subsection{Reduction to the affine manifold} \label{affine}
Now let us see how to construct an affine manifold from the data of log Calabi-Yau space.  There are two methods, related, as the reader may suspect, by mirror symmetry.
In what follows, Let $\nu:\widetilde{\mathcal{X}}_0\ra \mathcal{X}_0$ be the normalization of $\mathcal{X}_0$, $\widetilde{\mathcal{X}}_0=\coprod X_i$ with $X_i$ toric, and the strata of $\mathcal{X}_0$ defined by
$$\Strata(\mathcal{X}_0):=\{\nu(S) | S \text{ is a toric strum of } X_i \text{ for some } i \}$$

\subsubsection{The dual intersection complex or ``fan picture"}
Suppose $(\mathcal{X}_0, \mathcal{M}_{\mathcal{X}_0})$ is a log Calabi-Yau space.  Note that each component $X_i$ of $\mathcal{X}_0$ is a toric variety $X_\tau$ with a corresponding fan $\Sigma_\tau$ in $M$.  This data is used to construct an affine structure near strata of codimension greater than one in $B$.  Topologically, these fans are then glued along the identification of toric strata given by $\nu$.  This construction falls short, however, of giving us an affine structure; there is no way of identifying the structure on one fan with another.

Applying Definition \ref{toricdegeneration} (4), for each $\{x\}\in \Strata(\mathcal{X}_0)$, there exists $M_x$, a Gorenstein monoid.  Note that $\check{M}_x = \operatorname{cone}(\Delta_x)\cap N$ for some $\Delta_x$, so, in particular, each zero dimensional toric stratum is associated to a lattice polytope.  These lattice polytopes allow us to interpolate between the affine structure of different fans, yielding an affine structure.  However, as is easy to imagine, the affine structures arising from these constructions may not be sufficiently compatible to allow us stitch the topological manifold into an uninterrupted affine manifold.  Rather, we must introduce singularities along a codimension two discriminant locus compatible with the polyhedral decomposition.  This can be done canonically by using a barycentric subdivision.

The result of this construction is an affine manifold with singularities $B$ along with a polyhedral decomposition $\mathcal{P}$.  We will call the pair $(B,\mathcal{P})$ a tropical manifold.

If $(\mathcal{X}_0, \mathcal{M}_{\mathcal{X}_0})$ is polarized by an ample line bundle $\mathcal{L}$, we can nicely encode this as additional data on our tropical manifold.  In particular, each $\mathcal{L}|_{X_i}$ is an ample line bundle, giving a piecewise linear function on the fan $\Sigma_i$.  Globally, we can glue these into a multi-valued (because of monodromy) piecewise linear function $\varphi$.  We call the triple $(B,\mathcal{P}, \varphi)$ a \emph{polarized tropical manifold}.

\subsubsection{The intersection complex or ``cone picture"}
If the data of the polarization seemed extraneous in the fan picture, it is essential in the following ``cone picture."  Again, along each component $X_i$, $\mathcal{L}|_{X_i}$ an ample line bundle on a projective toric variety, with a corresponding polytope $\sigma_i$.  We can glue these polytopes along the identifications given by $\nu$.  This gives us a topological manifold $\check{B}$ as well as a polyhedral decomposition $\check{\mathcal{P}}$.  Just as before, we need a fan structure at the vertices to define an affine manifold structure to the topological gluing. 
% Each vertex $v$ corresponds to a zero-dimensional stratum $\{x\}$, and thus a maximal cell $\sigma_{v}$ in the dual intersection complex.  We can take the fan structure at $v$ to be given by the normal fan $\check{\Sigma_v}}$ to $\sigma_v$.  Equivalently, 
Recall that, by the Gorenstein assumption, a monoid of the form $M_v=\{(m,a)\in \mathbb{Z}^n\oplus \mathbb{Z} | \varphi(m)\geq a\}$ is associated to each vertex $v$.  The domains of linearity of $\check{\varphi}_v$ define a fan $\Sigma_v$ in $N_\mathbb{R}$.  We can again glue (with singularities) using the polytope and fan structure, giving a polarized tropical affine manifold $(\check{B}, \check{\mathcal{P}}, \check{\varphi})$.
\begin{figure}
\centering
\[
\begin{tikzpicture}[scale=0.7]
\draw (0,0) -- (1,-1) -- (3,-1) -- (4,0) -- (2,2) -- (0,0);
\draw (3,3) -- (5,1) -- (7,3) -- (5,5) -- (3,3);
\draw (6,0) -- (8,0) -- (8,2) -- (6,0);
\draw [thick,->] (5,-0.3) -- (6,0.7);
\draw [thick,->] (5,-0.3) -- (6,-1.3);
\draw [thick,->] (5,-0.3) -- (4,0.7);
\draw [thick,->] (5,-0.3) -- (4,-1.3);
\draw [thick,->] (5,-0.3) -- (5,-1.7);
\draw [thick,->] (8.3,3) -- (7.3,2);
\draw [thick,->] (8.3,3) -- (7.3,4);
%\draw [thick,->] (8.3,3) -- (7.3,4);
\draw [thick,->] (8.3,3) -- (10.7,2);
\draw [thick,->] (8.3,3) -- (8.3,1.6);
\draw [thick,->] (5.3,6) -- (6.3,5);
\draw [thick,->] (5.3,6) -- (3.7,6);
\draw [thick,->] (5.3,6) -- (5.3,7.4);
\draw [thick,->] (2,2.7) -- (3,1.7);
%\draw [thick,->] (2,2.7) -- (3,3.7);
%\draw [thick,->] (2,2.7) -- (1,1.7);
\draw [thick,->] (2,2.7) -- (2,4.1);
\draw [thick,->] (2,2.7) -- (0.6,2.7);
\end{tikzpicture}
\]

\caption{Patching an affine manifold from fans and polyhedra. Mismatches lead to singularities in the affine structure.}
\label{patching}
\end{figure}
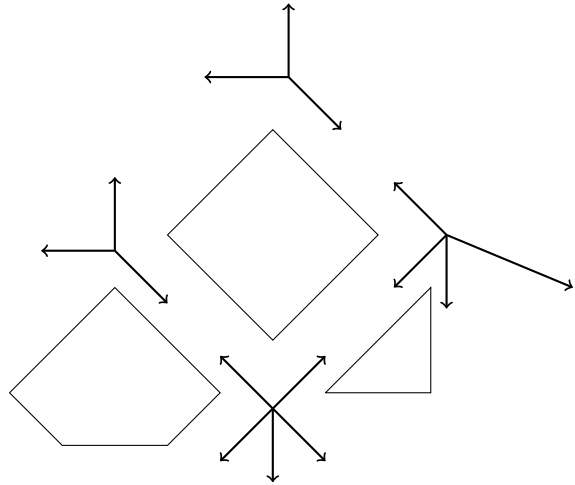

\begin{figure}
\centering
\[
\begin{tikzpicture}[scale=1.2]
\draw (0,0) -- (2,2) -- (3,1) -- (2,0) -- (1,1);
\draw (0,0) -- (0.5,-0.5) -- (1.5,-0.5) -- (2,0) -- (3,0) -- (3,1) -- (3.7,1);
%\draw (3,1) -- (3,1.7);
\draw (2,0) -- (2,-0.7);
%\draw (1,1) -- (1,1.7);
%\draw (1,1) -- (0.3,1);
\draw (2,2) -- (2,2.7);
\node at (1.5,1.5) {$+$};
\node at (2.5,0.5) {$+$};
\node at (1.5,0.5) {$+$};
\node at (2.5,1.5) {$+$};
\end{tikzpicture}
\]
\caption{An unavoidably misleading (flat paper provides an affine manifold without singularities!) representation of an affine manifold with singularities resulting from the identification in Figure \ref{patching}.}
\label{stitchedmfld}
\end{figure}
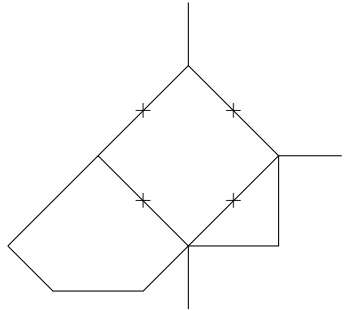

\subsubsection{The discrete Legendre transform}
The definitions above beg for an explicit connection.  The basic toric geometry correspondence between a polytope and a fan along with a piecewise linear function can be extended to a duality of polarized tropical manifolds taking $(B, \mathcal{P},\varphi)$ to $(\check{B}, \check{\mathcal{P}}, \check{\varphi})$ called the discrete Legendre transform.   This is the appropriate discretized version of the original relationship noticed by Hitchin between the complex and K\"ahler affine structures on the base of an SYZ fibration.  Significantly, we have the following result.
\begin{lemma} 
For a given log Calabi-Yau space, the discrete Legendre transform interchanges the dual intersection complex with the intersection complex.  
\end{lemma}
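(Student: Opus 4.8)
The plan is to verify that, once the discrete Legendre transform is unwound, the two constructions of Section~\ref{affine} agree cell by cell; no global argument is needed beyond combinatorial bookkeeping indexed by $\Strata(\mathcal{X}_0)$.

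First I would recall the transform in the form we use it. Starting from a polarized tropical manifold $(B,\P,\varphi)$, one replaces each maximal cell $\Delta\in\P$ by the normal fan of $\Delta$ carrying the support function of $\Delta$; one replaces each vertex $v\in\P$, which comes with a local fan $\Sigma_v$ and a local piecewise linear function $\varphi_v$, by the Newton polytope of $\varphi_v$; and one glues these pieces along the order-reversing bijection between the cells of $\P$ and those of the new decomposition $\check\P$, transporting the discriminant locus along the (common, barycentrically chosen) subdivision. Cell by cell this is pure toric duality: for a lattice polytope $\Delta$ and the Gorenstein cone $C=\operatorname{cone}(\Delta\times\{1\})$, the faces of $C^\vee$ are exactly the cones of the normal fan of $\Delta$, and under this duality an ample piecewise linear function is exchanged with its Newton polytope.

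Next I would feed in the dual intersection complex $(B,\P,\varphi)$ of a polarized log Calabi--Yau space $(\mathcal{X}_0,\mathcal{M}_{\mathcal{X}_0},\mathcal{L})$ and read off the output. Its vertices $v$ are the irreducible components $X_v=X_{\Sigma_v}$, with $\varphi_v$ the piecewise linear function of the ample bundle $\mathcal{L}|_{X_v}$; the Newton polytope of $\varphi_v$ is precisely the moment polytope $\sigma_v$ of $(X_v,\mathcal{L}|_{X_v})$, i.e. the maximal cell attached to $X_v$ in the cone picture. Dually, the maximal cells of $\P$ are the lattice polytopes $\Delta_x$ determined by $\check M_x=\operatorname{cone}(\Delta_x)\cap N$ at the $0$-dimensional strata $x$; by the duality recalled above the normal fan of $\Delta_x$ is the fan cut out by the domains of linearity of $\check\varphi_x$, which is exactly the fan structure that the cone picture assigns to the vertex $x\in\check\P$, and the Gorenstein monoid recovered there is again $M_x$. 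Matching the cells of intermediate dimension (the strata of intermediate codimension), the barycentric discriminant locus, and the two multivalued functions $\varphi$ and $\check\varphi$ is then a formality, and one obtains an isomorphism of polarized tropical manifolds from the Legendre dual of $(B,\P,\varphi)$ to the intersection complex $(\check B,\check\P,\check\varphi)$. Since the transform is involutive, the same argument run backwards gives the reverse direction.

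The one place I expect genuine work is this last matching: checking that the affine charts are truly interchanged and not merely abstractly isomorphic. Concretely, one must confirm that the interpolation data gluing the fans $\Sigma_v$ in the fan picture --- the polytopes $\Delta_x$ --- is Legendre-dual to the fan structures used to rigidify the gluing of the polytopes $\sigma_v$ in the cone picture, including the monodromy around the codimension two discriminant and the precise, barycentrically determined location of that discriminant. Once this local compatibility is established, globality is automatic because both $\P$ and $\check\P$ are assembled from the same incidence data of $\Strata(\mathcal{X}_0)$.
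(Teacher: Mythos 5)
The paper itself offers no proof of this lemma --- it is stated as a black box in the survey, with the actual argument living in Gross--Siebert's foundational work --- so there is no in-paper argument to compare against. Your sketch is the standard (and essentially the only) route: index both complexes by $\Strata(\mathcal{X}_0)$ with the order reversed, observe that at a vertex the pair $(\Sigma_v,\varphi_v)$ coming from $(X_v,\mathcal{L}|_{X_v})$ has Newton polytope equal to the moment polytope $\sigma_v$ used in the cone picture, and that at a zero-dimensional stratum the Gorenstein cone $\check M_x=\operatorname{cone}(\Delta_x)\cap N$ dualizes to the fan of domains of linearity of $\check\varphi_x$; this is correct and matches how the two constructions are set up in Section~\ref{affine}. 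The one caveat is the one you flag yourself: as written, your argument establishes a cell-by-cell isomorphism of local models, but the lemma is a statement about tropical manifolds, i.e.\ about affine structures with singularities, and the substantive content is that the \emph{charts} (fan structures at vertices interpolated by the polytopes $\Delta_x$, versus polytopes glued and rigidified by the fans at vertices of $\check\P$) induce the same integral affine structure away from a common barycentric discriminant, with matching monodromy. That verification is not "a formality" --- it is where the proof actually lives in the literature --- so your write-up should be regarded as a correct reduction to that local-to-global compatibility rather than a complete proof.
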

\subsection{Reconstruction of $\mathcal{X}_0$ from $(B,\mathcal{P}, \varphi )$}
As we've seen, Theorem \ref{recthm} shows that one can recover a toric degeneration from a log Calabi-Yau space.  Can one recover a log Calabi-Yau space from an affine manifold?  Consider the map 
$$\{\mathcal{X}_0, \mathcal{M}_{\mathcal{X}_0}\}\ra \{(B,\mathcal{P}, \varphi)\}$$ 
from the set of LCY spaces to the set of polarized tropical affine manifolds given by operation of taking the intersection complex.  

Recall each maximal cells $\sigma_i$ of an affine manifold, if interpreted as an intersection complex, represents a projective toric variety $\mathbb{P}_{\sigma_i}$.  As there is an 1-to-1 inclusion preserving correspondence between the toric strata of $\mathbb{P}_{\sigma_i}$ and the polyhedral strata of $\sigma_i$, it's clear that we should glue $\mathbb{P}_{\sigma_1}$ and  $\mathbb{P}_{\sigma_2}$ along $\mathbb{P}_\tau$ if $\tau=\sigma_1\cap\sigma_2$.  For each identification, there is a whole family of possible equivariant gluings.  These choices are called \emph{closed gluing data}.  With a choice $s$ of closed gluing data, one can recover a scheme $\check{\mathcal{X}}_0(B,\mathcal{P}, \varphi)$.

Not all choices of $s$ result in something that can be the central fiber of a toric degeneration, because the gluing must carry a correct log structure.  In order to guarantee the existence of such a log structure, we must consider closed gluing data that are induced by \emph{open gluing data}.  Each vertex $v$ of $\mathcal{P}$ comes with a monoid $P_v:=\{(m,r)\in \mathbb{Z}^n\times \mathbb{Z}| r\geq \varphi_v(m)\}$, where $\varphi_v$ is a local representative of $\varphi$.  Setting
\begin{eqnarray*}
&U(v):=\spec\mathbb{C}[P_v]\\
&V(v):=\spec \mathbb{C}[P_v]/(z^{(0,1)})
\end{eqnarray*}
we obtain a local model.  As shown by Gross and Siebert in \cite{GS03}, a necessary condition for $\check{\mathcal{X}}_0(B,\mathcal{P}, \varphi)$ to be the central fiber of a toric degeneration is that it can be expressed as an (equivariant) gluing of $V(v)$ along Zariski open subsets.  These gluing choices are called \emph{open gluing data}.  Each $V(v)$ come with a divisorial log structure $\mathcal{M}_v$ obtained from $V(v)\subseteq U(v)$, and the corresponding ghost sheaves $\overline{\mathcal{M}}_v=\mathcal{M}_v/\mathcal{M}^{\times}_v$ (see Section \ref{definitions}) are identified by the gluings.  This gives us a ghost sheaf of monoids on $\check{\mathcal{X}}_0(B,\mathcal{P}, \varphi)$.  

The following theorem is a main result of \cite{GS03} 
\begin{theorem}
Given $(B,\mathcal{P}, \varphi)$ simple, the set of log Calabi-Yau spaces with intersection complex $(B, \mathcal{P}, \varphi)$ modulo isomorphism preserving $B$ is $H^1(B, i_*\check{\Lambda} \otimes k ^ \times)$.  An isomorphism is said to preserve $B$ if it induces the identity on the intersection complex.
\end{theorem}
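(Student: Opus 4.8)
The plan is to present every log Calabi--Yau space with intersection complex $(B,\mathcal{P},\varphi)$ as a gluing of the standard affine models $V(v)$ along the overlaps prescribed by $\mathcal{P}$, to organize the choices involved in such a gluing into a \v{C}ech $1$-cochain with values in a suitable sheaf on $B$, and then to show that the resulting assignment ``gluing data $\mapsto$ log Calabi--Yau space'' descends to a bijection between $H^1$ of that sheaf and the set of isomorphism classes over $B$. The identification of the sheaf with $i_*\check{\Lambda}\otimes k^\times$ is the final, combinatorial, step.

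First I would recall from \cite{GS03} the local structure: Zariski-locally on $\check{\mathcal{X}}_0(B,\mathcal{P},\varphi)$ one has the charts $V(v)=\spec\mathbb{C}[P_v]/(z^{(0,1)})\subseteq U(v)=\spec\mathbb{C}[P_v]$ attached to vertices $v$ of $\mathcal{P}$ (and analogous charts along higher strata), each carrying its divisorial log structure $\mathcal{M}_v$, and that the ghost sheaf $\overline{\mathcal{M}}_v$ is already canonically determined on overlaps by the affine-linear combinatorics of $(B,\mathcal{P},\varphi)$. Open gluing data is precisely the collection of isomorphisms used to patch these charts over overlaps, required to be equivariant for the relevant torus actions so that the patching respects the $\overline{\mathcal{M}}_v$; the cocycle (triple-overlap) condition is exactly what is needed for the patched object to be a separated scheme, and equivariance makes the $\mathcal{M}_v$ glue as well, producing the log Calabi--Yau space.

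Next I would pin down the group in which the patching isomorphisms live. An equivariant automorphism of the local model near a point $x$ that induces the identity on the ghost sheaf is determined by a character, i.e.\ by an element of $\Hom(\Lambda_x,k^\times)$ for $\Lambda_x$ the stalk of integral tangent (equivalently, via $\varphi$, cotangent) vectors; over the smooth locus $B_0=B\setminus\Delta$ these assemble into the sheaf $\check{\Lambda}\otimes k^\times$, and the combinatorics of $\mathcal{P}$ force the gluing datum along a stratum meeting $\Delta$ to extend across it, which is precisely what pushing forward by $i\colon B_0\hookrightarrow B$ records. Thus open gluing data is a representative of a class in $H^1$ of a good cover of $B$ (refining $\mathcal{P}$) with values in $i_*\check{\Lambda}\otimes k^\times$. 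I would then check that replacing a chart $V(v)$ by an equivariant automorphism alters the $1$-cochain by the corresponding \v{C}ech coboundary, and conversely that cohomologous cochains yield spaces related by an isomorphism inducing the identity on $(B,\mathcal{P},\varphi)$; this gives an injection $H^1(B,i_*\check{\Lambda}\otimes k^\times)\hookrightarrow\{\text{iso classes over }B\}$. Surjectivity --- that \emph{every} log Calabi--Yau space with this intersection complex arises so --- is where simplicity enters: it guarantees that such an $\mathcal{X}_0$ is Zariski-locally isomorphic, \emph{with its log structure}, to the standard models, so the transition maps between any two such local trivializations are equivariant and assemble into open gluing data realizing the space.

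\textbf{Main obstacle.} The delicate part is the bookkeeping near the discriminant locus $\Delta$ and the lower-dimensional strata of $B$: one must verify that the sheaf of admissible equivariant automorphisms has stalk exactly $(i_*\check{\Lambda}\otimes k^\times)_x$ at such points, that simplicity (together with the Gorenstein / Calabi--Yau conditions) collapses the a priori larger local automorphism groups there to this stalk, and that the cover of $B$ by open stars of cells is acyclic for this sheaf so that the \v{C}ech computation actually computes $H^1(B,i_*\check{\Lambda}\otimes k^\times)$. Determining precisely which equivariant automorphisms of $V(v)$ preserve both the scheme and the log structure, and showing they amount to no more than $\check{\Lambda}\otimes k^\times$, is the crux on which the whole argument turns.
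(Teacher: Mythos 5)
The paper does not actually prove this statement: it is quoted as ``a main result of \cite{GS03}'' and the surrounding text only sets up the local models $U(v)$, $V(v)$ and the notion of open gluing data. So the comparison below is with the argument in \cite{GS03} that the paper is pointing to, not with a proof contained in the text.

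Your \v{C}ech skeleton (glue the $V(v)$ along overlaps, record the equivariant transition maps as a $1$-cochain valued in $\check{\Lambda}\otimes k^\times$ on $B\setminus\Delta$, pass to $i_*$ and to $H^1$) is the right first half of the argument, and your identification of the admissible chart automorphisms with characters $\Hom(\Lambda_x,k^\times)$ is correct. The genuine gap is that you treat the log structure as something that comes for free once the schemes are glued equivariantly: ``equivariance makes the $\mathcal{M}_v$ glue as well, producing the log Calabi--Yau space.'' That is not how it works, and it is precisely the point the theorem turns on. What glues canonically is only the \emph{ghost} sheaf $\overline{\mathcal{M}}$ (as the paper itself says); an actual log structure of that ghost type, log smooth off a codimension-two set $Z$, is \emph{additional data}, classified in \cite{GS03} by sections of a sheaf ($\mathcal{LS}^+_{\mathrm{pre}}$) supported on the codimension-one singular strata of $\check{\mathcal{X}}_0$, whose local description involves regular functions on those strata whose zero loci produce $Z$. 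Existence of such a section is obstructed in general (this is where positivity enters), and for a fixed glued scheme there is a priori a positive-dimensional moduli of such log structures. The content of simplicity is that this moduli, taken together with the ambiguity in the open gluing data and modulo isomorphisms over $B$, collapses exactly to $H^1(B,i_*\check{\Lambda}\otimes k^\times)$ --- not, as in your sketch, merely that local charts exist for surjectivity. A proof organized as you propose, with no slot for classifying the log structures on a fixed underlying scheme, would at best classify the schemes $\check{\mathcal{X}}_0(B,\mathcal{P},\varphi,s)$ and would not see why the simplicity hypothesis is needed at all.

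Your ``main obstacle'' paragraph (stalk computation at $\Delta$, acyclicity of the cover by open stars) identifies real but secondary bookkeeping; the crux you should flag instead is the interplay between gluing data and the moduli of log smooth structures along the singular locus.
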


Therefore,  the fiber over a given manifold $(B, \mathcal{P}, \varphi)$ is identified with $H^1(B, i_*\check{\Lambda} \otimes k ^ \times)$, where $i:B\setminus  \Delta \hookrightarrow  B$, $\Delta$ is the discriminant locus of $B$, and $\Lambda$ is the family of lattices locally defined by the flat affine integral vector fields on $B\setminus \Delta$.  The element $0\in H^1(B, i_* \Lambda \otimes k^\times)$ corresponds to an untwisted gluing.  
Hence we have a bijection
\[
\xymatrix{
\left\{\left( \mathcal{X}_0, \mathcal{M}_{\mathcal{X}_0} \right)\right\} \ar@{<->}[rr]^-{1:1}  & & \left\{\left(\left(B,\mathcal{P} \right) , s \right) \, | \; s\in H^1(B, i_* \Lambda \otimes k^\times) \right\} \\ 
\left\{ \text{ polarized } \right\} \ar[u] \ar@{<->}[rr] & & \ar[u]_{\text{forgetful map}} \left\{\left(\left(B,\mathcal{P}, \varphi \right) , s \right) \, | \; s\in H^1(B, i_* \Lambda \otimes k^\times)  \right\}.
}
\]
\subsection{Mirror symmetry via the Gross-Siebert program}

With these results in place, we can discuss an overall strategy of using these techniques to understand mirror symmetry.  One begins with a polarized  toric degeneration $\mathcal{X}\ra S$, which can be distilled to a LCY space.  By taking the dual intersection complex, we further reduce to a polarized tropical affine manifold $(B, \mathcal{P}, \varphi)$.  From here, we wish to apply the reconstruction theorem to construct a degeneration $\check{\mathcal{X}}\ra \spec k\llbracket t\rrbracket$ whose \emph{intersection complex} is $(B, \mathcal{P}, \varphi)$.  This degeneration should be dual (in the mirror sense) to the one we started with.  The idea can be summed up in the following diagram.
\[
\begin{tikzpicture}[scale=1]
\node at (0,10) {$\mathcal{X}$} ;
\node at (0,9) {$S$} ;
\draw [->] (0,9.8) -- (0,9.2);
\node[align=center,right] at (0.4,9.5) {Polarized\\ toric CY\\degeneration};
\draw [<->,dashed] (2.5,9.5) -- (5.5,9.5) ;
\node [above] at (4,9.5) {Mirror};
\node [below] at (4,9.5) {symmetry};
\node at (6.3,10) {$\mathcal{X}$} ;
\node at (6.3,9) {$\spec k\llbracket t\rrbracket$} ;
\draw [->] (6.3,9.8) -- (6.3,9.2);
\node[align=center,right] at (7,9.5) {Polarized\\ toric CY\\degeneration};
\draw [->,snake=snake,segment amplitude=.6mm, segment length = 4mm, line after snake = 0.4mm] (0.5,8.5) -- (0.5,7);
\node at (0.4,6.5) {$\left(\mathcal{X}_0,\mc{M}_{\mathcal{X}_0}\right)$} ;
\node[align=center,right] at (0.8,7.6) {Polarized\\ toric log CY\\ space};
\draw [->,snake=snake,segment amplitude=.6mm, segment length = 4mm, line after snake = 0.4mm] (6.3,7) -- (6.3,8.5);
\node at (6.3,6.5) {$\left(\check{\mathcal{X}_0},\mc{M}_{\check{\mathcal{X}_0}}\right)$} ;
\node[align=center,right] at (6.5,7.75) {Reconstruction \\ thm};
\draw [->,snake=snake,segment amplitude=.6mm, segment length = 4mm, line after snake = 0.4mm] (0.5,5.7) -- (0.5,3.7);
\node[align=center,left] at (0.3,4.7) {Fan};
\draw [->,snake=snake,segment amplitude=.6mm, segment length = 4mm, line after snake = 0.4mm] (6.3,3.7) -- (6.3,5.7);
\node[align=center,right] at (6.5,4.7) {Fan};
\node at (0.4,3) {$\left(\left(B,\mathcal{P}, \varphi \right) , s \right)$};
\node at (6.3,3) {$\left(\left(\check{B},\check{\mathcal{P}}, \check{\varphi} \right) , \check{s} \right)$};
\draw [<->] (1.8,3) -- (4.9,3) ;
\node [above] at (3.35,3) {Discrete Legendre transform};
\draw [->,snake=snake,segment amplitude=.4mm, segment length = 4mm, line after snake = 0.4mm] (1.3,5.9) -- (5.8,3.6);
\draw [->,snake=snake,segment amplitude=.4mm, segment length = 4mm, line after snake = 0.4mm] (5.8,5.9) -- (1.3,3.6);
\node [align=center] at (1.8,5.3) {Cone};
\node [align=center] at (5.2,5.3) {Cone};
\node [align=center] at (4.2,2) {Pick this or\\work universally in $\check{s}$};
\draw [->] (5.7,2.1) to [out=-10,in=-90] (7.1,2.75);
\end{tikzpicture}
\]

The basic idea of mirror symmetry is to identify pairs of manifolds (or degenerations) for which the symplectic structure of one is closely related to the complex structure of the other.  Much of the early excitement over mirror symmetry resulted from the identification of certain enumerative invariants on one manifold with the results of period integrals on another.  One of the nice features of the above construction is that there is a combinatorial structure, the underlying affine manifold, which controls the symplectic structure of $\mathcal{X}$ and the complex structure of $\check{\mathcal{X}}$.  The natural geometry on tropical affine manifolds is \emph{tropical geometry}, which leads one to hope that mirror symmetry can be well described by identifying tropical structures that describe both the symplectic structure of $\mathcal{X}$ and the complex structure of $\check{\mathcal{X}}$. 

\subsubsection{Tropical data in the dual intersection complex}
The utility of tropical curves for the computation of Gromov-Witten invariants has been known for some time.  Please see Section \ref{section:NS} for more information about how these techniques fit into the overall structure of toric degenerations.  In keeping with the overall philosophy of the program, the goal is to develop the machinery to compute Gromov-Witten invariants of the general fiber from the combinatorial data of the central fiber. The current interpretation relies on something called the \emph{tropicalization functor} that uses log structures to produce polyhedral complexes; in particular, it recovers the dual intersection complex.  As Gross and Siebert have shown, it is possible to construct a nice moduli space of so-called \emph{log stable maps} for well behaved log spaces.  These techniques are not yet applicable to the general toric degeneration framework, as the log structure of the  central fiber fails to satisfy the requirements of the theorem on the points $Z$.  Nevertheless, the image of a log stable map under the tropicalization functor should be a tropical curve in the dual intersection complex, giving some motivation for the hope that curve counting can entirely be done on the combinatorics of the affine manifold.
\subsubsection{Tropical data in the intersection complex}
The tropical data relevant to the complex structure of a manifold reconstructed from an intersection complex are given by the rays of a scattering diagram.  In order to understand how this arises, we need to discuss the specifics of the reconstruction theorem.  In the absence of singularities in the affine manifold, the reconstruction process constructs the well-known Mumford degeneration.  Specifically, suppose that $\check{B}$ is a polytope $\Delta\subseteq \mathbb{R}^n$ and $\check{\mathcal{P}}$ is a polyhedral decomposition of $\Delta$ induced by the bending locus of a piecewise linear function $\check{\varphi}$.  Consider
$$\hat{\Delta}:=\{(m,a)\in \mathbb{R}^n\oplus \mathbb{R} | \check{\varphi}(m)\geq a\}.$$
Setting $\mathcal{X}:=\Proj k[\text{cone}(\hat{\Delta})\cap \mathbb{Z}^{n+2}]=\mathbb{P}_{\hat{\Delta}}$, we see that setting $t:=z^{(0,\ldots, 0,1,0)}$ gives us a degeneration $\mathcal{X}\ra k[t]$ which is a reconstruction of $(X_0, \mathcal{M}_{\mathcal{X}_0})$ (the LCY space achieved by a choice of ``vanilla" gluing data).  As you can see, this is just a gluing of the local models introduced in the discussion of the open gluing data.  The introduction of singularities, however, creates a great deal of complication.  
The effort to create a reconstruction process began with the work of Fukaya in \cite{F05}, who noted that perturbations of the complex structure (in dimension 2) should be concentrated along trees of gradient flow lines emanating from singular points of the affine manifold.  Kontsevich and Soibelman further studied the two-dimensional case in \cite{KS06}, showing that a tropical affine surface with 24 focus-focus singularities can be used to construct a rigid analytic K3 surface.  The key insight here was the use of  gluing automorphisms attached to gradient flow lines, giving a ``scattering diagram".  Gross and Siebert studied the problem using the dual affine structure in \cite{GS11}, where the gradient flows become straight lines.  The local models are then glued using the automorphisms carried by this \emph{scattering diagram}.  This allowed a difficult extension to the higher dimensional case, yielding the theorem referenced above.

In the case of dimension two, the gluing automorphisms propagate along straight lines, and these straight lines collide and glue to form structures reminiscent of so-called \emph{tropical disks} (see Section \ref{sec:tropics}).  Our guiding hope is that these are tropical manifestations of holomorphic disks.  Nishinou has shown that such a correspondence does indeed exist \cite{Nishinou}.  Furthermore, as Auroux has explained in \cite{A07}, one expects the complex structure on one side of the mirror to be controlled by holomorphic disks on the other side, lending further credence to this idea. 
\begin{figure}
\centering
\[
\begin{tikzpicture}[scale=1.7]
\draw (-0.7,0.7) -- (0,0) -- (2,0) -- (2.7,0.7);
\draw (0,0) -- (0,-1);
\draw (2,0) -- (2,-1);
\node at (1,0) {$\times$};
\draw [dashed] (1,0) -- (1,0.9);
\draw [dashed] (1,0) -- (1.85,0.85);
\draw [dashed] (1,0.2) -- (1.2,0.2);
\draw [dashed] (1,0.4) -- (1.4,0.4);
\draw [dashed] (1,0.6) -- (1.6,0.6);
\draw [dashed] (1,0.8) -- (1.8,0.8);
\node [right] at (0,-0.2) {$x$};
\draw [->] (0,0) -- (0,-0.4);
\node [above] at (0.2,0) {$w$};
\draw [->] (0,0) -- (0.4,0);
\node [below] at (-0.2,0.15) {$y$};
\draw [->] (0,0) -- (-0.28,0.28);
\node [left] at (2,-0.2) {$u$};
\draw [->] (2,0) -- (2,-0.4);
\node [below] at (2.2,0.15) {$v$};
\draw [->] (2,0) -- (1.6,0);
\node [above] at (1.85,0) {$w^{-1}$};
\draw [->] (2,0) -- (2.28,0.28);
\draw [->,thick] (0.8,0.66) to [out=45,in=135] (1.9,0.66);
\node at (1.3,1.25) {$y\mapsto w^{-1}v$};
\node at (1.3,1.05) {$x\mapsto w^{-1}u$};
\draw [->,thick] (0.6,-0.4) to [out=-45,in=-135] (1.4,-0.4);
\node at (1,-0.7) {$x\mapsto u$};
\node at (1.1,-0.9) {$y\mapsto v\,w^{-2}$};
\draw (0,0) circle [radius=0.5];
\draw (2,0) circle [radius=0.5];
\node at (-1.2,-0.8) {$xyw=t$};
\draw (-1.2,-0.8) ellipse (14pt and 10pt);
\draw [->] (-1.2,-0.35) to [out=75,in=180] (-0.6,0.1);
\node at (3.2,-0.8) {$uvw^{-1}=t$};
\draw (3.2,-0.8) ellipse (17pt and 12pt);
\draw [->] (3.2,-0.35) to [out=105,in=0] (2.6,0.1);
\end{tikzpicture}
\]
\caption{Monodromy introduces an ambiguity in the identification of local models near a singularity.  This difficulty is resolved by introducing gluing automorphisms along walls that are invariant under the monodromy induced by the singularity.  See \cite{GS11}.}
\label{gluing}
\end{figure}
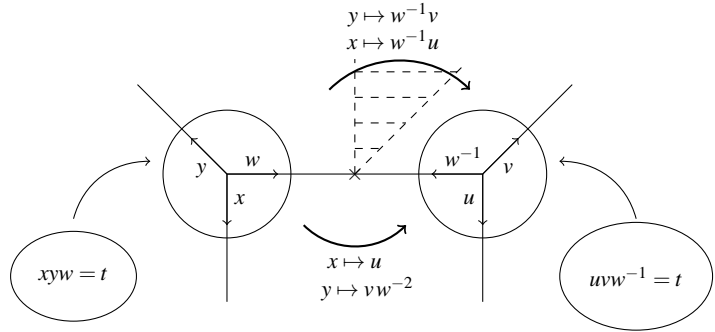
\subsection{Structure}
Having established a sketch of the main ideas of the Gross-Siebert program, we go on to explore some of major tools used in its study.  In Section \ref{sec:loggeom} we give an introduction to logarithmic geometry, an extremely important tool for the study of degenerating families.  Next, we introduce tropical geometry in Section \ref{sec:tropics}.  The application of tropical geometry to enumerative questions is introduced in Section \ref{sec:enum}, utilizing logarithmic techniques.  Finally,  these enumerative results are connected with certain period calculations on a Landau-Ginzburg model of $\mathbb{P}^2$ in a sketch of Gross's construction mirror symmetry.  This connection is achieved through an identification of tropical structures common to both the Landau-Ginzburg model and the tropical enumerative calculations.

\subsection{Acknowledgements}
We are indebted to the referee for a careful reading of the text and to Mark Gross for pointing out the application of the Welschinger invariant appearing in Section \ref{sec:enum}.  These notes are sprung from the Fields Institute's thematic program on Calabi-Yau varieties.  The authors would like to thank the Fields Institute for providing an excellent working and learning environment, and the program's organizers for their hard work and guidance, most of all Noriko Yui.

\section{Introduction to Logarithmic Geometry}
\label{sec:loggeom}

\abstract*{
}

\subsection{Introduction}

The first goal of this chapter is to familiarize the reader with log structures and to overview some basic properties of these. The second, more specific goal is to introduce the reader to notions used in other sections of this chapter. Namely, this includes the definition of log smoothness in section \ref{propr}, as well as the definition of torically transverse log curves in section \ref{tor-trans}. The third goal is to offer the reader an introduction to logarithmic Gromov-Witten theory. In order to do so, F. Kato's \cite{kato-log-curves} local description of log smooth curves is illustrated in Section \ref{log-sm-curves}. This is then used in Section \ref{log-gw} to sketch the starting point for logarithmic Gromov-Witten theory. In particular, we will describe why log smooth maps are a natural (and powerful!) candidate to generalize (relative) stable maps.

Log geometry was introduced by Illusie and Fontaine, see \cite{Illusie_log_spaces}, and by K. Kato, see \cite{Kato_log_struct}. Adding a log structure to certain singular schemes allows them to be treated as if they were smooth. The focus is on examples that illustrate this concept. The examples are taken from the book \cite{kan} by Gross. The interested reader is invited to consult that reference for a more thorough treatment of log geometry, as well as for more examples.

\subsection{Motivation}
Log structures are a vast abstraction of log differentials. Thus, to motivate log structures, we start by reviewing log differential. Let $X$ be a smooth quasi-projective variety contained in a projective variety $\overline{X}$. Denote by $i : X\hookrightarrow \overline{X}$ the inclusion and assume that the divisor $D=\overline{X}\backslash X$ is normal crossings. By definition, for a point $z\in D$ there is an affine open neighbourhood $U$ of $z$ in $\overline{X}$, and coordinates $x_1,\dots,x_n$ on $U$ such that $D\cap U$ is given by
$$
x_1\cdots x_p = 0,
$$
for some $p\leq n$.

\begin{definition} The \emph{sheaf of log differentials} $\Omega_{\overline{X}}^q\left( \log D \right)$ is a sheaf on $\overline{X}$, defined locally as a subsheaf
$$
\Omega_{\overline{X}}^q\left( \log D \right) \subseteq i_* \Omega_X^q,
$$
as follows. Assume $U\subset \overline{X}$ is affine open and has coordinates $x_1,\dots,x_p$ such that $D\cap U$ is given by $x_1 \cdots x_p = 0$ for some $p\leq n$. Define $\Omega_{\overline{X}}^q\left( \log D \right) \left( U \right)$ to be generated by
$$
\frac{\diff x_1}{x_1}, \cdots, \frac{\diff x_p}{x_p}, \diff x_{p+1}, \cdots, \diff x_n.
$$
\end{definition}
The sheaf of log differentials recovers for $X$ a number of properties that hold for projective varieties. For example, its hypercohomology calculates the cohomology of $X$:
$$
\mathbb{H}^q\left(\overline{X},\Omega_{\overline{X}}^\bullet \left( \log D \right) \right) \cong \hhh^q(X,\IC).
$$
Taking this as a starting point, Deligne developed his theory of mixed Hodge structures, which provides analogous results for $X$ as the Hodge structure does for $\overline{X}$. In mirror symmetry, this analogy is carried over to Yukawa couplings. Indeed, via variation of mixed Hodge structures, Konishi-Minabe in \cite{local_B_model} define the local $B$-model Yukawa coupling in the setting of local Calabi-Yau threefolds. Their result mirrors the properties of the Yukawa coupling for the compact Calabi-Yau threefold case. These examples show that the sheaf of log differentials extends results that are true for projective varieties to quasi-projective ones.

We proceed to consider the relative version of the sheaf of log differentials in a family. It illustrates how using the sheaf of log differentials recovers results that hold true for smooth varieties to singular ones. It is part of Steenbrink's construction of the limiting mixed Hodge structure, see \cite{lim_hodge_str}, for a normal crossings degeneration.

Consider a normal crossings degeneration. This consists of a one-dimensional flat family
$$
f : X \to S,
$$
such that $S$ is smooth and such that the fibers $X_s$ are smooth except for a closed point $0\in S$. Moreover, $f$ is assumed to be normal crossings. That means the following: For every $z\in X$, there is $U \ni z$ an affine open neighbourhood with coordinates $x_1,\dots,x_n$; there is an affine open neighbourhood $V$ of $S$ with coordinate $s$; $U$ and $V$ are such that $f|_U$ maps to $V$ and is given by
$$
(x_1,\dots,x_n) \mapsto s=x_1\cdots x_p,
$$
for some $p\leq n$. Define the \emph{sheaf of relative log $q$-forms} as the quotient
$$
\Omega^q_{X/S} \left( \log X_0 \right) := \Omega^q_{X} \left( \log X_0 \right) / \mc{F},
$$
where
$$
\mc{F}= f^*\Omega^1_{S} \left( \log 0 \right) \wedge \Omega^{q-1}_{X} \left( \log X_0 \right).
$$
Then $\Omega^q_{X/S} \left( \log X_0 \right)$ is a sheaf on $X$. To illustrate how it differs from $\Omega^q_{X} \left( \log X_0 \right)$, consider log $1$-forms. Since $f$ is normal crossings, in an affine open neighbourhood $U \subset X$ of $z\in X_0$ and in suitable coordinates, $X_0\cap U$ is given by $x_1\dots x_p=0$. Thus, as above, $\Omega^1_{X} \left( \log X_0 \right) \left( U \right)$ is generated by
$$
\frac{\diff x_1}{x_1}, \cdots, \frac{\diff x_p}{x_p}, \diff x_{p+1}, \cdots, \diff x_n.
$$
By definition $\Omega^1_{X/S} \left( \log X_0 \right)$ has the same set of generators. Pulling back the $1$-form $\diff s / s$ yields the additional relation
$$
\frac{\diff x_1}{x_1} + \cdots + \frac{\diff x_p}{x_p}=0.
$$
Consider the sheaf on $X_0$ obtained by the restriction to $X_0$,
$$
\Omega^q_{X_0^\dagger/S^\dagger}:=\Omega^q_{X/S} \left( \log X_0 \right)|_{X_0}.
$$
The sheaf $\Omega^q_{X_0^\dagger/S^\dagger}$ exhibits a lot of properties that would hold for $\Omega^q_{X_0}$ in case $X_0$ was smooth. For instance, $\Omega^q_{X_0^\dagger/S^\dagger}$ is locally free and the exterior derivative makes sense on $\Omega^q_{X_0^\dagger/S^\dagger}$. Moreover, it is shown in \cite{lim_hodge_str} that for $f$ proper and log smooth (see section \ref{def_log_sm} below), the higher direct image
$$
R^p f_* \Omega^q_{X/S} \left( \log X_0 \right)
$$
is locally free and furthermore imitates some of the properties that $R^p f_* \Omega^q_{X/S}$ enjoys in the smooth case. Namely, away from $X_0$, $R^p f_* \Omega^q_{X/S} \left( \log X_0 \right)$ is the sheaf of $q$-forms and so its fibers are the Dolbeault cohomology groups
$$
\hhh^p(X_s,\Omega^q_{X_s}),
$$
whenever $s\neq 0$. And its fiber at $0$ is
$$
\hhh^p( X_0,\Omega^q_{X_0^\dagger/S^\dagger}).
$$
Finally, these cohomology groups are used by Steenbrink in \cite{lim_hodge_str} to define the limiting mixed Hodge structure associated to this degeneration.

We hope that this last example convinces the reader that using the sheaf of relative log differentials allows to treat the central fiber $X_0$ as if it was smooth. Log structures, though more abstract, are a vast generalization of this idea. They have the advantage that they can be considered over any scheme. The notion of \emph{log smoothness}, see definition \ref{def_log_sm} below, applies much more generally than smoothness does, and exhibits many of the same properties than smoothness does.

\subsubsection{The \'etale topology}
\label{etale}

In order to talk about log structures, the Zariski topology is too coarse in general. Instead, we need to consider sheaves in the \'etale topology. We briefly overview what it means for a sheaf to be defined in the \'etale topology. We refer the interested reader for a more thorough treatment of the topic to the book \cite{milne_et_book} by Milne.

Let $X$ and $Y$ be schemes. Recall that a flat morphism of finite type\footnote{If we strove for maximal generality, we would assume $\pi$ to be flat and locally finitely presented.}
$$
\pi: X\to Y,
$$
is \'etale if and only if for any $q\in Y$, its preimage is written as a disjoint union
$$
\pi^{-1}(q) = \sqcup_i \spec K_i,
$$
where the $K_i$ are finite separable extensions of the residue field $k(q)$.

The \'etale topology adds more open subschemes to the Zariski topology. It is not a topology in the classical sense, but it exhibits the same properties. We do not provide a thorough overview of it, but rather describe what sheaves are in the \'etale topology and how they are used. Let $X$ a scheme. Open neighbourhoods in the \'etale topology are defined as \'etale morphisms
$$
U\to X.
$$
Let $\mc{F}$ be a sheaf of sets (or of groups or of any other algebraic structure) in the \'etale topology. Then $\mc{F}$ associates a set (or group etc.) $\mc{F}(U)$ to each \'etale map $U\to X$. Moreover, to each diagram of  \'etale maps
$$
\xymatrix{
U \ar[rr]^\phi \ar[dr] & & V \ar[dl] \\
& X, &
}
$$
$\mc{F}$ associates a restriction map of sets (or of groups etc.)
$$
\mc{F}(\phi) : \mc{F}(V) \to \mc{F}(U).
$$
These restriction maps are required to satisfy the usual sheaf axioms.

We review the definition of stalks in the context of the \'etale topology. Let $\overline{x} \to X$ be a geometric point. By definition, $\overline{x}=\spec(k)$, where $k$ is algebraically closed. Thus, choosing a geometric point amounts to choosing a point $x\in X$ and an inclusion $k(x) \subseteq k$ from the residue field $k(x)$ of $x$ to an algebraically closed field $k$. The stalk of $\mc{F}$ at $\overline{x}$ is defined as the direct limit
$$
\mc{F}_{\overline{x}} := \varinjlim \mc{F}(U),
$$
where the limit is taken over diagrams
$$
\xymatrix{
\overline{x} \ar[rd] \ar[r] & (U,u) \ar[d] \\
 & (X,x),
}
$$
for $(U,u)\to (X,x)$ pointed \'etale maps.

Throughout this section, we consider the schemes to be endowed with the \'etale topology, and the sheaves and stalks to be defined as above. For example, when we consider stalks of sheaves, we will always choose a geometric point.

\subsubsection{Basic definitions}
\label{definitions}
In this section, we introduce the terminology that is needed for the definition of log smoothness (Definition \ref{def_log_sm}). We are mainly concerned with sheaves of monoids, with the monoid operation usually given by multiplication, the notable exemption concerning the ghost sheaves. Let $X$ be a scheme and consider the sheaf of monoids $\mc{O}_X$ with the monoid structure given by multiplication. A \emph{pre-log structure} on $X$ consists of a sheaf of monoids $\mc{M}_X$ on $X$, in addition to a homomorphism of sheaves of monoids
$$
\alpha_X : \mc{M}_X \to \mc{O}_X.
$$
Then $\mc{M}_X$ is a \emph{log structure} if in addition the restriction
$$
\alpha_X|_{\alpha^{-1}_X\left( \mc{O}_X^{\times} \right)} : \alpha^{-1}_X\left( \mc{O}_X^{\times} \right) \to \mc{O}^\times_X
$$
is an isomorphism. Throughout this section, we use the notation $\mc{M}_X$ to denote a log structure on $X$. We write $X^\dagger = (X, \mc{M}_X)$ to indicate that the log structure is implicitly understood.

A morphism
\begin{align*}
f : X^\dagger & \to Y^\dagger
\end{align*}
of log structures consists of a morphism of the underlying schemes
\begin{align*}
f : X & \to Y,
\end{align*}
and a morphism of sheaves of monoids 
$$
f^{\#} : f^{-1}\mc{M}_Y \to \mc{M}_X,
$$
such that the diagram
\begin{equation}
\label{commdiag}
\xymatrix{
f^{-1} \mc{M}_Y \ar[d]_{f^{-1}\alpha_Y} \ar[r]^-{f^{\#}} & \mc{M}_X \ar[d]^{\alpha_X} \\
f^{-1} \mc{O}_Y \ar[r]^-{f^*} & \mc{O}_X
}
\end{equation}
commutes.

The \emph{ghost sheaf} $\overline{\mc{M}_X}$ is defined as the cokernel of $\alpha_X^{-1}$ restricted to $\mc{O}^\times_X$, yielding a short exact sequence
$$
1 \to \mc{O}_{X}^\times \xrightarrow{\alpha_X^{-1}} \mc{M}_X \to \overline{\mc{M}_X} \to 0.
$$
Note that the ghost sheaf is written \emph{additively}. As we will see in the examples of the next section, for the most important example of a log structure (the divisorial log structure), the ghost sheaf records the order of vanishing of regular functions. Since the order of vanishing of the product of two functions is the sum of the individual orders, this justifies the additive notation.

Assume that we have a map of log schemes $f : X^\dagger\to Y^\dagger$. Since the inverse image functor $f^{-1}$ is exact, $f^{-1}\overline{\mc{M}_Y}$ is the sheaf cokernel of
$$
f^{-1}\mc{O}^\times_Y \to f^{-1}\mc{M}_Y.
$$
Since \eqref{commdiag} commutes, $f^\#$ induces a map on the ghost sheaves
$$
\overline{f^\#} : f^{-1}\overline{\mc{M}_Y} \to \overline{\mc{M}_X}.
$$
For simplicity, we write $f^\# = \overline{f^\#}$ as well.

Let $\alpha : P_X \to \mc{O}_X$ be a pre-log structure on $X$. The \emph{log structure associated to} $P_X$ is the sheaf of monoids
$$
\mc{M}_X := \frac{P_X\oplus\mc{O}^\times_X}{\left\{ \left( p,\alpha(p)^{-1} \right) \; : \; p\in\alpha^{-1}\left( \mc{O}_X^\times \right) \right\}},
$$
in addition to the morphism of sheaves of monoids $\alpha_X : \mc{M}_X \to \mc{O}_X$ defined via
$$
\alpha_X(p,f) := \alpha(p)\cdot f.
$$
We show that this yields a log structure. Note that the map $\alpha_X$ is well-defined. Indeed, if $\left(p,\alpha(p)^{-1}\right)\in P_X\oplus \mc{O}_X^\times$ is such that $p\in \alpha^{-1}\left( \mc{O}_X^\times \right)$, then
$$
\alpha_X(p,\alpha(p)^{-1})= \alpha(p) \cdot \alpha(p)^{-1} = 1.
$$
We need to prove that the restriction of $\alpha_X$ to
$$
\alpha_X^{-1} \left( \mc{O}_X^\times \right) \to \mc{O}_X^\times
$$
yields an isomorphism. This map is surjective since if $f\in\mc{O}_X^\times$, then $\alpha_X(1,f)=f$. To show that it is injective, assume that $\alpha_X(p,f)=1$. Then $\alpha(p)\cdot f =1$, $f = \alpha(p)^{-1}$ and hence $(p,f)=(p,\alpha(p)^{-1})=1$.

Let $f : X \to Y$ be a morphism of schemes and assume that $Y$ is endowed with a log structure $\alpha_Y : \mc{M}_Y \to \mc{O}_Y$. The \emph{pull-back log structure} on $X$, denoted by $f^*\mc{M}_Y$, is the log structure associated to the pre-log structure defined by the composition
$$
f^{-1}\left( \mc{M}_Y \right) \xrightarrow{\alpha_Y} \alpha_Y^{-1} \left( \mc{O}_Y \right) \xrightarrow{f^*} \mc{O}_X.
$$
The pullback commutes with the ghost sheaf, in the sense that
$$
\overline{f^*\mc{M}_Y} = f^{-1}\overline{\mc{M}_Y}.
$$
For a proof of this statement, see \cite{kan}.

\subsection{Examples}
\label{examples}

Unless specified otherwise, the monoids below are written multiplicatively. The exception is for the monoid $\IN$, which is endowed with the operation of addition and which we assumed to contain $0$.

\begin{example}
The \emph{trivial log structure} on a scheme $X$ consists of the invertible functions: $\mc{M}_X = \mc{O}_X^\times$.
\end{example}

\begin{example} \label{stand-log-point}
Let $k$ denote a field. The \emph{standard log point} over $k$ is defined as
$$
\spec k^\dagger = \left( \spec k, \mc{M} = k^\times\oplus\IN  \right),
$$
where $\alpha : k^\times \oplus \IN \to k$ sends
$$
(y,n) \mapsto \left\{ \begin{array}{l l}
y & \quad \text{if } n=0, \\
0 & \quad \text{if } n\neq 0.
\end{array}
\right.
$$
Note that $\alpha^{-1}(k^\times)=k^\times \oplus \left\{ 0 \right\}$, hence $\overline{\mc{M}}=\IN$. In terms of the ghost sheaf, we can thus think of the standard log point to consist of a copy of $\IN$ on top of $\spec k$.
\end{example}

\begin{example}
\label{ex-div-log-str}
Next, we introduce the most important log structure, the \emph{divisorial log structure}. Let $X$ be a scheme and let $D \subset X$ be a closed subset of pure codimension $1$. Denote moreover by $j : X\backslash D \hookrightarrow X$ the inclusion. Then the \emph{divisorial log structure induced by} $D$ is the log structure $\mc{M}_{(X,D)}$ on $X$ defined by considering regular functions which are invertible away from $D$,
$$
\mc{M}_{(X,D)} := \left( j_* \mc{O}^\times_{X\backslash D} \right)\cap\mc{O}_X,
$$
and by taking
$$
\alpha_X : \mc{M}_{(X,D)} \hookrightarrow \mc{O}_X
$$
to be the inclusion.
\end{example}

\begin{example}
As a first example of divisorial log structure, consider the pair $(X,D)=(\mathbb{A}^1_k,\left\{ 0 \right\})$ and $\mc{M}= \mc{M}_{(X,D)}$. We show that the restriction of $\mc{M}$ to $\orig$ yields the standard log point, i.e. that the pull-back log structure $j^*\mc{M}$ is $\IN\oplus k^\times$. As above, consider the inclusion (of schemes)
$$
j : \left\{ 0 \right\} = \spec k \hookrightarrow \aaa.
$$
Consider the restriction (pullback via $j$) of $\mc{M}$ to $\orig$. $\mc{M}$ is the sheaf of regular functions on $\aaa$ that are invertible away from $\orig$. Moreover, $j^{-1}(\mc{M})$, its stalk at the origin, is the germ of functions on $\aaa$ that are invertible away from $\orig$. In other words,
$$
j^{-1}(\mc{M})=\left\{ \phi \cdot x^n \; | \; n \in \IN, \; \phi\in\mc{O}(U)^\times, \; U \text{ \'etale neighborhood of } \orig \right\}.
$$
 Furthermore, $\alpha_X^{-1}\left( \mc{O}_X \right)$ is the sheaf of invertible regular functions on $\aaa$, and the map
$$
j^* : \alpha_X^{-1} \left( \mc{O}_X \right) \to \mc{O}_{\orig}
$$
is the evaluation map. Putting this together, the composition
$$
\alpha : j^{-1}(\mc{M}) \xrightarrow{\alpha_X} \alpha_X^{-1}\left( \mc{O}_X \right) \xrightarrow{j^*} \mc{O}_{\orig}
$$
is the evaluation map and sends
$$
\phi \cdot x^n \mapsto \left\{ \begin{array}{l l}
\phi(0)\neq 0 & \quad \text{if } n=0, \\
0 & \quad \text{if } n \geq 1.
\end{array}
\right.
$$
We now take the log structure associated to $\alpha$. The set
$$
\left\{ \left(\phi \cdot x^n,\alpha(\phi \cdot x^n)^{-1}\right) \; : \; \phi \cdot x^n \in \alpha^{-1}\left( \mc{O}_X^\times \right) \right\}
$$
consists of the elements of the form $(\phi,\phi(0)^{-1})$. Therefore, the associated log structure is given by
$$
\mc{M}_{\orig} := j^*\mc{M}= \frac{\left\{\phi\cdot x^n \right\}\oplus k^\times}{\left\{ \left( \phi,\phi(0)^{-1} \right) \right\}} = \IN\oplus k^\times;
$$
\begin{align*}
\alpha_{\orig} :  \IN\oplus k^\times & \to \mc{O}_{\orig}; \\
(x^n,y) \; & \mapsto  \left\{ \begin{array}{l l}
y & \quad \text{if } n=0, \\
0 & \quad \text{if } n \geq 1.
\end{array}
\right.
\end{align*}
This indeed is the standard log point.

Continuing on the above example, there is only one map of schemes
$$
j : \orig \to \aaa.
$$
In terms of log schemes schemes however, and taking the same log structures as above, there are many maps
$$
\orig^\dagger \to \left(\aaa\right)^\dagger.
$$
Indeed, such a map corresponds to a choice of morphism between sheaves of monoids
$$
j^\# : j^{-1}\mc{M} \to \mc{M}_{\orig},
$$
making the diagram
$$
\xymatrix{
\left\{ \phi \cdot x^n \right\} = j^{-1} \mc{M} \ar[d] \ar[r]^-{j^{\#}} & \mc{M}_{\orig} = \IN \oplus k^\times \ar[d]\\
\left\{ \phi \right\} = j^{-1} \mc{O}_{\aaa} \ar[r]^-{j^*} & \mc{O}_{\orig} = k^\times
}
$$
commute. It follows that $j^{\#}$ is determined by two choices of morphisms of monoids
\begin{align}
\label{mapofmonoids}
\IN & \to \IN,\\
\IN & \to k^\times.
\end{align}
A geometric way of seeing this map is at the level of the ghost sheaf. The stalks of the ghost sheaf $\overline{\mc{M}}$ are trivial away from the origin, while its stalk at the origin is $\IN$. The ghost sheaf of $\mc{M}_{\orig}$ on the other hand is $\IN$. The map \eqref{mapofmonoids} is the map induced on ghost sheaves by $j$:
$$
\overline{j^\#} : \IN = j^{-1}\overline{\mc{M}} \to \overline{\mc{M}_{\orig}} = \IN.
$$
Choosing as map of monoids the identity map implies that the log structure $\mc{M}_{\orig}$ is induced by $\mc{M}$ via $j$.

The choice of the map  \eqref{mapofmonoids} is extra information that is not seen at the level of schemes. This data however carries geometric information as we will see in the examples below.
\end{example}

\begin{example}
Next, we consider the affine plane $\mathbb{A}^2=\spec k[x,y]$ with the divisorial log structure induced by the union of the coordinate axes $D = \left\{ xy=0 \right\}$. For simplicity, we again denote this log structure by $\mc{M}$. $\mc{M}$ is the sheaf consisting of regular functions on $\aaaa$ that are invertible away from the coordinate axes. Denote again by $j : D \hookrightarrow \aaaa$ the inclusion. Denote moreover by $D_1$ the $x$-axis and by $D_2$ the $y$-axis.

To illustrate what information is carried by it, we compute the ghost sheaf $\overline{\mc{M}}$, as well as the ghost sheaf $j^{-1}\overline{\mc{M}}$ of the restriction of $\mc{M}$ to $D$. Denote by $i_1 : D_1 \to \aaaa$, resp. by $i_2 : D_2 \to \aaaa$ the inclusion maps. Denote by $\nnn$ the constant sheaf of monoids determined by $\IN$ on $D_1$, resp. $D_2$. We have a map of sheaves on monoids
$$
\phi : \mc{M} \to i_{1,*}\nnn\oplus i_{2,*}\nnn,
$$
defined as follows. Let $u : U\to \aaaa$ be an \'etale morphism, and let $f$ be a regular function on $U$ that is invertible away from $u^{-1}(U)$. Then
$$
\phi(U)(f) := (n,m),
$$
where $n$, resp. $m$, is the order of vanishing of $f$ along $u^{-1}(D_1)$, resp. $u^{-1}(D_2)$. The map $\phi$ factors through $\overline{\mc{M}}$. Indeed, if $f$ and $g$ have the same order of vanishing along $u^{-1}(D_1)$ and $u^{-1}(D_2)$, then $f\cdot g^{-1}\in\mc{O}^\times_U$, so that $f=g$ in $\overline{\mc{M}}(U)$. In fact, the kernel of $\phi$ is $\mc{O}^\times_{\aaaa}$, so that we obtain an injection:
$$
\overline{\mc{M}} \hookrightarrow i_{1,_*}\nnn\oplus i_{2,_*}\nnn.
$$
Moreover, the functions $x^ny^m$ have orders of vanishing $(n,m)$ and thus the above map is surjective as well, thus an isomorphism. In particular, the stalk of $\overline{\mc{M}}$ at $x\in\aaaa$ is
$$
\left\{ \begin{array}{c l l}
\IN\oplus\IN & \quad \text{if } x=(0,0), \\
\IN & \quad \text{if } x\in D-\left\{(0,0)\right\}, \\
0   & \quad \text{otherwise}.
\end{array}
\right.
$$
By abuse of notation, denote by $i_1$, resp. by $i_2$, the inclusions $D_i \hookrightarrow D$. Recall that $j^{-1}\overline{\mc{M}}=\overline{j^*\mc{M}}$ as noted at the end of section \ref{definitions}. It follows that
$$
\overline{j^*\mc{M}}=i_{1,*}\nnn\oplus i_{2,*}\nnn.
$$
At the level of stalks, we can think of having a copy of $\IN$ on each component of $D$. In particular, this sheaf of monoids has nothing to do with functions on $D$, but rather remembers how $D$ is embedded into $\aaaa$ (it encodes the possible order of vanishing of functions).

\end{example}

\begin{example}
The previous example generalizes as follows. Let $X$ be a locally Noetherian normal scheme and let $D\subset X$ be a closed subset of pure codimension $1$. Take $\mc{M}$ to be the divisorial log structure associated to $D$. Let $\overline{x}\to X$ be a geometric point and let $r$ be the number of components of $D$ that meet $\overline{x}$. Then there is an injection
$$
\overline{\mc{M}}_{\overline{x}} \to \IN^r.
$$
The proof is analogous to the one given in the previous example. In particular, the above map is again induced by sending the germ of a regular function (invertible away from $D$) to its order of vanishing along the $r$ components. Encoding the possible orders of vanishing, the divisorial log structure can be thought of as describing geometric information about how $D$ is embedded into $X$.

\end{example}
In the last two examples, we computed the stalks of some ghost sheaves. A map of log schemes comes along with a pullback map of sheaves of monoids, and thus induces a pullback map on the stalks. These maps of monoids (or rather, of sheaves of monoids) can be thought of as extra combinatorial data. The next two examples explore the geometric information encoded by this data.

\begin{example}
We consider the case of a map
$$
f : X^\dagger \to \spec k^\dagger.
$$
from a log scheme to the standard log point over a field $k$. It follows from the map at the level of schemes that $X$ is defined over $k$. The pull back map fits into a commutative diagram
$$
\xymatrix{
f^{-1}\mc{M}_{\spec k^\dagger} = k^\times \oplus \IN \ar[r]^<<<<<{f^\#} \ar[d] & \mc{M}_X \ar[d] \\
f^{-1}\mc{O}_{\spec k} = k \ar[r]^<<<<<<<{f^*} & \mc{O}_X.
}
$$
It follows that $f^\#$ is determined by a map $\IN \to \mc{M}$. This in turn corresponds to a choice of section $\rho \in \Gamma(X,\mc{M}_X)$, forming a commutative diagram:
$$
\xymatrix{
(0,1) \ar@{|->}[d] \ar@{|->}[r] & \rho \ar@{|->}[d] \\
\alpha_{\spec k^\dagger}(0,1) = 0 \ar@{|->}[r] & \alpha_X(\rho) = 0.
}
$$
It follows that the extra data carried by $f$ is that of a section $\rho$ of $\mc{M}_X$ with the property that $\alpha_X(\rho)=0$.

\end{example}

\begin{example} \label{ex-blowup}
We now consider a map in the opposite direction of the previous example. Consider the affine plane $\left(\aaaa\right)^\dagger$ with log structure $\mc{M}$ induced by the divisor $D$ consisting of the union of the coordinate axes. Denote by $\spec k^\dagger$ the standard log structure on $\spec k$. Since we have not introduced toric geometry, for what follows we do not provide details - those can be found in \cite{kan}. Consider maps
$$
f : \spec k^\dagger \to \left(\mathbb{A}_k^2\right)^\dagger,
$$
mapping $\spec k$ to the origin. We explore the additional information carried by the pull back of sheaves of monoids. Denote by $\overline{0}$ a geometric point mapping to $0$. We have the pull back map
$$
f^\# : f^{-1}\mc{M} = \mc{M}_{\overline{0}} \longrightarrow \mc{M}_{\spec k^\dagger} = k^\times \oplus \IN,
$$
which fits into a commutative diagram
$$
\xymatrix{
\mc{M}_{\overline{0}} \ar[d] \ar[r] & k^\times \oplus \IN \ar[d] \\
\mc{O}_{\aaaa,\overline{0}} \ar[r] & k.
}
$$
Now, cf. \cite{kan}, the choice of pull back map $f^\#$ corresponds to a choice of toric blow up of $\mathbb{A}^2_k$ at the origin and a choice of point on the exceptional divisor (plus some minor extra information). In particular, the choice of $f^\#$ corresponds to a birational transformation on $\aaaa$. We discuss in section \ref{log-gw} how this insight is used to define log Gromov-Witten invariants.

\end{example}

\subsection{Properties}
\label{propr}

The goal of this section is the definition of log smoothness, Definition  \ref{def_log_sm} . Before stating it, we need to introduce some further conditions that guarantee the well-behavedness of log schemes and log maps. The first one was explored in the examples of the previous section:
\begin{definition}
Let $f : X^\dagger \to Y^\dagger$ be a morphism of log schemes. Then $f$ is said to be \emph{strict} if the map
$$
f^\# : f^{-1} \mc{M}_Y \to \mc{M}_X
$$
induces an isomorphism of log structures (that is, an isomorphism of sheaves of monoids) between the pull-back log structure $f^*\mc{M}_Y$ and $\mc{M}_X$.
\end{definition}
In the next definition, a log structure is said to be fine if \'etale locally it is realized as the log structure induced by a constant sheaf of monoids. The last section contained a number of examples of such log structures.

\begin{definition}
\label{def-fine}
Let $X^\dagger$ be a log scheme. Then $\mc{M}_X$ is said to be \emph{fine} if \'etale-locally the following conditions are satisfied: There is an \'etale open cover $\left\{ f_i : U_i \to X \right\}$ of $X$. For each $f_i$, there is a finitely generated monoid $P_i$ and a morphism of sheaves of monoids
$$
g_i : \widetilde{P} \to \mc{O}_U,
$$
where $\widetilde{P}$ denotes the constant sheaf of monoids on $U$ induced by $P$. Then, the log structure induced by $g_i$ is required to be isomorphic to the pull-back log structure $f_i^*\mc{M}_X$.
\end{definition}
We now come to the definition (by infinitesimal lifting criterion) of log smoothness for fine log schemes.

\begin{definition} \label{def_log_sm}
Let $f : X^\dagger \to Y^\dagger$ be a map of fine log schemes and assume that $f$ is of locally finite presentation. Then $f$ is said to be \emph{log smooth} if for each commutative diagram of fine log schemes
$$\xymatrix{
T^\dagger \ar@{^{(}->}[d]_\iota \ar[r] & \ar[d]^f X^\dagger \\
T'^\dagger \ar[r] & Y^\dagger,
}
$$
where $\iota$ is a strict closed log immersion and where $T$ is defined by a nilpotent ideal in $\mc{O}_{T'}$, there exists a unique log map $g : T'^\dagger \to X^\dagger$ making the diagram
$$\xymatrix{
T^\dagger \ar@{^{(}->}[d]_\iota \ar[r] & \ar[d]^f X^\dagger \\
T'^\dagger \ar@{.>}[ru]|-{g} \ar[r] & Y^\dagger
}
$$
commute.
\end{definition}
Note that unlike smooth morphisms, log smooth maps need not be flat, see \cite{kan} for an example.

We provide two examples of log smooth maps. The first example states that, with the appropriate choice of log structure, any toric variety is log smooth. The second example ties with the ideas outlined in the introduction. If $f : X \to \aaa$ is a smooth family of varieties, the fibers need not be smooth. The fibers will, however, be log smooth if $X$ is toric and if $f$ satisfies some properties. We do not provide the exact condition, as we haven't introduced toric varieties. The interested reader is referred to \cite{kan}.

\begin{example}
Let $X$ be toric variety and endow it with the divisorial log structure induced by the toric boundary. Then the structure map
$$
X^\dagger \to \spec k,
$$
where $\spec k$ is given the trivial log structure, is log smooth.
\end{example}
If $X$ is an affine toric variety over a field $k$, then there is a (toric) monoid $P$ such that $X=\spec k[P]$. The \emph{monoid ring} $k[P]$ is defined as the formal sum
$$
k[P] := \bigoplus_{p\in P} k \cdot z^p,
$$
with multiplication linearly induced by $z^p \cdot z^{p'} = z^{p+p'}$. See \cite{kan} for how monoid rings are related to toric varieties. Note that $\aaa=\spec k[\IN]$.

\begin{example}
Let $X=\spec k[P]$ be an affine toric variety. Let $f : X \to \aaa = \spec k[\IN]$ be a family induced by a non-zero map $\IN \to P$. Endow both $X$ and $\aaa$ with the divisorial log structure coming from their respective toric divisors. Then $f$ is log smooth. Furthermore, consider the fiber over $0$:
$$
\xymatrix{
X_0 \ar[d] \ar[r] & X \ar[d]^f \\
\spec k = \orig \ar[r] & \aaa.
}
$$
Endow $X_0$ with the pull-back log structure and $\spec k$ with the standard log structure (which is the pull-back log structure as we saw in section \ref{examples}). Then the map of fine log schemes
$$
X_0^\dagger \to \spec k^\dagger
$$
is log smooth (while it is not smooth).
\end{example}

We now introduce of the \emph{relative log tangent sheaf}, which will be used in \ref{log-curves-to-curves}.

\begin{definition}
\label{rel-log-tang}
Let $\pi : X^\dagger \to S^\dagger$ be a morphism of log schemes and let $\mathcal{E}$ be an $\mc{O}_X$-module. A \emph{log derivation} on $X^\dagger$ over $S^\dagger$ with values in $\mathcal{E}$ is a pair $(\ddd,\dddlog)$ as follows:
\begin{align*}
\ddd : \mc{O}_X & \longrightarrow \mathcal{E}
\end{align*}
is an ordinary derivation of $X$ over $S$.
\begin{align*}
\dddlog  : \mc{M}_X^{gp} & \longrightarrow \mc{E}
\end{align*}
is a morphism of abelian sheaves such that $\dddlog \circ \pi^{\#} = 0$. They are moreover required to satisfy the compatibility condition that for all $m\in\mc{M}_X$,
$$
\ddd\left( \alpha_X(m) \right) = \alpha_X(m) \cdot \dddlog(m).
$$
The resulting relative log tangent sheaf is denoted by $\Theta_{X^\dagger/S^\dagger}$.
\end{definition}

We end this section with some definitions needed in the next section.

\begin{definition}
A monoid $P$ is defined to be \emph{integral} if the cancellation law holds. That is, whenever $x+y=x'+y$ in $P$, then $x=x'$
\end{definition}

\begin{definition}
Let $P$ be a monoid with operation written additively and denote by $P^{gp}$ the Grothendieck group associated to $P$. Then $P$ is called \emph{saturated} if $P$ is integral and moreover if for all $p\in P^{gp}$, whenever there is $m\in\IN$ such that $mp\in P$, then $p\in P$ as well.
\end{definition}
The natural numbers are an example of a saturated monoid. Let $m\geq 2$ and consider the monoid
$$
P=\left\{ n\in\IN : n\geq m \right\}\cup\left\{ 0 \right\}.
$$
Then $P$ is not saturated.

Next comes a refinement of the property of being fine. Recall from section \ref{etale} that for sheaves defined in the \'etale topology, stalks are defined at geometric points.

\begin{definition}
Let $X^\dagger$ be a fine log scheme and use the same notation as for definition \ref{def-fine}. Then $\mc{M}_X$ is said to be \emph{fine saturated} if (in addition to being fine), at every geometric point $\overline{x} \to X$ of $X$, the stalk of the ghost sheaf $\overline{\mc{M}}_{X,\overline{x}}$ is saturated.
\end{definition}
The following couple definitions are motivated by the following (vaguely stated) fact: An integral homomorphism of monoids induces a flat map on the induced log schemes. See \cite{kan} for more details.

\begin{definition}
Let $P$ and $Q$ be integral monoids and let $h : Q \to P$ be a morphism of monoids. Then $h$ is called \emph{integral} if the following property holds. Assume there are $p_1,p_2\in P$ and $q_1,q_2 \in Q$ such that
$$
h(q_1)+p_1 = h(q_2) + p_2.
$$
Then there are $q_3,q_4\in Q$ and $p\in P$ such that
\begin{align*}
p_1 & = h(q_3) + p, \\
p_2 & = h(q_4) + p,  \\
q_1 + q_3 & = q_2 + q_4.
\end{align*}
\end{definition}

\begin{definition}
Let $f : X^\dagger \to Y^\dagger$ be a map of fine log schemes. Then $f$ is said to be \emph{integral} if the following holds. Let $\overline{x} \to X$ be a geometric point of $X$. Let $f(\overline{x}) \to Y$ be a geometric point such that $\overline{x} \to Y$ factors through $f(\overline{x})$. Then the induced morphism on the stalks of the ghost sheaves
$$
\overline{\mc{M}}_{Y,f(\overline{x})} \to \overline{\mc{M}}_{X,\overline{x}}
$$
is integral.
\end{definition}

\subsection{Torically transverse log curves}\label{tor-trans}
In this section, we introduce in definition \ref{toric-trans} and \ref{toric-trans-log-curve} below the notion of \emph{torically transverse (log) curve}, which is used for definition \ref{def-numbers-for-count} and in section \ref{section-match-counts}. This section assumes (conversational) knowledge of toric geometry and stable maps. Let $\Sigma \subseteq \mathbb{R}^n$ be a fan and denote by $X_\Sigma$ the associated toric variety. Denote by $\partial X_\Sigma$ the toric boundary (the union of the codimension 1 toric strata). Denote moreover by $\cup_{\tau \in \Sigma^{>1}} D_\tau$ the union of the toric strata of codimension two or higher. The reader versed in toric geometry will recognize the meaning of the notation.

\begin{definition}\label{toric-trans}
A curve $C\subseteq X_\Sigma$ is said to be \emph{torically transverse} if it is disjoint from $\cup_{\tau \in \Sigma^{>1}} D_\tau$.
\end{definition}
Note that it follows that a torically transverse curve has no irreducible component contained in a codimension 1 stratum (since then it would intersect $\cup_{\tau \in \Sigma^{>1}} D_\tau$).

\begin{definition}
A stable map $f : C\to X_\Sigma$ is called \emph{torically transverse} if its image $f(C)\subseteq X_\Sigma$ is torically transverse and no irreducible component of $C$ is mapped into $\partial X_\Sigma$.
\end{definition}
Consider now the following situation. Let $k$ be a field and let $\Sigma$ be a fan. Denote by $X$ the toric variety associated to $\Sigma$. Moreover, denote by $\Sigma(\mathbb{A}_k^1)$ the fan of $\mathbb{A}_k^1$. Endow both $X$ and $\mathbb{A}^1_k$ with the standard log structure, i.e. with the divisorial log structure associated to the toric boundary. Assume we are given a surjective map of fans $\Sigma \to \Sigma(\mathbb{A}_k^1)$. This yields a log smooth map
$$
\pi : X \to \mathbb{A}^1_k,
$$
which is a degeneration of toric varieties. Denote furthermore by $X_0 = \pi^{-1}(0)$ the central fibre. Endow $X_0$ with the log structure induced by the log structure of $X$. Restricting $\pi$ to the central fibre, we obtain a morphism of log schemes
$$
\pi_0 : X_0^\dagger \to \spec k^\dagger,
$$
where $\spec k^\dagger$ denotes the standard log point, as in Example \ref{stand-log-point}.

\begin{definition}\label{toric-trans-log-curve}
Assume the above setup and let $C^\dagger$ be a log curve with fine saturated log structure. Consider a log map $f : C^\dagger \to X_0^\dagger$, whose underlying scheme map is a stable map. Assume moreover that for each codimension 1 toric strata $D$ of $X_0$, the restriction $f^{-1}(D) \to D$ is torically transverse. Then, a \emph{torically transverse log curve in} $X_0^\dagger$ is given by a commutative diagram of log maps
$$
\xymatrix{
C^\dagger \ar[rd]^g \ar[rr]^f & & X^\dagger_0 \ar[dl]_{\pi_0} \\
& \spec k^\dagger. & 
}
$$

\end{definition}

\subsection{Log smooth curves}
\label{log-sm-curves}
The starting point of logarithmic Gromov-Witten theory, as discussed in the next section, is the realization that log smooth maps behave very much like stable maps, and that many of the geometric tricks needed for stable maps are already encoded by morphisms of log structures. The latter property was illustrated by the examples of maps of log schemes in section \ref{examples}. Here, we outline the local structure of log smooth curves, as established by F. Kato in \cite{kato-log-curves}.

Consider a morphism of log schemes
$$
f : C^\dagger \to W^\dagger
$$
satisfying the following list of conditions:
\begin{itemize}
\item The map $f$ is log smooth, integral and of relative dimension $1$;
\item As a scheme, $W=\spec A$, where $A$ is a complete local ring over an algebraically closed field $k$;
\item The log schemes $C^\dagger$ and $W^\dagger$ are fine saturated.
\end{itemize}
Denote by $0\in W$ the closed point. As $k$ is algebraically closed, $0$ is the only geometric point and it follows that any sheaf will be determined by its stalk at $0$. Analogously, any map of sheaves will be determined by its values on the stalk at $0$. Let $Q := \overline{\mc{M}}_{W,0}$. Then the log structure on $W$ is determined by a morphism
$$
\sigma : Q \to A.
$$
Denote by $C_0$ the fibre of $f$ over $0$ and let $x$ be a geometric point of $C_0$, in this case a $k$-valued point. The structure theorem by F. Kato then states that for a sufficiently small \'etale neighbourhood $U\to X$ of $x$, the log structure restricted to $U$ is isomorphic to one of the three following log schemes.

\paragraph{(1) Smooth point}

For the first case, $U=\spec A[u]$, $f$ is smooth (in the conventional sense) and the log structure on $U$ is induced by
\begin{align*}
Q & \to \mc{O}_U \\
q & \mapsto f^*\sigma(q).
\end{align*}
The log structure thus is just the pull back of the log structure on the base, and contains no additional information.

\paragraph{(2) Double point}

Let $m$ denote the maximal ideal of $A$. In the second case, there is $t\in m$ such that $U=\spec A[u,v]/(uv-t)$. Moreover, the log structure is as follows. There is $\alpha\in Q$ with  $\sigma(\alpha)=t$. Consider the diagonal map $\IN\to\IN^2$ and let $\IN\to Q$ be determined by $1\mapsto \alpha$. Denote by  $\IN^2\oplus_\IN Q$ the fibred sum determined by these maps. Then the log structure on $U$ is induced by the pre-log structure
\begin{align*}
\IN^2 \oplus Q & \to \mc{O}_U, \\
\left((a,b),q\right) & \mapsto u^av^bf^*\sigma(q).
\end{align*}
Here, $C_0$ is nodal.

\paragraph{(3) Log marked points}

For the third case, $U=\spec A[u]$ and the log structure is induced by the pre-log structure
\begin{align*}
\IN \oplus Q & \to \mc{O}_U, \\
\left(a,q\right) & \mapsto u^af^*\sigma(q).
\end{align*}
In this case, the point $u=0$ is the image of a section $W\to C$, which should be thought of as a marked point. Moreover, the log structure is the sum of on one hand the pull-back log structure from the base and on the other hand the divisorial log structure associated to the divisor $u=0$. In addition to simply choosing a point $u=0$, the ghost sheaf at $u=0$ has (compared to a smooth point) an additional copy of $\mathbb{N}$. Maps from $C$ to a log scheme will come with a pullback map at the level of the ghost sheaves. In the case explored in the next section, that pullback map carries some geometric information, as it encodes some intersection multiplicity.

\subsection{Towards logarithmic Gromov-Witten theory}
\label{log-gw}

With the goal of motivating logarithmic Gromov-Witten invariants, we briefly sketch its starting idea. The theory of logarithmic Gromov-Witten invariants was established by Gross-Siebert in \cite{loggw}, by Chen in \cite{Chen-DF} and by Abramovich-Chen in \cite{Chen-Abram-DF}.

The reader familiar with stable curves will recognize the similarities they have in common with log smooth curves. Stable curves are locally either smooth or nodal, and are endowed with marked points. The main difference is that a log marked point comes along with a ghost sheaf stalk isomorphic to $\IN$. This allows for much more flexibility when considering maps from smooth log curves. In one dimension lower, we saw in example \ref{ex-blowup} how mapping the log point to the plane corresponds (roughly) to a blow up of the plane and a choice of point on the exceptional divisor. In that example, the log map contained extra geometric information. Analogously, the log structure on a log smooth curve can be used to encode intersection multiplicities, as we explain now, by comparing log stable maps to relative stable maps.

Relative Gromov-Witten arise when the target variety $X$ degenerates to a variety given by the union of two smooth varieties $Y_1\cup_D Y_2$ glued along a smooth divisor $D$. In that situation, the degeneration formula applies. This formula, along with localization, is one of the most important tools in Gromov-Witten theory. Broadly speaking, the degeneration formula relates the Gromov-Witten invariants of $X$ to sums of gluings of relative invariants of $(Y_i,D)$. In practice, choosing a suitable degeneration, one hopes to computes the Gromov-Witten invariants of $X$ in terms of simpler to compute relative Gromov-Witten invariants.

The theory of relative Gromov-Witten invariants has two major disadvantages though. Firstly, it applies only when $D$ is a smooth divisor, limiting the range of degenerations that can be considered. Secondly, the definition of the relevant moduli space, though elegant, is somewhat unnatural and technically complicated to deal with. Indeed, in order to obtain a compact moduli space, the target variety is allowed to degenerate. More concretely, consider the situation of a smooth variety $X$ with smooth divisor $D$ (the situation in which relative Gromov-Witten invariants are defined). Let $C\to X$ be a relative stable map and assume that $C$ is not mapped into $D$. Then for each point of intersection of the image of $C$ with $D$, there is a well-defined intersection multiplicity and the (non-compactified) moduli of relative stable maps is stratified according to the different intersection multiplicities. However, if a component of $C$ degenerates into $D$ problems arise. For one, the intersection multiplicities are no longer well-defined. The solution developed by Li in \cite{li-stable-relative} is to allow the target to degenerate. If in the limit (a component of) the relative stable map limit is mapped into $D$, then $D$ is replaced by a $\mathbb{P}^1$-bundle on it. The relative condition is then considered at the divisor at $\infty$. This process is then repeated as necessary.

Logarithmic Gromov-Witten theory avoids these two shortcomings. Firstly, the divisorial log structure associated to $D$ exists whether $D$ is smooth or not. Secondly, the extra information carried by the log marked points is such that degenerations of the target variety are not needed. A marked point has a ghost sheaf of $\IN$ on top of it. With the divisorial log structure, a point on the divisor has a ghost sheaf of $\IN$ (in the case of $D$ smooth). The log map determines a map between these two copies of $\IN$. This map is the multiplication by a positive integer, which is the intersection multiplicity. When a component of the curve degenerates into $D$ then, the map on the ghost sheaves keeps track of the intersection multiplicity, which thus remains well-defined.

This is just a brief glimpse as to why log Gromov-Witten invariants are a suitable generalization of relative Gromov-Witten invariants. On one hand, they are simpler to work with. On the other hand, they allow for much more general degenerations.

\section{Tropical geometry}
\label{sec:tropics}
Tropical geometry can be roughly understood as a ``piecewise-linear" version of algebraic geometry.  It has flourished over the past few decades, quickly establishing itself as an important combinatorial and conceptual tool in the study of enumerative geometry.  The name ``tropical" was coined to honor Imre Simon, who pioneered many of the field's techniques.  Mikhalkin's demonstration of the equivalence of tropical and classical curve counting \cite{mik} was the inspiration for a number of results showing that a surprising amount of information can be naturally encoded in these piecewise-linear structures.  We will begin this section with some background on the field's connections to classical algebraic geometry and then proceed to rigorously define several tropical objects necessary in the following.  The motivational remarks owe a great deal to Mikhalkin's \cite{MikAm} and Gathmann's \cite{gat} excellent expositions, while the the latter definitions can be found in \cite{kan}.
\subsection{Motivation}
\label{subsec:motivation}
Throughout this chapter, tropical curves will manifest themselves as piecewise linear graphs in the plane.  The relationship of these objects with classical algebraic curves in $\mathbb{P}^2$ or $(\mathbb{C}^*)^2$ will be explored in this section.
\subsubsection{From amoebas to tropical curves}
Given a variety $V\subset(\mathbb{C}^*)^n$, one can examine the image under the map $\Log_e:(\mathbb{C}^*)^n\rightarrow \mathbb{R}^n$ defined by 
$$\Log_e(z_1,\ldots, z_n):=(-\log_e|z_1|, \ldots ,- \log_e|z_n|),$$
where $e=\ln(1)$.  The set $\Log_e(V)\subset \mathbb{R}^2$ is called the amoeba of $V$.  Note that this construction is quite widely applicable, as all toric varieties contain a copy of $(\mathbb{C}^*)^n$.  
\begin{figure}[h]
\centering
\begin{tabular}{cc}
\includegraphics[width=0.5\textwidth]{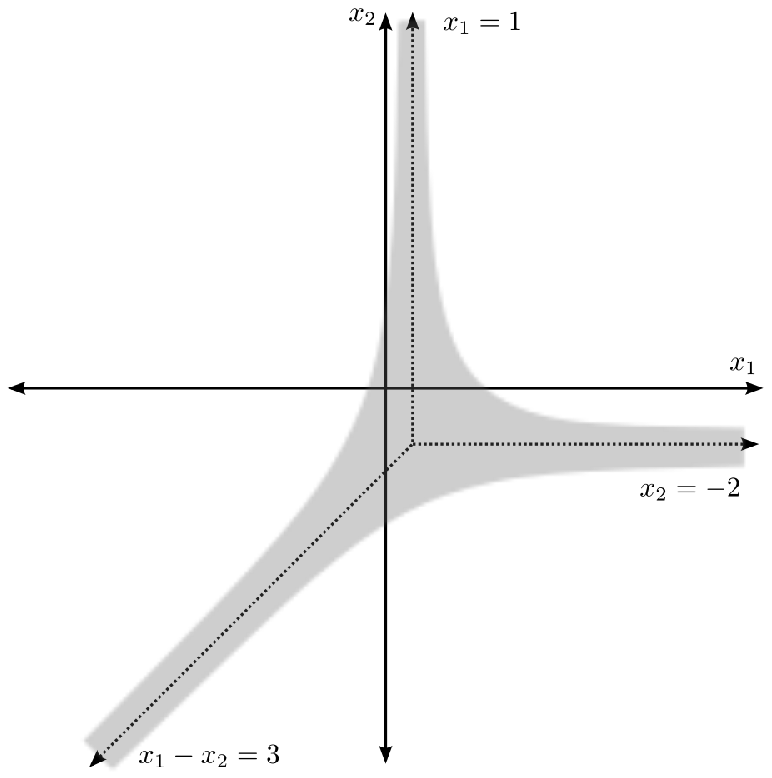}&
\includegraphics[width=0.5\textwidth]{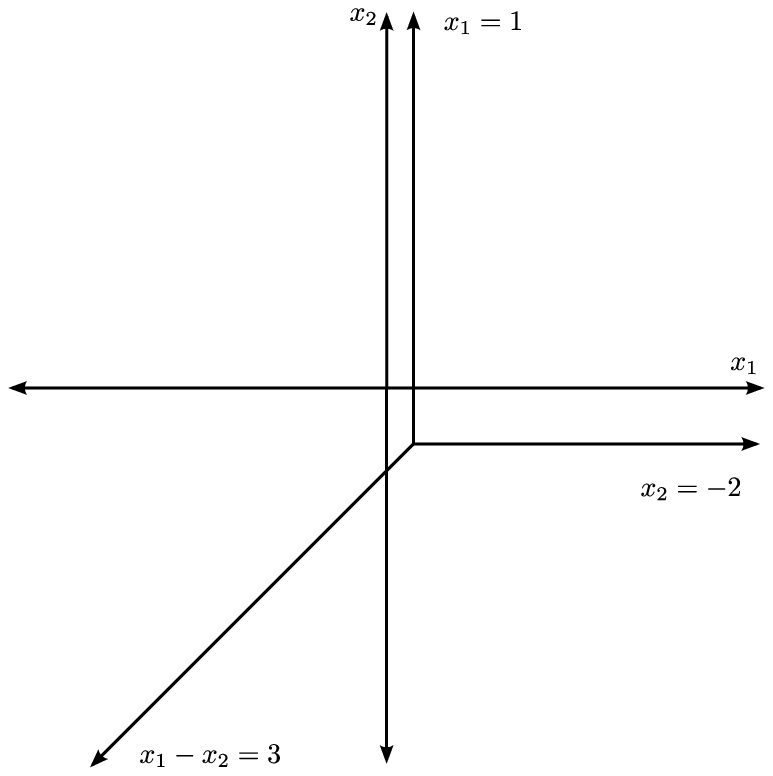}
\end{tabular}
\caption{An approximation of the image of $C = \{(z_1, z_2)|e^1z_1 + e^{-2}z_2 = 1\}$ under $Log_e$ is given on the left, while its ``spine" is given on the right. }
\label{amoeba}
\end{figure}

Upon an examination the amoebas of curves in $(\mathbb{C}^*)^2$ such as those in Figure \ref{amoeba}, one quickly sees that they share certain features.  One of these is the existence of ``arms" heading off to infinity; it is the resemblance of these features to the amoeba's pseudopods that earns these mathematical objects their name.  The ``fleshy" part of the picture can be considered extraneous, and one may wish to simplify the situation further, distilling the picture into the collection of piecewise linear components hinted at by the shape.  It's easy to see that one can roughly achieve this by zooming out on the graph until the pseudopods are very thin.  Mathematically, this could be achieved by defining
$$\Log_t(z_1,\ldots, z_n):=(\log_t|z_1|, \ldots , \log_t|z_n|)$$
and examining the amoeba given for very small $t$.  This process is unsatisfactory, however, because it would move vertex of the resulting graph to the origin.
A solution is found in replacing each coefficient  $a$ of the defining equations by $t^{\log_e a}$, thus defining a family of curves $V_t$ in $(\mathbb{C}^*)^2$.   Taking the limit as $t$ goes to $\infty$ of $\Log_t(V_t)$ gives us the piecewise linear graph we can intuitively see hiding in each of these amoebas.  Although biologically confusing,  this rigid structure is called the ``spine" of the amoeba, and the spine pictured on the right in Figure \ref{amoeba} give us our first example of a tropical curve.  

The Gross-Siebert program suggests that mirror symmetry is can be exhibited by an exchange of ``tropical" data on the shared base of a fibration.  The process described above is analogous to that of passing to the large complex structure limit of a family of varieties, suggesting that tropical objects may reasonably be expected to encode mirror symmetric data.

Although our strategy of degenerating amoebas to their spines is effective, it is a bit cumbersome. A shortcut is suggested by our replacement of the coefficients $a\in \mathbb{C}$ by $t^{\log_e a}$.   The field $K$ of Puiseux series over $\mathbb{C}$ is defined, roughly, to be the set of formal power series $\alpha=\sum_{k=k_0}^\infty c_n t^{k/n}$.  Therefore, instead of thinking of a family of curves $V_t$, we can instead consider a single curve over $(K^*)^2$.  How should we then interpret the map $\Log_t$?  

Suppose we have an element $f:=\sum_{k=k_0}^\infty c_n t^{k/n}\in K^*$ and $k_0\neq 0$.  For $0<r<1$, define $f(r)=\sum_{k=k_0}^\infty c_n r^{k/n}$.   It's then easy to see that $\lim_{r\rightarrow 0^+}\log_r f(r)=k_0/n$.  This assignment of $$\val:\sum_{k=k_0}^\infty c_n t^{k/n}\mapsto k_0$$ has some nice properties.  In fact, if we define $v(0)=\infty$ it's easy to see that 
\begin{eqnarray*}
\val(a)&=\infty \mbox{ if and only if } a=0\\
\val(ab)&=\val(a)+\val(b)\\
\val(a+b&)\geq \min\{\val(a), \val(b)\}
\end{eqnarray*}
which makes $\val$ into something known as a {\it non-Archimedean valuation}.  These properties will come into play shortly.  Continuing our intuitive construction, we should feel justified in making the following definition.
\begin{definition}
Let $V\subset (K^*)^n$ be an algebraic variety.  Define the tropicalization $V_{trop}$ of $V$ by $$V_{trop}:=\overline{\Val(V)},$$ where $\Val(k_1,\ldots, k_n):=(\val(k_1), \ldots \val(k_n))$.
\end{definition}
\subsubsection{The min-plus semiring and tropical varieties}
Because we wish to study the ``tropical" image of our varieties, we define an arithmetic on $\mathbb{R}$ corresponding to the non-Archimedean valuation. 
\begin{definition}
Let $a$, $b\in \mathbb{R}$.  Define:
\begin{eqnarray}
a\oplus b &= \min(a,b)\\
a\odot b &= a+b
\end{eqnarray}
where $+$ is standard addition on $\mathbb{R}$.
\end{definition}
Note that multiplicative inverses are given by subtraction, while there is no additive inverse.  The rough idea is that algebraic geometry in $\mathbb{R}^n$ with the min-plus arithmetic should have a correspondence to the tropicalization of algebraic geometry in $(K^*)^n$.  

Suppose we have a polynomial 
$$p(x_1,\ldots, x_n):=\sum_{i \in S} a_i x_1^{i_1}\cdots x_n^{i_n}$$
with $S \subseteq \mathbb{Z}^n$ a finite set, $i:=(i_1,\ldots, i_n)$, and $a_i\in K^*$.  The equation $p=0$ defines a variety $V$ in $(K^*)^n$, and thus defines a tropical curve $V_{trop}$.  Is there a way to recover $V_{trop}$ without passing through $(K^*)^n$?
Consider the tropical version of the above polynomial
\begin{eqnarray}
p_{trop}(z_1, \ldots, z_n)&:=\sum_{i \in S} \Val(a_i)\odot z_1^{i_1}\odot \cdots \odot z_n^{i_n} \label{eq1}\\
&=\min(\Val(a_i)+i_1z_1+\ldots + i_nz_n i\in S), 
\end{eqnarray}
where the sum in Equation \ref{eq1} is $\oplus$ and the $z_i$ are the standard coordinates on $\mathbb{R}^n$.  Note that $p_{trop}$ defines a piecewise linear map $\mathbb{R}^n\rightarrow \mathbb{R}$.  Suppose $p(r_1,\ldots r_n)=0$ for $r_i\in K^*$.  This means $\sum_{i \in S} a_i r_1^{i_1}\cdots r_n^{i_n}=0$.  

Define $m_i=\val(a_i r_1^{i_1}\cdots x_n^{r_n})$,  and let $l=\min(m_i)$.  The coefficient of $t^q$ in $p(r_1,\ldots r_n)$ must be zero for all values of $q\in \mathbb{Q}$, and thus $m_i=l$ for at least two values of $i\in S$.  Let the set of such $i\in S$ be given by $M\subseteq S$.  If we reinterpret this condition in terms of $p_{trop}$, we see that 
\begin{eqnarray}
p_{trop}(\val(r_1),\ldots, \val(r_n))&=\min(\val(a_i)+i_1z_1+\ldots + i_nz_n\mid i\in S)\\
&=\val(a_m)+m_1\val(r_1)+\ldots + m_n\val(r_n)
\end{eqnarray}
for any $m\in M$.  In particular, the minimum is simultaneously achieved by at least two monomials at $(\val(r_1), \ldots, \val(r_n))$.  Therefore, $V_{trop}$ must be contained in the locus of the non-smooth pieces of the function defined by $p_{trop}$.  
This motivates an alternate viewpoint of tropical curves as the so-called ``corner locus" of the piecewise linear functions defined by polynomials using the min-plus arithmetic.  Such objects are significantly easier to handle and have very nice combinatorial properties that allow further abstraction. 

We can think of $S\subseteq N$ and write
$$p_{trop}(m)=\min(\val(a_i)+\langle n, m \rangle \mbox{ for } n\in S)$$ as a function from $M_\mathbb{R}$ to $\mathbb{R}$.
 \begin{definition}
 Let $f(z)=\sum_{n\in S} a_n z^n = \min\{a_n+\langle n, z\rangle \mbox{ for } n\in S\}$.  As a set, we define the tropical hypersurface $V(f)$ associated to $f$ to be the set in $M$ defined by the corner locus of $f$.
 \end{definition}
 Thinking of $V(f)$ as a union of codimension one polyhedra of $M$, we associate a weight $w(e)$ to each polyhedron $e$.  This is a measure of the severity of the bend that occurs at $e$, and is defined to be the index of $n-n'$ in $N$, where $n'$ and $n$ define the behavior of $f$ on either side of $e$.
One important implication of the geometry behind this definition is the \emph{balancing condition}.  If $\dim M_\mathbb{R}=2$ so $V(f)$ is a piecewise linear graph in $\mathbb{R}^2$, we can formulate it in the following way.  Let $\tau$ be a vertex of $V(f)$ and $e_1,\ldots, e_n$ be edges connected to $\tau$ and $p_1, \ldots, p_n\in M$ be primitive vectors such that $p_i$ points away from $\tau$ in the direction of $e_i$.  Then
$$\sum_{i=1}^n p_i w(e_i)=0\in M.$$
This condition puts strong constraints on the nature of $V(f)$.

 Let's reexamine our example in this context.  The polynomial defining the amoeba in Figure \ref{amoeba} has the following counterpart in $K[x_1, x_2]$:
$$p(x_1, x_2)=t^{-1} x_1+t^{2} x_2-1$$
The tropicalized version is given by
$$p_{trop}(z_1, z_2)=-1\odot z_1\oplus 2 \odot z_2\oplus 0$$
and its graph is illustrated in Figure \ref{3D}.

\begin{figure}[h]
\centering
\begin{tabular}{cc}
\includegraphics[width=0.5\textwidth]{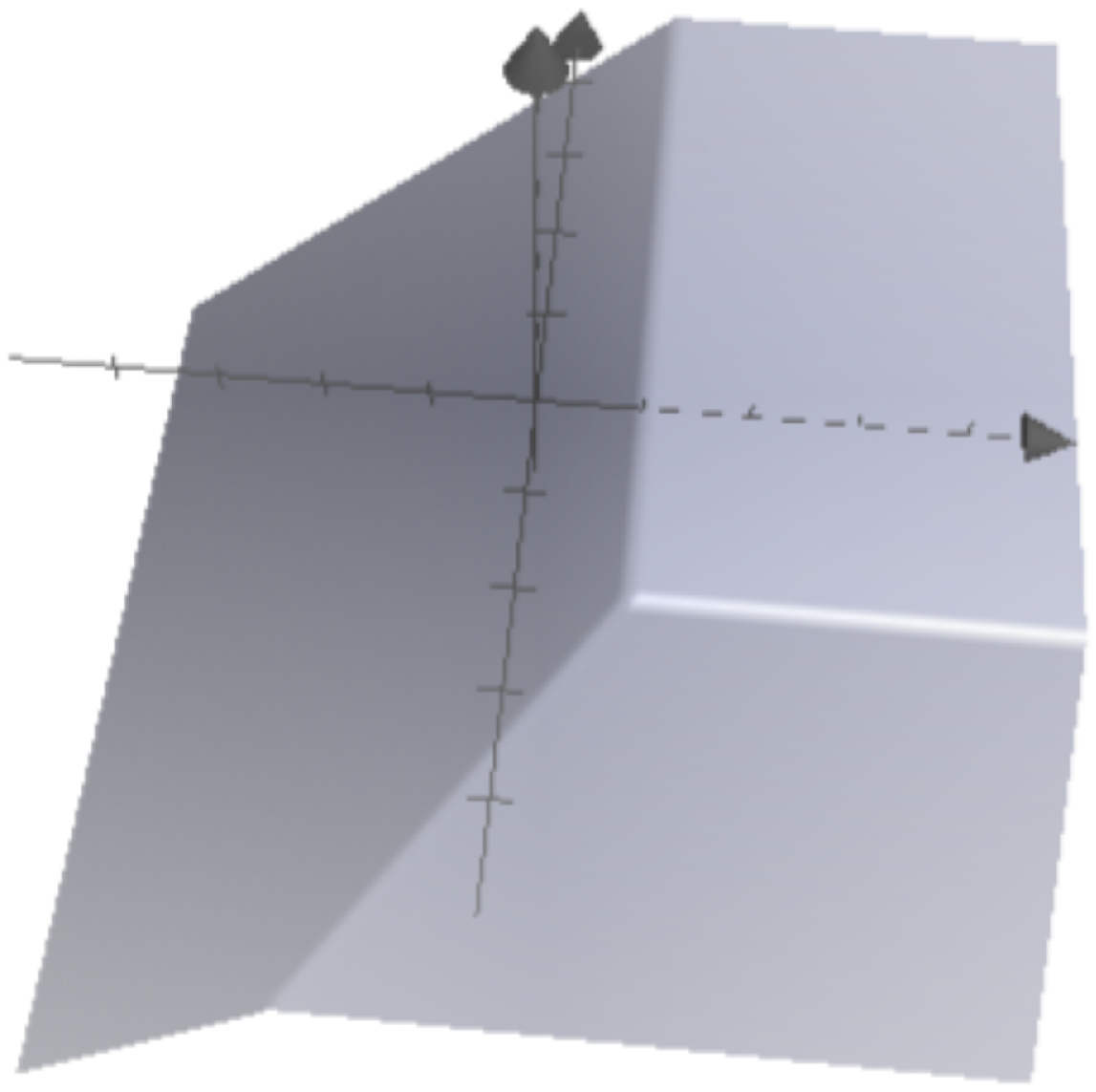}&
\includegraphics[width=0.5\textwidth]{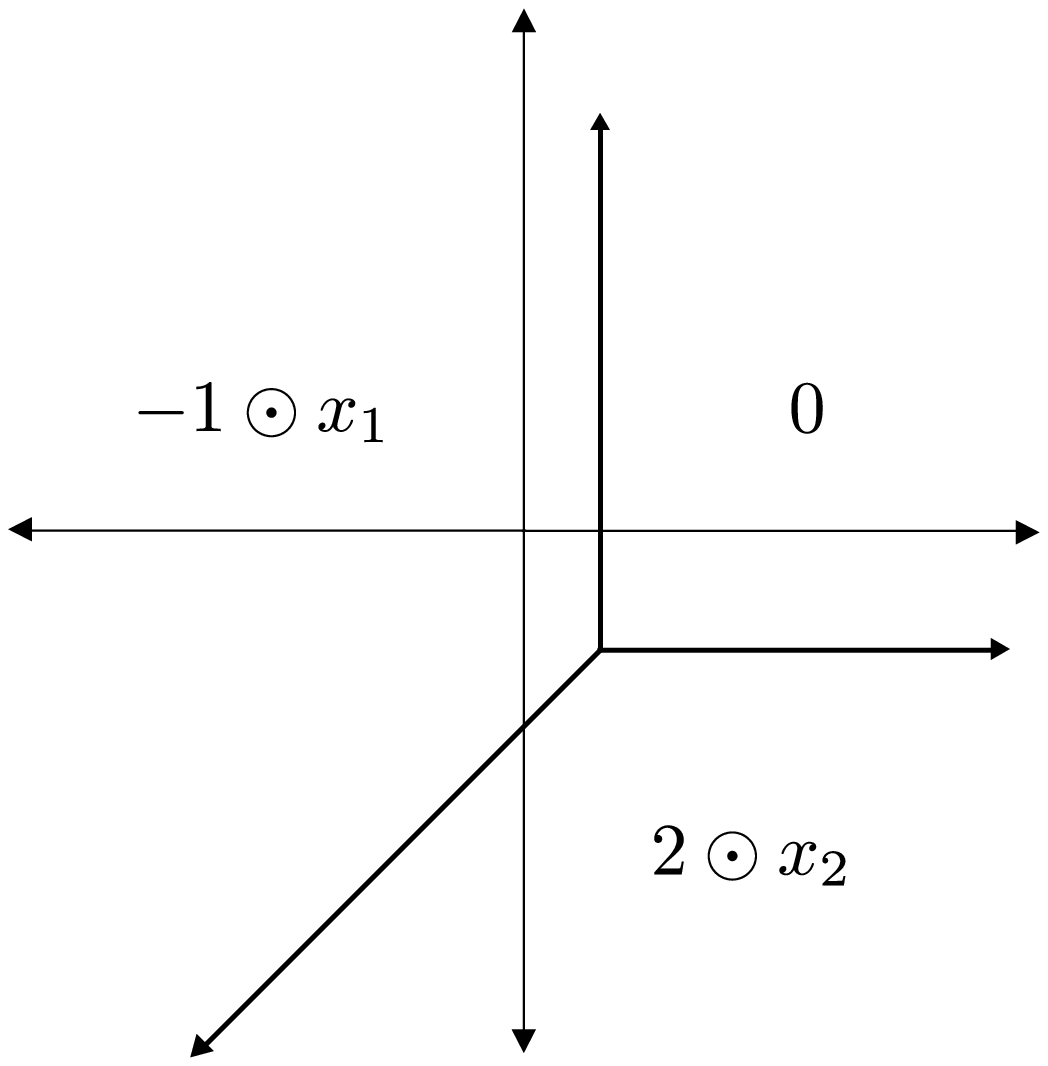}
\end{tabular}
\caption{On the left, a graph of $p_{trop}(z_1, z_2)$.  The diagram on the right indicates the monomial that determines the behavior of $p_{trop}(z_1, z_2)$ in each of the regions demarcated by the corner locus of the graph.  The weights of the edges of $V(p_{trop})$ are also indicated.}  
\label{3D}
\end{figure}

\subsection{Combinatorial objects}
For much of what follows, it is useful to abstract the definition of the tropical curve to a combinatorial formulation satisfying the properties explored above.
\subsubsection{Marked tropical curves}
\label{subsec:marked tropical curves}
For our purposes, it's most convenient to deal with strictly combinatorial objects incorporating the features we've discussed above. Let $\bar{\Gamma}$ be the topological realization of a graph with no bivalent vertices.  Let $\Gamma^{[1]}$ be the set of edges, $\Gamma^{[0]}$ the set of vertices.  Define $\Gamma$ to be $\bar{\Gamma}$ without its univalent vertices.  Note that $\Gamma$ generally will have non-compact edges, which we gather into a set $\Gamma^{[1]}_\infty$.  Assign a weight function $w: \Gamma^{[1]}\rightarrow \mathbb{Z}_{\geq 0}$ such that $w(\Gamma^{[1]}_\infty)\subseteq \{0,1\}$ and $w^{-1}(0)\subseteq \Gamma^{[1]}_\infty$.  Assign a label $x_i$ to each of the weight 0 edges using an inclusion
 \begin{eqnarray*}
& \{x_1,\ldots, x_n\}\hookrightarrow \Gamma^{[1]}_\infty\\
& x_i \mapsto E_{x_i}
\end{eqnarray*}
 
 The data $(\Gamma, x_1, \ldots, x_n)$ constitutes a \emph{marked graph}.  A marked graph can be given a geometric manifestation using the following definition.
 \begin{definition}[Marked parametrized tropical curve]
 \label{def2}
A \emph{marked parametrized tropical curve} [MPTC] is a continuous map $h:(\Gamma,x_1, \ldots, x_n)\rightarrow M_\mathbb{R}$ satisfying:
\begin{itemize}
\item If $E\in \Gamma^{[1]}_{\infty}$ and $w(E)=0$, then $h|_E$ is constant.  That is, $h$ collapses labeled edges.  On other edges, $h|_E$ is a proper embedding of $E$ into a line of rational slope in $M_\mathbb{R}$.
\item Let $V$ be a vertex of $\Gamma$, and $E_1, \ldots E_m$ be the edges adjacent to $V$.  Let $v(E_i)$ be a primitive vector pointing away from $h(V)$ along the direction of $h(E_i)$.  Then
$$\sum_{i=1}^mw(E_i)v(E_i)=0.$$
 \end{itemize}
In the following, we will conflate a collapsed edge with its label.  That is, if $$h:(\Gamma,x_1, \ldots, x_n)\rightarrow M_\mathbb{R}$$ is a marked parametrized tropical curve, we write $h(x_i)=h(E_{x_i})$.
\end{definition}
We say that two parametrized tropical curves $h:(\Gamma,x_1, \ldots, x_n)\rightarrow \mathbb{R}^n$ and $h':(\Gamma',x'_1, \ldots, x'_n)\rightarrow \mathbb{R}^n$ are \emph{equivalent} if there is a homeomorphism $\phi:\Gamma\rightarrow \Gamma'$ with $\phi(E_{x_i})=E_{x_i'}$ for each $i$ and $h=h'\circ \phi$.  We can then define a \emph{marked tropical curve} to be an equivalence class of parametrized marked tropical curves.

 We say a marked tropical curve $h$ is \emph{in} $X_\Sigma$ if, for each unmarked unbounded edge $E\in \Gamma_\infty^{[1]}$, $h(E)$ is a translate of some $\rho\in \Sigma^{[1]}$.  In this case we can define its degree.

\begin{definition}[Degree of a marked tropical curve]
If $h$ is a marked tropical curve in $X_\Sigma$, the \emph{degree} of $h$, notated $\Delta(h)$, is defined to be
$$\Delta(h):=\sum_{\rho\in \Sigma^{[1]}} d_\rho v_\rho\in T_\Sigma$$
where $d_\rho$ is the number of unbounded edges of $\Gamma$ that are mapped to translates of $\rho$ by $h$ and $T_\Sigma$ is as defined in Section \ref{toriccon}.
\end{definition} 
An unbounded edge of a tropical curve mapping in the direction of a ray $\rho\in \Sigma$ corresponds to an intersection of the corresponding classical curve with the toric divisor defined by $\rho$, justifying this naming convention.
\begin{definition}[Genus of a marked tropical curve]
If $h$ is a marked tropical curve in $X_\Sigma$, the \emph{genus} of $h$ is defined by
$$g(h):=b_1(\Gamma)$$.
\end{definition} 
As an exercise, convince yourself that $r(\Delta(h))=0$ for any marked tropical curve.  Given $\Delta(h)=\sum_{\rho\in \Sigma^{[1]}} d_\rho v_\rho\in T_\Sigma$, we define $|\Delta(h)|:=\sum_{\rho\in \Sigma^{[1]}} d_\rho$.

\begin{figure}[h!]
\centering
\begin{tabular}{cc}
\includegraphics[width=0.5\textwidth]{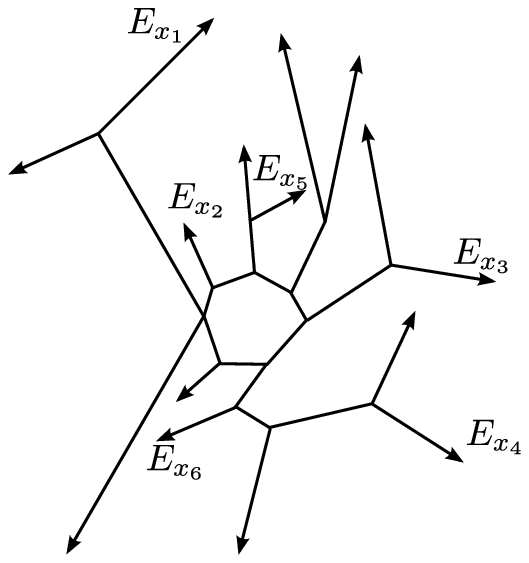} &
\includegraphics[width=0.5\textwidth]{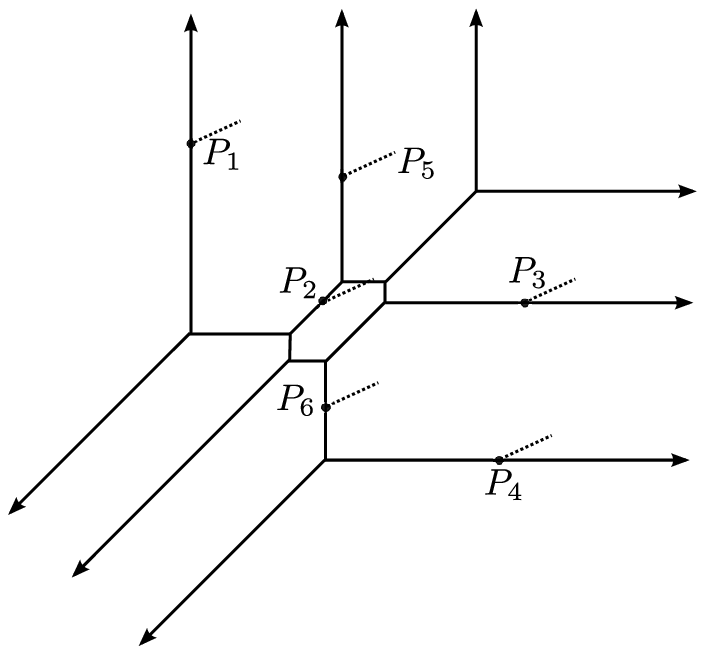} 
\end{tabular}
\caption{On the left, the graph $\Gamma$ underlying a marked parametrized tropical curve $h$ in $X_\Sigma$.  On the right, the image of $\Gamma$ under $h$ with $E_{x_i}$ mapping to $P_i$ in $M_\mathbb{R}$.  The dotted edges are of weight $0$, collapsed by $h$.  The genus of $h$ is 1, and the degree of $h$ is $3t_{\rho_0}+3t_{\rho_1}+3t_{\rho_2}$.  Note that there are an infinite number of inequivalent choices of maps $h$ given these particular choices of $\Gamma$, images of $E_{x_i}$ in the plane, and directions for the images of the unbounded edges of $\Gamma$.  That is, the image can be deformed while preserving these properties.}
\label{genus1}
\end{figure}

In order to use tropical curves for enumerative problems one must count them with a weighting known as the \emph{Mikhalkin multiplicity}.  See Section \ref{section:NS} for more on this.
\begin{definition}[$\Mult(h)$]
\label{def-mik-mult}
Let $h:\Gamma\rightarrow M_\mathbb{R}$ ($\dim M_\mathbb{R}=2$) be a trivalent marked tropical curve with no edges mapped on top of one another and weight one for all unbounded, unmarked edges.  For $V\in \Gamma^{[0]}$ with adjacent edges $E_1, E_2,$ and $E_3$, define
\begin{eqnarray*}
\Mult_V(h):=& w_1w_2|m_1\wedge m_2|\\
=& w_2w_3|m_2\wedge m_3|\\
=& w_3w_1|m_3\wedge m_1|
\end{eqnarray*}
if none of the $E_i$ are marked, and $\Mult_V(h)=1$ otherwise.  Here $w_i$ is the weight of $E_i$ and $m_i$ is a primitive (coprime entries) vector in $M$ pointing away from $V$ along the edge $E_i$.   Here we identify $M\wedge M$ with $\mathbb{Z}$ and sign ambiguity is absorbed by the absolute value.  Note that the equivalence of the statements is due to the balancing condition.  
Then we define
\begin{eqnarray}
\Mult(h):=\prod_{V\in \Gamma^{[0]}} \Mult_V(h).
\end{eqnarray}
\end{definition}
\begin{figure}[h]
\sidecaption[t]
\includegraphics[width=0.4\textwidth]{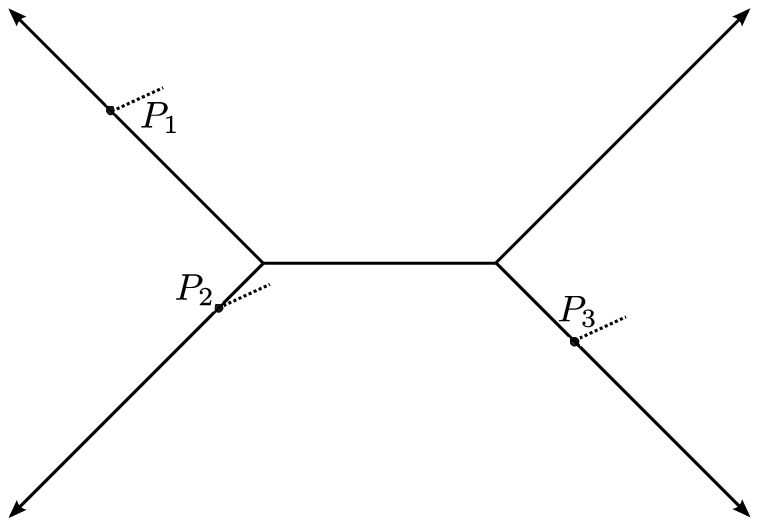}
\caption{The image of a marked, parametrized tropical curve, $h$.  Assume the outgoing edges are weight 1, pointing in the directions $(1,1)$, $(1,-1)$, $(-1,1)$, and $(-1,-1)$.  As an exercise, compute the Mikhalkin multiplicity of $h$.}
\label{mult}
\end{figure}

\begin{figure}[h]
\centering
\begin{tabular}{cc}
\includegraphics[scale=0.9]{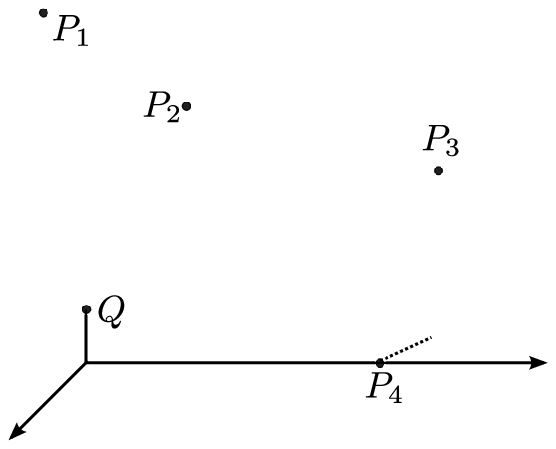}&
\includegraphics[scale=0.9]{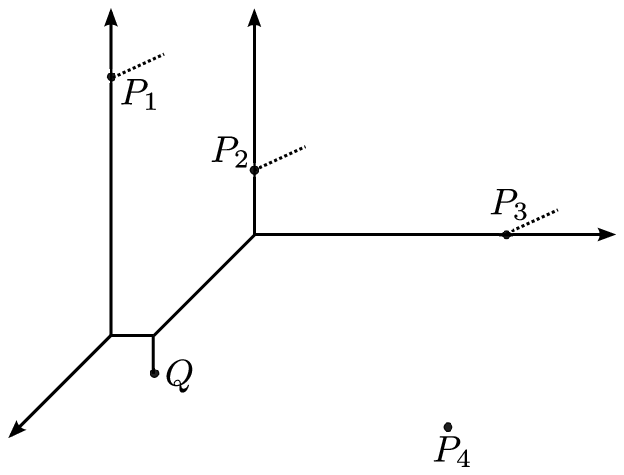}\\
\end{tabular}
\caption{The images of two tropical disks in $(X_\Sigma, P_1, \ldots,  P_5)$ with boundary $Q$.}
\label{diskfig1}
\end{figure}

\begin{figure}[h]
\centering
\begin{tabular}{cc}
\includegraphics[scale=0.7]{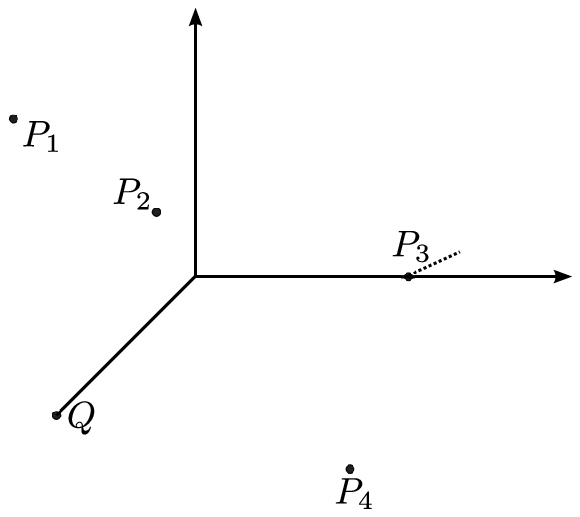}&
\includegraphics[scale=0.7]{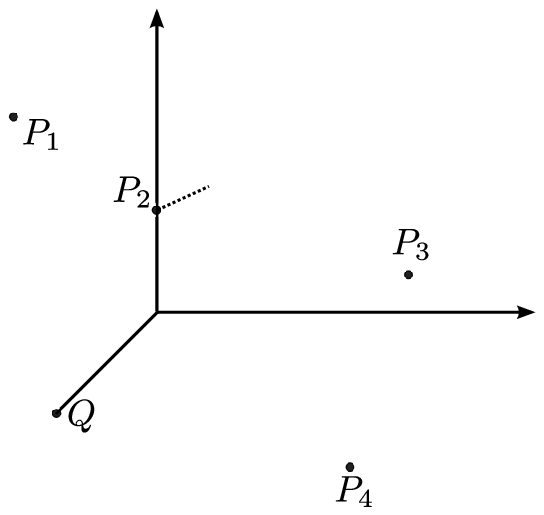}\\
\end{tabular}
\caption{The images of two more tropical disks in $(X_\Sigma, P_1, \ldots,  P_5)$ with boundary $Q$.}
\label{diskfig2}
\end{figure}

\begin{figure}[h]
\centering
\begin{tabular}{cc}
\includegraphics[scale=0.7]{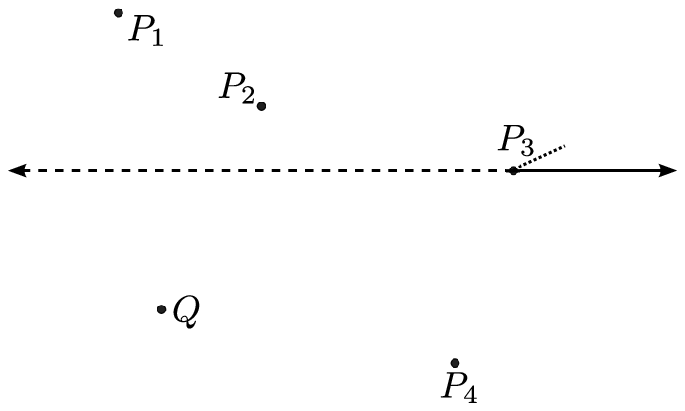}&
\includegraphics[scale=0.7]{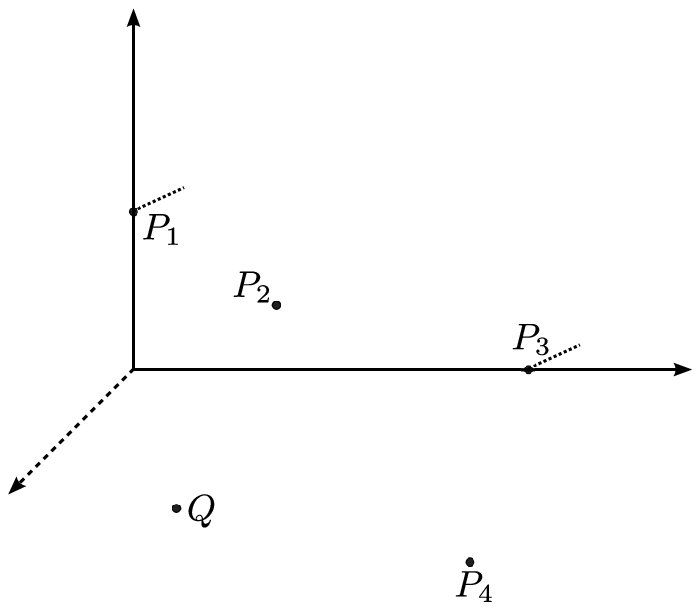}\\
\end{tabular}
\caption{Maslov index 0 tropical trees in $(X_\Sigma,  P_1, \ldots,  P_5)$.  The dashed edges are the distinguished outgoing edge.  Note that in the case of the tropical tree on the right, we could have selected any of the outgoing edges as the distinguished one.}
\end{figure}

\subsubsection{Tropical disks and trees}
In order to discuss the mirror symmetry relationship for $\mathbb{P}^2$, we'll need two objects which are closely related to tropical curves: tropical disks and trees.  Intuitively, tropical disks are fragments of a tropical curve broken at a vertex and are the tropical analogue of holomorphic disks, while tropical trees are tropical disks with the truncated edge extended to infinity.    

More formally, let $\overline{\Gamma}$ be a weighted, connected finite graph without bivalent vertices, with the additional choice of a univalent vertex $V_{out}$ adjacent to a unique edge $E_{out}$.  Let
$$\Gamma':=(\overline{\Gamma}\setminus\overline{\Gamma}_\infty^{[0]})\cup \{V_{out}\}\subseteq \overline{\Gamma}.$$
Suppose that $\Gamma'$ is a tree with one compact external edge and a number of non-compact external edges.  Then a \emph{parametrized d-pointed tropical disk} in $M_\mathbb{R}$ with domain $\Gamma'$ is: 
\begin{itemize}
\item A choice of inclusion $\{p_1,\ldots,p_d\}\hookrightarrow\Gamma^{[1]}_\infty \setminus \{E_{out}\}$, written $p_i\rightarrow E_{p_i}$.
\item A weight function $w:\Gamma'^{[1]}\rightarrow \mathbb{Z}_{\geq 0}$ with $w(E)=0$ if and only if $E=E_{p_i}$ for some $i$ and $w(E)=1$ for all other edges in $\Gamma'^{[1]}_\infty$.
\item A continuous map $h:\Gamma' \rightarrow M_\mathbb{R}$ satisfying the conditions for tropical curves, except that there is no balancing condition at the univalent vertex $V_{out}$.\\
\end{itemize}

An isomorphism of parametrized tropical disks between $h_1:(\Gamma'_1, p_1,\ldots, p_d)\rightarrow M_\mathbb{R}$ and $h_2:(\Gamma'_2, p_1,\ldots, p_d)\rightarrow M_\mathbb{R}$  is a homeomorphism $\Phi:\Gamma'_1\rightarrow \Gamma'_2$ respecting marked edges and weights, such that $h_1=h_2\circ\Phi$.  Just as with marked tropical curves, we refer to an equivalence class of parametrized marked tropical disks a \emph{marked tropical disk}.

\begin{definition}[Tropical disks in $(X_\Sigma, P_1, \ldots, P_k)$ with boundary $Q$]
A tropical disk in $(X_\Sigma,P_1, \ldots,  P_k)$ with boundary $Q$ is a $d$-pointed tropical disk $h:(\Gamma, p_1, \ldots, p_d)\rightarrow M_\mathbb{R}$ with $h(p_j)=P_{i_j}$ for some $1\leq i_1 <\ldots<i_d\leq k$, $h(V_{out})=Q$, and $h(E)$ is a translate of some $\rho\in \Sigma^{[1]}$ for each $E\in \Gamma_\infty^{[1]}$ with $w(E)=1$.  
\end{definition}
Multiplicity and degree can be defined for tropical disks as they were defined for tropical curves, neglecting the univalent vertex.
Continuing the analogy with holomorphic disks, given a d-pointed tropical disk $h$, we define its \emph{Maslov index} as
$$MI(h):=2(|\Delta(h)|-d).$$
There is a related tropical object of some importance, the \emph{tropical tree}.
Tropical trees are simply tropical disks where the outgoing edge $E_{out}$ is extended into unbounded edge.  The degree, multiplicity, and Maslov index are computed in the same way as was done with tropical disks, in each case ignoring the distinguished unbounded edge.  Tropical trees are important in this particular story because a Maslov index 2 tropical disk with boundary $Q$ can be decomposed as a ``stem" with truncated Maslov index 0 tropical trees sprouting out from it. This idea is the key to the relevance of so-called ``scattering diagrams" to the B-model of $\mathbb{P}^2$.  See Figure \ref{stems}.   
\begin{figure}
\sidecaption[t]
\includegraphics[width=0.4\textwidth]{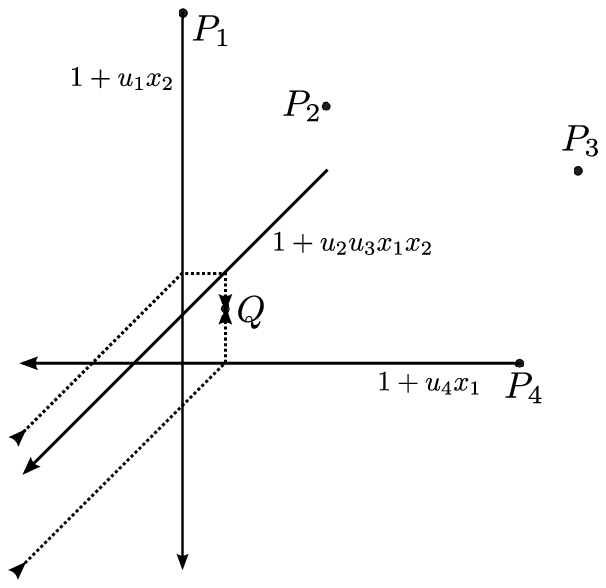}
\caption{``Stems" of Maslov index 2 tropical disks with boundary $Q$ along with the outgoing edges of their attached Maslov index 0 trees.  Find the Maslov index two tropical disks in Figures \ref{diskfig1} and \ref{diskfig2} corresponding to these stems.}
\label{stems}
\end{figure}

\section{Tropical curve counting}
\label{sec:enum}
\subsection{Moduli spaces of tropical curves}

\begin{definition} 
Given an element $\Delta\in T_\Sigma$, define 
$$\shM_{g,k}(\Sigma,\Delta)=\left.\left\{\begin{array}{l}
 \hbox{marked tropical curves in }X_\Sigma\hbox{ of genus }g\\
 \hbox{and degree }\Delta\hbox{ with }k\hbox{ markings}
\end{array} \right\}\right.$$
Furthermore if $\Delta=\sum_\rho d_\rho\rho$ we set 
$$|\Delta|=\sum_\rho d_\rho\in\ZZ.$$
\end{definition}

% For figures use
%
\begin{figure}[b]
\sidecaption
% Use the relevant command for your figure-insertion program
% to insert the figure file.
% For example, with the graphicx style use
\includegraphics[scale=.8]{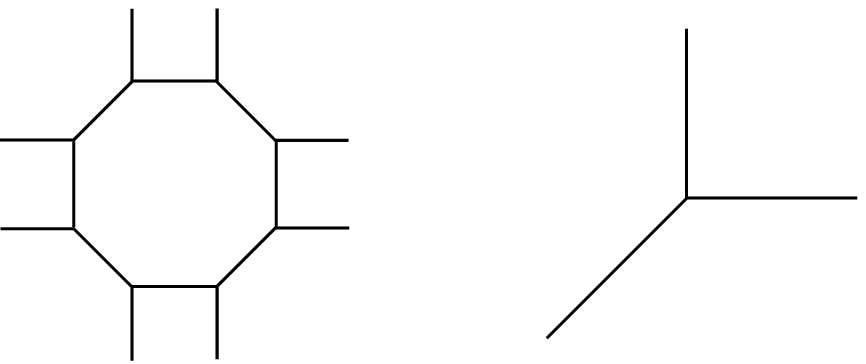}
%
% If no graphics program available, insert a blank space i.e. use
%\picplace{5cm}{2cm} % Give the correct figure height and width in cm
%
%\caption{If the width of the figure is less than 7.8 cm use the \texttt{sidecapion} command to flush the caption on the left side of the page. If the figure is positioned at the top of the page, align the sidecaption with the top of the figure -- to achieve this you simply need to use the optional argument \texttt{[t]} with the \texttt{sidecaption} command}
\caption{A tropical line in $\PP^2$ is uniquely determined by where its vertex is (right hand side). For the tropical curve of bi-degree $(2,2)$ in $\PP^1\times\PP^1$ of the combinatorial type shown on the left, there are, up to translation, $6$ further moduli by varying the lengths of the $8$ bounded edges.}
\label{fig:two-trop-curves}       % Give a unique label
\end{figure}

\begin{example}  \label{example-MPTCs}
\begin{enumerate}
\item
Let $\Sigma$ be the fan of $\PP^2$, so $M\cong\ZZ^2$. The rays are generated by $\rho_1=(1,0)$, $\rho_2=(0,1)$ and $\rho_3=(-1,-1)$. 
Let $\Delta=\rho_1+\rho_2+\rho_3$ then $\shM_{0,0}(\Sigma,\Delta)=M_\RR$ as the map $h$ is uniquely determined by where the trivalent vertex of $\Gamma$ goes and there is no restriction on where to map it.
In fact, in general for any $\Sigma,\Delta$ we have that $M_\RR$ acts freely on $\shM_{g,n}(\Sigma,\Delta)$ by translation.
\item
Let $\Sigma$ be the fan of $\PP^1\times\PP^1$. 
The rays are generated by $\rho_1=(1,0)$, $\rho_2=(-1,0)$, $\rho_3=(0,1)$ and $\rho_4=(0,-1)$. 
Set $\Delta=2\rho_1+2\rho_2+2\rho_3+2\rho_4$. 
Consider the tropical curve on the left in Fig~\ref{fig:two-trop-curves} (the graph $\Gamma$ is determined from the image of $h$ for given $P_i$).
Let us fix the combinatorial type of $h$, i.e. the weighted graph $\Gamma$ and the rational slopes of the edges of the image of $h$
and let $\shM^{[h]}_{1,0}(\Sigma,\Delta)$ denote the subset of $\shM_{1,0}(\Sigma,\Delta)$ of MPTCs of combinatorial type $h$.
Up to translation, a curve in $\shM^{[h]}_{1,0}(\Sigma,\Delta)$ is uniquely determined be the length of its compact edges of which there are $8$. 
However the lengths cannot vary freely because their union needs to be a closed cycle. This imposes two conditions, one for each coordinate of $M_\RR$.
Let $I=\{(1,1), (1,0), (1,-1), (0,-1), (-1,-1), (-1,0), (-1,1), (0,1)\}$ be the set of directions of the bounded edges.
We then find that the set $\shM_{1,0}(\Sigma,\Delta)$ can be identified with
$$M_\RR\times\left\{\phi\in \operatorname{Map}(I,\RR_{>0}) \left| 0=\sum_{v\in I} \phi(v)v\right.\right\}.$$
Note that $\shM_{1,0}(\Sigma,\Delta)$ is $8$-dimensional. 
This coincides with the (complex) dimension of the parameter-space of algebraic curves of bi-degree $(2,2)$ in $\PP^1\times\PP^1$ (these are elliptic curves). This is no coincidence as we will see later.
\end{enumerate}
\end{example}

\begin{lemma} 
\label{finiteness-of-comb-types}
When $\dim M_\RR=2$, the set of combinatorial types of tropical curves in $X_\Sigma$ of fixed genus, markings and degree is finite.
\end{lemma}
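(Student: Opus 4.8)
The plan is to bound first the isomorphism type of the underlying weighted marked graph $\Gamma$, then the direction of each of its edges, and to deduce that each of these data ranges over a finite set.

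\smallskip\noindent\emph{The graph is bounded.} Fix $g$, the number of markings $k$, and $\Delta$. Every unbounded edge of $\Gamma$ has weight $0$ or $1$: the weight-$0$ ones are precisely the $k$ marked ends, and by the definition of the degree the weight-$1$ unbounded edges number $\sum_\rho d_\rho=|\Delta|$, so $\#\Gamma^{[1]}_\infty=k+|\Delta|$ is fixed. Every vertex has valence $\ge 3$ (bivalent vertices are forbidden and univalent ones have been removed), so the handshake identity gives $3\,\#\Gamma^{[0]}\le 2\,\#\Gamma^{[1]}_{\mathrm{bd}}+\#\Gamma^{[1]}_\infty$, while $b_1(\Gamma)=\#\Gamma^{[1]}_{\mathrm{bd}}-\#\Gamma^{[0]}+1=g$ (for $\Gamma$ connected; the disconnected case is similar). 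Eliminating $\#\Gamma^{[1]}_{\mathrm{bd}}=\#\Gamma^{[0]}+g-1$ yields $\#\Gamma^{[0]}\le 2g-2+k+|\Delta|$, so the numbers of vertices and of edges — hence the isomorphism type of $\Gamma$ and the placement and labelling of its marked ends — lie in a finite set.

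\smallskip\noindent\emph{The edge directions are bounded.} A combinatorial type consists of such a graph together with the weighted direction $d_E:=w(E)v(E)\in M$ of each edge (which determines $w(E)$ as the $\gcd$ of the entries of $d_E$, and $v(E)$). Marked ends have $d_E=0$; a weight-$1$ unbounded edge of a curve in $X_\Sigma$ is parallel to a ray of the fixed fan $\Sigma$, so $\|d_E\|\le C_0(\Sigma)$, and all unbounded weighted directions together have norm-sum $\le C_1:=(k+|\Delta|)\,C_0$. If a bounded edge $E$ disconnects $\Gamma$, summing the balancing conditions over one component cancels the interior terms in pairs and leaves $d_E=\pm\sum(\text{unbounded weighted directions in that component})$, so $\|d_E\|\le C_1$. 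For a bounded edge on a cycle I would exploit realizability: the type being that of an actual map $h$, there are positive quantities $L_E:=\ell_E/w(E)$ with $\sum_{E\in\gamma}\pm L_E d_E=0$ around every cycle $\gamma=(v_1,\dots,v_r)$. The balancing relations force the successive cycle-edge directions $\delta_i$ to satisfy $\delta_i=\delta_1-\sum_{2\le j\le i}B_j$, with $B_j$ the sum of the weighted directions at $v_j$ of the off-cycle edges; substituting into $\sum_i L_i\delta_i=0$ gives $\delta_1=\big(\textstyle\sum_i L_i\sum_{2\le j\le i}B_j\big)/\sum_i L_i$, a convex combination of bounded vectors, so $\|\delta_i\|$ is bounded in terms of the $\|B_j\|$. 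For the unique cycle of a genus-$1$ curve the off-cycle edges are separating, hence bounded, which finishes that case; higher genus should follow by treating the cyclomatic structure inductively.

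\smallskip\noindent Thus, once $g,k,\Delta$ and $\Gamma$ are fixed, each $d_E$ lies in the finite set $\{m\in M:\|m\|\le C\}$ for a suitable constant $C=C(g,k,\Delta,\Sigma)$, the weights being determined, and only finitely many combinatorial types remain. The real obstacle is the higher-genus version of the second step: balancing alone leaves a rank-$2g$ lattice of candidate weighted-direction assignments, and cutting it down to finitely many genuinely uses that a type is realized with \emph{strictly positive} edge lengths (this positivity is what converts the cycle-closure equations into the averaging estimate). A tidier alternative, bypassing the bookkeeping, is to pass to the image $h(\Gamma)\subset M_\RR$: it is a balanced weighted $1$-complex whose Newton polygon is determined by $\Delta$, hence dual to one of the finitely many subdivisions of that lattice polygon, so every edge direction of $\Gamma$ is parallel to an edge of one of finitely many complexes and every weight is bounded by that complex's edge weight.
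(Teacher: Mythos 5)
Your opening step --- bounding $\#\Gamma^{[0]}$ and $\#\Gamma^{[1]}$ in terms of $g$, $k$ and $|\Delta|$ via the handshake identity and the Euler characteristic --- is correct and is actually a useful supplement, since the paper does not make that count explicit. The place where your argument genuinely diverges from the paper, and where it has a gap, is the second step. Summing the balancing condition over a component bounds the direction of a separating edge, and your convex-combination trick (using strict positivity of the edge lengths to average the cycle-closure equation) does handle the unique cycle of a genus-$1$ graph, because there every off-cycle edge is separating. But for $g\ge 2$ the off-cycle edges attached to one cycle may themselves lie on another cycle, so they are not separating and their directions are not yet bounded when you need them; the phrase ``should follow by treating the cyclomatic structure inductively'' is a hope, not an argument, and you correctly identify this as the real obstacle. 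As developed, your primary route only proves the lemma for $g\le 1$.

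The paper sidesteps all of this with the duality you mention only in your final sentence: in dimension two the balancing condition is exactly the integrability condition allowing one to build a piecewise linear convex function on $M_\RR$ whose corner locus is $h(\Gamma)$, with bending $w(E)$ along $h(E)$; its Newton polytope depends only on $\Delta$, so combinatorial types of images correspond to lattice subdivisions of a single fixed lattice polygon, of which there are finitely many, and this is uniform in $g$. Since that ``tidier alternative'' is precisely the paper's proof, I would promote it to the main argument and retain your graph count as the complement that converts finiteness of images (dual subdivisions) into finiteness of parametrized combinatorial types, together with the finitely many placements of the markings.
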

\begin{proof} 
It suffices to show that the set of combinatorial types of unmarked curves is finite as there is only a finite set of choices for placing the markings.
Given one such curve $h$, one can construct a piecewise linear convex function $M_\RR\ra\RR$ whose locus of non-linearity coincides with $h$. 
The bending at an edge $h(E)$ is $w(E)$ and the balancing condition guarantees that this gives a globally compatible function. 
This function thus determines a Newton polytope in the dual space of $M_\RR$ together with a triangulation. 
This is in fact a lattice polytope, so the set of lattice triangulations is finite. 
Furthermore, the Newton polytope only depends on the degree of $h$, so the set of combinatorial types of unmarked curves is identified with the set of triangulations of the Newton polytope and this is known to be finite.
\end{proof}

A priori $\shM_{g,k}(\Sigma,\Delta)$ is merely a set. However, the natural identifications in the following proposition furnish $\shM_{g,k}(\Sigma,\Delta)$ with a piecewise linear structure.
Given $h\in\shM_{g,k}(\Sigma,\Delta)$, let $\shM_{g,k}^{[h]}(\Sigma,\Delta)$ denote the subset of $\shM_{g,k}(\Sigma,\Delta)$ of all MPTC with the same \emph{combinatorial type} as $h$, i.e. the same weighted graph $\Gamma$ and the same rational slopes of $h(E)$ for each edge $E\subset\Gamma$ with $h(E)\neq 0$.
\begin{proposition}[shape of $\shM_{g,k}$]
\label{shapeMgk}
\begin{enumerate}
\item $\shM_{g,k}(\Sigma,\Delta)=\coprod_h \shM_{g,k}^{[h]}(\Sigma,\Delta)$ where the disjoint union is over all combinatorial types.
\item $\shM_{g,k}^{[h]}(\Sigma,\Delta)$ is naturally identified with the interior of a polyhedron.
\end{enumerate}
\end{proposition}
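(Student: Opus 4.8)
The plan is to establish (1) as an essentially tautological partition statement, and then to realize each stratum $\shM_{g,k}^{[h]}(\Sigma,\Delta)$ as the relative interior of a rational polyhedron by introducing explicit coordinates on it.

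For part (1): every $h\in\shM_{g,k}(\Sigma,\Delta)$ has a well-defined \emph{combinatorial type}, namely the isomorphism class of the triple consisting of the weighted marked graph $\Gamma$, the markings, and the primitive integral slope vector of $h(E)$ for each edge $E$ with $h(E)\neq 0$. Two curves either have the same type or not, so $\shM_{g,k}(\Sigma,\Delta)$ is the disjoint union of the sets $\shM_{g,k}^{[h]}(\Sigma,\Delta)$, one for each type that actually occurs; in particular every stratum appearing in the union is non-empty. (By Lemma \ref{finiteness-of-comb-types} the union is finite when $\dim M_\RR=2$, but that is not needed here.)

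For part (2): fix a combinatorial type $h$, so that $\Gamma$, the weights $w$, the markings, and the primitive slopes $u_E\in M$ are all fixed; the balancing condition holds for these slopes automatically, since it is a condition on $w$, the $u_E$ and the combinatorics of $\Gamma$ only, and $h$ arose from an actual curve. I would now choose a root vertex $V_0$ of $\Gamma$, a spanning tree of its subgraph of bounded edges, and an orientation of every edge (the tree edges away from $V_0$). Given a point $p_0\in M_\RR$ and a tuple of edge lengths $(\ell_E)$ indexed by the bounded edges $E\in\Gamma^{[1]}\setminus\Gamma^{[1]}_\infty$, taken in $\RR_{>0}$ (every bounded edge is non-collapsed, since $w^{-1}(0)\subseteq\Gamma^{[1]}_\infty$), one reconstructs a candidate map by setting $h'(V_0)=p_0$, propagating along the spanning tree via $h'(\mathrm{head}\,E)=h'(\mathrm{tail}\,E)+\ell_E u_E$, collapsing each weight-$0$ edge onto the image of its adjacent vertex, and sending each unbounded non-collapsed edge to the ray from its adjacent vertex in direction $u_E$. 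This $h'$ is a genuine MPTC of type $h$ if and only if the non-tree edges close up, which, after fixing a basis $\gamma_1,\dots,\gamma_g$ of $H_1(\Gamma,\ZZ)$ (so $g=b_1(\Gamma)$) and writing $\epsilon_E(\gamma_j)\in\ZZ$ for the coefficient of $E$ in $\gamma_j$, is exactly the system of $2g$ linear equations
$$
\sum_{E}\epsilon_E(\gamma_j)\,\ell_E\,u_E=0\in M_\RR,\qquad j=1,\dots,g.
$$
Thus $(p_0,(\ell_E))\mapsto h'$ is a bijection from $M_\RR\times(L\cap\RR_{>0}^{B})$ onto $\shM_{g,k}^{[h]}(\Sigma,\Delta)$, where $B=\Gamma^{[1]}\setminus\Gamma^{[1]}_\infty$ and $L\subseteq\RR^{B}$ is the rational linear subspace cut out by the displayed equations. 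Because $\shM_{g,k}^{[h]}$ is non-empty, $L$ contains a vector with all coordinates strictly positive, and then $L\cap\RR_{>0}^{B}$ equals the relative interior of the rational polyhedral cone $L\cap\RR_{\geq 0}^{B}$; taking the product with $M_\RR$ exhibits $\shM_{g,k}^{[h]}(\Sigma,\Delta)$ as the relative interior of a rational polyhedron, as claimed. (For the bi-degree $(2,2)$ curve of Example \ref{example-MPTCs}(2) this reproduces $M_\RR\times\{\phi\in\mathrm{Map}(I,\RR_{>0}):\sum_v\phi(v)v=0\}$: there $g=1$, $|B|=8$, and the pair of equations cuts $\RR^{8}$ down to the $6$-dimensional $L$.)

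The step I expect to require the most care is the treatment of graph automorphisms. Injectivity of the coordinate map is immediate once $\Gamma$ is fixed, but if $\Gamma$ admits a nontrivial automorphism preserving $w$, the markings and the slopes $u_E$, then length tuples related by it give isomorphic MPTCs, and the identification above should be read modulo the resulting finite action; for the generic type — in particular whenever the markings rigidify $\Gamma$ — this group is trivial and the statement holds verbatim, and in general one restricts to such types or phrases the conclusion up to this action. A secondary point to check is that the subset $\shM_{g,k}^{[h]}(\Sigma,\Delta)$, and hence the polyhedron, is independent of the auxiliary choices of root and spanning tree: different choices merely re-coordinatize the same subset, yielding polyhedra identified by an integral affine isomorphism.
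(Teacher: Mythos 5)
Your proposal is correct and follows essentially the same route as the paper, which also treats (1) as a tautology and proves (2) by coordinatizing a combinatorial type via the lengths of the bounded edges, cutting out a positive-orthant slice of $\operatorname{Map}(I,\RR_{>0})$ by one linear (vector) equation per independent cycle of $\Gamma$ and taking the product with $M_\RR$ for translations. Your additional caveat about quotienting by graph automorphisms that preserve weights, markings and slopes is a genuine subtlety that the paper's terse proof passes over silently, and your spanning-tree reconstruction just makes explicit what the paper delegates to Example~\ref{example-MPTCs}.
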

\begin{proof} The first statement is a tautology. The proof of second works along the lines of Example~\ref{example-MPTCs}-2., i.e. let $I$ denote the set of slope vectors of the bounded edges of $h(\Gamma)$. Up to translations by elements of $M_\RR$, we identify $\shM_{g,k}(\Sigma,\Delta)$ with the subset of $\operatorname{Map}(I,\RR_{>0})$ cut out by $m$ linear equations, one for each cycle in $\Gamma$.
\end{proof}

More can be said when we restrict to genus zero curves. 
Set $\Gamma^{[0]}=\{V\in\Gamma\hbox{ is a vertex }\}$. Since univalent vertices were removed and there are no bivalent vertices in $\Gamma$ each vertex of $\Gamma$ has valency at least three.
We define the overvalency of $\Gamma$ by 
$$\ov(\Gamma)=\sum_{V\in\Gamma\hbox{ \scriptsize is a vertex}} \operatorname{valency}(V)-3.$$
It vanishes if and only if each vertex of $\Gamma$ has valency three. 
\begin{definition}
A marked parametrized tropical curve $h$ is called \emph{simple} if $h$ is injective on vertices, unmarked unbounded edges have weight one and each vertex has non-zero multiplicity (in particular the overvalence vanishes).
\end{definition}

\begin{proposition}[shape of $\shM_{0,k}(\Sigma,\Delta)$]
\label{shapeM0k}
\begin{enumerate}
\item $\shM_{0,k}^{[h]}(\Sigma,\Delta)\cong M_\RR\times \RR_{>0}^{e+k-3-\ov(\Gamma)}$ where $e$ is the number of unbounded unmarked edges of $\Gamma$.
\item Assume now $n=2$, i.e. $M_\RR\cong\RR^2$. Given $P_1,...,P_{|\Delta|-1}\in M_\RR$ in general position, we have that
$$\{h\in\shM_{0,|\Delta|-1}(\Sigma,\Delta)\,|\, h(x_i)=P_i\}$$
is a finite set of simple curves of different combinatorial types.
\end{enumerate}
\end{proposition}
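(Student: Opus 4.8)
For the first assertion, recall that $g(h)=b_1(\Gamma)$, so $g=0$ together with the connectedness of $\Gamma$ forces $\Gamma$ to be a tree. Fix a root vertex $v_0$. The slopes of the edges of $h(\Gamma)$ are part of the data of the combinatorial type $[h]$, so $h$ is completely determined by $h(v_0)\in M_\RR$ together with the lengths of the bounded edges: the image of any vertex is $h(v_0)$ plus the sum of $(\textrm{length})\cdot(\textrm{fixed primitive direction})$ taken along the unique path in $\Gamma$ from $v_0$ to that vertex, and the images of the unbounded edges are then forced. As $\Gamma$ is a tree there are no cycle conditions, so the bounded-edge lengths vary independently in $\RR_{>0}$, giving $\shM_{0,k}^{[h]}(\Sigma,\Delta)\cong M_\RR\times\RR_{>0}^{E_b}$ with $E_b$ the number of bounded edges. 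Writing $E_\infty$ for the number of unbounded edges and $V$ for the number of vertices, counting valencies gives $2E_b+E_\infty=\sum_v\operatorname{valency}(v)=3V+\ov(\Gamma)$, the tree relation gives $V=E_b+1$, and $E_\infty=e+k$ since among the unbounded edges exactly the $k$ marked ones have weight $0$ while the remaining $e$ are the unmarked ones; eliminating $V$ and $E_\infty$ yields $E_b=e+k-3-\ov(\Gamma)$.

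For the second assertion, put $k=|\Delta|-1$; the number of unmarked unbounded edges is then $e=|\Delta|$ by definition of the degree, so by the first part $\dim\shM_{0,|\Delta|-1}^{[h]}(\Sigma,\Delta)=2|\Delta|-2-\ov(\Gamma)$ for every combinatorial type $[h]$, whereas the evaluation map $\operatorname{ev}\colon\shM_{0,|\Delta|-1}^{[h]}(\Sigma,\Delta)\to(M_\RR)^{|\Delta|-1}$, $h\mapsto(h(x_1),\dots,h(x_{|\Delta|-1}))$, has target of dimension $2|\Delta|-2$; in the coordinates above $\operatorname{ev}$ is the restriction of a linear map $L$ to the open subset $M_\RR\times\RR_{>0}^{E_b}$ of $\RR^{2+E_b}$. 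By Lemma~\ref{finiteness-of-comb-types} there are only finitely many combinatorial types, so it is enough to understand $\operatorname{ev}^{-1}(P)$ for each one. Since $\ov(\Gamma)\ge 0$, the source has dimension at most $2|\Delta|-2$; hence if $\ov(\Gamma)>0$, or if $\ov(\Gamma)=0$ but $L$ is not injective, the image of $\operatorname{ev}$ lies in a proper linear subspace of $(M_\RR)^{|\Delta|-1}$, hence in a nowhere-dense set, whereas if $\ov(\Gamma)=0$ and $L$ is injective then $\operatorname{ev}$ is injective and $\#\operatorname{ev}^{-1}(P)\le 1$. Consequently, as soon as $P$ avoids the finite union of the nowhere-dense images of the first kind, each combinatorial type contributes at most one curve; since curves of different combinatorial types are inequivalent, this already yields finiteness and the statement that the curves have pairwise distinct combinatorial types.

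It remains to restrict $P$ further so that each surviving curve is simple. Over-valent types do not contribute for such $P$ by the above, so each surviving type is trivalent, and unmarked unbounded edges have weight one automatically. If a trivalent type has a vertex $V$ of multiplicity $0$, then by the balancing condition the three edges at $V$ are all parallel to a common primitive vector $u$, and by Definition~\ref{def-mik-mult} none of them is a marked edge; increasing the lengths of the two edges leaving $V$ in one direction of $\RR u$ and decreasing the length of the third by the same amount translates $h(V)$ along $\RR u$ while leaving every other vertex --- and hence every $h(x_i)$ --- fixed, so $L$ is not injective and this type does not contribute for generic $P$. Finally, inside a surviving type (trivalent, all vertex multiplicities nonzero, $L$ injective) the locus where $h$ is not injective on vertices is a proper closed subset of $M_\RR\times\RR_{>0}^{E_b}$, because for generic edge lengths the displacement along any nontrivial path of bounded edges is nonzero; its image under the injective map $\operatorname{ev}$ is again nowhere dense. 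Declaring $P$ to be in general position when it avoids the resulting finite union of nowhere-dense sets, every $h$ with $h(x_i)=P_i$ is trivalent, has weight-one unmarked unbounded edges and nonzero vertex multiplicities, and is injective on vertices; that is, $h$ is simple. Hence this set is finite and its members have pairwise distinct combinatorial types.

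The dimension bookkeeping in the first part is routine; the substantive work is the genericity analysis of the second part, above all the verification that the over-valent types, the trivalent types possessing a zero-multiplicity vertex, and the coincident-vertex loci within the remaining types each sweep out only a nowhere-dense subset of $(M_\RR)^{|\Delta|-1}$, so that a single generic choice of $P$ avoids all of them at once.
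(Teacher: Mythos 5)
Your proof is correct and follows essentially the same route as the paper's: the valency/Euler-characteristic count for part 1 and the dimension count via the affine-linear evaluation map for part 2, including the same collinearity/deformation argument ruling out zero-multiplicity vertices. The only difference is that you spell out the genericity bookkeeping (the finite union of nowhere-dense images that $P$ must avoid) more explicitly than the paper, which simply invokes ``general position.''
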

\begin{proof} 
By the proof of part 2. of Prop.~\ref{shapeMgk}, we need to show that the number of bounded edges coincides with 
$e+k-3-\ov(\Gamma)$. 
Set $\Gamma^{[0]}=\{V\in\Gamma\hbox{ is a vertex }\}$, we have that
\begin{equation}
\label{shapeM0k-eq1}
\begin{array}{rcl}
3|\Gamma^{[0]}|+\ov(\Gamma)&=&\sum_{V\in\Gamma^{[0]}} \operatorname{valency}(V)\\
 &=& 2\cdot\hbox{(number of bounded edges)}+\hbox{(number of unbounded edges)}
\end{array}
\end{equation}
On the other hand for the Euler characteristic of $\Gamma$ we find
\begin{equation}
\label{shapeM0k-eq2}
1-g=\chi(\Gamma)=|\Gamma^{[0]}|-\hbox{(number of bounded edges)}.
\end{equation}
Eliminating $|\Gamma^{[0]}|$ together with noting that $e+k$ is the number of unbounded edges yields
$$ \hbox{number of bounded edges} = e+k+3g-3-\ov(\Gamma).$$
Inserting $g=0$ gives the first assertion.
To prove the second assertion, note that each point imposes a 2-dimensional condition and all conditions are independent by the general position assumption. 
For $\shM_{0,k}$ to be non-empty, by a dimension count via the first assertion and $k=|\Delta|-1$, we need to have
$$2+ e+|\Delta|-4-\ov(\Gamma)-2(|\Delta|-1)\ge 0.$$
Note that $e\le|\Delta|$, so the inequality holds if and only if it is an equality and $\ov(\Gamma)=0$ and $e=|\Delta|$. In this case, $h$ is trivalent with all unbounded edges of weight one. By the general position assumption, $h$ is injective on vertices and if there was a vertex of multiplicity zero, all attached edges would be collinear and so one could move this vertex contradicting zero-dimensionality of the set of solutions. 
Thus, every curve is simple. 
They are of different types by part 1 of Prop.~\ref{shapeMgk}. 
The finiteness of the set of combinatorial types is Lemma~\ref{finiteness-of-comb-types}.
\end{proof}

In analogy to usual Gromov-Witten invariants, we may define the evaluation map
$$\op{ev}:\shM^{[h]}_{g,k}(\Sigma,\Delta)\ra M_\RR^k,\qquad h\mapsto (h(x_1),...,h(x_k))$$
which is in fact an affine linear map: it maps a set of polyhedra affine linearly to a vector space. 
The set of curves going through a set of points $P_1,...,P_k$ is then
$$\op{ev}^{-1}(P_1,...,P_k)=\{h\in\shM_{g,k}(\Sigma,\Delta)\,|\, h(x_i)=P_i\}$$
By the previous proposition, this set is finite for $k=|\Delta|-1$, $g=0$ and one may wonder how its size changes if one varies $P_1,...,P_k$.
If one counts weighted by the multiplicity of the combinatorial type, we will see later that the count is independent of the position of the points as long as the points are in general position. 
This means that if we take a path from one positioning of the $P_i$ to another positioning and at some point along the path one combinatorial type ceases to have a solution for the given points, another combinatorial type takes over!
Assuming this result, the following definition is well-defined (independent of the $P_i$).

\begin{definition} 
We define the number of rational tropical curves of degree $\Delta$ in $X_\Sigma$ as
$$N^{0,\trop}_{\Delta,\Sigma}=\sum_{\begin{array}{c}{h\in\shM_{0,|\Delta|-1}(\Sigma,\Delta)}\\{h(x_i)=P_i}\end{array}} \op{Mult}(h)$$
\end{definition}

\begin{definition} 
\label{def-numbers-for-count}
Similarly and classically, we define the number of rational holomorphic curves of degree $\Delta$ in $X_\Sigma$ as
$$N^{0,\hol}_{\Delta,\Sigma}=\left|\left\{f\in\overline\shM_{0,k}(X_\Sigma,\Delta) \left| \begin{array}{c} f:(C,x_1,...,x_k)\ra X_\Sigma \hbox{ is a torically transverse}\\ \hbox{algebraic curve with }f(x_i)=Q_i\end{array}\right.\right\}\right|$$
where $k=|\Delta|-1$ and $Q_1,...,Q_k$ are points in general position in $X_\Sigma$.
\end{definition}

The following result in particular gives the well-definedness of $N^{0,\trop}_{\Delta,\Sigma}$. 
\begin{theorem} 
\label{maintheorem}
If $\dim M_\RR=2$ and $g=0$ then
$$N^{g,\hol}_{\Delta,\Sigma}= N^{g,\trop}_{\Delta,\Sigma}$$
\end{theorem}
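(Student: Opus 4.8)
The plan is to prove this via the toric degeneration technique that fits squarely into the logarithmic framework developed above; Mikhalkin's original proof in \cite{mik} by patchworking is an alternative. First I would fix the point constraints: choose tropical points $P_1,\dots,P_k \in M_\RR$ with $k = |\Delta|-1$ in general position, so that by Proposition~\ref{shapeM0k}(2) the set of tropical curves of degree $\Delta$ through the $P_i$ is finite and consists of simple trivalent curves. Working over $R = \CC\llbracket t\rrbracket$ with fraction field $K$, one chooses points $\widetilde{Q}_i$ in $X_\Sigma$ over $K$ whose tropicalizations (valuations of coordinates) are the $P_i$; the theorem then becomes the assertion that the number of torically transverse rational stable maps of degree $\Delta$ through the $\widetilde{Q}_i$, which is the deformation-invariant quantity $N^{0,\hol}_{\Delta,\Sigma}$ of Definition~\ref{def-numbers-for-count}, equals $\sum_h \Mult(h)$.

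Next I would build the ambient degeneration. Choose a polyhedral decomposition $\mathcal{P}$ of $M_\RR$ whose $1$-skeleton contains the images $h(\Gamma)$ of all the finitely many tropical curves through the $P_i$, together with a convex piecewise-linear function inducing it, and take $\mathfrak{X}\to\Spec R$ to be the associated Mumford degeneration of $X_\Sigma$. Its central fibre $\mathfrak{X}_0 = \bigcup_V X_V$ is glued from toric surfaces indexed by the vertices $V$ of $\mathcal{P}$, along toric divisors indexed by the bounded cells, and $\mathfrak{X}_0$ carries the divisorial log structure of Example~\ref{ex-div-log-str}, making $\mathfrak{X}_0^\dagger \to \Spec k^\dagger$ log smooth over the standard log point. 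A simple trivalent tropical curve $h$ through the $P_i$ then assembles into a torically transverse pre-log curve $C_0^\dagger \to \mathfrak{X}_0^\dagger$ in the sense of Definition~\ref{toric-trans-log-curve}: each vertex of $\Gamma$ contributes a rational curve in the corresponding component $X_V$ meeting the toric boundary with the multiplicities dictated by the adjacent edge weights and passing through the marked points mapped into the dual cell, bounded edges contribute the gluings along the $X_V\cap X_{V'}$, and unbounded edges contribute the torically transverse intersections with $\partial X_\Sigma$.

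The heart of the argument is the logarithmic smoothing together with the lift count. Using Kato's local description of log smooth curves (Section~\ref{log-sm-curves}) and the deformation theory of log stable maps \cite{loggw}, I would show that each such pre-log curve is unobstructed --- the relevant $H^1$ vanishes because every component maps to a toric surface as a torically transverse rational curve, whose normal-sheaf cohomology is controlled --- and that the number of log enhancements of $C_0^\dagger$ that smooth, over $R$, to an honest algebraic curve $\widetilde{C}\to X_\Sigma$ through the $\widetilde{Q}_i$ is exactly $\prod_V \Mult_V(h) = \Mult(h)$. Concretely, the node-smoothing parameters are forced to be the powers of $t$ determined by the edge lengths, while at each trivalent vertex the number of admissible log and gluing data is the index of the sublattice of $M$ generated by the weighted outgoing primitive edge vectors, which is precisely $w_1w_2\,|m_1\wedge m_2|$ under the identification $M\wedge M\cong\ZZ$ of Definition~\ref{def-mik-mult}. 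Conversely, any torically transverse rational curve in $X_\Sigma$ over $K$ through the $\widetilde{Q}_i$ has a well-defined tropicalization, which is a tropical curve through the $P_i$, and by flatness of $\mathfrak{X}$ it specializes to a curve of the above type on the central fibre; hence the assignment is surjective as well. Summing over all $h$ yields $N^{0,\hol}_{\Delta,\Sigma} = N^{0,\trop}_{\Delta,\Sigma}$.

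The main obstacle is exactly this local analysis: establishing the vanishing of the obstruction space so that the enumeration reduces to a count of log structures, and then matching that count of log structures to the combinatorially defined Mikhalkin multiplicity at each vertex while checking the node-smoothing compatibility. A secondary point, already settled by Proposition~\ref{shapeM0k}(2), is the genericity ensuring that no relevant tropical curve has a vertex of multiplicity zero or a contracted unmarked edge, so that every lift is of this transparent type and no curve is counted with weight zero or along a positive-dimensional locus.
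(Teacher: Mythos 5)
Your proposal follows essentially the same route as the paper's proof, which is the Nishinou--Siebert degeneration argument: build a polyhedral decomposition containing all tropical curves through the $P_i$, pass to the associated toric degeneration, match tropical curves with torically transverse log stable curves on the central fibre, and use log deformation theory (vanishing obstructions and the lattice map $\Phi$) to lift uniquely to the generic fibre. The only cosmetic difference is that you bundle the multiplicity $\Mult(h)$ into a single count of ``log enhancements,'' whereas the paper factors it as $\frak{d}\cdot\frak{w}$, the number of underlying stable maps times the number of log structures on each; the content is the same.
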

The theorem is the overlap of a result by Mikhalkin who proved the statement for any genus $g$ when $\dim M_\RR=2$ and Siebert-Nishinou \cite{NS06} who prove it for $g=0$ in any dimension.

\subsection{Finding all rational tropical curves through eight points in the plane}
We want to discuss in this section an extended example elucidating Thm.~\ref{maintheorem}. 
It is a famous fact that there are precisely $12$ rational curves of degree three going through $8$ generically placed points in the projective plane.
Dropping rationality, there is a one-parameter family of degree three curves going through $8$ points. The general member of this pencil is an elliptic curve but $12$ members are rational nodal curves.
So if $\Sigma_{\PP^2}$ is the fan of $\PP^2$ and we fix the degree as $\Delta_3=3\omega_1+3\omega_2+3\omega_3$ for $\omega_i$ the generators of the rays in the fan, then we have classically
$$N^{0,\hol}_{\Delta_3,\Sigma_{\PP^2}}=12$$
and by Thm.~\ref{maintheorem} we expect to find also $12$ tropical genus zero curves (counted with multiplicity) through $8$ general points in $\RR^2$.
We reduce the complexity of the problem by a slight modification. 
Pick any three of the eight points and consider the toric structure on $\PP^2$ where the open torus is the complement of the three lines going through pairs out of the three points.
The blow-up of $\PP^2$ in the three points can be realized torically, i.e. there is a subdivision $\Sigma$ of the fan $\Sigma_{\PP^2}$ where each of the three maximal cones is subdivided into two standard cones and the toric variety corresponding to the subdivision is the blow-up $X_\Sigma=\Bl_{3\hbox{\scriptsize pt}}\PP^2$. The resulting fan is shown in Fig.~\ref{fig:fan}.
\begin{figure}[t]
\sidecaption[t]
\includegraphics[width=0.63\textwidth]{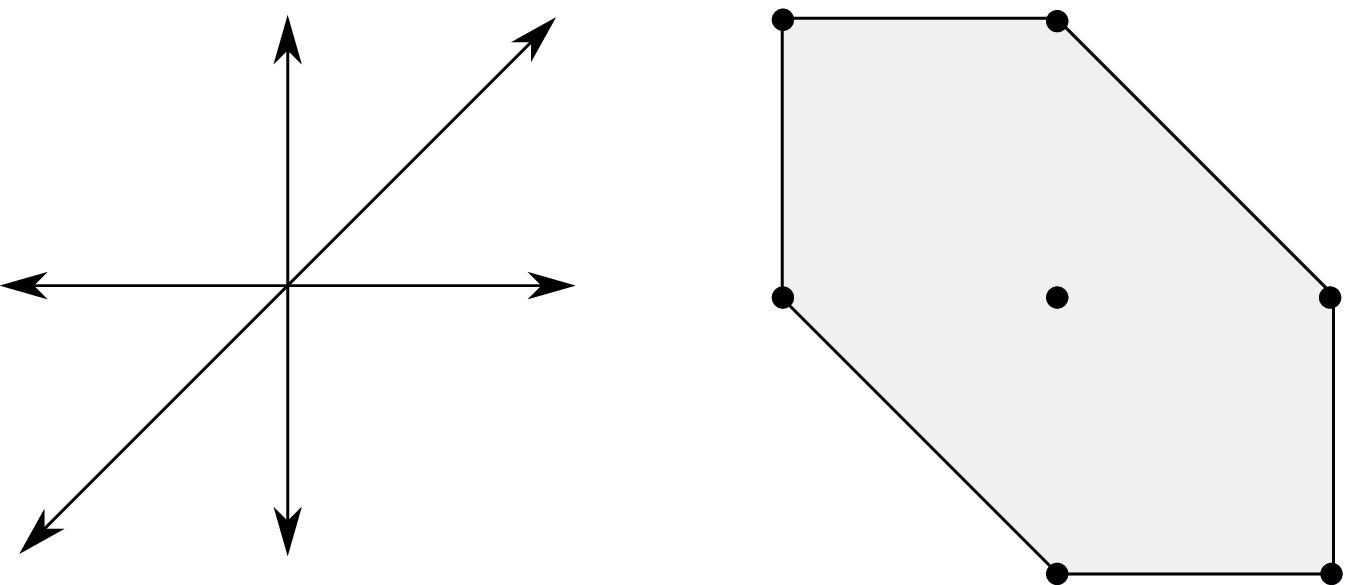}
\caption{Fan of the blow-up of $\PP^2$ in three points and the Newton polytope of its anti-canonical divisor.}
\label{fig:fan}      
\end{figure}
It is the normal fan to a hexagon (in the dual space) depicted on the right.
The anti-canonical degree of $X_\Sigma$ is
$$\Delta=\rho_1+...+\rho_6$$
where the $\rho_i$ denote the six generators of the rays in $\Sigma$.
The combinatorial problem is now to find all tropical genus zero curves through five general points in $\RR^2$ of degree $\Delta$. 
\begin{figure}[t]
\sidecaption[t]
\includegraphics[width=0.63\textwidth]{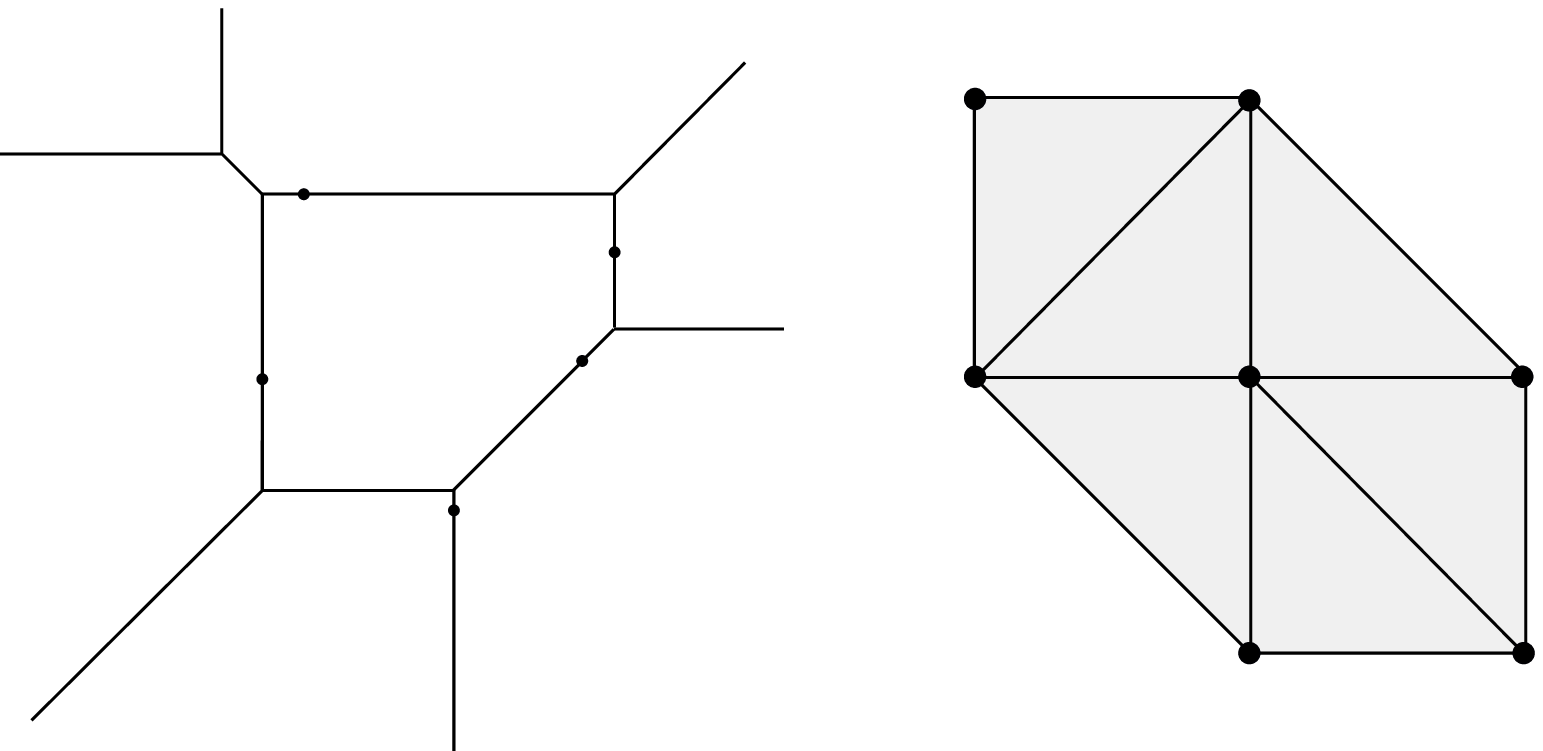}
\caption{A tropical elliptic curve in $X_\Sigma$ of degree $\Delta$ containing five given points and the subdivision of the Newton polytope corresponding to its combinatorial type.}
\label{fig:ell-curve}      
\end{figure}
Given any $5$ points, just by inspection it is quite hard to come up with just a single such tropical curve. It is easier though to find a genus one curve through these points as such tropical curves come in a one-parameter family just as their holomorphic analogues. Fig.~\ref{fig:ell-curve} depicts such a tropical genus one curve. The degree of freedom can be seen by the fact that the upper left branch is free to move out diagonally to the upper left.
There is actually a tropical version of the pencil of elliptic curves as the set of tropical genus one curves going through the five points. 
We are going to construct it in the following.

\subsubsection{A tropical pencil of elliptic curves}
\label{section-sweep-pencil}
A side effect of the construction of the pencil is going to be that we also obtain all rational curves going through the five points as those are members of the pencil, so we will find them on the way.
Note that a tropical curve of degree $\Delta$ is uniquely determined (up to adding a constant) by the piecewise linear convex function $\RR^2\ra\RR$ whose locus of non-linearity is the tropical curve.
Any such function has the following shape
$$\varphi:\RR^2\ra\RR,\quad v\mapsto \max\left\{\langle v,m\rangle+a_m\ |\ m \hbox{ is a lattice points in the Newton polytope}\right\}$$
for some coefficients $a_m\in\RR$. As there are seven coefficients, all piecewise linear convex functions naturally give a convex subset in $\RR^7$.
Requiring that the locus of non-linearity of such a function contains a certain point imposes a one-dimensional condition on the function, so by the general positioning of the five points, we expect that there is a two-dimensional subset of $\RR^7$ that gives the pencil. There is one excess dimension over the set of tropical curves as a function $\phi$ gives the same tropical curve as $\phi+a$ for any $a\in\RR$, so we could instead work in $\RR^7/\RR(1,...,1)\cong\RR^6$ to obtain the pencil as a piecewise linear one-dimensional subset. We will see that this subset in our example has the shape depicted in Fig.~\ref{fig:pencil}.

\begin{figure}[t]
\sidecaption[t]
\includegraphics[width=0.5\textwidth]{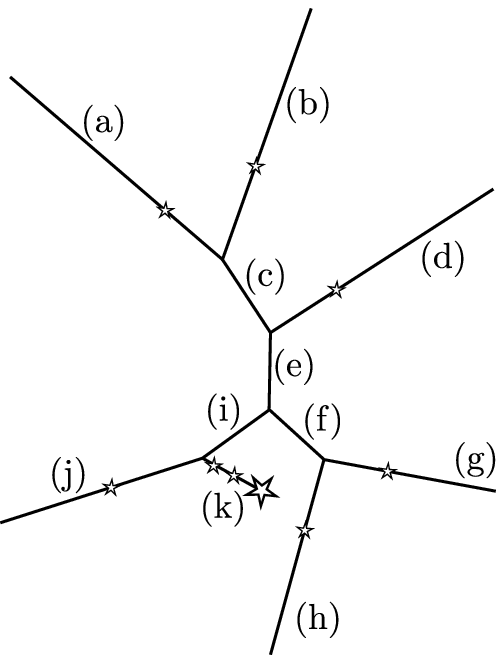}
\caption{Pencil of tropical anti-canonical curves containing $5$ general points in a del Pezzo surface of degree $6$ (blow-up of $\PP^2$ in three points). The rational nodal curves in this pencil are marked by a star. The large star is a genus zero curve of multiplicity four so that the sum of all rational curves with multiplicities adds up to $12$. The labels of the edges of the pencil refer to the labelling of the steps in the construction of family of tropical curves in Fig.~\ref{fig:pencil1}, Fig.~\ref{fig:pencil2}}.
\label{fig:pencil}    
\end{figure}

\begin{figure}[t]
\sidecaption[t]
\begin{tabular}{cc}
(a)& (b)\\
\includegraphics[width=0.45\textwidth]{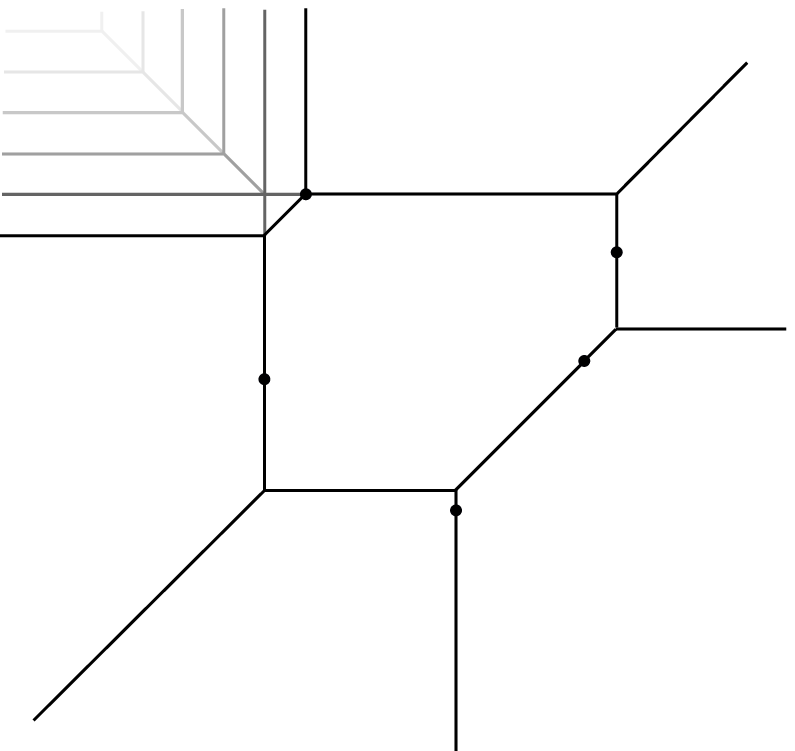} & \includegraphics[width=0.45\textwidth]{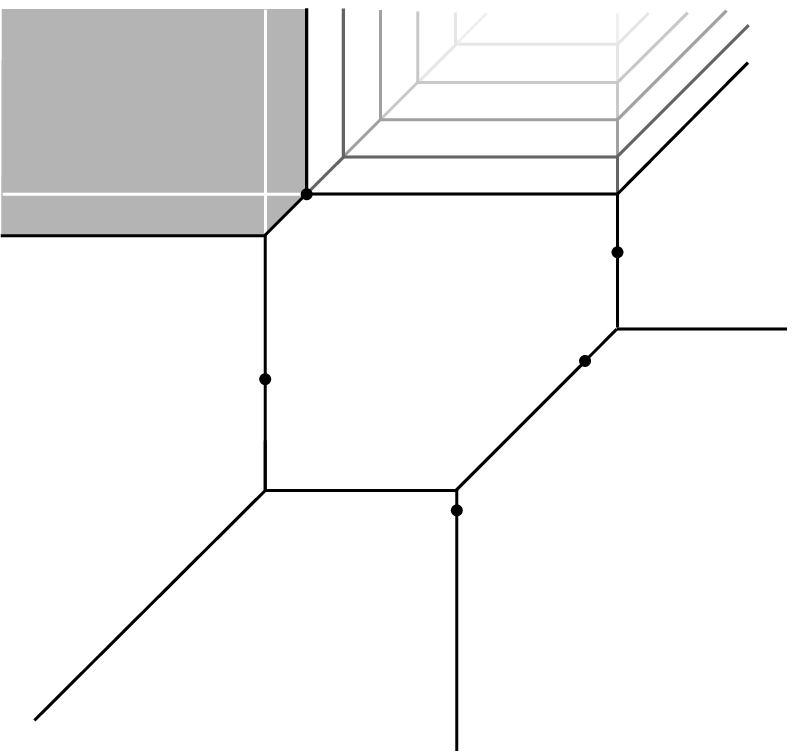}\\
(c)& (d)\\
\raisebox{-1\height}{\includegraphics[width=0.45\textwidth]{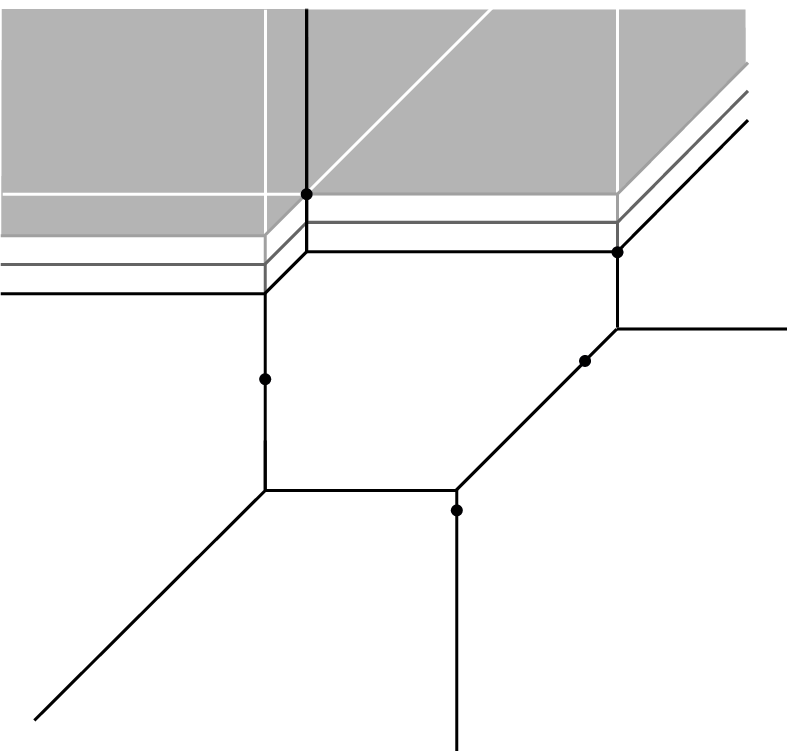}} & \raisebox{-1\height}{\includegraphics[width=0.45\textwidth]{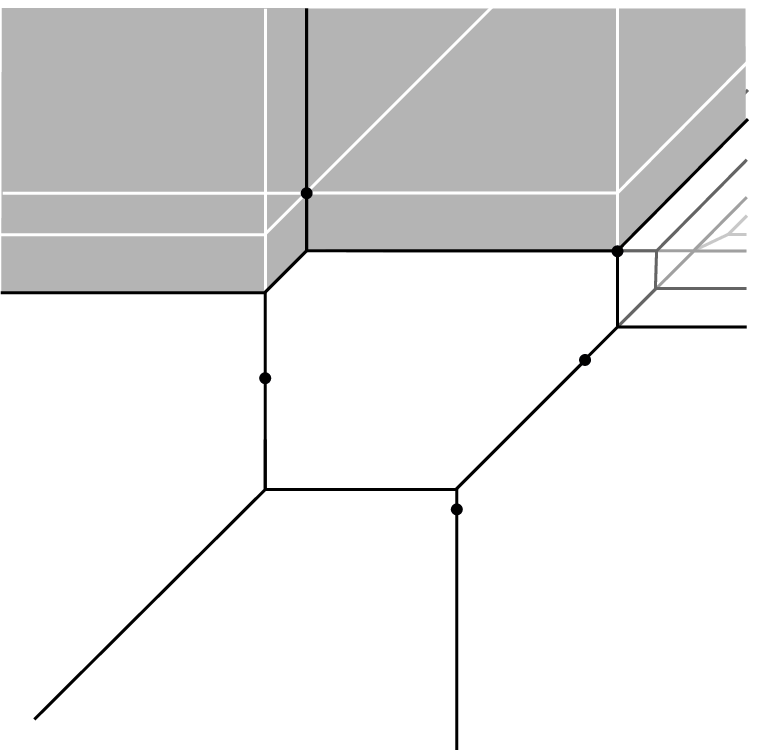}}\\
(e)& (f)\\
\raisebox{-1\height}{\includegraphics[width=0.45\textwidth]{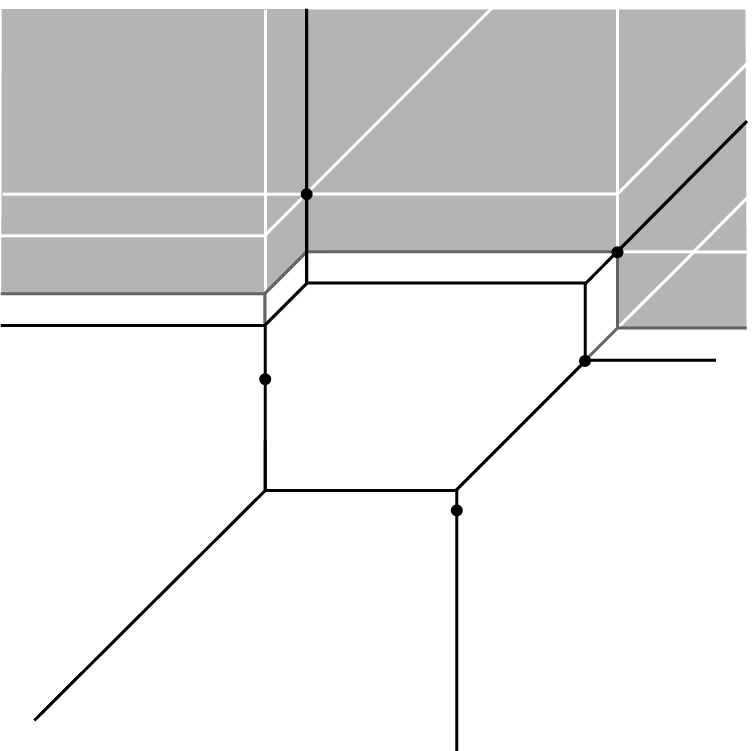}} & \raisebox{-1\height}{\includegraphics[width=0.45\textwidth]{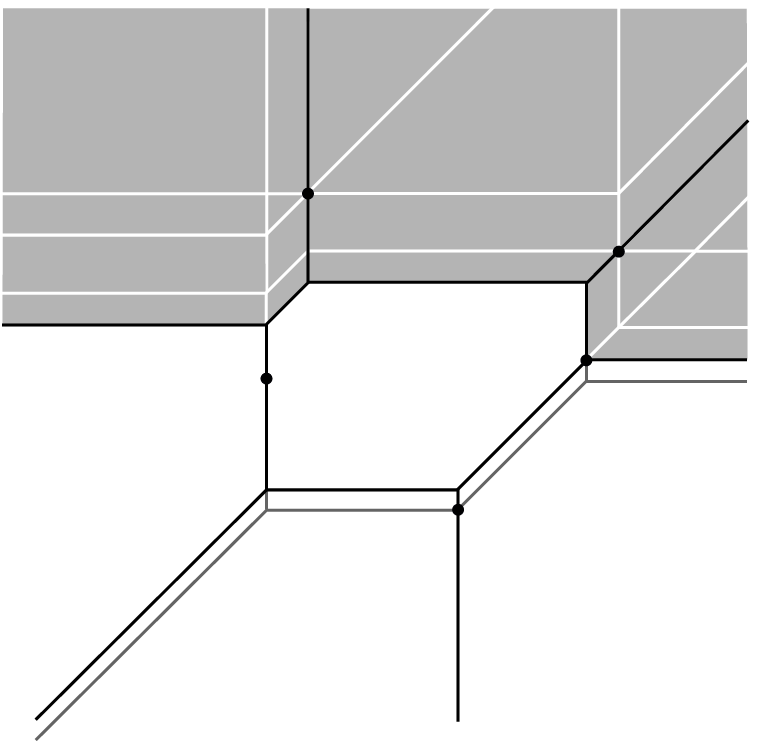}}
\end{tabular}
\caption{The pencil of elliptic curves sweeps the plane. Whenever a marked points becomes a vertex of the tropical curve, there are two possibilities to move on in the pencil leading to the various branches in Fig.~\ref{fig:pencil}. We depict here the tropical curves of the the first $6$ edges in the pencil}.
\label{fig:pencil1}    
\end{figure}

\begin{figure}[t]
\sidecaption[t]
\begin{tabular}{cc}
(g)& (h)\\
\includegraphics[width=0.45\textwidth]{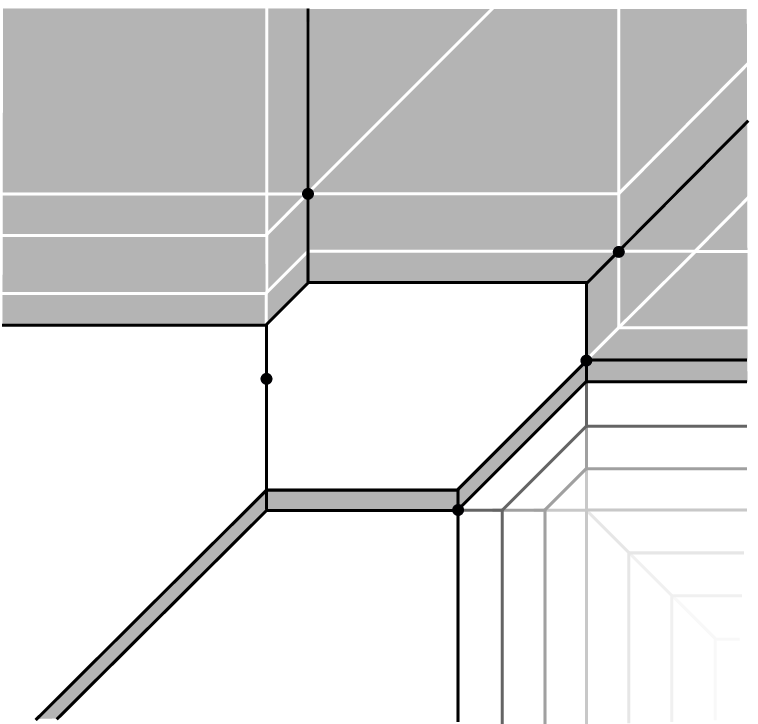} & \includegraphics[width=0.45\textwidth]{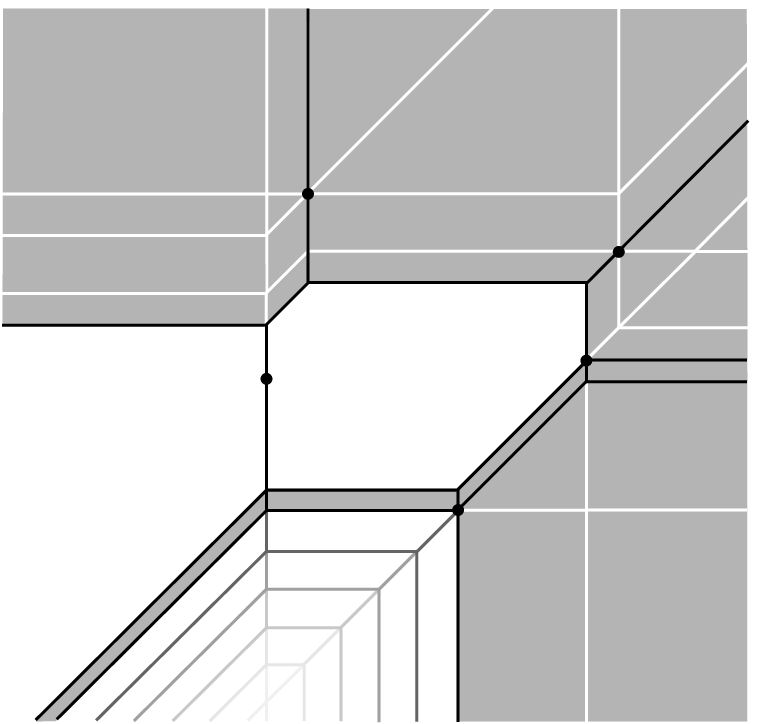}\\
(i)& (j)\\
\includegraphics[width=0.45\textwidth]{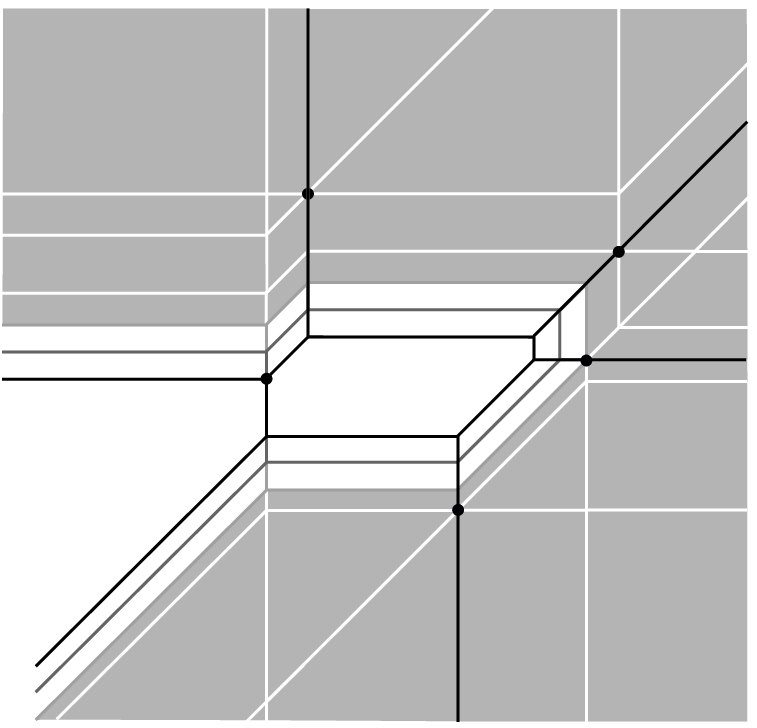} & \includegraphics[width=0.45\textwidth]{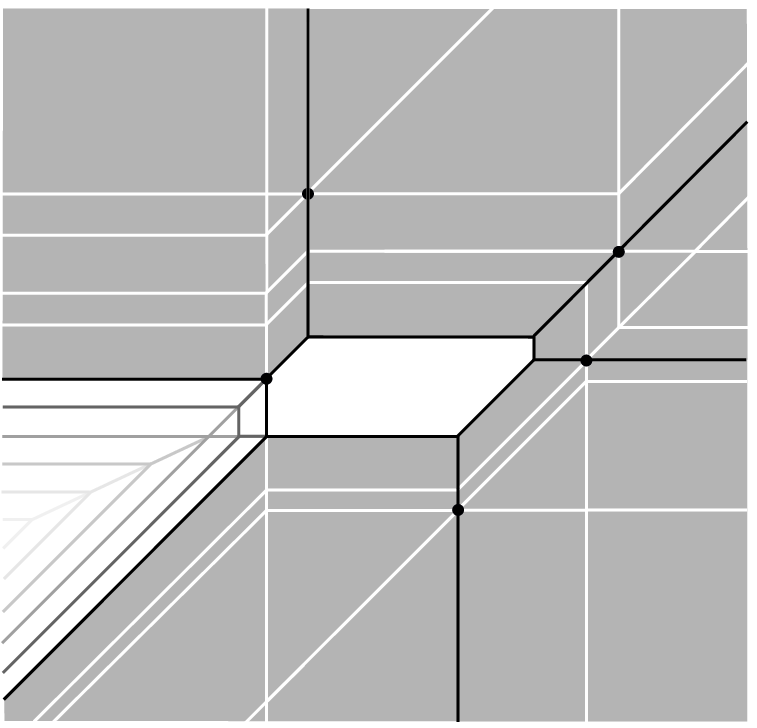}\\
(k)& (l)\\
\includegraphics[width=0.45\textwidth]{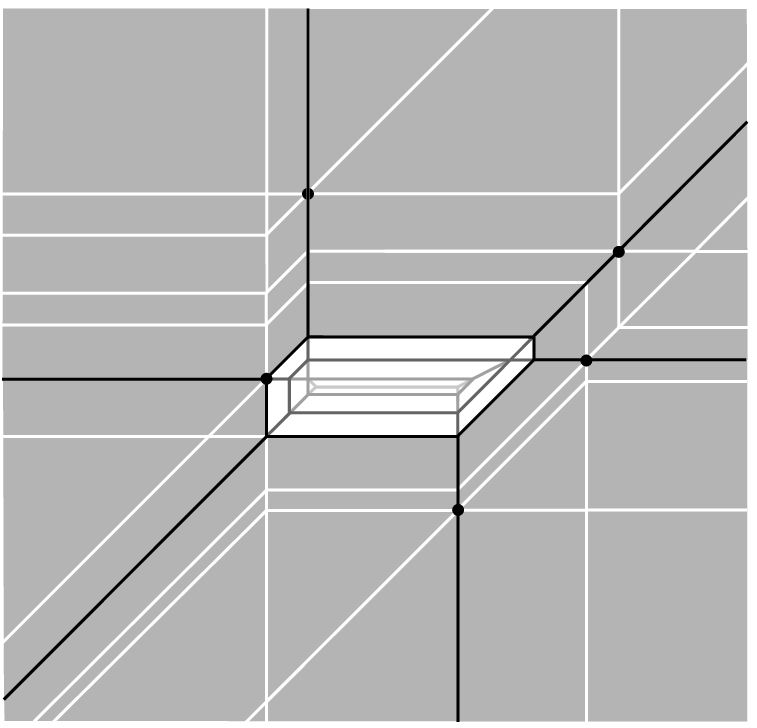} & \includegraphics[width=0.45\textwidth]{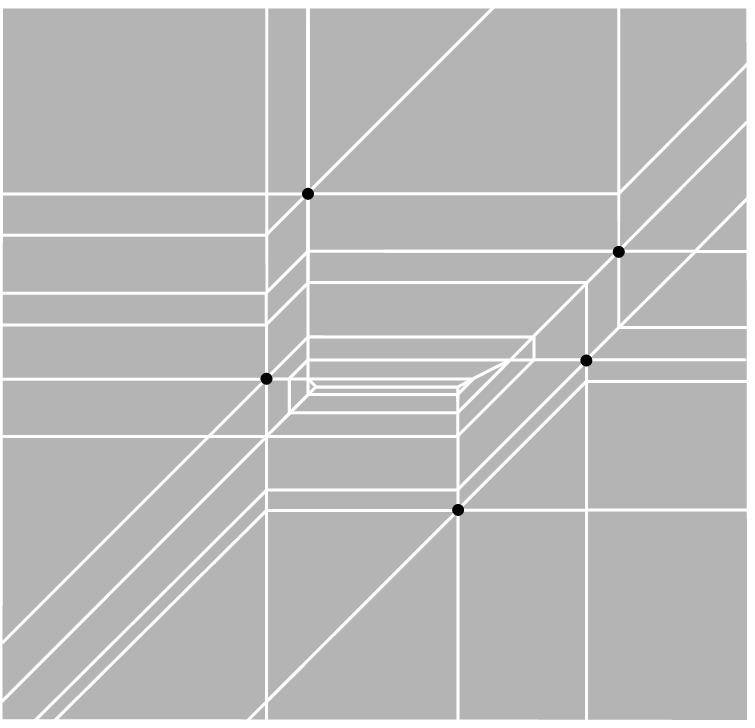}
\end{tabular}
\caption{Complementing Fig.~\ref{fig:pencil1}, we depict the tropical elliptic curves for the remaining edges in the pencil. Picture (l) shows the union of all rational curves in the pencil}.
\label{fig:pencil2}   
\end{figure}
Indeed, the movable upper left branch of our tropical elliptic curve of Fig.~\ref{fig:ell-curve} moves as shown in picture (a) of Fig.~\ref{fig:pencil1}.
It accommodates a nodal rational curve that shows as a tropical curve with a four-valent vertex. 
In fact as a marked parametrized tropical curve, the four-valent point is not actually a vertex, i.e. it is not the image of a vertex of the graph under the immersion $h$. 
The nodal curve is indicated by a star in Fig.~\ref{fig:pencil}.
Moving past the nodal curve, our elliptic tropical curve eventually attains the property that one of its vertices coincides with one of the $5$ fixed points. At this stage we have swept through the upper left section of $\RR^2$ with tropical curves parametrized by the branch of the pencil in Fig.~\ref{fig:pencil} marked by (a) and we reached a vertex of the pencil. 
From the vertex there are two directions to move on in the pencil corresponding to the two regions next to the marked point in the complement of the vertex-curve. In step (b), we move into the region to the upper right where we find another nodal curve. We carry on like this moving through further edges of the pencil. The steps (a)-(f) are depicted in Fig~\ref{fig:pencil1}, the steps (g)-(k) are depicted in Fig~\ref{fig:pencil2}. 
\FloatBarrier
The last step (k) in which the tropical curves sweep the central region is somewhat special: it gives the edge of the pencil with a univalent vertex. Not only does this edge contain two nodal curves in its interior, 
furthermore, the univalent vertex is also a rational curve of multiplicity four as it has two vertices each of multiplicity two. In total, we have found 8 nodal curves of multiplicity one and another rational curve of multiplicity $4$ adding up to the expected count:
$$N^{0,\trop}_{\Delta,\Sigma}=1+1+1+1+1+1+1+1+4=12.$$
Finally picture (l) in Fig~\ref{fig:pencil2} shows the union of all rational curves which gives a polyhedral subdivision of $\RR^2$ in which the fixed points are vertices.

\subsubsection{Is it possible to find twelve tropical curves of multiplicity one?}
One may wonder whether it is necessary to have a tropical curve of higher multiplicity in the pencil or whether there exists a configuration of $12$ multiplicity one curves going through some other positioning of the $5$ fixed points. 
From the experience of our construction of the pencil, one might get the impression that no matter where we place the $5$ points there should always be some region in the middle (in the cycle that gives the genus of the elliptic curve) that needs to be swept by the pencil leading to a univalent vertex of the pencil. This vertex is necessarily not an elliptic curve and most likely of higher multiplicity. 
While this is a hand-waving argument, there is a rigorous proof for the non-existence of a configuration of $12$ curves that has been known to real tropical geometers like Ilia Itenberg and Grigory Mikhalkin. It makes use of the Welschinger invariant. Recall the definition of the Mikhalkin multiplicity from Def.~\ref{def-mik-mult}.
We take from \cite[Def. 7.19]{mik} the following.
\begin{definition}[Welschinger multiplicity] 
\label{def-welsch-mult}
Let $h:(\Gamma,x_1, \ldots, x_n)\rightarrow M_\mathbb{R}$ be a simple marked parametrized tropical curve with $\dim M_\mathbb{R}=2$. 
For $V\in\Gamma$ a vertex, we define
$$
\Mult_V^{\RR,W}(h)=\left\{
\begin{array}{ccl} 
(-1)^{\frac{\Mult_V(h)-1}{2}}&\ \ &\hbox{if $\Mult_V(h)$ is odd}\\[2mm]
0 &&\hbox{otherwise}
\end{array}\right.
$$
and 
$$
\Mult^{\RR,W}(h):=\prod_{V\in \Gamma^{[0]}} \Mult^{\RR,W}_V(h).
$$
\end{definition}

\begin{definition}[Tropical Welschinger invariant]
\label{def-welsch-inv}
Let $\Delta$ be a degree for a smooth toric surface $\Sigma$, in particular $\dim M_\RR=2$. Set $k=|\Delta|-1$ and let $P_1,...,P_{k}\in M_\RR$ points in general position. We define the tropical Welschinger invariant
$$W^\trop(\Sigma, \Delta, P_1,...,P_k)=\sum_{h} \Mult^{\RR,W}(h)$$
where the sum ist over all rational tropical curves of degree $\Delta$ meeting the $P_i$, i.e. over
$$\{h\in\shM_{0,k}(\Sigma,\Delta)\,|\, h(x_i)=P_i\}.$$
\end{definition}
The tropical Welschinger invariant draws its significance from the following theorem.
\begin{theorem}{\bf (Mikhalkin \cite[Thm.~6]{mik}, Welschinger \cite{welsch}, cf. \cite[Thm. 3.1]{Sh06})}
Assume the setup of Def.~\ref{def-welsch-inv}. The number $W^\trop(\Sigma, \Delta)=W^\trop(\Sigma, \Delta, P_1,...,P_k)$ is independent of the position of $P_1,...,P_k$ and gives a lower bound on the number of real curves of degree $\Delta$ passing through $k$ real points in the corresponding toric surface over $\RR$.
\end{theorem}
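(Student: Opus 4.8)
The plan is to separate the statement into two essentially independent assertions: (i) invariance of $W^\trop(\Sigma,\Delta,P_1,\dots,P_k)$ under moving the point configuration, and (ii) the lower bound for real enumerative geometry. I would prove (i) by a tropical wall-crossing argument in the spirit of Prop.~\ref{shapeM0k}, and deduce (ii) from the real (sign-refined) form of the correspondence Theorem~\ref{maintheorem}.

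For (i), consider the configuration space $(M_\RR)^k$ with $k=|\Delta|-1$. By Prop.~\ref{shapeM0k}(2) there is an open dense set of \emph{generic} configurations over which the set $\{h\in\shM_{0,k}(\Sigma,\Delta)\mid h(x_i)=P_i\}$ is finite and consists of simple trivalent curves, and its complement is a finite union of codimension-one ``walls''. It suffices to show that the signed count is unchanged when a generic path crosses one wall transversally. First I would classify the wall types: (A) a bounded edge of some curve degenerates to zero length, so two trivalent vertices of $\Gamma$ collide into a single four-valent vertex; (B) a marked point $h(x_i)=P_i$ passes through the image of a vertex of $\Gamma$; and (C) a vertex escapes to infinity, which cannot occur for configurations confined to a fixed compact region (a standard boundedness argument shows all relevant curves stay in a bounded part of $M_\RR$). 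On type (B) walls the two combinatorial types on either side have equal $\Mult^{\RR,W}$, because passing a contracted marked edge across a trivalent vertex leaves every vertex multiplicity untouched. The substantive case is (A): near the wall, a local four-valent vertex with balanced primitive edge data $(w_i,v_i)_{i=1}^4$ has at most three trivalent resolutions, of which precisely those compatible with the fixed edge slopes occur as the left and right fibres of the path, and one must verify the local identity $\sum_{\text{left}}\Mult^{\RR,W}(h)=\sum_{\text{right}}\Mult^{\RR,W}(h)$.

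I expect this four-valent identity to be the heart of the argument and its main obstacle. For the ordinary Mikhalkin multiplicity the analogous identity is what underlies the well-definedness of $N^{0,\trop}_{\Delta,\Sigma}$ (cf.\ the discussion before Def.~\ref{def-numbers-for-count}); the Welschinger refinement uses the extra fact that $\Mult^{\RR,W}_V$ \emph{vanishes} whenever $\Mult_V$ is even, so that any resolution introducing an even-multiplicity vertex drops out of the count, while for the surviving (odd) resolutions the parity bookkeeping in the exponents $(\Mult_V(h)-1)/2$ must be shown to match on the two sides. Carrying this out reduces to a short but careful case analysis of the $2$-dimensional lattice geometry at the four-valent vertex, organized by the possible weighted slope configurations; this is the content of Def.~7.19 and the surrounding discussion in \cite{mik}.

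For (ii), I would invoke the real form of the correspondence theorem: for a generic real point configuration $Q_1,\dots,Q_k$ on the real toric surface tropicalizing to $P_1,\dots,P_k$, each simple tropical curve $h$ in the count has a finite set of real algebraic lifts passing through the $Q_i$, and the sum of the Welschinger signs of those lifts (one sign per isolated real node) equals $\Mult^{\RR,W}(h)$; summing over $h$ gives that $W^\trop(\Sigma,\Delta)$ equals the Welschinger number, i.e.\ the signed count of real rational curves of degree $\Delta$ through $Q_1,\dots,Q_k$. Since each summand in that signed count has absolute value $1$, the triangle inequality yields that the number of such real curves is at least $|W^\trop(\Sigma,\Delta)|$, which is the asserted lower bound; together with part (i) (equivalently, with Welschinger's symplectic invariance of the signed real count) this bound holds for every generic real configuration. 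The one genuinely new ingredient beyond Theorem~\ref{maintheorem} here is the sign-refined lifting count, which is Shustin's patchworking computation, cf.\ \cite[Thm.~3.1]{Sh06}.
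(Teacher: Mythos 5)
The paper offers no proof of this theorem --- it is stated as a quoted result of Mikhalkin, Welschinger and Shustin --- so there is no internal argument to compare against; what can be judged is whether your outline matches the proof in the cited literature, and it does. Your decomposition (invariance by wall-crossing in the configuration space of points, lower bound by a sign-refined correspondence theorem plus the triangle inequality) is the standard route, and your classification of the codimension-one walls --- quadrivalent-vertex degenerations, a marked point passing through a vertex, with escape to infinity excluded by boundedness --- is correct, as is the observation that type (B) walls leave $\Mult^{\RR,W}$ untouched because vertices adjacent to marked edges carry multiplicity one by Def.~\ref{def-mik-mult}. Two caveats. First, essentially all of the mathematical content of part (i) sits in the local identity $\sum_{\mathrm{left}}\Mult^{\RR,W}(h)=\sum_{\mathrm{right}}\Mult^{\RR,W}(h)$ at a four-valent vertex, which you correctly isolate as the heart of the matter but do not carry out; unlike the analogous identity for $\Mult$, it is not formal, since the vanishing of $\Mult_V^{\RR,W}$ at even-multiplicity vertices and the parity bookkeeping in the exponents $(\Mult_V(h)-1)/2$ genuinely require the lattice-geometric case analysis you defer to \cite{mik} (see also \cite{IKS03}). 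Second, in part (ii) the assertion that the Welschinger signs of the real algebraic lifts of a simple tropical curve $h$ sum to $\Mult^{\RR,W}(h)$ is precisely the sign-refined version of Theorem~\ref{maintheorem} and is the other nontrivial input (Shustin's patchworking, \cite{Sh06}). So your proposal is an accurate and well-organized reduction of the theorem to these two cited facts, but it proves neither of them; as a self-contained argument it is a roadmap rather than a proof, which is consistent with how the paper itself treats the statement.
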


%\begin{theorem} 
%\label{thm-shustin}
%Let $X_\Sigma$ be a toric Del Pezzo surface given by a fan $\Sigma$, let $\Delta\in T_\Sigma$ be the degree of the standard toric anti-canonical divisor and set $r=|\Delta|$.
%Let $x_1,..,x_r\in X_\Sigma$ be real points in general position, i.e. fixed under the anti-holomorphic involution. The Welschinger invariant $W_r(\Sigma, \Delta)$ satisfies the following equality
%$$W_r(\Sigma, \Delta) = \sum_A (-1)^{a}\prod_{\tau \hbox{ \scriptsize even}}\frac{|\tau|}2$$
%where the sum is over all rational tropical curves of degree $\Delta$ containing $x_1,...,x_r$ that do not have an edge of non-zero even weight.
%Each of these curves has a combinatorial type given by a subdivision of the Newton polytope and $a$ is the number of integral points that lie in the interior of a triangle. 
%The product is over all triangles $\tau$ of even lattice area.
%\end{theorem}

Most interesting for us is the property of the Welschinger invariant to be independent of the position of the points. Let us apply this to the toric del Pezzo of degree $6$ that we studied in the previous sections. We can readily compute the Welschinger invariant from our findings of rational curves via Def.~\ref{def-welsch-mult} and it yields
$$W^\trop(\Sigma, \Delta)= 1+1+1+1+1+1+1+1+0=8.$$
If there was another configuration of the $5$ points for which we had $12$ rational tropical curves of multiplicity one going through them, the calculation for the Welschinger invariant would read 
$$W^\trop(\Sigma, \Delta) = 1+1+1+1+1+1+1+1+1+1+1+1=12$$
however this would lead to a contradiction to the previous calculation as the invariant doesn't depend on the configuration of points we choose to compute it from. 
Knowing now that 12 curves are impossible, we can ask which other findings of curves would give the correct Welschinger invariant of $8$. 
\begin{exercise} 
\begin{enumerate}
\item By going through the possible regular triangulations of the Newton polytope, check that rational tropical curves of degree $\Delta$ can have Mikhalkin multiplicity $1,3,4$. (Note that there is a triangulation featuring only one area two triangle but this triangulation is not regular.)
\item Check that we have the following table on contributions of a rational tropical curve to the invariants. 
\begin{center}
\bgroup
\def\arraystretch{1.2}%  1 is the default, change whatever you need
\begin{tabular}{l|c|c|c|c}
Multiplicity (i.e. contribution to $N^{0,\trop}_{\Delta,\Sigma})$ \,&\,1\,&\,3\,&\,4\,\\
\hline
contribution to $W^\trop(\Sigma, \Delta)$ & 1 &-1&0
\end{tabular}
\egroup
\end{center}
\item Deduce that the conditions $N^\trop(\Delta,\Sigma)=12$ and $W^\trop(\Sigma, \Delta)=8$ allow for exactly one further possible configuration of rational tropical curves through $5$ points. It features $10$ curves and the multiplicities are respectively
$$1+1+1+1+1+1+1+1+1+3.$$
\item Verify the existence of this configuration by using the tropical pencil construction of the previous section: Start with the multiplicity three curve as the univalent vertex of the pencil and start sweeping from there.
\end{enumerate}
\end{exercise}

\section{From tropical curves to algebraic curves and back}
\label{section:NS}
We are going to sketch the proof of Theorem~\ref{maintheorem}. 
This will be similar to the exposition in \cite{kan}, while the original is \cite{NS06}.
The proof is a matching of the following sets
$$\left\{\hbox{tropical curves}\right\}\stackrel{1:\op{Mult}}{\longleftrightarrow} \left\{\substack{\hbox{torically transverse}\\ \hbox{log stable curves}}\right\}\stackrel{1:1}{\longleftrightarrow} \left\{\hbox{torically transverse curves}\right\}$$
and thus involves four steps constructing the maps in each direction. The main tool is a toric degeneration.

\subsection{Toric degenerations compatible with tropical curves}
\label{section-toric-degen-from-trop}
Let $X_\Sigma$ be a smooth toric surface given by a fan $\Sigma$ in $M_\RR$. 
This is the surface that we want to count rational curves in.
Let $\Delta\in T_\Sigma$ be a given degree, $s:=|\Delta|-1$ and $P_1,...,P_s\in M_\QQ=M\otimes_\ZZ\QQ$ points in general position.
By Prop.~\ref{shapeM0k}, the set $\shM_{0,s}(\Sigma,\Delta)$ is finite and consists of simple marked parametrized tropical curves $h_i:(\Gamma_i,x^i_1,...,x^i_s)\ra M_\RR$.
We are looking for a polyhedral decomposition of $M_\RR$ with the following properties
\begin{enumerate}
\item the tropical curves are contained in the 1-skeleton of $\P$, i.e. $$h_i(\Gamma_i)\subset\bigcup_{\tau\in\P,\dim\tau=1}\tau,$$
\item $P_1,...,P_s$ are vertices of $\P$,
\item the vertices in $\P$ have rational coordinates and the facets in $\P$ have rational slope,
\item each cell in $\P$ has at least one vertex,
\item for each $\tau\in\P$ we have $\lim_{r\ra 0}r\tau$ is a cone in $\Sigma$.
\end{enumerate}
This can be obtained as follows. Let $\P_i$ be the polyhedral decomposition of $M_\RR$ induced by $h_i(\Gamma_i)$. Consider their intersection
$$\P=\P_1\cap...\cap\P_s=\{\tau_1\cap...\cap\tau_s|\tau_i\in\P_i\}\setminus\{\emptyset\}.$$
It satisfies 1 and 3 but not necessarily 2,4, or 5. However if we further intersect with several translates of the subdivision $\Sigma$ moving the origin of $\Sigma$ to each of the $P_i$ we can make sure is also satisfies 2,4,5.
It might be unnecessary to add translates of $\Sigma$, e.g. in the example of section \ref{section-sweep-pencil} for which picture (l) of Fig.~\ref{fig:pencil2} shows the union of rational curves through $P_1,...,P_5$ we find properties 1-5 satisfied directly.
There situations however where it becomes necessary to add translates of $\Sigma$, e.g. when $N^{0,\trop}_{\Sigma,\Delta}=1$ than 2 is not satisfied. This happens for instance when $X_\Sigma=\PP^2$ and when $\Delta$ is the sum of the primitive generators of the rays. Also one should note that a tropical curve might just be a straight line, e.g. the tropical version of the rational curve $\PP^1\times\{0\}$ in $X_\Sigma=\PP^1\times\PP^1$.

We replace $M$ by $\frac1aM$ where $a$ is the common denominator of the coordinates of the vertices of $\P$. This doesn't change $N^{0,\trop}_{\Sigma,\Delta}$ and turns $\P$ into an integral subdivision.
Note that $(M_\RR,\P)$ is a fan picture (dual intersection complex) for a log Calabi-Yau space in the sense of Def.~\ref{def-log-CY-space}. One obtains a degenerating family $f:X\ra\AA^1$ as follows.
Let $\Sigma_\P$ be the \emph{fan over $\P$}, i.e.
$$\Sigma_\P=\{\overline{\op{Cone}(\sigma)}\mid\sigma\in\P\}\cup \{\overline{\op{Cone}(\sigma)}\cap (M_\RR\times\{0\})\mid\sigma\in\P\}$$
where 
$$\op{Cone}(\sigma)=\{(rm,r)\mid m\in\sigma,r\in\RR_{\ge 0}\}\subset M_\RR\oplus\RR$$ and $\overline{\op{Cone}(\sigma)}$ is its closure.
We have $X$ is the toric variety associated to $\Sigma_\P$, i.e. $X=X_{\Sigma_\P}$ and the map $X\ra\AA^1$ is given by the map of fans induced by the projection $M_\RR\oplus\RR\ra\RR$.
By property 5 of $\P$, we have that $\Sigma_\P$ has $\Sigma$ as the subfan living in $M_\RR\times\{0\}$. 
This means that the general fibre of $f$ is $X_\Sigma$.
Furthermore, $\P$ is the intersection of $\Sigma_\P$ with $M_\RR\times\{1\}$, i.e. geometrically $f$ is a toric degeneration of $X_\Sigma$ and $\P$ indeed gives the fan picture for the central fibre. See Fig.~\ref{fig:fan-of-deg-del-Pezzo} for an example.
\begin{figure}[t]
\sidecaption[t]
\includegraphics[width=0.63\textwidth]{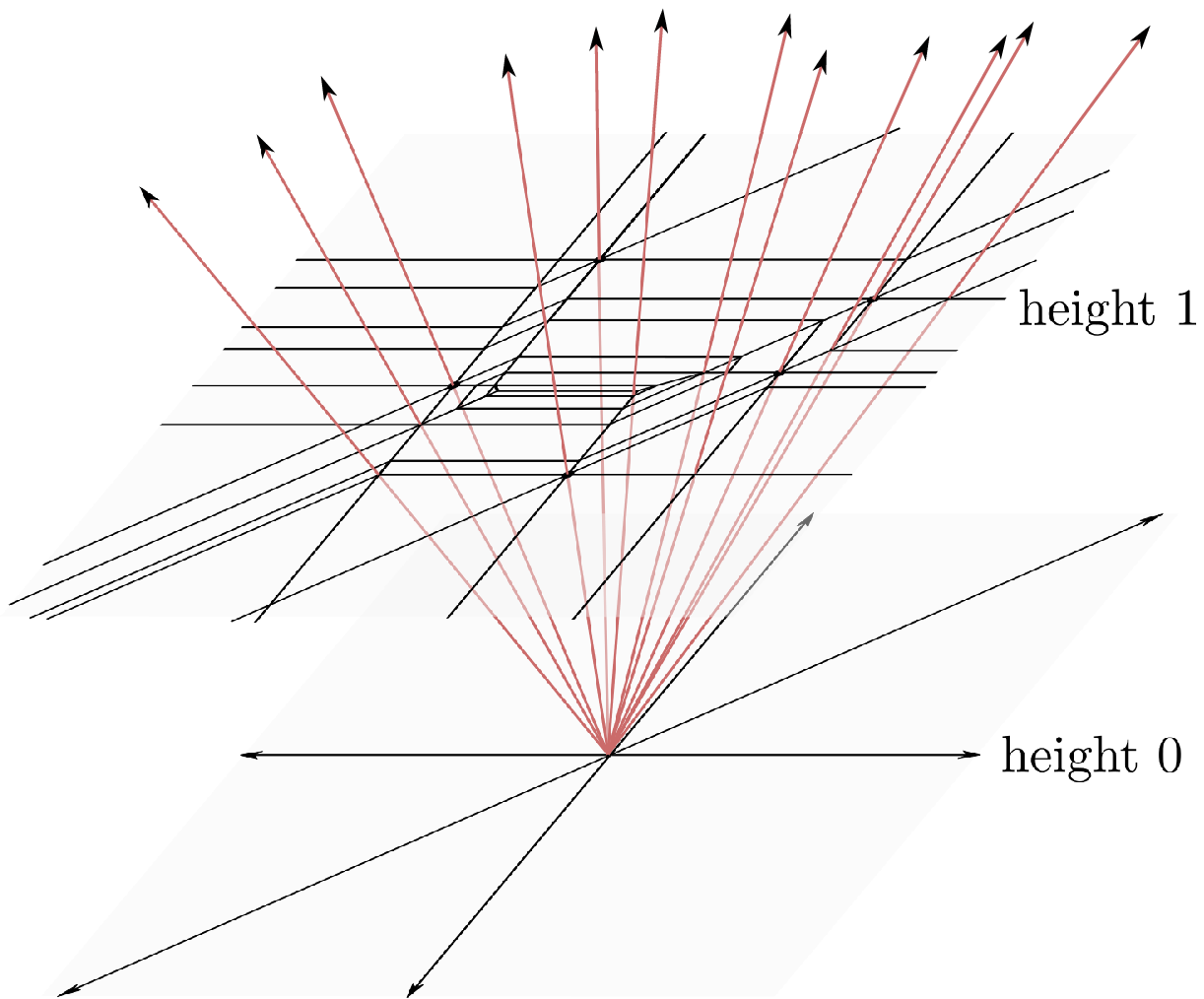}
\caption{The fan of the toric degeneration of a degree $6$ del Pezzo given by the polyhedral decomposition via the union of the rational tropical curves in picture (l) of Fig.~\ref{fig:pencil2}.}
\label{fig:fan-of-deg-del-Pezzo}
\end{figure}

\subsection{The different counts to be matched}
\label{section-match-counts}
Let $L_i$ be the rank one sublattice of $M\oplus\ZZ$ generated by $(P_i,1)$ and let $\GG(L_i)\subset\GG(M\oplus\ZZ)$ denote the corresponding one-dimensional subtorus of the open dense torus acting on $X$.
Choose general points $Q_1,...,Q_s\in \GG(M\oplus \ZZ)$ and consider $\overline{\GG(L_i).Q}$, closure of the $\GG(L_i)$-orbit of of $Q_i$ in $X$. The composition $\overline{\GG(L_i).Q}\subset X\stackrel{f}{\longrightarrow}\AA^1$ is an isomorphism, so each $\overline{\GG(L_i).Q}$ gives a section $\sigma_i:\AA^1\ra X$ of $f$.
\begin{center}
\centerline{
\xymatrix@C=30pt
{
X \ar^f[r]& \AA^1 
\ar^{\quad\sigma_1}@/^/[l] 
\ar^{\scriptsize\qquad \ddots}@{}/^0.8pc/
\ar^{\qquad\sigma_s}@(d,dr)[l] 
}}
\end{center}
Set $X_0=f^{-1}(0)$ and more generally $X_t=f^{-1}{t}$ for $t\in\AA^1$. We are next going to match the sets
\begin{enumerate} 
\item Marked parametrized rational tropical curves $(h,\Gamma,x_1,...,x_s)$ of degree $\Delta$ through $P_1,...,P_s$, i.e. the set $\shM_{0,s}(\Sigma,\Delta)$.
\item Torically transverse log stable genus zero curves $$g:C^\dagger\ra X_0^\dagger$$ going through $\sigma_1(0)$,...,$\sigma_s(0)$.
\item Torically transverse stable genus zero curves in $X_t$ going through $\sigma_1(t)$,...,$\sigma_s(t)$ for a general $t$.
\end{enumerate}
By what we said before, for any $t\neq 0$, $X_t\cong X_\Sigma$ and $\sigma_1(t)$,...,$\sigma_s(t)$ lie in general position for $t$ sufficiently general, so the count in 3. is independent of the choice of $t\neq 0$ by usual Gromov-Witten theory.
Let $K$ be the algebraic closure of $\CC((t))$, so we have inclusions 
$$\CC[t]\subset \CC((t)) \subset K$$
that gives the generic point $\eta:\Spec K\ra \AA^1$ of the base of $f$ and we may consider the fibre of $f$ over it which is
$$X_\eta = X\times_{\AA^1}\Spec K$$
and because the family $X$ is trivial outside of the central fibre, we have
$X_\eta=X_\Sigma\times_{\Spec\CC}\Spec K$
which is just the toric variety for the fan $\Sigma$ over the base field $K$. Furthermore the restriction of $\sigma$ to the point $\eta$, i.e. the composition
$$\Spec K\stackrel{\eta}{\lra} \AA^1\stackrel{\sigma_i}{\lra} X$$
gives a point $\sigma_i(\eta)\in X_\eta$.
We are going to replace the count in 3. by the following count at the generic fibre of $f$.
\begin{enumerate} \setcounter{enumi}{3}
\item Torically transverse stable genus zero curves in $X_\eta$ going through $\sigma_1(\eta)$, ..., $\sigma_s(\eta)$.
\end{enumerate}
The count in 4. coincides with that in 3. because Gromov-Witten invariants don't depend on the algebraically closed base field of characteristic zero that we define $X_\Sigma$ over.

\subsection{Turning log curves into tropical curves}
\label{section-log2trop}
Let us start with a log stable curve $g:C^\dagger\ra X_0^\dagger$ going through $\sigma_1(0)$,...,$\sigma_s(0)$. The central fibre $X_0$ is a union of closed toric strata $D_\tau$ for $\tau\in\P$ ($D_\tau$ is the closure of the torus orbit given by $\op{Cone}(\tau)\in \Sigma$). The components are actually $D_v$ for $v\in\P$ a vertex.
One checks that 
$$\sigma_i(0)\in D_{P_i},$$
in fact it lies in the dense torus of $D_{P_i}$. 
Here it becomes handy that $P_i$ are vertices of $\P$ which we ensured in section~\ref{section-toric-degen-from-trop}.
A component of $C_j$ of $C$ maps under $g$ into some toric surface $D_{v_j}$ for $v_j$ a vertex in $\P$. It doesn't map into the boundary divisor of $D_v$ by the toric transverseness assumption on $g$.

We build the tropical curve $h:(\Gamma,x_1,...,x_s)\ra  M_\RR$ corresponding to the log curve $g$ by first constructing its image $h(\Gamma)$.
The vertices of $h(\Gamma)$ will be 
$$\{v_j\mid C_j\subset C\hbox{ is a component}\}$$ and we connect two vertices by a straight line whenever the corresponding components of $C$ map to different components of $X_0$. 
It can happen that different $C_j$ map to the same $v_j$. This won't bother us.
We yet lack the rays shooting off to infinity for $h(\Gamma)$.
We add a ray $\rho\in\Sigma$ at the vertex $v_j$ for every point of intersection of $C_j$ with a divisor $D_\omega\subset D_{v_j}$ for $\omega\in\P$ a ray that is a translate of $\rho$. 
We have now built the image $h(\Gamma)$ of a tropical curve containing $P_1,...,P_s$. 
Fig.~\ref{fig-build-trop-curve} illustrates this process.
\begin{figure}[t]
\sidecaption[t]
\includegraphics[width=0.63\textwidth]{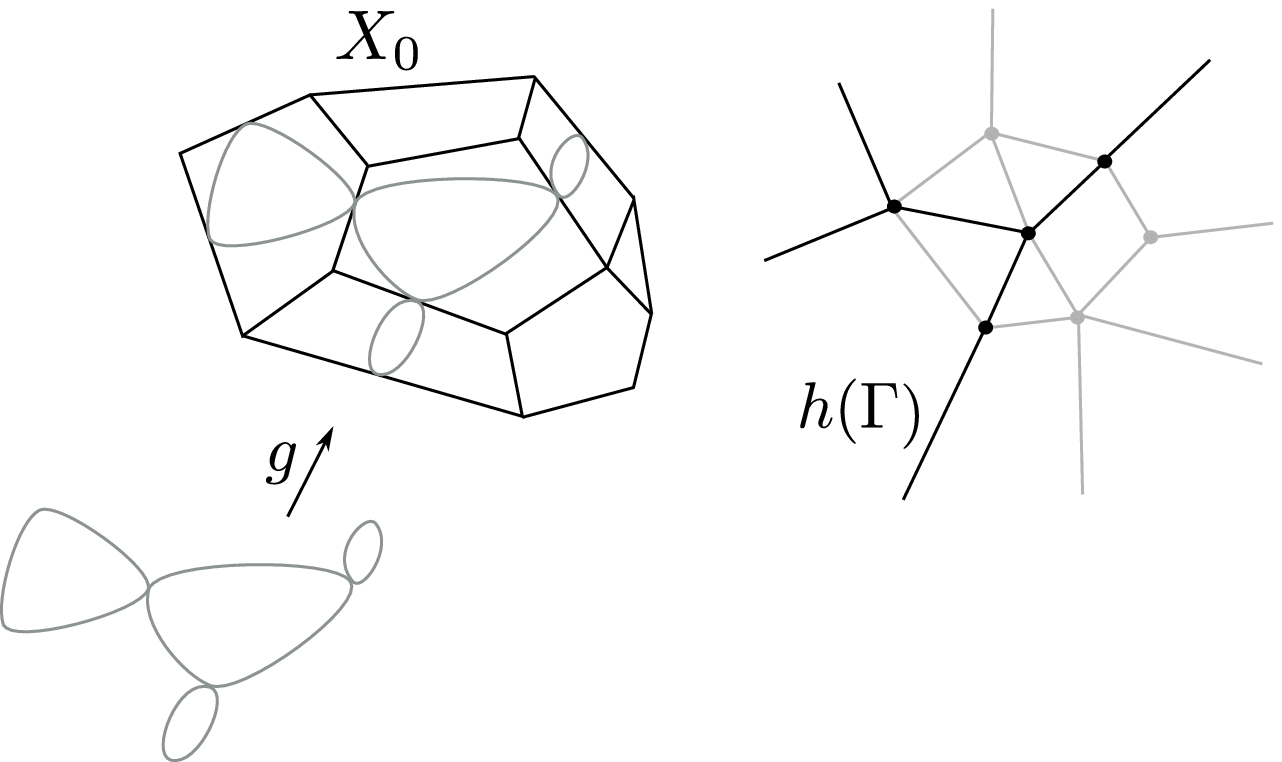}
\caption{Starting from a log curve that maps to $X_0$, we construct the associated tropical curve as part of the one-skeleton of $\P$.}
\label{fig-build-trop-curve}
\end{figure}
It remains to attach weights to edges and rays and to check that the balancing condition holds. 
To then obtain $\Gamma$ is straightforward as it is determined by $h(\Gamma)$ plus weights and the $P_i$.
Indeed, the images of edges of $\Gamma$ under $h$ meet transversely by the assumption of the $P_i$ to be in general position. As $\Gamma$ is trivalent, a higher valency than three of a vertex in $h(\Gamma)$ means a crossing of two edges of $\Gamma$.
Even beyond this, one should note that the set of vertices $v_j$ just given may be larger than the actual set of tropical curve vertices, for instance when a couple of intervals connect to form a longer interval, the midpoints get ignored in the definition of 
$(h,\Gamma,x_i)$ unless they are marked points. For the reverse construction later on, one simply retrieves the midpoints from the knowledge of $\P$.

\subsubsection{The weights}
Let us pick an edge $E$ of $h(\Gamma)$ that we want to associate a weight to. 
If $E$ is a ray with vertex $v$ then we take for its weight the sum of the intersection multiplicities with $D_E$ of the components $C_j$ of $C$ that map to $D_v$. 
A posteriori we will know that there is only one such component meeting $D_E$ and it has intersection multiplicity one with $D_E$ because the tropical curve we produce is going to be simple by Prop.~\ref{shapeM0k} and unbounded edges of simple curves have weight one.

Let now $E$ be a bounded edge, so $D_E$ is the intersection of two components $D_{v_1}, D_{v_2}$ of $X_0$. 
We define the weight of $E$ to be the sum of the intersection multiplicities with $D_E$ of all components of $C$ that map to $D_{v_1}$ and we need that this number coincides with the one where we replace $D_{v_1}$ by $D_{v_2}$. This is guaranteed by the log geometry:

\begin{lemma} 
Let $p$ be an intersection point of two components $C_1,C_2$ of $C$ that map to $D_{v_1}, D_{v_2}$ where $v_1$ and $v_2$ are connected by an edge $E$ and $g(p)\in D_E$. 
The intersection multiplicity of $g(C_1)$ with $D_E$ coincides with the intersection multiplicity of $g(C_2)$ with $D_E$.
\end{lemma}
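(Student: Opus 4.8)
The statement is local at the node $p$, so the plan is to read both intersection multiplicities off the single homomorphism induced by $g$ on the stalks of the ghost sheaves at $p$. Work in \'etale-local coordinates on $X$ near $g(p)$ in which $D_{v_1}=\{x=0\}$, $D_{v_2}=\{y=0\}$ and $D_E=\{x=y=0\}$; then the intersection multiplicity of $g(C_1)$ with $D_E$ at $g(p)$ is $w_1:=\op{ord}_p(g^*y|_{C_1})$ and that of $g(C_2)$ is $w_2:=\op{ord}_p(g^*x|_{C_2})$, and we must show $w_1=w_2$. Observe first that by torical transversality $g(p)$ lies in the open orbit of $D_E$ — it cannot meet the $0$-dimensional strata of $X_0$, which sit in codimension two in each $D_{v_i}$ — so the stalk of $\overline{\mc M}_{X_0}$ at $g(p)$ may be computed at the generic point of $D_E$.

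The core of the argument is to write down the two local models and the induced map between their ghost sheaves. On $C$, F.~Kato's structure theorem (Section~\ref{log-sm-curves}, case~(2)) presents $C^\dagger$ near $p$ as $\Spec k[u,v]/(uv)$ with branches $C_1=\{v=0\}$, $C_2=\{u=0\}$, and produces a positive integer $\ell_p$ together with a chart identifying
$$\overline{\mc M}_{C,p}=\NN^2\oplus_\NN\NN=\langle e_1,e_2,e_t\mid e_1+e_2=\ell_p\,e_t\rangle,$$
where $e_i$ is the class of the coordinate on $C_i$, $e_t$ is the image of the generator of $\overline{\mc M}_{\Spec k}=\NN$, and the chart sends $((a,b),q)$ to $u^av^b$ if $q=0$ and to $0$ otherwise (the base being the standard log point). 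On $X_0$, the local toric model of Definition~\ref{toricdegeneration}(4) identifies $X_0^\dagger\to\Spec k^\dagger$ near $g(p)$ with the central fibre of $\{xy=t^{\ell}\}\to\AA^1_t$ (up to a trivial torus factor), where $\ell$ is the lattice length of the edge $E$ in $\P$; hence
$$\overline{\mc M}_{X_0,g(p)}=\langle g_1,g_2,g_t\mid g_1+g_2=\ell\,g_t\rangle,$$
with $g_1=[x]$, $g_2=[y]$, and $g_t$ the image of the generator of $\overline{\mc M}_{\Spec k}$.

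Now use the two compatibilities satisfied by $g$. Since $g$ is a morphism over $\Spec k^\dagger$, the induced map on ghost sheaves $\overline{g^\#}\colon\overline{\mc M}_{X_0,g(p)}\to\overline{\mc M}_{C,p}$ sends $g_t\mapsto e_t$, so applying it to $g_1+g_2=\ell\,g_t$ yields $\overline{g^\#}(g_1)+\overline{g^\#}(g_2)=\ell\,e_t$. Next, lift $g_1$ to the section $z^{g_1}$ of $\mc M_{X_0}$ (so $\alpha_{X_0}(z^{g_1})=x$); the compatibility $\alpha_C\circ g^\#=g^*\circ\alpha_{X_0}$ forces $\alpha_C(g^\#(z^{g_1}))=g^*x$, which vanishes identically on $C_1$ and has order $w_2$ on $C_2$, so comparing with the chart formula pins down the exponent of $g^\#(z^{g_1})$ and gives $\overline{g^\#}(g_1)=w_2\,e_2$; symmetrically (using $z^{g_2}$ and $g^*y$) one gets $\overline{g^\#}(g_2)=w_1\,e_1$. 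Substituting into the previous identity, $w_1 e_1+w_2 e_2=\ell\,e_t$ in $\overline{\mc M}_{C,p}$. Passing to the Grothendieck group $\overline{\mc M}_{C,p}^{gp}$, which is the lattice generated by $e_1=(1,0)$, $e_2=(0,1)$ and $e_t=(1/\ell_p,1/\ell_p)$, this reads $(w_1,w_2)=(\ell/\ell_p,\ell/\ell_p)$, whence $w_1=w_2$ — and, as a bonus, $\ell_p\mid\ell$, the common value $\ell/\ell_p$ being the weight subsequently assigned to $E$.

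The one genuinely delicate ingredient is the $X_0$-side local model: one must know that the lattice length $\ell$ of $E$ appears in the monoid relation as $g_1+g_2=\ell\,g_t$ — equivalently, that the total space $X$ acquires a transverse $A_{\ell-1}$ singularity along $D_E$ when $\ell>1$ — which is exactly what Definition~\ref{toricdegeneration}(4) provides, and which is why it mattered earlier to arrange that the $P_i$, hence $v_1$ and $v_2$, be vertices of an integral polyhedral decomposition $\P$. Everything else is routine bookkeeping with monoid homomorphisms.
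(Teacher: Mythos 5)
Your proof is correct and follows essentially the same route as the paper's: both arguments read the equality off the compatibility of the chart/ghost-sheaf map at the node with the relation $xy=z^{\ell}$ (in your notation $g_1+g_2=\ell\,g_t$) recording the integral length of $E$, using that $g$ is a map over the standard log point. Your additive ghost-sheaf bookkeeping merely makes explicit the step the paper asserts by writing the same exponent $w$ on both $x$ and $y$ in its chart diagram, and your conclusion $w_1=w_2=\ell/\ell_p$ is the paper's relation $we=\ell$ with $e=\ell_p$.
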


\begin{proof}
Recall that $S_e$ is the monoid that is given multiplicatively by
$$S_e=\langle x,y,z\mid xy=z^e\rangle.$$
Let $l$ be the integral length of $E$. The log structure of $X_0$ at $g(p)$ is given by the local structure near the origin in the log chart
$$
\begin{array}{rcl}
S_l&\ra& \CC[x,y,u]/(xy)\\
x&\mapsto& x\\
y&\mapsto& y\\
z&\mapsto& 0.
\end{array}
$$
In other words, while the underlying space $X_0$ is ignorant of the length of $E$, its log structure still remembers it.
The local structure of the log map $g:C^\dagger\ra X_0^\dagger$ takes the shape in terms of local charts at $p$ and $g(p)$ given in the following commutative diagram of monoids.
\begin{equation} \label{log-structure-at-node}
\begin{gathered}
\xymatrix@C=30pt
{
\CC[x,y]/(xy)  &  \CC[x,y,u]/(xy) \ar_g^{\substack{x^w\mapsfrom x\\ y^w\mapsto y\\ 0\mapsfrom u}}[l] \\
S_e\ar[u] & S_l\ar[u]  \ar^{\substack{x^w\mapsfrom x\\ y^w\mapsfrom y\\ z\mapsfrom z}}[l]  
}
\end{gathered}
\end{equation}
so there is another integer $e$ that is encoded in the log structure of $C$ (similarly as $w$ is encoded in the log structure of $X_0$) and there is an integer $w$ that comes from the log-structure part of the map $g$.
The well-definedness of this part implies 
$$we=l$$
and $w$ is the intersection multiplicity with $D_E=\Spec\CC[u]$ of either component of $C$.
\end{proof}

\subsubsection{The balancing condition}
Let us now pick a vertex $v\in h(\Gamma)$ that corresponds to a component $C_v$ of $C$ that maps non-constantly into $D_v$ under $g$.
Let $D_{E_1},...,D_{E_r}$ be the toric divisors in $D_v$ that are met by $g(C_v)$ with intersection multiplicities $w_1,...,w_r$ respectively.
Let $\Sigma_v$ denote the fan of $D_v$ with the rays corresponding to $E_1,...,E_r$ generated by the primitive vectors $m_1,...,m_r$.
We want to show that $$\sum_i w_i m_i=0$$
for which it suffices to show that $\sum_i w_i \langle m_i, n\rangle=0$ holds for all $n$ in the dual space. Such an $n$ defines a rational function $z^n$ and $\langle m_i, n\rangle$ is its order of vanishing along $D_{E_i}$, so
$\sum_i w_i \langle m_i, n\rangle$ is the divisor of zeros and poles of the restriction of $z^n$ to $g(C_v)$ which is therefore zero.

\subsection{Turning tropical curves into log curves}
\label{section-trop2log}
The knowledge about Prop.~\ref{shapeM0k} becomes handy for this step. 
It tells us that there are only finitely many tropical curves (that we have already built into the construction of $\Sigma$) and moreover these are all simple.
Let now $(h,\Gamma,x_1,...,x_s)$ be one of them. We want to construct a torically transverse log curve $g:C^\dagger\ra X_0^\dagger$ whose tropical curve under the association in the previous section~\ref{section-log2trop} brings us back to $h$. 
We will need that for an edge $\omega$ of $\P$ contained in $h(E)$ for an edge $E$ of $\Gamma$ the weight $w(E)$ divides the length of $\omega$ because this always holds for the resulting tropical curve obtained from a log curve by the previous section. We can achieve this by replacing $M$ by $\frac1b M$ for a suitable $b$ if necessary.

Let $\widehat\Gamma$ be the graph that results from first removing all marked edges from $\Gamma$ and then removing each resulting bivalent vertex by identifying its adjacent edges respectively. 
We denote by $\widehat\Gamma^{[0]}$ the vertices of $\widehat\Gamma$ (these coincide with those vertices of $\Gamma$ that are not adjacent to a marked edge). 
By $\widehat\Gamma^{[1]}$ we denote the set of edges of $\widehat\Gamma$ and $E_j$ ($1\le j\le s$) refers to the edge of $\widehat\Gamma$ that arises from identifying the edges of $\Gamma$ adjacent to $E_{x_j}$. Note that a priori it could happen that $E_j=E_k$ for $j\neq k$. For $E\in \widehat\Gamma^{[1]}$ we define its weight $w(E)$ as the weight of an edge of $\Gamma$ that is one of its constituents (or coincides with it) which is well-defined by the balancing condition and since $w(E_{x_i})=0$.

For each bounded edge $E$ in $\widehat\Gamma$ let $v^+_E,v^-_E$ be an enumeration of its vertices and for a ray $E$ let $v_E^-$ be its vertex. 
Let $u_E\in M$ be the primitive vector pointing from $h(v_E^-)$ into $h(E)$. 
(In case $E=E_i$, let $u_E$ be the primitive vector pointing from $h(v_E^-)$ into $h(E')$ where $E'$ is the edge of $\Gamma$ adjacent to $v_E^-$ and that got concatenated with other edges of $\Gamma$ to become $E$.)
We set $u_i=u_{E_i}$ and $v_-^i=v_-^{E_i}$.
The crucial gadget in this section is the map of lattices

\begin{eqnarray}
\Phi: \Map(\widehat\Gamma^{[0]},M)&\ra&  \left(\prod_{\substack{E\in \widehat\Gamma^{[1]}\\ E \hbox{ \scriptsize bounded}}} M/\ZZ u_E\right) \oplus\left( \prod_{i=1}^s M/\ZZ u_i\right) \nonumber\\
H&\mapsto& \left(\left(H(v_+^E)-H(v_-^E)\right)_E, H(v_-^{1}),...,H(v_-^{s})\right) \nonumber
\end{eqnarray}
An element $H\in\Map(\widehat\Gamma^{[0]},M_\RR)$ gives a piecewise affine deformation $h_H$ of $h$ (with fewer vertices however) by moving the vertices that are in $\widehat\Gamma^{[0]}$ as prescribed by $H$, i.e.
$$\widehat\Gamma^{[0]} \ni v\mapsto v+H(v)=:h_H(v).$$
One extends this to a map $h_H:\widehat\Gamma\ra M_\RR$ by sending a bounded edge affine linearly the the interval between the images of its vertices and an unbounded unmarked edge $E$ gets mapped to the parallel translate of $h(E)$ so that its vertex is $h_H(v^E_-)$ (If $E$ is an edge concatenated from various edges of $\Gamma$, then we mean by $h(E)$ the union of the images of the individual edges under $h$.)
Let $\Phi_\RR$ be the result of tensoring $\Phi$ by $\RR$. 
The main point is that $h_H:\widehat\Gamma\ra M_\RR$ is a parametrized tropical curve containing the $P_i$ if and only if $H\in\ker\Phi_\RR$.
Since $h$ is rigid, $\ker\Phi_\RR=0$ and thus $\Phi$ is injective. By a rank count one concludes
\begin{lemma} $\Phi$ is an embedding of lattices with finite index.
\end{lemma}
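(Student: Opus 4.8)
The plan is to split the statement into two independent parts: injectivity of $\Phi$, and equality of the $\ZZ$-ranks of source and target. Together these force $\operatorname{coker}\Phi$ to be finite, i.e.\ $\Phi_\RR$ to be an isomorphism, which is exactly ``embedding of lattices with finite index''. Injectivity is essentially already supplied by the discussion preceding the lemma: by the ``main point'', an $H\in\Map(\widehat\Gamma^{[0]},M_\RR)$ lies in $\ker\Phi_\RR$ if and only if $h_H$ is again a parametrized tropical curve through $P_1,\dots,P_s$; by Proposition~\ref{shapeM0k}(2) the set of such curves is finite, and each combinatorial type is the interior of a polyhedron (Proposition~\ref{shapeMgk}), so a finite set of them is zero-dimensional. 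Hence $\ker\Phi_\RR=0$, $\Phi$ is injective, and only the rank count remains.

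For the rank count, set $n=\dim M_\RR=2$. The source has rank $n\cdot|\widehat\Gamma^{[0]}|$, and the target has rank $(n-1)$ for each bounded edge of $\widehat\Gamma$ plus $(n-1)$ for each of the $s$ markings, i.e.\ rank $(n-1)\bigl(b'+s\bigr)$ with $b'$ the number of bounded edges of $\widehat\Gamma$. So I would next pin down the shape of $\widehat\Gamma$. Since $h$ is simple and $g=0$, the graph $\Gamma$ is a trivalent tree; the marked edges $E_{x_i}$ are weight-zero leaves, and the balancing condition at a trivalent vertex forbids two marked edges from meeting there (this would force the third edge to have weight zero as well, hence also be marked, hence $h$ would identify three of the distinct points $P_i$). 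Thus deleting the $s$ marked edges produces only bivalent vertices, and smoothing them yields a graph $\widehat\Gamma$ that is again a trivalent tree whose unbounded edges are precisely the $e$ unbounded, weight-one edges of $\Gamma$; recall from the proof of Proposition~\ref{shapeM0k}(2) that $e=|\Delta|=s+1$. Now the Euler characteristic of a tree gives $|\widehat\Gamma^{[0]}|=b'+1$, and summing valencies gives $3|\widehat\Gamma^{[0]}|=2b'+e$; combining these yields $b'=e-3$ and $|\widehat\Gamma^{[0]}|=e-2$. Therefore the source has rank $2(e-2)$ and the target has rank $(e-3)+(s)=(e-3)+(e-1)=2(e-2)$. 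These agree, so $\Phi$ is an injection of free $\ZZ$-modules of equal rank, hence a finite-index embedding.

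The conceptual heart — that injectivity of $\Phi$ is the rigidity of $h$ — is already dispatched by the lemma's preamble, so I expect no obstacle there. The one place calling for genuine care is the combinatorial bookkeeping for $\widehat\Gamma$: one must verify that deleting the marked edges creates only bivalent (never univalent) vertices and that $\widehat\Gamma$ is a bona fide trivalent tree with exactly $|\Delta|$ unbounded edges, treating separately (or excluding by a further subdivision) the mildly degenerate configurations in which two unmarked rays through a marked point would concatenate. All of this is controlled by the simplicity of $h$ together with the general position of $P_1,\dots,P_s$ via Proposition~\ref{shapeM0k}, so I would make those exclusions explicit before running the Euler-characteristic computation above.
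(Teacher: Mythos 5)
Your proposal is correct and follows the same route as the paper: injectivity of $\Phi$ from the rigidity of $h$ (i.e.\ $\ker\Phi_\RR=0$), followed by a rank count. The paper merely asserts ``by a rank count one concludes,'' whereas you carry that count out explicitly via the trivalent-tree combinatorics of $\widehat\Gamma$ (using $e=|\Delta|=s+1$ from Proposition~\ref{shapeM0k}), and you correctly flag the degenerate straight-line configurations that need separate treatment.
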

Let $\frak{d}=|\coker\Phi|$ be this index. 

\begin{theorem} 
\begin{enumerate} 
\item The number of stable maps $g:C\ra X_0$ with $\sigma_i(0)\in g(C)$ that give back $h$ under the recipe of the previous section is $\frak{d}$.
\item The number of possibilities of turning a given $g:C\ra X_0$ into a strict log map $g:C^\dagger\ra X_0^\dagger$ is
$$\frak{w}=\left(\prod_{\substack{E\in \Gamma^{[1]}\\ E \hbox{ \scriptsize bounded}}} w(E) \right)\cdot\left( \prod_{i=1}^s w(E_i) \right).$$
\item We have
$$\frak{d}\cdot\frak{w} =\op{Mult}(h).$$
\end{enumerate}
\end{theorem}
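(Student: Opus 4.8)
The plan is to prove the three assertions in turn, (1) and (2) by geometric and étale‑local analysis, (3) by a lattice–index computation, and then combine them. For (1), fix the combinatorial type of $h$. A stable map $g : C \ra X_0$ realizing $h$ breaks up along the components of $C$: each non‑contracted component maps to one of the toric surfaces $D_v$, $v\in\widehat\Gamma^{[0]}$, as a torically transverse rational curve whose intersection pattern with the boundary of $D_v$ is prescribed by the primitive directions and weights of the edges at $v$ (the components over the intermediate vertices of $\P$ along an edge are then recovered from $\P$ and the weights, as in Section~\ref{section-log2trop}). Since $h$ is simple, each such parametrized component map is unique up to the action of the dense torus $\GG(M)$ of $D_v$; hence, up to these torus actions, the set of candidate maps $g$ of the given type is parametrized by $\Map(\widehat\Gamma^{[0]},M)\otimes_\ZZ\CC^\times$, and the conditions ``the two branches at each node over $D_E$ glue'' and ``$g(x_i)=\sigma_i(0)$'' translate into prescribing a single value of the homomorphism $\Phi\otimes_\ZZ\CC^\times$ (the sublattices $\ZZ u_E$, $\ZZ u_i$ in the target of $\Phi$ are exactly the kernels of the relevant restriction‑to‑divisor, resp.\ evaluation, maps). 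By the Lemma preceding the theorem, $\Phi$ is an embedding of lattices of equal rank with $|\coker\Phi|=\frak d$; tensoring $0\ra\Map(\widehat\Gamma^{[0]},M)\xrightarrow{\Phi}(\cdots)\ra\coker\Phi\ra0$ with $\CC^\times$ shows $\Phi\otimes\CC^\times$ is surjective with kernel $\Tor_1^\ZZ(\coker\Phi,\CC^\times)$, a finite group of order $\frak d$. So the system is solvable and its solution set is a torsor of size $\frak d$; the deformation theory confirming that this genuinely enumerates the stable maps in question I would quote from \cite{NS06}.

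For (2), the analysis is étale‑local on $C$ and uses F.\ Kato's structure theorem for log smooth curves. Away from nodes and marked points the log structure is a pull‑back, so there the strict log enhancement of $g$ is forced. At a node $p$ over $D_E$ the local picture is the one in the proof of the preceding Lemma --- target chart $S_l\ra\CC[x,y,u]/(xy)$, source chart the double‑point model $S_e\ra\CC[x,y]/(xy)$ with $w(E)\,e=l$ --- and, the map on ghost sheaves being forced, upgrading $g$ to a morphism of log schemes over the standard log point costs a choice of a $w(E)$‑th root of unity: $w(E)$ possibilities. At a marked point $x_i$ the local model is the log‑marked‑point chart $\IN\oplus Q\ra\mc O$, and the analogous matching across the order‑$w(E_i)$ behavior recorded by the concatenated edge $E_i$ leaves $w(E_i)$ possibilities. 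Multiplying over all nodes (one per bounded edge of $\Gamma$) and all marked points yields $\frak w=\bigl(\prod_{E\text{ bounded}}w(E)\bigr)\bigl(\prod_{i=1}^s w(E_i)\bigr)$.

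For (3), it remains to prove $\frak d\cdot\frak w=\op{Mult}(h)$, i.e.\ $|\coker\Phi|=\op{Mult}(h)\big/\bigl(\prod_{E\text{ bounded}}w(E)\cdot\prod_i w(E_i)\bigr)$. Since $g(h)=0$, $\widehat\Gamma$ is a tree, and the plan is to evaluate $|\coker\Phi|$ by induction, pruning a trivalent leaf $V$ at each step: such a $V$ has two unbounded edges and one bounded edge $E$ joining it to the rest of the tree, and restricting $\Phi$ to the variable $H(V)$ and the equations attached at $V$ isolates a local index equal to $w_aw_b\,|m_a\wedge m_b|=\Mult_V(h)$ (Definition~\ref{def-mik-mult}) divided by $w(E)$; one then recurses on $\widehat\Gamma$ with $V$ removed. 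The balancing condition $w_1m_1+w_2m_2+w_3m_3=0$ at $V$ is precisely what makes the three expressions for $\Mult_V(h)$ coincide, so the local index does not depend on the rooting. Telescoping over all vertices, with each bounded edge of $\Gamma$ and each marked ray divided out exactly once, gives the identity, and hence $\frak d\cdot\frak w=\op{Mult}(h)$.

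The obstacle is bookkeeping rather than any single idea. In (1) the real content is the deformation‑theoretic justification that ``stable maps realizing $h$'' form literally a $\ker(\Phi\otimes\CC^\times)$‑torsor --- maps to toric surfaces, gluing at the nodes, and the interplay of pre‑log and log data --- which is the heart of \cite{NS06}; and in (3) the computation of $|\coker\Phi|$, though elementary linear algebra, must be carried out attentively, in particular for the marked rays $E_i$ and for the vertices of $\Gamma$ that are absorbed when forming $\widehat\Gamma$.
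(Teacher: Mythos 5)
Your treatment of items (1) and (3) follows essentially the same route as the paper: match the stable maps realizing $h$ with the fibre of $\GG_m\otimes\Phi$ over a prescribed element, so that the count is $|\Tor_1^\ZZ(\coker\Phi,\CC^\times)|=|\coker\Phi|=\frak{d}$ by divisibility of $\CC^\times$, and compute $|\coker\Phi|$ by pruning one vertex at a time, reading the local index off the map $\Hom(\{v\},M)\ra M/\ZZ u_{E_1}\oplus M/\ZZ u_{E_2}$. Both are sound at the level of detail the paper itself gives.

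Item (2), however, contains a genuine gap in the node accounting. The curve $C$ produced from $h$ does not have one component per vertex of $\Gamma$: along an edge $E$ of $\widehat\Gamma$ whose image $h_0(E)$ contains $n$ interior vertices of the polyhedral decomposition $\P$, the map $g$ is built from a chain of $n$ intermediate $\PP^1$s, hence $n+1$ nodes, each of which independently carries a $w(E)$-fold choice of root of unity for the log structure. Your assertion that there is ``one node per bounded edge of $\Gamma$'' is therefore false in general, and multiplying your (correct) local count of $w(E)$ over the actual nodes would give $w(E)^{n+1}$, not $w(E)$. The missing ingredient is the quotient by isomorphisms of the underlying scheme $C$: the deck transformations cyclically permuting the $w(E)$ sheets over each of the $n$ intermediate components identify $w(E)^{n}$ of these choices, leaving a net factor of $w(E)$ per bounded edge of $\widehat\Gamma$. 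Relatedly, the factor $w(E_i)$ for a marked edge does not come from a log-structure choice at the marked point --- the paper checks there is no such choice there --- but from the additional freedom of where to place the marking among the $w(E_i)$ sheets of the cover. With these corrections the bookkeeping does reduce to the stated formula, but as written your argument for (2) would not survive the case where $\P$ subdivides the image of an edge, which is the typical situation.
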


\begin{proof} 
We give only the main ideas since details can be found in \cite{kan}.
For a lattice $L$, we denote $\GG(L)=L\otimes_\ZZ\GG_m$ the corresponding group scheme for $\GG_m$ the multiplicative group of $\CC$.
The result of applying $\GG$ to $\Phi$,
\begin{eqnarray}
\GG(\Phi): \GG(\Map(\Gamma^{[0]},M))&\ra&  \left(\prod_{\substack{E\in \Gamma^{[1]}\\ E \hbox{ \scriptsize bounded}}} \GG(M/\ZZ u_E)\right) \times\left( \prod_{i=1}^s \GG(M/\ZZ u_i)\right) \nonumber
\end{eqnarray}
is a surjection with kernel $\Tor_1^\ZZ(\coker(\Phi),\GG_m)\cong \coker(\Phi)$ of size $\frak{d}$ because $\GG_m$ is divisible.
We are going to match choices of stable maps $g$ corresponding to $h$ with elements in the source of $\GG(\Phi)$ that map to the trivial element in the target.
Since $|\ker(\GG(\Phi))|=\frak{d}$, we will conclude item 1 of the Theorem from this.
This matching can be seen through the following steps.
\begin{enumerate}
\item 
\begin{minipage}[t]{0.65\textwidth} 
Given $v\in \widehat\Gamma^{[0]}$, let 
$E_1,...,E_3$ be the adjacent edges of $\widehat\Gamma^{[0]}$ and
$\omega_1,\omega_2,\omega_3\subset h(\Gamma)\cap \P$ be the corresponding three edges in $\P$ containing $h(v)$.
One checks that the standard action of $\GG(\Hom(\{v\},M))\cong \GG_m^2$ on $D_{h(v)}$ induces a transitive and free action on the set of 
maps $g_v:\PP^1\ra D_{h(v)}$ up to automorphism of the domain such that $g_v(\PP^1)$ meets the three divisors $D_{\omega_1}, D_{\omega_2}, D_{\omega_3}$ at order $w(E_1), w(E_2), w(E_3)$.
\end{minipage}\hfill
\begin{minipage}[t][2.5cm][b]{0.26\textwidth}
\includegraphics[width=0.9\textwidth]{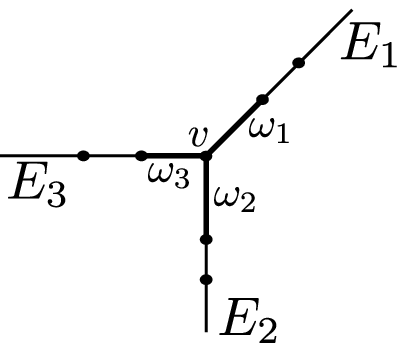}
\end{minipage}\\
\item 
Let $h_0:\widehat\Gamma\ra M_\RR$ refer to the map $h_H$ with $H=0$, i.e. $h_0$ is the adaption of $h$ from $\Gamma$ to $\widehat\Gamma$.
For $E\in \widehat\Gamma^{[1]}$ an edge connecting $v_1$ to $v_2$, we need to connect $g_{v_1}(\PP^1)$ to $g_{v_2}(\PP^1)$ by a chain of $\PP^1$s, one for each $v\in\P$ contained in the relative interior of $h_0(E)$.
The $\PP^1$ corresponding to such a $v\in h_0(E)$ maps into $D_v$. Each such $D_v$ has a natural $\PP^1$-fibration via the map $M\ra M/\ZZ u_E$ and we are looking for a chain of fibres of these fibrations. 
The fibres are parametrized by $\GG(M/\ZZ u_E)$ and the condition that $g_{v_1}(\PP^1)$ connects to $g_{v_2}(\PP^1)$ can be phrased by saying that 
$\GG(\Hom(\{v_1\},M))$ and $\GG(\Hom(\{v_2\},M))$ project to the same element of $\GG(M/\ZZ u_E)$.
\item Eventually $P_i$ lies in the relative interior of $h_0(E_i)$ and $\sigma_i(0)$ lies in the fibration fibre given by some element of $\GG(M/\ZZ u_i)$. That $g(C)$ contains this fibre is encoded in the second factor in the target of $\GG(\Phi)$.
\end{enumerate}

To prove 2. note that by strictness there is only a choice for the log structure at the special points of $C$. 
These are points $p\in C$ such that the log structure of $X_0$ at $g(p)$ is not just the pullback from the base $\Spec\CC^\dagger$. There are three kinds of such points: points of $X_0$ where two components meet, marked points $\sigma_i(0)$ and points in the toric boundary $\partial X_0=\bigcup_{D\subset X_\Sigma\hbox{ \scriptsize a prime divisor not in }X_0} D\cap X_0$. 
One checks that there is only a choice to be made at points where two components of $X_0$ meet. 
The structure there is given by \eqref{log-structure-at-node}. All maps in this diagram are fixed except for the left vertical one that we may twist. 
There is not much of a choice for twisting either in order to keep commutativity. What works for this map is this
$$x\mapsto \zeta x,\quad y\mapsto y$$
for a $w$th$=\frac{l}e$th root of unity $\zeta$.
This gives the same log structure on $C$ abstractly but not the same as a $\Spec\CC^\dagger$-scheme because the product $xy$ changes by $\zeta$ and this is a section coming from of the log structure in the base $\Spec\CC^\dagger$.
On the other hand twisting both $x$ and $y$ yields something that can be shown to be isomorphic to one of the given twists.

Finally, some of these choices are isomorphic by an isomorphism of the underlying scheme $C$. Indeed, we can apply a deck transformation to the source whenever a $\PP^1$ maps to $D_v$ for $v$ in the interior of $h_0(E)$ with $E\in\widehat\Gamma^{[1]}$. There are $w(E)$ sheets that we can permute cyclically. 
This has the effect of that for a given edge $E$ of $\widehat\Gamma$ with vertices $v_1,v_2$ the number of choices for the log structure at the nodes of the chain of $\PP^1$s connecting $g_{v_1}(\PP^1)$ and $g_{v_2}(\PP^1)$ is
$$w(E)\cdot|\{\omega\in\P \hbox{ an edge with } \omega \subset h_0(E)\}|$$
whereas the total of possible deck transformations is
$$w(E)\cdot |\{v\in\P \hbox{ a vertex in the interior of }h_0(E)\}|.$$
The net choice is thus $w(E)$ for each bounded edge $E$ and there is only one choice for unbounded edges. 
If the edge is marked however, there is an additional choice of where to place the marking in the cover, so a marked edge $E$ contributes an additional factor of $w(E)$. This gives item 2 in the assertion.

The proof of item 3 starts with a local argument noting the multiplicity at a (trivalent) vertex $v$ of $\widehat\Gamma$ is defined by
$$w(E_1)w(E_2) |u_{E_1}\wedge u_{E_2}|$$
where $E_1,E_2$ are two of the three outgoing edges at $v$. 
One finds that $|u_{E_1}\wedge u_{E_2}|$ coincides with the rank of the cokernel of
$$\Hom(\{v\},M)\ra M/\ZZ u_{E_1}\oplus M/\ZZ u_{E_2}$$
which is the map given by projection on each component and it is a constituent of the map $\Phi$. 
One can prove item 3 by induction where one removes an unbounded ray with its vertex in each step. One finally uses item 1 and 2, for more details see \cite{kan}.
\end{proof}

In short, we have seen in this section that for a tropical curve $h:(\Gamma,x_1,...,x_2)\ra M_\RR$ there are $\op{Mult}(h)$ many different torically transverse log curves up to isomorphism that match the combinatorics of $h$.

\subsection{From ordinary stable curves to log stable curves}
Assume now we are given a torically transverse stable curve $g_\eta :C_\eta \ra X_\eta$, i.e. mapping in the generic fibre of the degeneration $f:X_\Sigma\ra\AA^1$. 
We require it to contain $\sigma_1(\eta),...,\sigma_s(\eta)$.
A priori, we don't know $\Sigma$ as it was constructed from the tropical curves after choosing $P_i$ and we do neither know the tropical curves nor the $P_i$ yet. 
Instead we start with any $\Sigma$ satisfying properties 3.-5. of section~\ref{section-toric-degen-from-trop}. E.g. the constant family with fibre the toric surface $X$ we started with will do.
A fan satisfying also properties 1. and 2. will come out of the construction in this section.
Starting with the diagram
\begin{equation} \nonumber
\begin{gathered}
\xymatrix@C=30pt
{
C_\eta\ar^{g_\eta}[r]\ar[d] &  X_\Sigma\setminus X_0\ar^f[d]\\
\Spec K\ar[r] & \AA^1\setminus\{0\}
}
\end{gathered}
\end{equation}
we want to fill in the fibres over $\{0\}$. 
This can be done by stable reduction after possibly doing a base change $\AA^1\ra\AA^1, z\mapsto z^k$ which is no problem for us as it just means a rescaling of $\P$.
The resulting stable curve that then maps into $X_\Sigma$ will typically not be torically transverse. 
By a suitable toric blow-up supported on the central fibre $X_0$ and given by a subdivision of $\P$, the map can be made torically transverse, see 
\cite[Thm. 4.24]{kan}. Doing this blow up for each curve in $X_\eta$ will implicitly ensure that $\P$ contains all tropical curves as required in property 1 and 2.
Once one has filled the central fibre by blow-up and semi-stable reduction we obtain a diagram
\begin{equation} \nonumber
\begin{gathered}
\xymatrix@C=30pt
{
C\ar^{g}[r]\ar[d] &  X_\Sigma\ar^f[d]\\
\Spec R\ar[r] & \AA^1
}
\end{gathered}
\end{equation}
with $R$ a discrete valuation ring and the base horizontal map dominant and $C$ a torically transverse stable map. 
We obtain the log curve by restriction of $g$ to $C_0$, the fibre over $\{0\}$, plus pulling back the divisorial log structure $\shM_{(X_\Sigma,X_0)}$ to $C_0$, see Ex.~\ref{ex-div-log-str}.

\subsection{From log curves to ordinary curves}   \label{log-curves-to-curves}
Starting with a torically transverse log stable curve $g_0:C_0^\dagger\ra X_0^\dagger$, we want to deform it to a stable map $g:C\ra X_\Sigma$ so that we can then restrict it to the generic point $\eta$ to obtain an ordinary torically transverse stable curve 
\begin{equation}
\nonumber
g_\eta:C_\eta\ra (X_\Sigma)_\eta=X\times_{\Spec\CC}\Spec K.
\end{equation}
This works by log deformation theory.
The goal is to lift $g_0:C_0^\dagger\ra X_0^\dagger$ order by order to $C_1,C_2,...$ where $g_i:C_i^\dagger\ra X_\Sigma^\dagger$ is defined over $\Spec \CC[t]/(t^{i+1})$. We can then take the projective limit to obtain a curve 
$g_\infty:C_\infty\ra X_\Sigma$ defined over $\CC\llbracket t\rrbracket$ which we then restrict to $\eta$ to get the ordinary curve.
There are four steps
\begin{enumerate}
\item thicken $C_0^\dagger$ to higher orders $C_i^\dagger$,
\item (step 1 plus) extend the map $C_i^\dagger\ra X_\Sigma$,
\item (step 1,2 plus) extend the marked points $x_j\ra C_i$ as sections over $\Spec \CC[t]/(t^{i+1})$,
\item (step 1,2,3 plus) make sure that the sections $x_j$ map under $g$ to the sections $\sigma_j$.
\end{enumerate}
The first item is governed by log smooth deformation theory. The obstruction group is $H^2(C_0,\Theta_{C_0^\dagger/\CC^\dagger})$ where $\Theta_{C_0^\dagger/\CC^\dagger}$ is the relative log tangent sheaf and this cohomology group vanishes because $C_0$ is a curve so any $H^2$ of a coherent sheaf is zero. The lifts from $C^\dagger_{i}$ to $C^\dagger_{i+1}$ form a torsor over 
$$H^1(C_0,\Theta_{C_0^\dagger/\CC^\dagger}).$$
For the second step consider the exact sequence 
\begin{equation}
\label{log-normal-seq}
0 \ra \Theta_{C_0^\dagger/\CC^\dagger}\ra g^*\Theta_{X_0^\dagger/\CC^\dagger}\ra \shN_{g_0}\ra 0
\end{equation}
where $\Theta_{X_0^\dagger/\CC^\dagger}$ is the (relative) log tangent sheaf of $X_0^\dagger$ and $\shN_{g_0}$ is defined by this sequence and can be called the log normal sheaf to $g_0$. 
Obstructions to lifting the map $C_i^\dagger\ra X_\Sigma$ sit in $H^1(C_0,g_0^*\Theta_{X_0^\dagger/\CC^\dagger})$. 
This group is trivial because $\Theta_{X_0^\dagger/\CC^\dagger}$ is a trivial vector bundle by a general fact for the standard log structures on toric varieties and since $C_0$ is a rational stable curve $H^1(C_0,\shO_{C_0})=0$.
One can show that the set of lifts from $C_i^\dagger\ra X_\Sigma^\dagger$ to $C_{i+1}^\dagger\ra X_\Sigma^\dagger$ is a torsor over 
$$H^0(C_0,\shN_{g_0}),$$ 
see \cite[Thm. 3.41]{kan}. 
This connects to step one via the connecting homomorphism in cohomology
$$H^0(C_0,\shN_{g_0})\ra H^1(C_0,\Theta_{C_0^\dagger/\CC^\dagger}).$$
For step 3 consider the embedding $\Theta_{C_0^\dagger/\CC^\dagger}(-\sum_i x_i) \subset \Theta_{C_0^\dagger/\CC^\dagger}$. We can modify \eqref{log-normal-seq} to
\begin{equation}
\label{log-normal-seq2}
0 \ra \Theta_{C_0^\dagger/\CC^\dagger}\left(-\sum_i x_i\right)\ra g^*\Theta_{X_0^\dagger/\CC^\dagger}\ra \shN_{g_0,{\bf x}}\ra 0
\end{equation}
where again $\shN_{g_0,{\bf x}}$ is defined via this sequence. There will then be a surjection $$\shN_{g_0,{\bf x}}\ra \shN_{g_0}$$ whose kernel can be identified with $\bigoplus_{i=1}^s\Theta_{C_0^\dagger/\CC^\dagger}|_{x_i}$ and thus
\begin{equation}
\label{split-Ng0}
\shN_{g_0,{\bf x}}\cong \shN_{g_0}\oplus \bigoplus_{i=1}^s\left.\Theta_{C_0^\dagger/\CC^\dagger}\right|_{x_i}.
\end{equation}
Given $g_i:C_i^\dagger \ra X_\Sigma^\dagger$ with sections $x_i:\Spec\CC[t]/(t^{i+1})\ra C_i$, the set of lifts of this data to order $i+1$ is a torsor over 
$$H^0(C_0,\shN_{g_0,{\bf x}}),$$
 see \cite[Thm. 3.42]{kan}.

Finally for step 4, one considers the map
$$\Xi:H^0(C_0,\shN_{g_0,{\bf x}})\ra \bigoplus_{i=1}^s\left. g^*\Theta_{X_0^\dagger/\CC^\dagger}\right|_{x_i}$$
given by choosing local lifts near the $x_i$ from $\shN_{g_0,{\bf x}}$ to $g^*\Theta_{X_0^\dagger/\CC^\dagger}$ and then restricting these to the $x_i$.
The right hand side records the deformation of the sections $\sigma_i(0)$ and if we want to follow any such deformation with the images $g(x_i)$, the map $\Xi$ needs to be surjective and the set of lifts satisfying item $4$ is then a torsor under $\ker\Xi$, see \cite[Thm. 3.43]{kan}.
It turns out the $\Xi$ is an isomorphism, so there is actually a unique lift for item 4. 
The proof is going to features the map $\Phi$ once more!
Using the splitting \eqref{split-Ng0} we find $\Xi$ is an isomorphism if and only if
$$\Xi':H^0(C_0,\shN_{g_0})\ra \bigoplus_{i=1}^s \frac{\left.g^*\Theta_{X_0^\dagger/\CC^\dagger}\right|_{x_i}}{\left.\Theta_{C_0^\dagger/\CC^\dagger}\right|_{x_i}} $$
is one. The range of $\Xi'$ can be identified with 
$$\prod^s_{i=1} (M/\ZZ u_i)\otimes_\ZZ\CC$$
Via a components-wise calculation and gluing condition, one finds that $H^0(C_0,\shN_{g_0})$ is identified with the kernel of the surjection
$$
\Map(\widehat\Gamma^{[0]},M)\otimes\CC \ra  \left(\prod_{\substack{E\in \widehat\Gamma^{[1]}\\ E \hbox{ \scriptsize bounded}}} (M/\ZZ u_E)\otimes\CC\right)
$$
Hence, $\Xi'$ is an isomorphism if and only if $\Phi\otimes\CC$ is one and we have seen this earlier. For details, consult \cite[\S4.5]{kan}.

\section{Mirror Symmetry for $\mathbb{P}^2$}
\label{sec:p2ms}
We give a sketch of Gross's construction of mirror symmetry for $\mathbb{P}^2$, which can be seen as  a tropical reformulation and expansion of Barannikov's construction \cite{bar}.  We begin with an outline of the relevant details of Barannikov's construction,  touch on the major concepts and tools of Gross's construction, and end with a statement of the theorem.  This exposition should be viewed as an attempt to give an abridged summary of \cite{kan} \cite{MSP2} with a few explanatory notes.
\subsection{Introduction}
\label{subsec:msintro}

In the case of Calabi-Yau threefolds, mirror symmetry relates the moduli space of K\"ahler structures on one manifold $X$ (the so-called A-model) with the moduli space of complex structures on another manifold, $\check{X}$ (the B-model).  Our picture is a bit different, as we'll be examining a mirror symmetry construction for $\mathbb{P}^2$, which is not Calabi-Yau.  

The A-model structure we'll be discussing on $X:=\mathbb{P}^2$ is relatively straightforward to describe; it concerns (roughly) counts of rational curves on $X$ satisfying certain intersection and genus requirements. These  ``counts," called Gromov-Witten invariants, can be used to perturb the usual cup product on the cohomology of $X$ into something called quantum cohomology, a construction whose operations can then be compiled into a particularly nice object called a Frobenius manifold.

When $X$ is Fano, as it is in our case, the mirror object is not a manifold but rather a \emph{Landau-Ginzburg model}. In the context of our discussion, this consists of a pair $(\hat{X}, W)$, where $\hat{X}$ is a variety and $W : \hat{X}\rightarrow \mathbb{C}$ a regular function called a \textit{Landau-Ginzburg potential}. Through Barannikov's technique of semi-infinite variation of Hodge structures \cite{bar}, one can again recover a Frobenius manifold. Mirror symmetry dictates that the Frobenius manifolds arising in the A- and B-model constructions should be the same.
 
In the case of $X = \mathbb{P}^2$, Gross has shown that both sides of the mirror are intrinsically susceptible to analysis by tropical geometry \cite{kan} \cite{MSP2}.  In his pioneering work, Mikhalkin demonstrated its descriptive power for the A-model by showing it possible to compute certain Gromov-Witten invariants for toric surfaces (including, of course, $\mathbb{P}^2$) by counting tropical curves in $\mathbb{R}^2$ \cite{mik}. The ease with which these invariants could now be computed and the conceptual insight yielded by the tropical point of view has inspired many attempts to generalize the result. Gathmann, Markwig, Kerber, Rau and others have made significant progress in this regard, establishing not only methods for the tropical computation of certain \textit{descendant} Gromov-Witten invariants, but also an intersection theory on a relevant moduli space \cite{GKM} \cite{M+R}. 

The tropical interpretation of the Landau-Ginzburg model is more recent.  The content of Gross's version of mirror symmetry for $\mathbb{P}^2$ is a simple, tropical description of the Landau-Ginzburg potential such that the mirror relationship can be easily described in terms of combinatorial objects.  This should be seen as a proof-of-concept for the Gross-Siebert program, exhibiting mirror symmetry via by expressing either side of the picture using the same tropical data.  For discussion on the generalization of these ideas and a better sense of their context, please see \cite{kan} and especially \cite{gsyz}.  
\section{Barannikov's construction}
\subsection{A model}
We will assume basic knowledge of Gromov-Witten theory.  For more information, consult the relevant chapter in this volume.  We'll confine our discussion to the concrete example of $X:=\mathbb{P}^2$.
Define $\mathcal{M}:=\spec \mathbb{C}[[y_0,y_1,y_2]]$.
Let $T_i$ be a positive generator of $H^{2i}(\mathbb{P}^2, \mathbb{Z})$ and let
$$\gamma:=y_0T_0+y_1T_1+y_2T_2$$
With this data, we are able to define the \textit{Gromov-Witten potential of $\mathbb{P}^2$}. 
$$\Phi := \sum_{k=0}^\infty \sum_{\beta\in H_2(X,\mathbb{Z})} \frac{1}{k!} \langle \gamma^k \rangle_{0,\beta}.$$
This function encodes much of the enumerative information of $\mathbb{P}^2$.
Define a constant metric $g$ on $\mathcal{M}$ with
$$g(\partial_{y_i}, \partial_{y_j}):=\int_{\mathbb{P}^2}T_i\cup T_j$$
and the connection $\nabla$ given by the flat sections $\partial_{y_i}$.  Define a product structure on the tangent bundle of $\mathcal{M}$ given by 
$$\partial_{y_i}\ast \partial_{y_j}:=\sum_{a,l} (\partial_{y_i} \partial_{y_j} \partial_{y_a} \Phi)g^{al} \partial_{y_l}.$$ 
This data defines a \emph{Frobenius manifold}.  For much more on these objects, see \cite{Ma99}.

Identifying $T_i$ with  $\partial_{y_i}$,  one can think of $\ast$ as giving a product structure on $H^*(\mathbb{P}^2, \mathbb{C}[[y_0, y_1, y_2]])$.  This is known as the \emph{big quantum cohomology ring}. 
The A-model data encoded in this manifold can be arranged into a function that will arise naturally on the other side of the mirror.  To define this function, we'll need a slight upgrade of the Gromov-Witten invariant, known as the \emph{descendent} Gromov-Witten invariant.  
\begin{definition}[Descendent Gromov-Witten invariants]
For $\alpha_i\in H^*(X, \mathbb{C})$, define
$$\langle \psi^{j_1} \alpha_1, \ldots  \psi^{j_n} \alpha_n\rangle_{g,\beta}:=\int_{[\bar{\mathcal{M}}_{g,n}(X,\beta)]^{vir}}\psi_1^{j_1}\cup\ldots \cup \psi_n^{j_n}\cup ev^*(\alpha_1\times \cdots \times \alpha_n).$$
Here we've attached a natural line bundle $\mathcal{L}_i$ to $\bar{\mathcal{M}}_{g,n}(X,\beta)$ associated to each marked point $x_i$.  The fiber of $\mathcal{L}_i$ at a point $[(C,x_1,\ldots, x_n)]$ is the cotangent line $\mathfrak{m}_{x_i}/\mathfrak{m}_{x_i}^2$, where $\mathfrak{m}_{x_i}\subseteq \mathcal{O}_{C,x_i}$ is the maximal ideal.  Then  $\psi_i:=c_1(\mathcal{L}_i)\in H^2(\bar{\mathcal{M}}_{g,n}(X,\beta), \mathbb{Q})$.
\end{definition}

\begin{definition}[Givental's J-function for $\mathbb{P}^2$]
$J_{\mathbb{P}^2}:\mathcal{M}\times \mathbb{C}^\times \rightarrow H^*(\mathbb{P}^2, \mathbb{C})$ is defined as follows:
\begin{eqnarray*}
J_{\mathbb{P}^2}(y_0, y_1, y_2, \hbar)&:=&e^\frac{y_0T_0+y_1T_1}{\hbar}\cup\left(T_0+\sum_{i=0}^2\left(y_2\hbar^{-1}\delta_{2,i}\vphantom{\frac12}\right.\right.\\
&&\left.\left.\sum_{d\geq 1}\sum_{\nu\geq 0} \langle T_2^{3d+i-2-\nu}, \psi^\nu T_{2-i}\rangle_{0,d}\hbar^{-(\nu+2)}e^{dy_1}\frac{y_2^{3d+i-2-\nu}}{(3d+i-2-\nu)!}\right)T_i\right)
\end{eqnarray*}
\end{definition}
We can define functions $J_i:\mathcal{M}\times \mathbb{C}^\times \rightarrow H^{2i}(\mathbb{P}^2, \mathbb{C})$ by the decomposition of $J$:
$$J_{\mathbb{P}^2}=\sum_{i=0}^n J_iT_i$$

\subsection{B model}
\label{bmodel}
Here we follow the summary of Barannikov's results \cite{bar} as given in \cite{MSP2}.  The mirror of $\mathbb{P}^2$ is the Landau-Ginzburg model $(\hat{X}, W)$, where $\hat{X}:=V(x_0x_1x_2-1)\subseteq \spec\CC[x_0,x_1, x_2]$ and $W=x_0+x_1+x_2$. 

We consider the universal unfolding of $W$ parametrized by the moduli space $\Specf\mathbb{C}[[t_0, t_1, t_2]]$
$$W_{\bf t}:=\sum_{i=0}^2 W^it_i,$$
and the local system $\mathcal{R}$ on $\mathcal{M}\times \mathbb{C}^\times$ whose fiber at a point $({\bf t}, \hbar)$  is the relative homology group $H_n(\hat{X}, Re(W_{\bf t}/\hbar)\ll 0)$.  With this setup, Barannikov uses semi-infinite variation of Hodge parameters to show the following result.  See Chapter 2 of \cite{kan} for a discussion of how these structures arise in our particular example.  First, there is a unique choice of the following data:
\begin{itemize}
\item A (multi-valued) basis of sections of $\mathcal{R}$, $\Xi_0,\Xi_1,\Xi_2$, with $\Xi_i$ uniquely defined modulo $\Xi_0,\ldots, \Xi_{i-1}$.
\item A section $s$ of $\mathcal{R}^\vee \otimes_\mathbb{C} \mathcal{O}_{\mathcal{M}\times\mathbb{C}}$ defined by integration of a family of holomorphic forms on $\hat{X} \times \mathcal{M} \times\mathbb{C}^\times$ of the form $$e^{W_{\bf t}/\hbar}f {\rm dlog}x_1\wedge{\rm dlog} x_2$$
where $\hbar$ is the coordinate on $\mathbb{C}$ and $f$ is a regular function on $\hat{X}\times \mathcal{M}\times\mathbb{C}^\times$ with $f|_{\hat{X}\times \{0\} \times \mathbb{C}^\times}=1$ and which extends to a regular function on $\hat{X}\times \mathcal{M}\times(\mathbb{C}^\times\cup\{\infty\})$.
\item  The monodromy associated with $\hbar\rightarrow \hbar e^{2\pi i}$ in $\mathcal{R}$ is given, in the constructed basis, by $\exp(6\pi iN)$, where
\begin{eqnarray*}
N=\left( \begin{array}{ccc}
0&1&0\\
0&0&1\\
0&0&0\\
\end{array}\right)
\end{eqnarray*}
\item A fiber of $\mathcal{R}^\vee$ is identified with the ring $\mathbb{C}[\alpha]/(\alpha^3)$, with $\alpha^i$ dual to $\Xi_i$.
The selected section $s$ of $\mathcal{R}^\vee\otimes \mathcal{O}_{\mathcal{M}\times \mathbb{C}^\times}$ gives us an element of each fiber of $\mathcal{R}^\vee$, which we write as
$$s({\bf t}, \hbar)=\sum_{i=0}^2 \alpha^i \int_{\Xi_i}e^{W_{\bf t}/\hbar}f{\rm dlog}x_1\wedge {\rm dlog} x_2$$
We require that we can write
$$s({\bf t}, \hbar)=\hbar^{-(3\alpha)}\sum_{i=0}^2 \phi_i({\bf t}, \hbar)(\alpha\hbar)^i$$
for functions $\phi_i$ satisfying
$$\phi_i({\bf t}, \hbar)= \delta_{0,i}+\sum_{j=1}^\infty \phi_{i,j}({\rm t}) \hbar^{-j}$$
for $0\leq i \leq 2$.  
These conditions place a restriction on the function $f$.  In the above,
$$\hbar^{-3\alpha}=\sum_{i=0}^2 \frac{(3)^i}{i!}(-\log \hbar)^i\alpha^i,$$
which absorbs the multi-valuedness of the integrals.\\
\end{itemize}
As a result of these conditions, if we set $y_i({\bf t})=\phi_{i,1}({\bf t})$, the functions $y_i$ form a set of coordinates on $\mathcal{M}$, $\lim_{\hbar\rightarrow \infty} \hbar^i\phi_i(0,\hbar)=\delta_{0,i}$, and we are able to state the following:

\begin{proposition}[Mirror symmetry for $\mathbb{P}^2$]
\label{msp2}
Given the above setup, on the $\mathbb{C}$ vector space $\mathbb{C}[[y_0,y_1,y_2, \hbar^{-1}]]$,
$$J_i=\phi_i$$
\end{proposition}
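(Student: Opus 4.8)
The plan is to pin down both the A-side series $J_i$ and the B-side series $\phi_i$ as the components of the unique, suitably normalized horizontal section of one and the same connection, and to identify the two descriptions of that connection by an explicit tropical evaluation of the oscillatory integrals. On the A-side I would invoke the standard fact from Gromov--Witten theory that $\sum_i J_i T_i$ solves the \emph{quantum differential equation} of $\PP^2$, i.e.\ it is a horizontal section of the Dubrovin connection $\hbar\,\partial-T_1\ast(-)$ on $H^*(\PP^2)\otimes\CC[[y_0,y_1,y_2]]((\hbar^{-1}))$; after passing to flat coordinates this becomes an order-three ODE with regular singularity, and the particular solution $J$ is singled out by $\lim_{\hbar\to\infty}\hbar^iJ_i(0,\hbar)=\delta_{0,i}$ and by the unipotent monodromy $\exp(6\pi iN)$ inherited from $H^*(\PP^2)$.

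Symmetrically, on the B-side the section $s$ of $\shR^\vee\otimes\mc O_{\mc M\times\CC^\times}$ is horizontal for the Gauss--Manin connection of the semi-infinite variation of Hodge structure attached to the oscillatory integral $\int_{\Xi_i}e^{W_{\mathbf t}/\hbar}f\,\dlog x_1\wedge\dlog x_2$, and Barannikov's uniqueness statement (the four bullet conditions in Section~\ref{bmodel}: the basis $\Xi_i$, the prescribed shape and normalization of $f$, the monodromy $\exp(6\pi iN)$, and $\phi_i=\delta_{0,i}+\sum_{j\ge1}\phi_{i,j}\hbar^{-j}$) is precisely that this horizontal section, together with the mirror map $y_i=\phi_{i,1}(\mathbf t)$, is unique. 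Thus both $(J_i)$ and $(\phi_i)$ are the uniquely normalized horizontal sections of connections that it remains to match.

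The heart of the argument --- and Gross's contribution --- is to perform this match tropically. I would construct the perturbing datum, equivalently the deformed potential $W_{\mathbf t}$, from a scattering diagram, so that its coefficients are generating functions over the Maslov-index-zero tropical trees of Section~\ref{sec:tropics} sprouting from a ``stem''. Expanding $\int_{\Xi_i}e^{W_{\mathbf t}/\hbar}f\,\dlog x_1\wedge\dlog x_2$ in powers of $\hbar^{-1}$ then produces, term by term, a weighted count of tropical disks with fixed boundary point and prescribed Maslov index and incidence conditions; by the tropical/holomorphic correspondence of Theorem~\ref{maintheorem}, enhanced to track $\psi$-classes via the descendant tropical counts of \cite{GKM} and \cite{M+R}, this count equals the descendant Gromov--Witten series $\sum_{d,\nu}\langle T_2^{3d+i-2-\nu},\psi^\nu T_{2-i}\rangle_{0,d}\,\hbar^{-(\nu+2)}e^{dy_1}\,y_2^{3d+i-2-\nu}/(3d+i-2-\nu)!$ appearing in $J_i$, and the B-side mirror map is identified with the A-side change to flat coordinates. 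Hence the two connections coincide, their normalized horizontal sections agree, and $J_i=\phi_i$ in $\CC[[y_0,y_1,y_2,\hbar^{-1}]]$.

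The main obstacle is this last step. One has to show that the scattering-theoretic $f$ actually satisfies all of Barannikov's regularity and normalization conditions --- so that it genuinely is the B-model datum rather than some other perturbation --- and one has to match the combinatorial weights generated by the $\hbar$-expansion of the period (Mikhalkin-type vertex multiplicities together with the contributions of collapsed legs carrying powers of $\psi$) with the descendant invariants. This is bookkeeping-heavy rather than dependent on a single hard estimate, and the WDVV/Kontsevich-type recursions obeyed by both sides can be used to reduce the verification to a finite check; for the details one should consult \cite{MSP2} and \cite{kan}.
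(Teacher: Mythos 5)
Your overall frame (both sides realized as uniquely normalized horizontal sections, matched by a tropical evaluation of the oscillatory integrals) is the right shape for Theorem~\ref{msthm}, but as a proof of Proposition~\ref{msp2} it is circular at the decisive step. The tropical computation of $\int_{\Xi_i}e^{W_{\mathbf t}/\hbar}f\,\dlog x_1\wedge\dlog x_2$ produces the \emph{tropical} descendant series $J_i^{\trop}$, i.e.\ the numbers $\langle T_2^{3d+i-2-\nu},\psi^\nu T_{2-i}\rangle^{\trop}_{0,d}$. To conclude $J_i=\phi_i$ you then need $J_i^{\trop}=J_i$, and you invoke \cite{GKM} and \cite{M+R} for this. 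But the Markwig--Rau correspondence covers only the descendant insertion $\psi^\nu T_2$ (the case $i=2$ in the paper's notation); for $i\neq 2$, i.e.\ insertions $\psi^\nu T_1$ and $\psi^\nu T_0$, the tropical invariants are \emph{defined} by Gross's construction and their equality with the classical ones is not independently established --- the paper records explicitly that Proposition~\ref{eq} ($J^{\trop}_{\PP^2}=J_{\PP^2}$) is \emph{equivalent} to Proposition~\ref{msp2} given Theorem~\ref{msthm}. So your argument proves the equivalence of the two propositions, not either one of them; Theorem~\ref{maintheorem} treats only non-descendant genus-zero counts and does not supply the missing enhancement.

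Note also that the paper does not prove Proposition~\ref{msp2}: it is quoted from \cite{bar} (for the part without descendants) and \cite{iritani} (for a direct proof), whose arguments run through the quantum differential equation and stationary-phase analysis of the oscillatory integrals with no tropical input. To make your strategy work you would have to either (a) prove the tropical-to-classical correspondence for descendant invariants with $T_0$ and $T_1$ insertions directly --- a genuinely missing ingredient, not bookkeeping --- or (b) drop the tropical middle step and identify the B-side Gauss--Manin connection with the Dubrovin connection analytically, which is what Barannikov and Iritani do. Your first two paragraphs (uniqueness of the normalized horizontal section on each side) are sound and form the skeleton of route (b).
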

See \cite{bar} for the part of the statement not involving descendent invariants, and \cite{iritani} for a more direct proof. The functions $\phi_{i,t}({\bf t})$ can be thought of as specifying a new set of coordinates on the moduli space; it is this change of coordinates that gives the isomorphism of the B-model Frobenius manifold with that arising in the A-model.  In Barannikov's formulation, this change of coordinates is difficult to make explicit and not immediately meaningful.  We will see that Gross's tropical methods make the transition very natural and explicit, providing a tropical interpretation of mirror symmetry.

\subsection{Tropical A-model}
The story here is the relatively long and extensive history of the tropical computation of Gromov-Witten invariants.  See Section \ref{section:NS}.  It's important to note that not all of the invariants appearing in the $J$ function have \emph{a priori} tropical interpretations.  In particular, tropical versions of descendent invariants of the type $\langle \psi^\nu T_i,  T_2, \ldots, T_2\rangle _{0,d}$ are, for $i\neq 2$, a result of the mirror symmetry construction outlined here.  The case where $i=2$ was previously treated by Markwig and Rau \cite{M+R}.

\section{Tropical B-model}
\subsection{Family of tropical Landau-Ginzburg potentials}
Recalling the role of the Landau-Ginzburg potential as discussed in \ref{bmodel}, we now outline the tropical version given in \cite{kan}.  The idea is to replace Barannikov's universal unfolding of $W$ with one that naturally relates to the flat coordinates $y_i$ on the A-model side.  Fukaya, Oh, Ohta, and Ono have shown that it is possible to construct a universal unfolding in terms of Maslov index 2 holomorphic disks \cite{FOOO}; there is a relationship between tropical disks and holomorphic disks \cite{Nishinou}.  Gross's construction defines a  universal deformation of $W$ in terms of Maslov index 2 tropical disks; the process of integration glues these disks together to form tropical curves (appearing on the A-model side of the picture).  In this process, the flat coordinates arise naturally and the mirror statement is a transparent combinatorial relationship.

Fix $k$ points $P_1, \ldots, P_k$ and a single point $Q$ in general position in $M_\mathbb{R}$.  In this context, general position can be achieved by choosing points for which the line connecting any pair is of irrational slope.    For the definitions of tropical curves, disks, and trees, see Section \ref{sec:tropics}.
\begin{definition}[$R_k$]
For each $P_i\in\{P_1, \ldots, P_k\}$ associate the variable $u_i$ in the ring:
$$R_k:=\frac{\mathbb{C}[u_1, \ldots, u_k]}{(u_1^2, \ldots, u_k^2)}$$
\end{definition}
For a tropical disk or tree $h$ in $(X_\Sigma, P_1, \ldots, P_k)$, define $I(h)\subseteq \{1, \ldots, k\}$ by 
$$I(h):=\{i | h(p_j)=P_i \text{ for some } j\}$$
\begin{definition}[$u_{I(h)}$]
Let $h$ be a tropical disk or tree in $(X_\Sigma, P_1, \ldots,  P_k)$.
Then
$$u_{I(h)}:=\prod_{i\in I(h)}u_{i}$$
\end{definition}
\begin{definition}[$Mono(h)$]
Let $h$ be a Maslov index 2 tropical disk with boundary $Q$ or Maslov index 0 tropical tree.  Then
$$Mono(h):=Mult(h)u_{I(h)}z^{\Delta(h)}\in \mathbb{C}[T_\Sigma]\otimes_\mathbb{C}R_k[[y_0]]$$
where $z^{\Delta(h)}\in\mathbb{C}[T_{\Sigma}]$ is the monomial associated to $\Delta(h)$.  We will often write $x_i$ for $z^{v_{\rho_i}}$.  See Figure \ref{fan}.
\end{definition}
\begin{figure}[h!]
\sidecaption[t]
\includegraphics[width=0.3\textwidth]{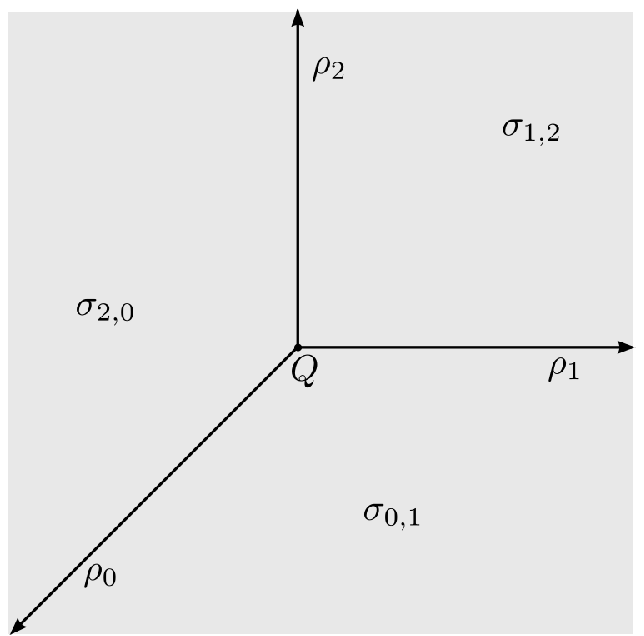} 
\caption{Toric fan for $\mathbb{P}^2$} 
\label{fan}
\end{figure}
\begin{definition}[$W_k(Q)$]
We define the $k$-pointed $n$-descendent Landau Ginzburg potential as 
$$W_{k}(Q):=y_0+\sum_h Mono(h)$$
where the sum is over all Maslov index 2 disks $h \in (X_\Sigma, P_{1}, \ldots,  P_{k})$ with boundary $Q$.
\end{definition}

\subsection{B-model tropical moduli}
Here we define Givental's B-model moduli space \cite{giv}, closely following the presentation in \cite{MSP2}.  

Fix a complete fan $\Sigma$ in $M_\mathbb{R}$ with $X_\Sigma$ a non-singular toric variety.

As the assumption of non-singularity implies the surjectivity of $r$, we have the following exact sequence:
$$0\rightarrow K_\Sigma\rightarrow T_\Sigma \rightarrow M\rightarrow 0$$
with the third arrow given by $r$ and $K_\Sigma$ its kernel.  
Dualizing over $\mathbb{Z}$ gives
$$0\rightarrow N \rightarrow {\rm Hom}_\mathbb{Z}(T_\Sigma, \mathbb{Z})\rightarrow {\rm Pic} X_\Sigma \rightarrow 0$$
Tensoring with $\mathbb{C}^\times$ gives the sequence

 $$0\rightarrow N\otimes\mathbb{C}^\times \rightarrow {\rm Hom}(T_\Sigma, \mathbb{C}^\times)\rightarrow {\rm Pic} X_\Sigma\otimes\mathbb{C}^\times \rightarrow 0$$
with the third arrow defining the map $\kappa$, providing the family of mirrors to $X_\Sigma$.  
 Set $$\check{\mathcal{X}}:=\Hom(T_\Sigma, \mathbb{C}^\times)=\spec\mathbb{C}[T_\Sigma].$$

The \emph{K\"ahler moduli space} of $X_\Sigma$ is defined to be 
$$ \mathcal{M}_\Sigma:={\rm Pic}X_\Sigma\otimes \mathbb{C}^\times=\spec \mathbb{C}[K_\Sigma]$$
Of course, this is very simple in our case with $K_\Sigma\cong\mathbb{Z}$.  Note that $\kappa$, by definition, is now a map:
$$\kappa:\spec \mathbb{C}[T_\Sigma]\rightarrow \mathcal{M}_\Sigma$$
A fiber of $\kappa$ over a closed point of $\mathcal{M}_\Sigma$ is isomorphic to $ \spec\mathbb{C}[M]$.  

Define the {\textit k-order thickening} of the K\"ahler moduli space $\mathcal{M}_\Sigma$ to be the ringed space
$$\mathcal{M}_{\Sigma, k}:=(\mathcal{M}_\Sigma, \mathcal{O}_{\mathcal{M}_{\Sigma, k}})$$
where  $\mathcal{O}_{\Sigma, k}(U)$ for $U\subseteq \mathcal{M}_\Sigma$ given by expressions of the form
$$\sum_{\substack{n=0\\
I\subseteq\{1,\ldots, k\}}}^\infty f_{n, I}y_0^nu_I$$
where $u_I\in R_k$, $f_{n,I}$ is a holomorphic function on $U$ for each $n$ and $I$ and there are only a finite number of terms for each $n$.\\
The {\textit k-order thickening of the mirror family} $\check{\mathcal{X}}_{\Sigma, k}:=(\check{\mathcal{X}}_\Sigma, \mathcal{O}_{\check{X}_{\Sigma, k}})$ is defined similarly, giving us a family
$$\kappa:\check{X}_{\Sigma, k}\rightarrow \mathcal{M}_{\Sigma, k}$$
In our particular example, writing $x_i$ for the monomial $z^{v_\rho}\in \mathbb{C}[T_\Sigma]$, it's easy to see that $\kappa$ is a map $\kappa:(\mathbb{C}^\times)^3\rightarrow \mathbb{C}^\times$ with 
$$\kappa(x_0, x_1, x_2)=x_0x_1x_2.$$
The relevance of this discussion to our earlier constructions is clear;  $W_{k}(Q)$ is, by construction, a regular function on $\check{X}_{\Sigma, k}$.  We can think of this map as providing a family of Landau-Ginzburg potentials.

The sheaf of relative differentials $\Omega^1_{\check{X}_{\Sigma, k}/\tilde{M}_{\Sigma, k}}$ is canonically isomorphic to the trivial locally free sheaf $M\otimes_\mathbb{Z} \mathcal{O}_{\check{X}_{\Sigma, k}}$, with $m\otimes 1$ corresponding to the differential
$${\rm dlog} m := \frac{d(z^{\overline{m}})}{z^{\overline{m}}}$$
where $\overline{m}$ is any lift of $m\in M$ to $T_\Sigma$ under the map $r$ and ${\rm dlog} z^{\overline{m}}$ is well defined as a relative differential independent of the choice of the lift.  Thus, a choice of generator $\wedge^2 M\cong \mathbb{Z}$ determines a nowhere-vanishing relative holomorphic two-form $\Omega$, which is, up to sign, canonical.  Explicitly, if $e_1$, $e_2$ is a positively oriented basis of $M$, we choose
$$\Omega := {\rm dlog} e_1\wedge {\rm dlog} e_2$$
\subsection{Automorphisms}
There is an obvious dependence on the potential $W_k$ on the position of the points $Q, P_1, \ldots, P_k$; significantly, the changes induced by different choices of points are restricted to those given by the action of a particularly nice group. 
\begin{definition}[$\mathbb{V}_{\Sigma, k}$]
$\mathbb{V}_{\Sigma, k}$ is the group of automorphisms of  $\mathbb{C}[T_\Sigma]\otimes_\mathbb{C}R_k[[y_0]]$ generated by elements of the form ${\rm exp}(cu_Iz^m\otimes n )$, whose action is given by:
$${\rm exp}(cu_Iz^m\otimes n )(z^{m'})=z^{m'}(1+cu_I\langle n, r(m')\rangle z^m)$$
\end{definition}
The generators of this group preserve our choice of $\Omega$; in fact, the original version of this group was defined as a group of Hamiltonian symplectomorphisms. 
 \subsection{Scattering diagrams}
The essential tool for understanding the dependence of $W_{k}(Q)$ on $Q\in M_\mathbb{R}$ is the \emph{scattering diagram}.  The definition we shall give, from \cite{kan}, has broad generalizations, but in this situation the underlying idea is very concrete and intuitively appealing.  One defines a collection of rays and lines (\emph{walls}) in the plane, each with an attached function in  $\mathbb{C}[T_\Sigma]\otimes_\mathbb{C}R_k[[y_0]]$.  Given the data of a wall and an attached function, one one can give an automorphism in $\mathbb{V}_{\Sigma, k}$ defined by crossing the wall in either of the possible directions.
\begin{definition} \cite{kan}
Fix $k\geq 0$.
\begin{enumerate}
\item A \emph{ray} or \emph{line} is a pair $(\mathfrak{d}, f_\mathfrak{d})$ such that
\begin{itemize}
\item $\mathfrak{d}\subseteq M_\mathbb{R}$ is given by 
$$\mathfrak{D}=m_{0} ^{'}-\mathbb{R}_{\geq 0} r(m_0)$$
if $\mathfrak{d}$ is a ray and 
$$\mathfrak{d} = m_{0}^{'} -\mathbb{R} r(m_0)$$
if $\mathfrak{d}$ is a line, for some $m_{0}^{'}\in M_\mathbb{R}$ and $m_0\in T_\Sigma$ with $r(m_0)\neq0$.  The set $\mathfrak{d}$ is called the \emph{support} of the line or ray.  If $\mathfrak{d}$ is a ray, $m_0^{'}$ is called the \emph{initial point} of the ray, written as $Init(\mathfrak{d})$.
\item $f_\mathfrak{d} \in \mathbb{C}[z^{m_0}] \otimes _\mathbb{C} R_k \subseteq \mathbb{C}[T_\Sigma]\otimes_\mathbb{C} R_k[[y_0]]$.
\end{itemize}
\item A \emph{scattering diagram} $\mathfrak{D}$ is a finite collection of lines and rays.
\end{enumerate}
\end{definition}
If $\mathfrak{D}$ is a scattering diagram, we write
$$Supp(\mathfrak{D}):=\cup_{\mathfrak{d}\in \mathfrak{D}} \mathfrak{d} \subseteq M_\mathbb{R}$$
and
$$Sing(\mathfrak{D}):=\bigcup_{\mathfrak{d}\in \mathfrak{D}} \partial \mathfrak{d} \cup \bigcup_{\substack{\mathfrak{d}_1, \mathfrak{d}_2\\ {\rm dim} \mathfrak{d}_1\cap \mathfrak{d}_2=0}} \mathfrak{d}_1\cap \mathfrak{d}_2$$
where $\partial\mathfrak{d}=\{Init(\mathfrak{d})\}$ if $\mathfrak{d}$ is a ray, and empty if it is a line.

\begin{definition}[$\theta_{\gamma, \mathfrak{D}}\in \mathbb{V}_{\Sigma, k}$]
Given a scattering diagram $\mathfrak{D}$ and smooth immersion $\gamma:[0,1]\rightarrow M_\mathbb{R}\setminus Sing(\mathfrak{D})$ whose endpoints are not in $Supp(\mathfrak{D})$, with $\gamma$ intersecting $Supp(\mathfrak{D})$ transversally,  this information defines a ring automorphism $\theta_{\gamma,\mathfrak{D}}\in \mathbb{V}_{\Sigma, k}$.
First, find numbers
$$0<t_1\leq t_2\leq \ldots \leq t_s <1$$
and elements $\mathfrak{d}_i$ such that $\gamma(t_i)\in \mathfrak{d}_i$, $\mathfrak{d}_i\neq \mathfrak{d}_j$ if $i\neq j $ and $s$ is taken to be as large as possible to account for all elements of $\mathfrak{D}$ that are crossed by $\gamma$.  For each $i\in \{1,\ldots, s\}$, define $\theta_{\gamma,\mathfrak{d}_i}\in \mathbb{V}_{\Sigma, k}$ to be the automorphism with action
\begin{align*}
\theta_{\gamma, \mathfrak{d}_i}(z^m)&= z^mf_{\mathfrak{d}_i}^{\langle n_0, r(m) \rangle}\\
\theta_{\gamma, \mathfrak{d}_i}(d)&= d
\end{align*}
for $m\in T_\Sigma$, $d\in R_k [[y_0]]$, where $n_0\in N$ is chosen to be primitive, annihilating the tangent space to $\mathfrak{d}_i$ and satisfying
$$\langle n_0, \gamma'(t_i)\rangle <0$$
Then $\theta_{\gamma, \mathfrak{D}}:=\theta_{\gamma, \mathfrak{d}_s}\circ\cdots \circ \theta_{\gamma, \mathfrak{d}_1}$, where composition is taken from right to left.

\end{definition}

In our particular example, we construct our walls from the outgoing edges of Maslov index 0 trees and attach functions determined by the degree, multiplicity, and marked points of the corresponding tree.  Given a general choice of $P_1, \ldots, P_k$, there should be a finite set $Trees(\Sigma, P_1, \ldots, P_k)$ of Maslov index zero trees in $X_\Sigma$ with the property that each maps its marked points to some subset of $\{P_1, \ldots, P_k\}$. 
\begin{definition}\cite{kan}
We define $\mathfrak{D}(\Sigma, P_1, \ldots, P_k)$. to be the scattering diagram which contains one ray for each element $h$ of $Trees(\Sigma, P_1, \ldots, P_k)$,  The ray corresponding to $h$ is of the form $(\mathfrak{d}, f_\mathfrak{d})$, where
\begin{itemize}
\item $\mathfrak{d} = h(E_{out})$.
\item $f_\mathfrak{d} = 1 + w_\Gamma(E_{out})Mono(h)$, where $w_\Gamma(E_{out})$ is the weight of the outgoing edge $E_{out}$.
\end{itemize}
\end{definition}

When the outgoing edges of two trees meet, one can construct a new tree by gluing them together and attaching an appropriate outgoing edge. This outgoing edge corresponds to a ray in the scattering diagram $\mathfrak{D}$ (see the lower left wall in Figure \ref{fig:scat0}).  It is this process that inspired the term ``scattering."  This property automatically induces a very nice feature of $\mathfrak{D}$:  the automorphism defined by going around a loop of any (unmarked) vertex in our scattering diagram is the identity.  In other examples of scattering diagrams, walls will need to be added at intersection points to ensure this phenomenon \cite{KS06}.
\begin{proposition}\cite{kan}
Let $P_1, \ldots, P_k$ be chosen generally.  If 
$$P\in {\rm Sing}(\mathfrak{D}(\Sigma, P_1, \ldots, P_k))$$ is a singular point with $P\notin \{P_1, \ldots, P_k\}$, and $\gamma_P$ is a small loop around $P$, then $\theta_{{\gamma_P}, \mathfrak{D}(\Sigma, P_1, \ldots, P_k)}={\rm Id}.$
\end{proposition}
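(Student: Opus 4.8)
The assertion is the local consistency of the scattering diagram $\mathfrak{D}:=\mathfrak{D}(\Sigma,P_1,\ldots,P_k)$, so the plan is to compute the path-ordered automorphism $\theta_{\gamma_P,\mathfrak{D}}$ in a small neighbourhood of $P$ and show that the walls of $\mathfrak{D}$ are organised so that it collapses to the identity. The first point to establish is that the ambient group $\mathbb{V}_{\Sigma,k}$ becomes nilpotent once we use the relations $u_i^2=0$ in $R_k$: any monomial $u_I z^m$ whose index set $I$ repeats an index vanishes, a generator $\exp(c\,u_I z^m\otimes n)$ acts by $z^{m'}\mapsto z^{m'}(1+c\,u_I\langle n,r(m')\rangle z^m)$, and finite products of such generators truncate. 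Consequently, for a wall $(\mathfrak{d},1+w_\Gamma(E_{out})\,Mono(h))$ one has $f_\mathfrak{d}^{\,j}=1+j\,w_\Gamma(E_{out})\,Mono(h)$, so the wall-crossing automorphism is exact to first order, $\theta_{\gamma,\mathfrak{d}}(z^{m'})=z^{m'}\bigl(1+\langle n_0,r(m')\rangle\,w_\Gamma(E_{out})\,Mono(h)\bigr)$, with $n_0$ primitive, annihilating $\mathfrak{d}$ and negative on $\gamma'$.

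Next I would describe the local picture at $P$. By the general position of $P_1,\ldots,P_k$ there are only finitely many Maslov index $0$ trees through subsets of the $P_i$ (a rigidity count in the spirit of Prop.~\ref{shapeM0k}, forced by $MI(h)=2(|\Delta(h)|-d)=0$), the resulting walls meet transversally, and $P$ lies on finitely many of them, say $\mathfrak{d}_1,\ldots,\mathfrak{d}_r$ coming from trees $h_1,\ldots,h_r$. The crucial structural fact is the scattering mechanism recalled just before the proposition: whenever the outgoing rays $\mathfrak{d}_i,\mathfrak{d}_j$ cross at $P$, gluing $h_i$ and $h_j$ at $P$ and attaching the outgoing edge forced by the balancing condition produces a tree $h_i\ast h_j$ which is again of Maslov index $0$ (since $\Delta(h_i\ast h_j)=\Delta(h_i)+\Delta(h_j)$ and the numbers of marked points add), and whose associated ray emanates from $P$ and is therefore one of the $\mathfrak{d}_\ell$; conversely every ray with initial point $P$ arises from such a gluing. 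Thus the collection of walls through $P$ is closed under gluing, and, iterating, under repeated gluing.

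The heart of the matter is to match this tree combinatorics with the Lie bracket of $\mathbb{V}_{\Sigma,k}$. From the first-order formula, $[\theta_{\gamma,\mathfrak{d}_i},\theta_{\gamma,\mathfrak{d}_j}]$ retains the single surviving term carrying $u_{I(h_i)}u_{I(h_j)}z^{\Delta(h_i)+\Delta(h_j)}$ with coefficient proportional to $w_i w_j\,Mult(h_i)\,Mult(h_j)\,|m_i\wedge m_j|$, where $m_i,m_j$ are the outgoing directions at $P$; I would check that this coincides with the first-order wall-crossing automorphism attached to $h_i\ast h_j$, i.e.\ that the Mikhalkin multiplicity factors as $Mult(h_i\ast h_j)=Mult(h_i)\,Mult(h_j)\,Mult_V$ with $Mult_V=w_i w_j|m_i\wedge m_j|$ the multiplicity of the new trivalent vertex, so that $Mono(h_i\ast h_j)$ is exactly the commutator contribution. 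Granting this, the standard scattering argument (cf.\ \cite{KS06}, \cite{GS11}) applies: in the $u_i^2=0$ truncation of $\mathbb{V}_{\Sigma,k}$ the product $\theta_{\gamma_P,\mathfrak{D}}$ is the identity as soon as every iterated commutator of the generating walls is itself realised by a wall of $\mathfrak{D}$, and closure under iterated gluing is precisely this condition. The genuinely nontrivial local automorphisms occur only at the points $P_i$, where new (non-glued) trees are rooted and the gluing closure fails, which is why the hypothesis $P\notin\{P_1,\ldots,P_k\}$ is needed.

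I expect the main obstacle to be the multiplicity bookkeeping of the third paragraph: proving that the structure constant $|m_i\wedge m_j|$ produced by the bracket in $\mathbb{V}_{\Sigma,k}$ is exactly the new vertex multiplicity $Mult_V$ entering $Mult(h_i\ast h_j)$, and that this persists through all iterated gluings so that the full Baker--Campbell--Hausdorff expansion of $\theta_{\gamma,\mathfrak{d}_r}\circ\cdots\circ\theta_{\gamma,\mathfrak{d}_1}$ cancels term by term. The remaining ingredients — finiteness and transversality of the walls, and the nilpotency that reduces each wall-crossing to first order — are either the general position hypothesis or formal consequences of the relations $u_i^2=0$.
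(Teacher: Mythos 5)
The paper does not actually prove this proposition: it is quoted from \cite{kan} with only the informal remark preceding it (that gluing two Maslov index~$0$ trees whose outgoing edges meet produces a new tree whose outgoing ray is again a wall of $\mathfrak{D}$) offered as justification. Your sketch is the standard way of making that remark precise, and its skeleton is sound: the relations $u_i^2=0$ do truncate every wall-crossing to first order (note $I(h)\neq\emptyset$ for a Maslov index $0$ tree since $MI(h)=0$ forces $d=|\Delta(h)|\geq 1$, so $f_{\mathfrak{d}}^{\,j}=1+jw_\Gamma(E_{out})Mono(h)$ is legitimate even for negative $j$); the closure of the walls through $P$ under gluing is exactly the paper's remark, and the converse direction (every ray with initial point $P$ splits as a gluing) follows because cutting a tree at the trivalent vertex $Init(\mathfrak{d})$ yields two subtrees with $MI(h)=MI(h_1)+MI(h_2)$ and each summand nonnegative; and the identification of the commutator structure constant $\left|m_i\wedge m_j\right|$ with the new vertex multiplicity is precisely the computation carried out in \cite{kan} (and in the ``tropical vertex'' literature). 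You have correctly isolated the one piece of genuine content — the multiplicity bookkeeping matching $Mult(h_i\ast h_j)=Mult(h_i)\,Mult(h_j)\,w_iw_j\left|m_i\wedge m_j\right|$ against the Lie bracket in $\mathbb{V}_{\Sigma,k}$ — and since all higher iterated commutators die under $u_i^2=0$ beyond the terms realised by further gluings, the cancellation is finite rather than a full Baker--Campbell--Hausdorff expansion. Two small points to tighten if you write this out: the gluing $h_i\ast h_j$ is only a valid element of $Trees(\Sigma,P_1,\ldots,P_k)$ when $I(h_i)\cap I(h_j)=\emptyset$, but this is harmless because the corresponding commutator term carries $u_{I(h_i)}u_{I(h_j)}=0$ otherwise; and the hypothesis $P\notin\{P_1,\ldots,P_k\}$ enters not because ``gluing closure fails'' at the $P_i$ in your sense, but because at a marked point new trees are rooted there whose rays are not accounted for by gluings of the incoming walls.
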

\subsection{Broken lines}
Once we have assembled a scattering diagram, the Maslov index 2 disks with a particular endpoint $Q$ can be found by analyzing objects called broken lines.  The precise definition (given in Section 5.4.4 of \cite{kan}) is not necessary for this exposition, but the idea is quite simple.  One begins with a line of slope equal to one of elements of $\Sigma^{[1]}$ in $M_\mathbb{R}$ far away from our chosen points in the plane. Label the line with the monomial associated to its element of $T_\Sigma$, and begin traveling along the line (in the direction opposite that specified by the monomial) until reaching a wall of the scattering diagram.  At this point, you can either choose to bend the line in a fashion dictated by the wall while appropriately adjusting the attached monomial or continue on undisturbed.  If you end up hitting $Q$ after some time, you've discovered a \emph{broken line with endpoint Q}.  Recalling that each of the walls of our scattering diagram correspond to a set of Maslov index 0 disks, the process of constructing a broken line can be thought of as taking a stem (the broken line) and attaching a set of disks corresponding to the walls at which the line bends.  It turns out that each Maslov index 2 disk can be decomposed in such a fashion, giving us the following useful result.
\begin{proposition} If $Q\notin {\rm Supp}(\mathfrak{D}(\Sigma, P_1, \ldots, P_k))$ is general, then there is a one-to-one correspondence between broken lines with endpoint $Q$ and Maslov index 2 disks with boundary $Q$.  In addition, if $\beta$ is a broken line corresponding to a disk $h$, and $cz^m$ is the monomial associated to the last segment of $\beta$, then
$$cz^m={\rm Mono}(h)$$
\end{proposition}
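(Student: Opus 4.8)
The plan is to build the correspondence by a ``stem decomposition'' of Maslov index $2$ disks in one direction and a ``grafting'' of Maslov index $0$ trees onto broken lines in the other, and then to read off the monomial identity by induction on the number of bends. I would begin by recording what general position buys: as in Lemma~\ref{finiteness-of-comb-types} and Proposition~\ref{shapeM0k} both the set $Trees(\Sigma,P_1,\dots,P_k)$ (hence the scattering diagram $\mathfrak{D}(\Sigma,P_1,\dots,P_k)$) and the set of Maslov index $2$ disks with boundary $Q$ are finite and consist of simple (trivalent, unit weight on unbounded edges) objects; a broken line ending at $Q$ meets ${\rm Supp}(\mathfrak{D})$ transversally and, away from ${\rm Sing}(\mathfrak{D})$, one wall at a time; and $Q\notin{\rm Supp}(\mathfrak{D})$. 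I would also note the algebraic simplification coming from $u_{I(h')}^2=0$ (valid since every Maslov index $0$ tree carries at least one marked point): crossing the wall $(h'(E_{out}),\,1+w(E_{out})\,{\rm Mono}(h'))$ sends a broken--line monomial $cz^m$ to either $cz^m$ or $c\,\langle n_0,r(m)\rangle\,w(E_{out})\,{\rm Mono}(h')\cdot z^m$, with no higher powers.

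Next, given a Maslov index $2$ disk $h$ with boundary $Q$, I would extract its \emph{stem}: the path in the domain from $E_{out}$ (whose vertex maps to $Q$) to a distinguished unbounded edge, defined recursively by the rule that at each trivalent vertex one detaches the unique forward subtree $T$ with $|\Delta(T)|=\#\{\text{marked points of }T\}$ and continues along the other. Well-definedness rests on the sub-lemma that every forward subtree is itself the domain of a tropical disk with $MI\ge 0$, so that, since $|\Delta(T_1)|+|\Delta(T_2)|=\#\{\text{marked points}\}+1$ at each branching, exactly one of the two is Maslov index $0$; in particular no marked point lies on the stem, and each detached $T$ is (up to combinatorial type) a genuine element of $Trees(\Sigma,P_1,\dots,P_k)$, hence defines a wall of $\mathfrak{D}$. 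Reading the partial sums of $\Delta$ along the stem as the exponents of the successive monomials, the stem becomes a broken line with endpoint $Q$: the balancing condition of $h$ at a stem vertex adjacent to a detached tree $h'$ says exactly that the outgoing stem direction is $-r(m)-r(\Delta(h'))$, which is the wall--crossing rule. This is precisely the ``stems'' picture of Figure~\ref{stems}. Conversely, starting from a broken line $\beta$ ending at $Q$, at each bend I would graft onto $\beta$ the Maslov index $0$ tree $h'$ whose outgoing ray is the crossed wall; the result satisfies the tropical disk axioms (balancing at the grafting vertices is the reverse computation), has $|\Delta|=1+\sum_j\#\{\text{marked points of }h'_j\}=1+d$ and hence $MI=2$, and has boundary $Q$. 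That the two constructions are mutually inverse is then formal: the stem of a grafted disk is $\beta$ again, and re-grafting the trees detached from $h$ returns $h$.

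For the monomial statement I would induct on the number $s$ of bends of $\beta$. When $s=0$, $\beta$ is a straight line of slope $\rho_i$ with monomial $z^{v_{\rho_i}}$, and the corresponding disk has $\Delta=v_{\rho_i}$, no marked points and $\Mult=1$, so ${\rm Mono}(h)=z^{v_{\rho_i}}$. For the step, if the last bend is at the wall attached to $h'$, then by induction the monomial just before that bend is ${\rm Mono}(h_-)$, where $h_-$ is the disk grafted from the truncated broken line, and after the bend it is ${\rm Mono}(h_-)\cdot\langle n_0,r(m)\rangle\,w(E_{out})\,{\rm Mono}(h')$. I would match this with $\Mult(h)\,u_{I(h)}\,z^{\Delta(h)}$ factor by factor: $\Delta(h)=\Delta(h_-)+\Delta(h')$, $I(h)=I(h_-)\sqcup I(h')$ (disjointness is general position), and $\Mult(h)=\Mult(h_-)\Mult(h')\,\Mult_V(h)$ by multiplicativity of $\Mult$ over vertices, where $\Mult_V(h)=|\langle n_0,r(m)\rangle|\cdot w(E_{out})$ is the local multiplicity at the grafting vertex --- here I would quote the identity already exploited in Section~\ref{section:NS} that such a local multiplicity equals the index of the cokernel of the projection $M\to M/\mathbb{Z}u_{E_1}\oplus M/\mathbb{Z}u_{E_2}$.

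The main obstacle I anticipate is the well-definedness of the stem/Maslov-$0$-tree decomposition: proving that every forward subtree of a Maslov index $2$ disk has $MI\ge 0$ and is an honest tropical disk, so that the split at each vertex is forced, and simultaneously checking that $Trees(\Sigma,P_1,\dots,P_k)$ is closed under exactly the gluings that arise, so that walls of $\mathfrak{D}(\Sigma,P_1,\dots,P_k)$ correspond bijectively to admissible bends. Once that matching is in place, the direction bookkeeping at each bend (the balancing computation) and the multiplicity bookkeeping (the cokernel-index identity) are routine, though they require some care with lattice indices and with edges of weight greater than one along the stem.
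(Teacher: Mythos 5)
Your stem-decomposition/grafting argument is exactly the approach the paper has in mind: the text preceding the proposition describes a broken line as a ``stem'' with Maslov index $0$ trees attached at the bends, defers the details to Section 5.4 of \cite{kan}, and your elaboration (including the wall-crossing bookkeeping killed by $u_i^2=0$ and the identification of the bend factor with the vertex multiplicity) fills in that same outline correctly. The one point you rightly flag as the crux --- that general position forbids Maslov index negative forward subtrees, so the split at each stem vertex is forced --- is indeed where the real work lives in \cite{kan}.
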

\subsubsection{Examples}
See Figures \ref{fig:scat0}, \ref{msdisk0}, and \ref{fig:scat1}.

\begin{figure}
\sidecaption[t]
\includegraphics[width=.6\textwidth]{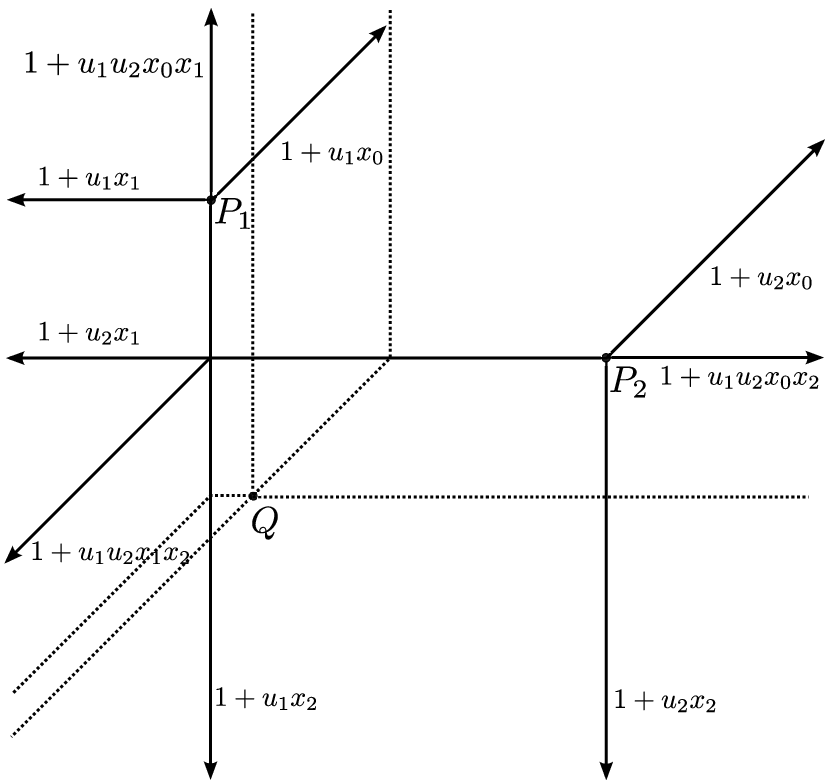}
 \caption{The scattering diagram for $W_2(Q)$ and this particular arrangement of points.  Broken lines are shown dotted.  The monomials corresponding to the broken lines are (beginning with that in the 12 o'clock position and proceeding clockwise):  $x_2$, $u_2x_1x_2$, $x_1$, $x_0$, $u_1x_0x_2$.}
\label{fig:scat0}
\end{figure}

\begin{figure}
\centering
\begin{tabular}{ccc}
\includegraphics[scale=.5]{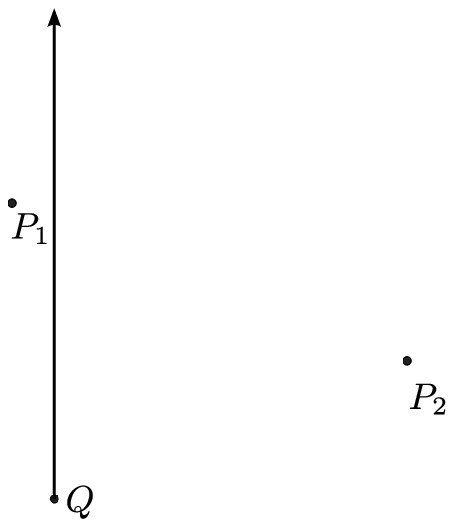}&
 \includegraphics[scale=.5]{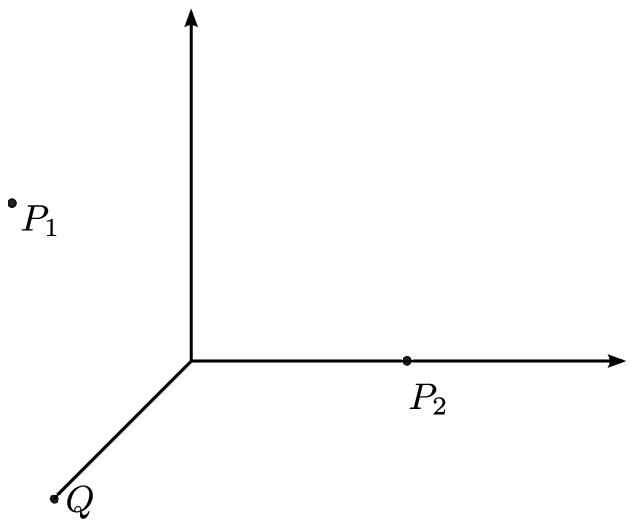}
 \includegraphics[scale=.5]{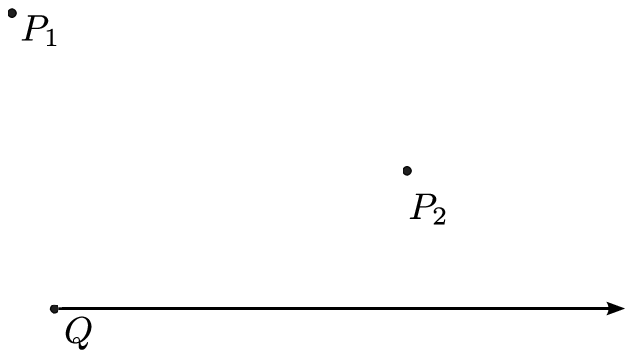}
\end{tabular}
\begin{tabular}{cc}
\includegraphics[scale=.5]{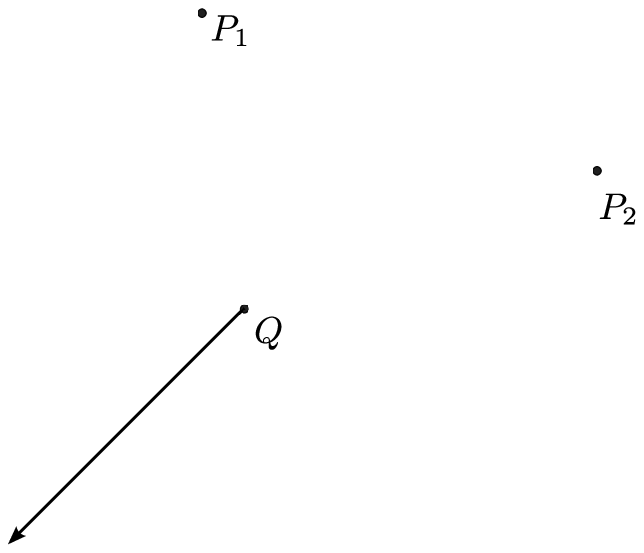}&
\includegraphics[scale=.5]{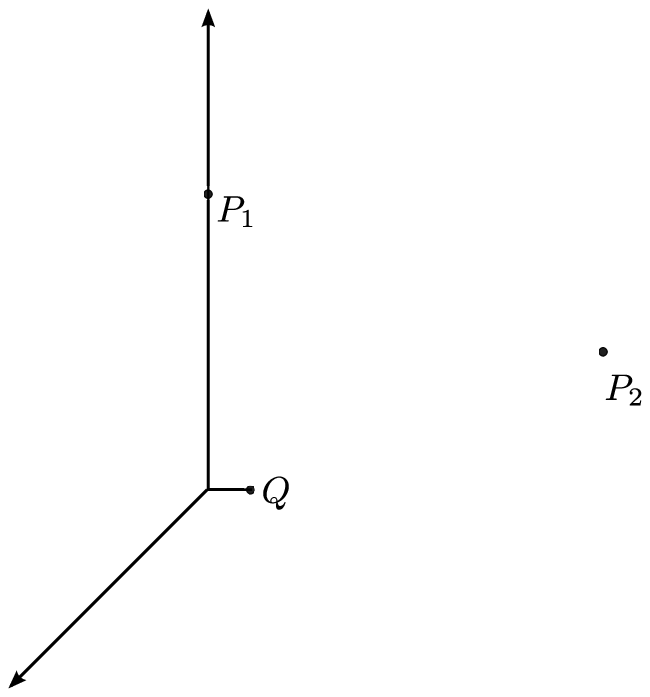}
\end{tabular}
\caption{Maslov index two disks corresponding to the broken lines in Figure \ref{fig:scat0}.}
\label{msdisk0}
\end{figure}

\begin{figure}
\sidecaption[t]
\includegraphics[width=.6\textwidth]{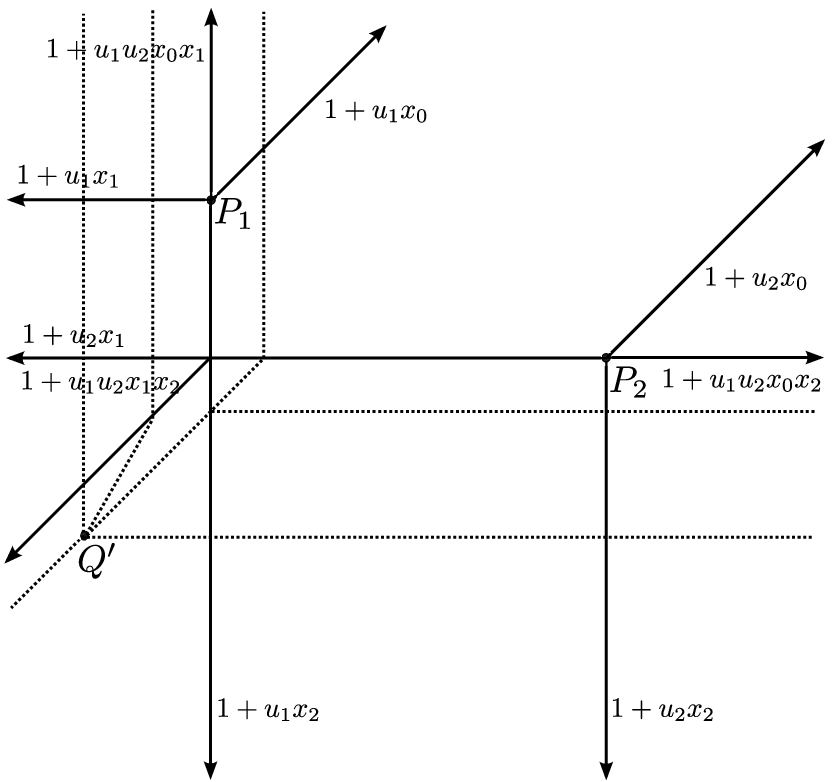}
\caption{The scattering diagram for $W_2(Q')$ and the same choice of $P_i$ as in Figure \ref{fig:scat0}. The monomials corresponding to the broken lines are (beginning with that in the 12 o'clock position relative to $Q'$ and proceeding clockwise):  $x_2$, $u_1u_2x_1x_2$, $u_2x_1$, $u_1x_2$, $x_1$, $x_0$.  For an explanation of the functions attached to the walls, see \cite{kan}, Section 5.4.3.}
\label{fig:scat1}
\end{figure}

\subsection{Tropical invariants}
In order to discuss the results of the period integrals, we must first give a notion of the tropical versions of the Gromov-Witten invariants involved in Givental's $J$-function for $\mathbb{P}^2$.  The exact definitions are not particularly illuminating, but the basic idea is essential to understanding our mirror symmetry construction.  In order to understand what type of curves contributing to these invariants, we must define a slightly different moduli space of parametrized tropical curves than was explored in Section \ref{sec:enum}. 

\begin{definition}[$\mathcal{M}_{0,k+1}(\Sigma, \Delta, P_1, \ldots, P_k, \psi^{\nu} S)$]
Let $P_1, \ldots, P_k \in M_\mathbb{R}$ be general.  Let $S\subseteq M_\mathbb{R}$.  Define
$$\mathcal{M}_{0,k}(\Sigma, \Delta, P_1, \ldots, P_k, \psi^{\nu} S)$$
to be the moduli space of rational $(k+1)$-pointed tropical curves in $X_\Sigma$, $$h:(\Gamma , p_1, \ldots, p_k, x) \rightarrow M_\mathbb{R}$$ of degree $\Delta$ such that
\begin{itemize}
\item $h(p_j)=P_{j}$, $1\leq j\leq k$.
\item $h(x)\in S$.
\item If $E_{x}$ shares a vertex $V_{j}$ with $E_{p_j}$, then
$$Val(V_j)=3+\nu$$
and the valency of the vertex $V_i$  attached to $E_{p_i}$ for $i\neq j$ is given by
$$Val(V_j)=3$$
\item Otherwise, the valency of the vertex $V_{x}$ attached to $E_{x}$ is given by 
$Val(V_{x})=\nu+3$ and $Val(V_j)=3$ for $1\leq j \leq k$.   
\item The weight of each unbounded edge of $\Gamma$ is either $0$ or $1$.  Note that all unmarked, unbounded edges must have weight 1 and be translates of elements of $\Sigma_{[1]}$.
\end{itemize}
\end{definition}

For compactness of notation, we depart slightly from the notation of \cite{kan}.  Let $S_0\subseteq M_\mathbb{R}$ be the set $\{Q\}$, $S_1=L\subseteq M_\mathbb{R}$ the tropical line with vertex $Q$ (the tropical curve given by attaching unbounded rays in the direction of $(-1-1)$, $(1,0)$, and $(0,1)$ to $Q$), and $S_2=M_\mathbb{R}$ .  Gross defines tropical invariants of the form
\begin{eqnarray*}
\langle P_1, \ldots, P_k, \psi^\nu S_i\rangle _{0,d}^{trop}
\end{eqnarray*}
with $3d-\nu - k +(2-i)=0$.  These are meant to be (and, as we shall see, are) equal to the corresponding classical Gromov-Witten invariants of the form $\langle \overbrace{T_2, \ldots, T_2}^k, \psi^\nu T_{2-i}\rangle_{0,d}$ for $\mathbb{P}^2$.   The tropical invariants are defined by summing the contributions of curves in $\mathcal{M}_{0,k+1}(\Sigma, \Delta, P_1, \ldots, P_k, \psi^{\nu-j} S_{i-j})$ for $0\leq j \leq i$ with the appropriate (and quite complicated) multiplicities.  For the precise definitions, see Section 5.2 of \cite{kan}.  Each of the tropical curves contributing to these invariants are glued from tropical disks and trees,  objects with a close correspondence to terms appearing in the tropical Landau-Ginzburg potential.  This is the connection that binds the A- and B-models in this construction.  See Figures \ref{curve1} and \ref{curve2} for examples of tropical curves relevant to these invariants.

\begin{figure}[h!]
\centering
\def\svgwidth{.2\textwidth}
\begin{tabular}{cc}
\includegraphics[scale=.4]{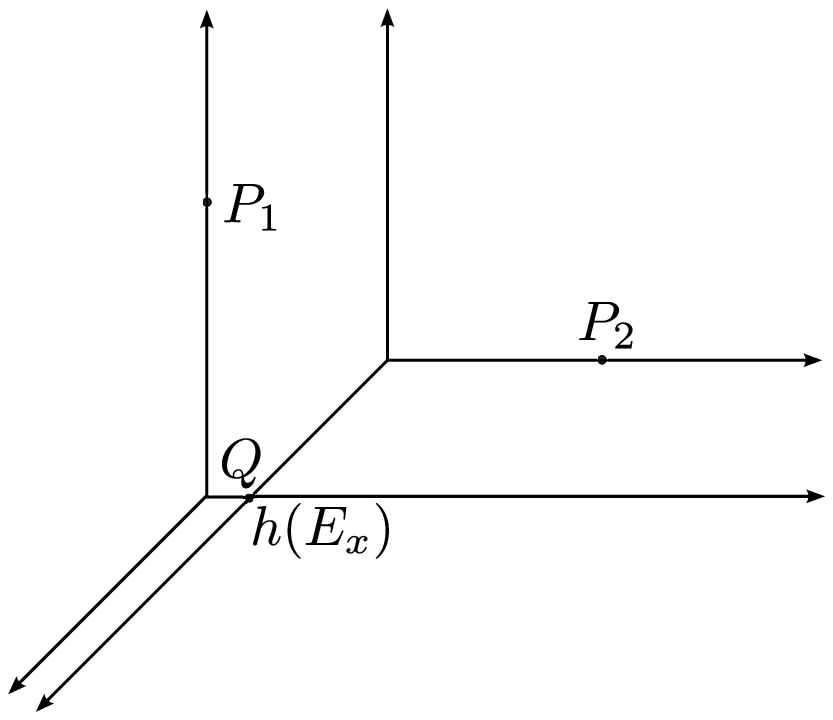}&
\includegraphics[scale=.4]{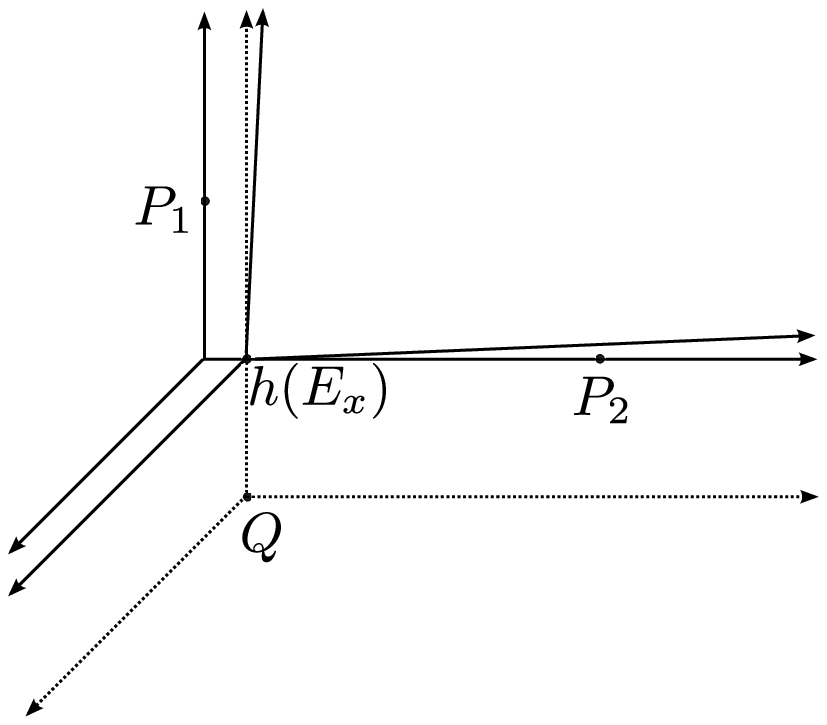}\\
\includegraphics[scale=.4]{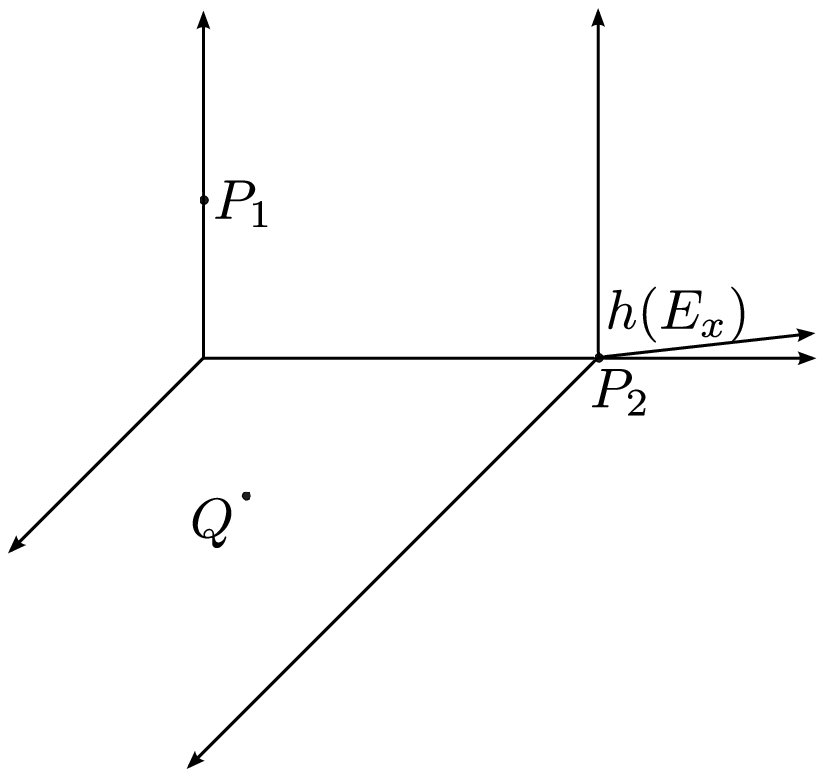}&
\includegraphics[scale=.4]{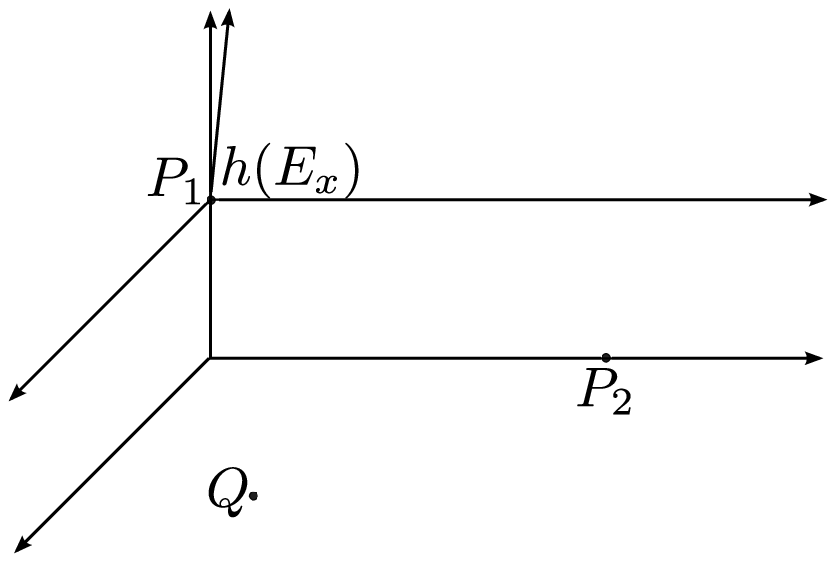}
\end{tabular}
\caption{Tropical curves \ contributing to $\langle P_1, P_2, \psi^2 S_0\rangle_{0,2}^{trop}$, $\langle P_1, P_2, \psi^3 S_1\rangle_{0,2}^{trop}$, and $\langle P_1, P_2, \psi^4 S_2\rangle_{0,2}^{trop}$.  Edges have been drawn as perturbed from their true direction when necessary for clarity.}
\label{curve1}
\end{figure}
\begin{figure}[h!]
\centering
\def\svgwidth{.2\textwidth}
\begin{tabular}{cc}
\includegraphics[scale=.4]{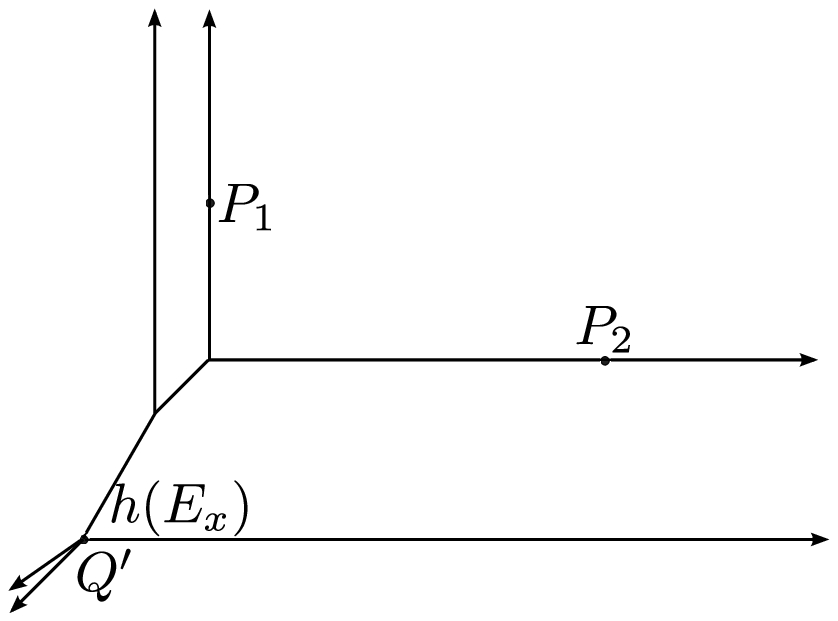}&
\includegraphics[scale=.4]{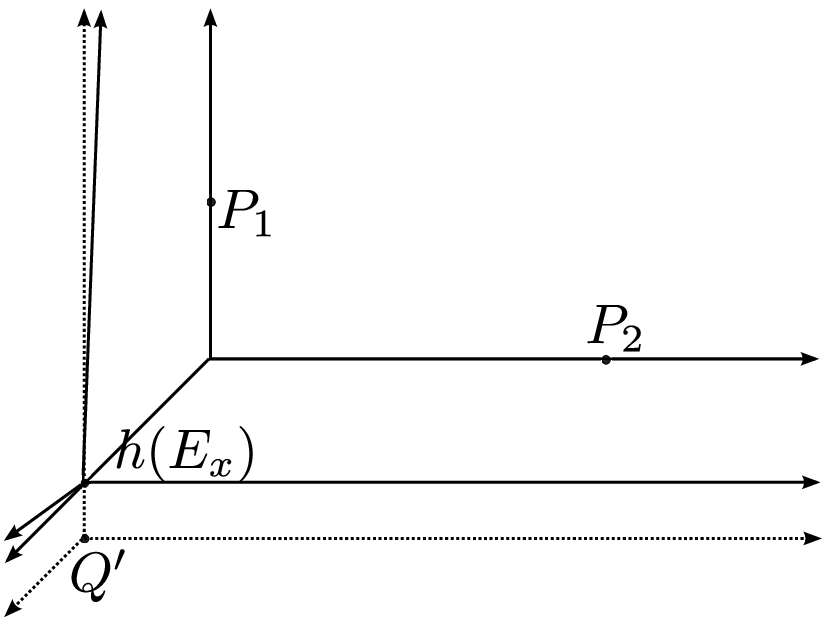}\\
\includegraphics[scale=.4]{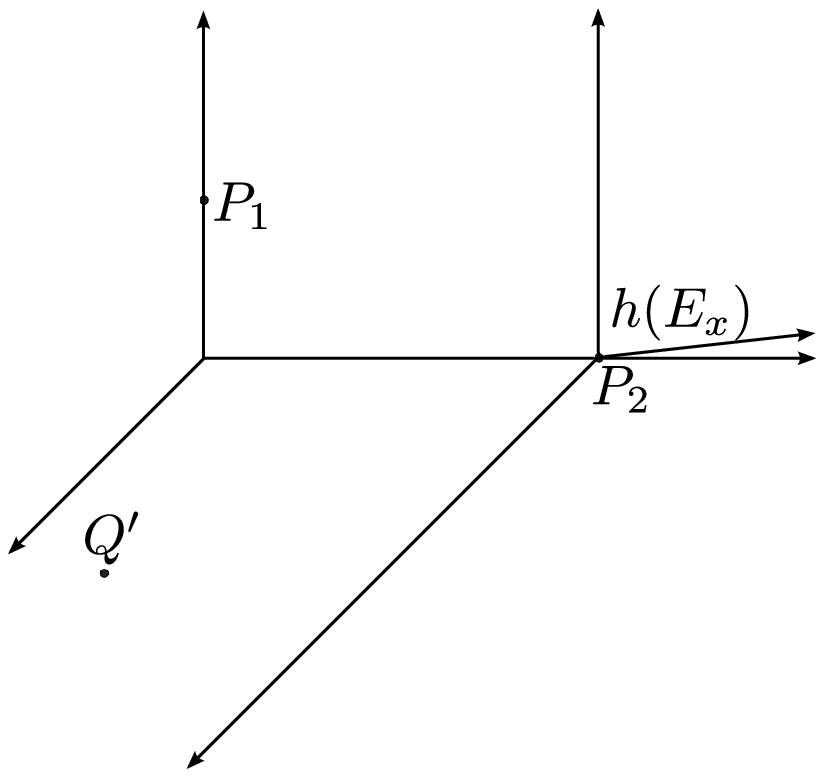}&
\includegraphics[scale=.4]{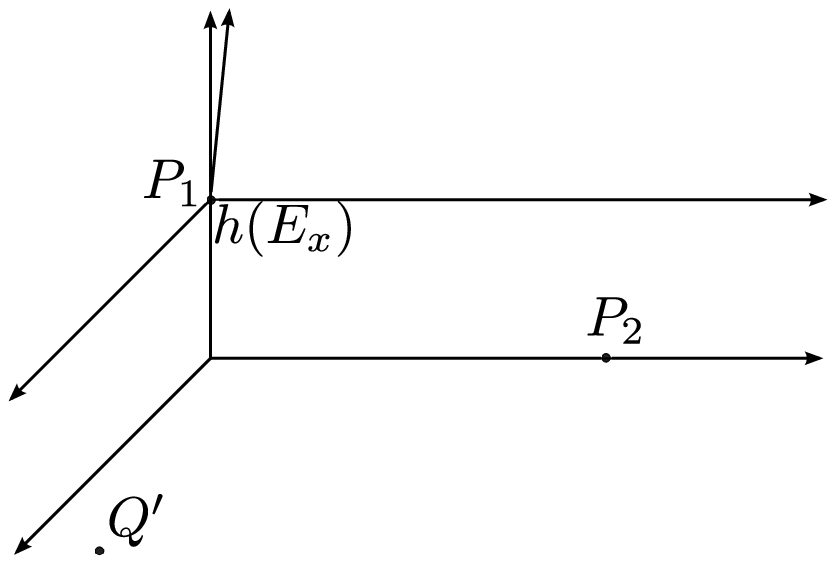}
\end{tabular}
\caption{Tropical curves contributing to $\langle P_1, P_2, \psi^2 S_0\rangle_{0,2}^{trop}$, $\langle P_1, P_2, \psi^3 S_1\rangle_{0,2}^{trop}$, and $\langle P_1, P_2, \psi^4 S_2\rangle_{0,2}^{trop}$ with basepoint $Q'$.}
\label{curve2}
\end{figure}
\subsection{Evaluation of integrals}
Through the evaluation of period integrals, the tropical objects controlling the Landau-Ginzburg model are assembled into tropical curves representing A-model invariants.  This is the punchline of the construction.
Here we return to the setup of language of \ref{bmodel}.  Let $\mathcal{R}$ be the local system on $\mathcal{M}_{\Sigma,k}\times \mathbb{C}^*$ whose fiber over $(u,\hbar)$ is given by 
$$H_2(\kappa^{-1}(u), {\rm Re}(W_0(Q)/\hbar)\ll 0).$$
Note that this local system is unconcerned with our thickening by the ring $R_k$.  Gross shows that it's possible to find a local basis $\Xi_0, \Xi_1, \Xi_2$ of $\mathcal{R}$ satisfying Barannikov's conditions such that the integrals $\int_\Xi e^{W_0(Q)/\hbar} \Omega$ take on a particular form.  We proceed by writing
$$\exp(W_k(Q)/\hbar)=\exp(W_0(Q))\exp((W_k(Q)-W_0(Q))/\hbar)$$
and expanding the latter part into a finite power series.  This term corresponds to gluings of the finite number of Maslov index 2 tropical disks which pass through marked points.  The variables $u_i$ square to zero, so only a finite number of such gluings result in nonzero monomials.
Using the properties of the sections $\Xi_i$, one can show that 
$$\sum_{i=0}^2 \alpha^i \int_{\Xi_i} e^{(x_0+x_1+x_2)/\hbar} x_0^{n_0} x_1^{n_1} x_2^{n_2}\Omega = \hbar^{-3\alpha}e^{\alpha y_1} \sum_{i=0}^2 \psi_i(n_0, n_1, n_2)\alpha^i,$$
where
$$\psi_i(n_0, n_1, n_2)=\sum_{d=0}^\infty D_i(d, n_0, n_1, n_2) \hbar^{-(3d-n_0-n_1-n_2)}e^{dy_1}$$
and the $D_i$ are some explicit numerical quantities.  With this result and the explicit dependence of $W_k$ on the scattering, the problem becomes combinatorial in nature.  The key to understanding the integral is to first break the finite expansion of $\exp((W_k(Q)-W_0(Q))/\hbar)$ into several sums and showing that, selecting one of these sums, we can make the resulting contribution to the integral be zero if we move $Q$ out toward infinity in an appropriate direction.  The structure of the scattering diagram is used to study how these contributions change as $Q$ moves back in from infinity.  The resulting terms can be interpreted as tropical curves.  
As can be seen by comparing Figures \ref{fig:scat0} and \ref{fig:scat1}, there is a clear dependence $W_{k}(Q)$ on $Q$.   As the choices of $Q$ and $P_i$ vary, Gross shows that $W_k$ is transformed by elements of $\mathbb{V}_{\Sigma, k}$; this results from the combinatorial properties of the scattering diagrams used to define the potential.  It's easy to show that the action of such an element on $W_k$ preserves the result of our desired integral.  The result of this analysis, as given in  \cite{MSP2}, is the following direct relationship between A-model and B-model data:
\section{Mirror symmetry}
\begin{theorem}
\label{msthm}
A choice of general points $P_1, \ldots, P_k$ and $Q$ gives rise to a function $W_k(Q)\in \mathbb{C}[T_\Sigma]\otimes_\mathbb{C} R_k[[y_0]]$ and hence  a family of Landau-Ginzburg potentials on the family $\kappa:\check{\mathcal{X}}_{\Sigma, k}\rightarrow \mathcal{M}_{\Sigma, k}$ with a relative nowhere-vanishing two form $\Omega$ as defined before.  This data gives rise to a local system $\mathcal{R}$ on $\mathcal{M}_{\Sigma, k}\otimes \spec \mathbb{C}[\hbar, \hbar^{-1}]$ whose fiber over $(\kappa, \hbar)$ is given by $H_2((\check{\mathcal{X}}_{\Sigma, k})_\kappa, \Real(W_0/\hbar)\ll 0)$.  There exists a multi-valued basis $\Xi_0$, $\Xi_1$, $\Xi_2$ of sections of $\mathcal{R}$ satisfying the conditions of the introduction such that
$$\sum_{i=0}^2 \alpha^i \int_{\Xi_i} e^{W_k(Q)/\hbar}\Omega = \hbar^{-3\alpha} \sum_{i=0}^2 \phi_i (\alpha \hbar)^i$$
with
$$(\phi_i(y_0, y_1, u_1, \ldots, u_k, \hbar^{-1})=\delta_{0,i}+\sum_{j=1}^\infty \phi_{i,j}(y_0,y_1`, u_1, \ldots, u_k)\hbar^{-j}$$
for $0\leq i \leq 2$, with
\begin{eqnarray*}
\phi_{0,1}&=y_0\\
\phi_{1,1}&=y_1:=\log(\kappa)\\
\phi_{2,1}&= y_2 :=\sum_{i=1}^k u_i.
\end{eqnarray*}
Furthermore,
$$\phi_i=J_i^{trop} (y_0, y_1, y_2).$$
Where
\begin{eqnarray*}
J^{trop}_{\mathbb{P}^2}(y_0, y_1, y_2, \hbar)&:=&\exp\left(\frac{y_0T_0+y_1T_1}{\hbar}\right)\cup\Bigg(T_0+\sum_{i=0}^2\bigg(y_2\hbar^{-1}\delta_{2,i}\\
&+&\sum_{d\geq 1}\sum_{\nu\geq 0} \langle T_2^{3d+i-2-\nu}, \psi^\nu T_{2-i}\rangle^{trop}_{0,d}\hbar^{-(\nu+2)}e^{dy_1}\frac{y_2^{3d+i-2-\nu}}{(3d+i-2-\nu)!}\bigg)T_i\Bigg)\\
&=:&\sum_{i=0}^2 J_i^{trop}T_i
\end{eqnarray*}
\end{theorem}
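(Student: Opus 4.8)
The plan is to follow the strategy of \cite{MSP2} and Chapter~5 of \cite{kan}, organizing the argument around the expansion of the exponentiated potential and the combinatorics of broken lines. First I would treat the base case $k=0$, which is essentially Barannikov's construction \cite{bar}: one exhibits a multivalued basis $\Xi_0,\Xi_1,\Xi_2$ of sections of the local system $\mathcal{R}$ whose fibre over $(\kappa,\hbar)$ is $H_2((\check{\mathcal{X}}_{\Sigma,0})_\kappa,\Real(W_0(Q)/\hbar)\ll 0)$, and checks that it satisfies the conditions of Section~\ref{bmodel} (each $\Xi_i$ defined modulo $\Xi_0,\dots,\Xi_{i-1}$; the monodromy $\exp(6\pi i N)$ under $\hbar\mapsto\hbar e^{2\pi i}$; the identification of $\mathcal{R}^\vee$ with $\mathbb{C}[\alpha]/(\alpha^3)$). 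The computational input is the evaluation of the basic period integrals
\[
\sum_{i=0}^2 \alpha^i \int_{\Xi_i} e^{(x_0+x_1+x_2)/\hbar}\, x_0^{n_0}x_1^{n_1}x_2^{n_2}\,\Omega \;=\; \hbar^{-3\alpha}\, e^{\alpha y_1}\sum_{i=0}^2 \psi_i(n_0,n_1,n_2)\,\alpha^i,
\]
with $\psi_i(n_0,n_1,n_2)=\sum_{d\geq 0} D_i(d,n_0,n_1,n_2)\,\hbar^{-(3d-n_0-n_1-n_2)}e^{dy_1}$; the numbers $D_i$ are what must ultimately be matched with tropical descendant invariants.

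Next I would pass to general $k$. Because $u_1^2=\cdots=u_k^2=0$ in $R_k$, writing $\exp(W_k(Q)/\hbar)=\exp(W_0(Q)/\hbar)\cdot\exp\big((W_k(Q)-W_0(Q))/\hbar\big)$ and expanding the second factor yields a \emph{finite} sum of monomials $c\,z^m$. By the correspondence between Maslov index $2$ disks with boundary $Q$ and broken lines with endpoint $Q$, together with the gluing of disks along walls of the scattering diagram $\mathfrak{D}(\Sigma,P_1,\dots,P_k)$, each such monomial is indexed by a gluing of Maslov index $2$ tropical disks passing through the marked points. Each monomial contributes to the period integral through the base-case formula above, so that $\sum_i \alpha^i\int_{\Xi_i} e^{W_k(Q)/\hbar}\,\Omega$ becomes a finite combinatorial sum.

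The heart of the argument is to identify this combinatorial sum with $\hbar^{-3\alpha}\sum_i \phi_i(\alpha\hbar)^i$ and then with the tropical $J$-function. The strategy is to decompose the finite expansion of $\exp\big((W_k(Q)-W_0(Q))/\hbar\big)$ into partial sums according to combinatorial type, observe that for a fixed partial sum moving $Q$ off to infinity in a suitably chosen direction forces its contribution to the integral to vanish, and then track how the contribution changes as $Q$ is brought back in from infinity. Changes occur only when $Q$ crosses a wall of $\mathfrak{D}(\Sigma,P_1,\dots,P_k)$, and the consistency of the scattering diagram (triviality of $\theta_{\gamma_P,\mathfrak{D}}$ around singular points, as established in the preceding propositions) controls these jumps. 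Reassembling, each surviving term is a gluing of disks and trees into a tropical curve contributing to a tropical invariant $\langle P_1,\dots,P_k,\psi^\nu S_i\rangle^{trop}_{0,d}$, and the multiplicity bookkeeping identifies the total with the coefficients of $J^{trop}_{\mathbb{P}^2}$; in particular one reads off $\phi_{0,1}=y_0$, $\phi_{1,1}=\log\kappa$, $\phi_{2,1}=\sum_i u_i$. Independence of the whole construction from the choice of general points is then obtained by showing that varying $P_1,\dots,P_k,Q$ transforms $W_k$ by an element of $\mathbb{V}_{\Sigma,k}$ (a consequence of the wall-crossing structure), and that such automorphisms preserve $\Omega$ and hence all period integrals.

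The step I expect to be the main obstacle is the combinatorial core: controlling precisely how the partial sums jump as $Q$ crosses walls, and verifying that the multiplicity bookkeeping reproduces exactly the tropical descendant invariants, especially for $i\neq 2$, where these invariants are essentially \emph{defined} by this matching (the case $i=2$ being already available via Markwig--Rau \cite{M+R}). This is where the full strength of scattering-diagram consistency and of the disk--tree--curve gluing combinatorics must be used, and it is the technical heart of \cite{kan} and \cite{MSP2}.
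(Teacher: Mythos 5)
Your proposal follows essentially the same route the paper sketches: the factorization $\exp(W_k(Q)/\hbar)=\exp(W_0(Q)/\hbar)\exp((W_k(Q)-W_0(Q))/\hbar)$ made finite by $u_i^2=0$, the explicit evaluation of the basic periods via the $\psi_i(n_0,n_1,n_2)$ and $D_i$, the decomposition into partial sums that vanish as $Q$ moves to infinity and are tracked back through wall-crossings of $\mathfrak{D}(\Sigma,P_1,\dots,P_k)$, and the invariance under the $\mathbb{V}_{\Sigma,k}$-action to remove the dependence on the choice of points. This matches the argument outlined in the subsection on evaluation of integrals (and carried out in detail in \cite{kan} and \cite{MSP2}), so no further comparison is needed.
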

There is an immediate corollary.
\begin{corollary}
Let $\mathfrak{M}_{\Sigma, k}$ be the formal spectrum of the completion of $\mathbb{C}[K_\Sigma]\otimes_\mathbb{C} R_{k} [[y_0]]$ at the maximal ideal $(y_0, \kappa-1, \{u_i\})$.  The completion is isomorphic to $\mathbb{C}[[y_0, y_1]]\otimes_\mathbb{C} R_{k}$ with $y_1:=\log\kappa$, the latter expanded in a power series at $\kappa=1$.  Let
$$\check{\mathfrak{X}}_{\Sigma, k}=\check{\mathcal{X}}_{\Sigma, k}\times _{\mathcal{M}_{\Sigma, k}} \mathfrak{M}_{\Sigma, k}.$$
The function $W_{k}(Q)$ is regular on $\mathfrak{X}_{\Sigma, k}$ and restricts to $W_0(Q)=x_0+x_1+x_2$ on the closed fiber of $\check{\mathfrak{X}}_{\Sigma, k}\rightarrow  \mathfrak{M}_{\Sigma, k}$ and hence gives a deformation of this function over $ \mathfrak{M}_{\Sigma, k}$.  Thus we have a morphism from  $\mathfrak{M}_{\Sigma, k}$ to the universal unfolding moduli space $\spec\mathbb{C}[[y_0, y_1, y_2]]$.  This map is given by:
\begin{eqnarray*}
y_0&\mapsto y_0\\
y_1&\mapsto \log(\kappa)\\
y_2& \mapsto \sum_{i} u_i
\end{eqnarray*}
\end{corollary}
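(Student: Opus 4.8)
The plan is to deduce the corollary from Theorem~\ref{msthm} by unwinding the definitions; the only substantive input is the computation of the first-order parts $\phi_{i,1}$ carried out there, everything else being bookkeeping together with an appeal to the universal property of the unfolding of $W_0$.

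First I would dispose of the elementary assertions. The identification $\mathfrak{M}_{\Sigma,k}\cong\Specf\bigl(\mathbb{C}[[y_0,y_1]]\otimes_\mathbb{C} R_k\bigr)$ is a routine completion computation: since $K_\Sigma\cong\mathbb{Z}$ we have $\mathbb{C}[K_\Sigma]=\mathbb{C}[\kappa^{\pm1}]$, whose completion at $(\kappa-1)$ is $\mathbb{C}[[\kappa-1]]$, and the substitution $y_1=\log\kappa=(\kappa-1)-\tfrac12(\kappa-1)^2+\cdots$ is invertible, so this is $\mathbb{C}[[y_1]]$; tensoring with $R_k[[y_0]]$ and completing along $(y_0,\{u_i\})$ then only $y_0$-adically completes (the ideal $(\{u_i\})$ being nilpotent), giving $\mathbb{C}[[y_0,y_1]]\otimes_\mathbb{C} R_k$. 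That $W_k(Q)$ is regular on $\check{\mathfrak{X}}_{\Sigma,k}$ is immediate: by construction $W_k(Q)\in\mathbb{C}[T_\Sigma]\otimes_\mathbb{C} R_k[[y_0]]$, which is precisely the ring of global sections of $\mathcal{O}_{\check{\mathcal{X}}_{\Sigma,k}}$ over $\check{\mathcal{X}}_\Sigma$, so $W_k(Q)$ is regular on $\check{\mathcal{X}}_{\Sigma,k}$, and its pullback along the formal base change $\mathfrak{M}_{\Sigma,k}\to\mathcal{M}_{\Sigma,k}$ is regular on $\check{\mathfrak{X}}_{\Sigma,k}$. The closed point of $\mathfrak{M}_{\Sigma,k}$ is $\{y_0=0,\ \kappa=1,\ u_i=0\}$, so the closed fibre of $\check{\mathfrak{X}}_{\Sigma,k}\to\mathfrak{M}_{\Sigma,k}$ is $\kappa^{-1}(1)\cong\spec\mathbb{C}[M]=\hat X$; restricting $W_k(Q)=y_0+\sum_h\operatorname{Mono}(h)$ kills the $y_0$ term and every term $\operatorname{Mono}(h)=\Mult(h)\,u_{I(h)}z^{\Delta(h)}$ with $I(h)\neq\emptyset$, leaving exactly the three basic Maslov index $2$ disks in the ray directions of the fan of $\mathbb{P}^2$, which contribute $x_0+x_1+x_2=W_0(Q)$. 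Hence $W_k(Q)$ is a deformation of $W_0$ over $\mathfrak{M}_{\Sigma,k}$.

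Next I would invoke the universal unfolding. As recalled in Section~\ref{bmodel} and before Proposition~\ref{msp2}, $W_0$ admits a universal unfolding $W_{\mathbf t}$ over $\Specf\mathbb{C}[[t_0,t_1,t_2]]$, so the deformation $W_k(Q)$ over the complete local ring $\mathcal{O}(\mathfrak{M}_{\Sigma,k})$ is the pullback of $W_{\mathbf t}$ along a unique classifying morphism $c:\mathfrak{M}_{\Sigma,k}\to\Specf\mathbb{C}[[t_0,t_1,t_2]]$. To make $c$ explicit I would use that Barannikov's flat coordinates $y_i=\phi_{i,1}^{\mathrm{univ}}$ identify the unfolding moduli space with $\spec\mathbb{C}[[y_0,y_1,y_2]]$, together with the uniqueness of the Barannikov normalisation of Section~\ref{bmodel}: the pullback under $c$ of the distinguished basis $\Xi_0,\Xi_1,\Xi_2$ and of the correction function $f$ for $W_{\mathbf t}$ satisfies the normalisation conditions for the family $W_k(Q)$, so by uniqueness it coincides with the data furnished by Theorem~\ref{msthm}. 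Consequently $c^*y_i=\phi_{i,1}$ evaluated for the family $W_k(Q)$, and Theorem~\ref{msthm} records $\phi_{0,1}=y_0$, $\phi_{1,1}=\log\kappa$ and $\phi_{2,1}=\sum_{i=1}^k u_i$. This is exactly the asserted map
\begin{align*}
y_0&\longmapsto y_0,\\
y_1&\longmapsto \log(\kappa),\\
y_2&\longmapsto \sum_{i=1}^k u_i.
\end{align*}

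I expect the one point genuinely needing care to be this naturality step — that the Barannikov basis $\Xi_i$ and correction $f$ for the universal unfolding pull back to the ones appearing in Theorem~\ref{msthm} — which is precisely where the uniqueness clause of Barannikov's construction (and the statement in Theorem~\ref{msthm} that the $\Xi_i$ may be chosen to satisfy the conditions of Section~\ref{bmodel} verbatim) does the work; the remaining verifications are formal.
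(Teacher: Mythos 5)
Your proposal is correct and follows the same route the paper intends: the paper offers no separate argument, declaring the statement an ``immediate corollary'' of Theorem~\ref{msthm}, and your write-up simply makes explicit the bookkeeping (the completion computation, the vanishing of all $\operatorname{Mono}(h)$ with $I(h)\neq\emptyset$ on the closed fibre, and the reading-off of the classifying map from $\phi_{0,1},\phi_{1,1},\phi_{2,1}$) that the paper leaves implicit. The naturality step you flag is handled exactly by the uniqueness clause in Barannikov's normalisation as restated in Theorem~\ref{msthm}, so nothing further is needed.
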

Furthermore, we have the equivalence between the classical accuracy of Gross's tropical descendent invariants and Proposition \ref{msp2} (mirror symmetry for $\mathbb{P}^2$).  More precisely, consider the following proposition:
\begin{proposition}
\label{eq}
$J_{\mathbb{P}^2}^{trop} = J_{\mathbb{P}^2}$.
\end{proposition}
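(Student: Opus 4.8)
The plan is to derive Proposition~\ref{eq} by playing Theorem~\ref{msthm} against Barannikov's Proposition~\ref{msp2}, the bridge being the observation that the tropical Landau--Ginzburg potential $W_k(Q)$ is \emph{literally a pullback} of Barannikov's universal unfolding $W_{\bf t}=\sum_i W^i t_i$. First I would invoke the corollary to Theorem~\ref{msthm}: it produces a morphism from $\mathfrak{M}_{\Sigma,k}$ to the universal unfolding moduli space $\Specf\mathbb{C}[[y_0,y_1,y_2]]$, given in coordinates by $y_0\mapsto y_0$, $y_1\mapsto\log\kappa$, $y_2\mapsto\sum_i u_i$, under which $W_k(Q)$ becomes the pullback of $W_{\bf t}$ and restricts to $W_0=x_0+x_1+x_2$ on the closed fibre. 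Since $\check{\mathcal{X}}_{\Sigma,k}\to\mathcal{M}_{\Sigma,k}$ is, fibrewise over $\mathcal{M}_\Sigma$, the same family $\Spec\mathbb{C}[M]$ that Barannikov uses (namely $\hat X=V(x_0x_1x_2-1)$ after the identification $\kappa=x_0x_1x_2$), the local system $\mathcal{R}$ and the relative two-form $\Omega={\rm dlog}\,e_1\wedge{\rm dlog}\,e_2$ of Theorem~\ref{msthm} are the pullbacks of the corresponding objects of Section~\ref{bmodel}.

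Next I would identify the normalized data $(\Xi_0,\Xi_1,\Xi_2,s)$ across this pullback. Barannikov's construction produces this data \emph{uniquely} from the list of conditions in Section~\ref{bmodel} (multivaluedness absorbed by $\hbar^{-3\alpha}$, monodromy $\exp(6\pi iN)$, the normalization $f|_{\hat X\times\{0\}\times\mathbb{C}^\times}=1$, and the shape $\phi_i=\delta_{0,i}+\sum_{j\ge1}\phi_{i,j}\hbar^{-j}$), and Theorem~\ref{msthm} asserts that the basis $\Xi_i$ it employs satisfies exactly these conditions. By uniqueness, the $\Xi_i$ of Theorem~\ref{msthm} pull back from Barannikov's, hence so do the resulting functions: writing $\phi_i^{\mathrm{trop}}$ for the functions of Theorem~\ref{msthm} and $\phi_i^{\mathrm{Bar}}$ for those of Proposition~\ref{msp2}, one gets $\phi_i^{\mathrm{trop}}=\phi_i^{\mathrm{Bar}}\big(y_0,\log\kappa,\textstyle\sum_i u_i\big)$. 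The flat coordinates match as well: Barannikov's flat coordinate is $y_j({\bf t})=\phi_{j,1}({\bf t})$, while the corollary records $\phi^{\mathrm{trop}}_{0,1}=y_0$, $\phi^{\mathrm{trop}}_{1,1}=\log\kappa$, $\phi^{\mathrm{trop}}_{2,1}=\sum_i u_i$, so the substitution above is precisely the change to flat coordinates.

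Finally I would conclude. By Theorem~\ref{msthm} we have $\phi^{\mathrm{trop}}_i=J^{trop}_i(y_0,y_1,y_2)$ evaluated at $(y_0,y_1,y_2)=(y_0,\log\kappa,\sum_i u_i)$, while by Proposition~\ref{msp2} we have $\phi^{\mathrm{Bar}}_i=J_i(y_0,y_1,y_2)$ with $J_i$ the classical Givental function. Since $\phi^{\mathrm{trop}}_i=\phi^{\mathrm{Bar}}_i$ after the \emph{same} substitution of flat coordinates, and since $J_i$ and $J^{trop}_i$ are both honest power series in the three coordinates $y_0,y_1,y_2$, we may cancel the substitution to obtain $J^{trop}_i=J_i$ for $i=0,1,2$, that is $J^{trop}_{\mathbb{P}^2}=J_{\mathbb{P}^2}$. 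The main obstacle is the identification step of the previous paragraph: one must verify carefully that the normalization Gross imposes on the tropical basis $\Xi_i$ coincides with Barannikov's, so that the uniqueness clause genuinely forces $\phi^{\mathrm{trop}}_i$ to be the pullback of $\phi^{\mathrm{Bar}}_i$; the rest is bookkeeping with the coordinate change of the corollary. (Equivalently, as noted in the text, Proposition~\ref{eq} and Proposition~\ref{msp2} are interderivable given Theorem~\ref{msthm}, so this is exactly the assertion that Gross's tropical descendent invariants are classically correct.)
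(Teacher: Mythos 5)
Your argument is essentially the route the paper itself takes: the text does not prove Proposition~\ref{eq} directly but records, as a corollary of Theorem~\ref{msthm}, that it is equivalent to Proposition~\ref{msp2}, and your proof is exactly the implication \ref{msp2}~$\Rightarrow$~\ref{eq}, with \ref{msp2} imported from Barannikov and Iritani and the identification of the two normalized bases $\Xi_i$ handled by the uniqueness clause just as the paper intends. The only point to watch is that for fixed $k$ the substitution $y_2\mapsto\sum_i u_i$ annihilates $y_2^{k+1}$ in $R_k$, so your final ``cancel the substitution'' step only pins down the coefficients of $y_2^j$ for $j\le k$, and one must let $k$ vary to recover the full identity $J^{trop}_{\mathbb{P}^2}=J_{\mathbb{P}^2}$.
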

We have the following as a corollary of Theorem \ref{msthm}.
\begin{corollary}
Proposition \ref{msp2} and Proposition \ref{eq} are equivalent.
\end{corollary}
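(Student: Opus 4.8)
The plan is to deduce the equivalence formally from Theorem~\ref{msthm} together with the observation that the functions $\phi_i$ produced by Gross's tropical period integrals coincide with the functions $\phi_i$ of Barannikov's construction entering Proposition~\ref{msp2}. Granting this identification the argument is short: Theorem~\ref{msthm} gives $\phi_i = J_i^{trop}$ as functions of $(y_0,y_1,y_2)$, so Proposition~\ref{msp2}, which reads $J_i = \phi_i$, becomes $J_i = J_i^{trop}$; and since $J_{\mathbb{P}^2}^{trop}$ and $J_{\mathbb{P}^2}$ have exactly the same shape --- the only difference being the replacement of each classical descendent invariant $\langle T_2^{3d+i-2-\nu},\psi^\nu T_{2-i}\rangle_{0,d}$ by its tropical counterpart --- the statement $J_i = J_i^{trop}$ for each $i$ is precisely Proposition~\ref{eq}. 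Hence Proposition~\ref{msp2} holds if and only if Proposition~\ref{eq} does; each direction is the substitution $\phi_i = J_i^{trop}$ into the other statement.

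The substance therefore lies in identifying the two families of functions $\phi_i$. Both are obtained by integrating $e^{W/\hbar}\Omega$ over a multivalued basis $\Xi_0,\Xi_1,\Xi_2$ of the same local system $\mathcal{R}$ attached to the same Landau--Ginzburg model $\hat{X}=V(x_0x_1x_2-1)$, $W=x_0+x_1+x_2$, subject to the same normalization conditions (prescribed monodromy $\exp(6\pi i N)$, the $\hbar^{-3\alpha}$ normalization of the section $s$, and $f|_{\mathbf{t}=0}=1$ extending regularly to $\hbar=\infty$). The sole difference is that Barannikov integrates over the abstract universal unfolding $W_{\mathbf{t}}=\sum_i W^i t_i$ while Gross integrates over the tropical unfolding $W_k(Q)$. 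By the universal property of the unfolding --- made explicit by the corollary to Theorem~\ref{msthm} --- $W_k(Q)$ is the pullback of $W_{\mathbf{t}}$ along the classifying morphism $\mathfrak{M}_{\Sigma,k}\to\Spec\mathbb{C}[[y_0,y_1,y_2]]$ given by $y_0\mapsto y_0$, $y_1\mapsto\log\kappa$, $y_2\mapsto\sum_i u_i$, and $W_k(Q)$ restricts to $W_0(Q)=x_0+x_1+x_2$ on the central fibre. Pulling Barannikov's normalized data $(\Xi_i,s)$ back along this morphism yields data that again satisfies the normalization conditions on Gross's side; by the uniqueness clause in Barannikov's construction (applied to Gross's unfolding) this pullback must be Gross's data. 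Consequently Barannikov's $\phi_i$, re-expressed in the flat coordinates $y_i$, equal Gross's $\phi_i$, and the identities $\phi_{0,1}=y_0$, $\phi_{1,1}=y_1=\log\kappa$, $\phi_{2,1}=y_2=\sum_i u_i$ from Theorem~\ref{msthm} confirm that the flat coordinates on the two sides are matched by exactly the classifying morphism above.

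I expect the main obstacle to be precisely this identification step: checking that the normalization conditions Gross imposes on $(\Xi_i,s)$ are literally Barannikov's, and that they are compatible with the classifying base change --- in particular that restricting $W_k(Q)$ to the closed fibre of $\check{\mathfrak{X}}_{\Sigma,k}\to\mathfrak{M}_{\Sigma,k}$ genuinely recovers $x_0+x_1+x_2$, so that the local systems $\mathcal{R}$ on the two sides are canonically the same, and that the flat coordinate $y_2=\sum_i u_i$ on the tropical side corresponds to Barannikov's $\phi_{2,1}$ under universality. Essentially all analytic content has been absorbed into Theorem~\ref{msthm} and its corollary, so what remains is careful bookkeeping of the two normalizations; but it is exactly in reconciling these two uniqueness statements that one must be most careful. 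Once this is in place, the corollary follows by the formal argument of the first paragraph, with no further computation.
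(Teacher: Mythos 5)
Your proposal is correct and follows essentially the route the paper intends: Theorem~\ref{msthm} supplies $\phi_i = J_i^{trop}$, Barannikov's uniqueness of the normalized data $(\Xi_i,s)$ identifies Gross's $\phi_i$ with Barannikov's via the classifying morphism $y_0\mapsto y_0$, $y_1\mapsto\log\kappa$, $y_2\mapsto\sum_i u_i$, and the equivalence of the two propositions is then the formal substitution you describe. The only point worth flagging is that $y_2=\sum_i u_i$ is nilpotent in $R_k$ (so each fixed $k$ only tests the coefficients of $y_2^m$ for $m\le k$), and the full equivalence requires letting $k$ be arbitrary --- a bookkeeping step implicit in both your argument and the paper's.
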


\section{Further reading}
\label{sec:further}
This chapter has given mention to topics appearing in a  wide swath of literature, and there are many connected works for the interested reader to explore.  As mentioned in the introduction, an excellent survey of the relationship between the Strominger-Yau-Zaslow conjecture and the Gross-Siebert program can be found in \cite{gsyz}.  This article serves as a helpful reading guide for much of the literature surrounding this topic.  Another valuable source of insight into the philosophy of the program can be found in the article giving its announcement \cite{GS03}.  

For a more in depth treatment of log geometry, the reader is recommended the relevant chapter in the book \cite{kan} by Gross. 
This source has the advantage to be tailored towards the Gross-Siebert program. Log differential forms in the Gross-Siebert program are treated in \cite{GS10,Ru10}.
Concerning logarithmic Gromov-Witten invariants, the foundational paper \cite{loggw} by Gross and Siebert defines the relevant moduli space.

There are many good introductions to tropical geometry.  For an entertaining and insightful overview, see the lecture of Maxim Kontsevich given at the Fields Institute \cite{Ko13}.  The application of the field to enumerative geometry was spearheaded by Grigory Mikhalkin \cite{mik}; our exposition is based on \cite{NS06} and \cite{kan}. Welschinger Invariants are treated in \cite{IKS03,Sh06}. Significant further progress has been made by Allerman, Markwig, and Rau, among others \cite{AR10} \cite{M+R}.  The latter works establish a tropical intersection theory whose analysis significantly expands the range of Gromov-Witten theory invariants calculable via tropical methods. 

Another application of tropical geometry to mirror symmetry, in this case the elliptic curve, is given by Boehm, Bringmann, Buchholz, and Markwig  in \cite{BBBM13}.  
As repeatedly mentioned, a much more comprehensive source for the material given in Section \ref{sec:p2ms} can be found in Gross's book \cite{kan}, while the author gives a more concise description in an article \cite{MSP2}.  Chapter 6 of the book also contains very explicit and concrete description of the details of the Gross-Siebert program in dimension two.  Some of the tools used in this construction, specifically scattering diagrams and broken lines, seem to have a very rich structure with a number of deep connections beyond this particular context.  For a discussion of the relationship with the so-called ``wall crossing structures" of Kontsevich and Soibelman, see Section 10 of \cite{KS13}.  An application to cluster algebras is forthcoming in work by Gross, Hacking, Keel and Kontsevich.

\bibliography{bib}

\begin{thebibliography}{10}

\bibitem{Chen-Abram-DF}
D.~Abramovich and Q.~Chen.
\newblock Stable logarithmic maps to {D}eligne-{F}altings pairs {II}.
\newblock {\em To appear in Asian J. Math.}, arXiv:1102.4531 (2011).

\bibitem{AR10}
Lars Allermann and Johannes Rau.
\newblock First steps in tropical intersection theory.
\newblock {\em Mathematische zeitschrift}, 264(3):633--670, 2010.

\bibitem{A07}
Denis Auroux.
\newblock Mirror symmetry and {T}-duality in the complement of an anticanonical
  divisor.
\newblock {\em arXiv preprint arXiv:0706.3207}, 2007.

\bibitem{bar}
S~Barannikov.
\newblock Semi-infinite hodge structures and mirror symmetry for projective
  spaces.
\newblock {\em arXiv preprint math/0010157}, 2000.

\bibitem{BB96}
Victor~V Batyrev and Lev~A Borisov.
\newblock On {C}alabi-{Y}au complete intersections in toric varieties.
\newblock {\em Higher-dimensional complex varieties (Trento, 1994)}, pages
  39--65, 1996.

\bibitem{BBBM13}
Janko Boehm, Kathrin Bringmann, Arne Buchholz, and Hannah Markwig.
\newblock Tropical mirror symmetry for elliptic curves.
\newblock {\em arXiv preprint arXiv:1309.5893}, 2013.

\bibitem{Chen-DF}
Q.~Chen.
\newblock Stable logarithmic maps to {D}eligne-{F}altings pairs {I}.
\newblock {\em Ann. of Math.}, 180(2):455--521, 2014.

\bibitem{F05}
Kenji Fukaya.
\newblock Multivalued morse theory, asymptotic analysis and mirror symmetry.
\newblock {\em Graphs and patterns in mathematics and theoretical physics},
  73:205--278, 2005.

\bibitem{FOOO}
Kenji Fukaya, Yong-Geun Oh, Hiroshi Ohta, and Kaoru Ono.
\newblock Lagrangian {F}loer theory on compact toric manifolds {II}: bulk
  deformations.
\newblock {\em Selecta Mathematica}, 17(3):609--711, 2011.

\bibitem{fulton}
William Fulton.
\newblock {\em Introduction to toric varieties}.
\newblock Number 131. Princeton University Press, 1993.

\bibitem{gat}
Andreas Gathmann.
\newblock Tropical algebraic geometry.
\newblock {\em arXiv preprint math/0601322}, 2006.

\bibitem{GKM}
Andreas Gathmann, Michael Kerber, and Hannah Markwig.
\newblock Tropical fans and the moduli spaces of tropical curves.
\newblock {\em Compositio Mathematica}, 145(01):173--195, 2009.

\bibitem{giv}
Alexander~B. Givental.
\newblock Equivariant {G}romov-{W}itten invariants.
\newblock {\em Internat. Math. Res. Notices}, (13):613--663, 1996.

\bibitem{G05}
Mark Gross.
\newblock Toric degenerations and {B}atyrev-{B}orisov duality.
\newblock {\em Mathematische Annalen}, 333(3):645--688, 2005.

\bibitem{MSP2}
Mark Gross.
\newblock Mirror symmetry for $\mathbb{P}^2$ and tropical geometry.
\newblock {\em Advances in Mathematics}, 224(1):169--245, 2010.

\bibitem{kan}
Mark Gross.
\newblock {\em Tropical geometry and mirror symmetry}, volume 114 of {\em CBMS
  Regional Conference Series in Mathematics}.
\newblock Published for the Conference Board of the Mathematical Sciences,
  Washington, DC, 2011.

\bibitem{gsyz}
Mark Gross.
\newblock Mirror symmetry and the {S}trominger-{Y}au-{Z}aslow conjecture.
\newblock {\em arXiv preprint arXiv:1212.4220}, 2012.

\bibitem{GS03}
Mark Gross and Bernd Siebert.
\newblock Affine manifolds, log structures, and mirror symmetry.
\newblock {\em Turk J Math}, 27:33--60, 2003.

\bibitem{GS10}
Mark Gross and Bernd Siebert.
\newblock Mirror symmetry via logarithmic degeneration data, {II}.
\newblock {\em Journal of Algebraic Geometry}, 19(4):679--780, 2010.

\bibitem{GS11}
Mark Gross and Bernd Siebert.
\newblock From real affine geometry to complex geometry.
\newblock {\em Annals of mathematics}, 174(3):1301--1428, 2011.

\bibitem{loggw}
Mark Gross and Bernd Siebert.
\newblock Logarithmic {G}romov-{W}itten invariants.
\newblock {\em Journal of the American Mathematical Society}, 26(2):451--510,
  2013.

\bibitem{GS06}
Mark Gross, Bernd Siebert, et~al.
\newblock Mirror symmetry via logarithmic degeneration data {I}.
\newblock {\em Journal of Differential Geometry}, 72(2):169--338, 2006.

\bibitem{GW00}
Mark Gross, Pelham~MH Wilson, et~al.
\newblock Large complex structure limits of {K}3 surfaces.
\newblock {\em Journal of Differential Geometry}, 55(3):475--546, 2000.

\bibitem{Hi97}
Nigel Hitchin.
\newblock The moduli space of special {L}agrangian submanifolds.
\newblock {\em arXiv preprint dg-ga/9711002}, 1997.

\bibitem{Illusie_log_spaces}
L.~Illusie.
\newblock {\em Logarithmic spaces (according to {K}. {K}ato)}, volume~15 of
  {\em Perspect. Math.}
\newblock Barsotti Symposium in Algebraic Geometry (Abano Terme, 1991),
  Academic Press, San Diego, CA, 1994.

\bibitem{iritani}
Hiroshi Iritani.
\newblock Quantum {D}-modules and generalized mirror transformations.
\newblock {\em Topology}, 47(4):225--276, 2008.

\bibitem{IKS03}
Ilia Itenberg, Viatcheslav Kharlamov, and Eugenii Shustin.
\newblock Welschinger invariant and enumeration of real rational curves.
\newblock {\em International Mathematics research notices},
  2003(49):2639--2653, 2003.

\bibitem{kato-log-curves}
F.~Kato.
\newblock Log smooth deformation and moduli of log smooth curves.
\newblock {\em Internat. J. Math}, 11(2):215--232, 2000.

\bibitem{Kato_log_struct}
K.~Kato.
\newblock {\em Logarithmic structures of {F}ontaine-{I}llusie}.
\newblock Algebraic analysis, geometry, and number theory (Baltimore, MD,
  1988). Johns Hopkins Univ. Press, Baltimore, MD, 1989.

\bibitem{local_B_model}
Y.~Konishi and S.~Minabe.
\newblock Local {B}-model and mixed {H}odge structure.
\newblock {\em Adv. Theor. Math. Phys.}, 14(4):1089--1145, 2010.

\bibitem{Ko13}
Maxim Kontsevich.
\newblock What is tropical mathematics?, 10 2013.

\bibitem{KS06}
Maxim Kontsevich and Yan Soibelman.
\newblock Affine structures and non-{A}rchimedean analytic spaces.
\newblock In {\em The unity of mathematics}, pages 321--385. Springer, 2006.

\bibitem{KS13}
Maxim Kontsevich and Yan Soibelman.
\newblock Wall-crossing structures in {D}onaldson-{T}homas invariants,
  integrable systems and mirror symmetry.
\newblock {\em arXiv preprint arXiv:1303.3253}, 2013.

\bibitem{li-stable-relative}
J.~Li.
\newblock Stable morphisms to singular schemes and relative stable morphisms.
\newblock {\em J. Differential Geom.}, 57(3):509--578, 2001.

\bibitem{Ma99}
IU~I Manin.
\newblock {\em Frobenius manifolds, quantum cohomology, and moduli spaces},
  volume~47.
\newblock American Mathematical Soc., 1999.

\bibitem{M+R}
Hannah Markwig and Johannes Rau.
\newblock Tropical descendant {G}romov-{W}itten invariants.
\newblock {\em manuscripta mathematica}, 129(3):293--335, 2009.

\bibitem{MikAm}
Grigory Mikhalkin.
\newblock Amoebas of algebraic varieties and tropical geometry.
\newblock In {\em Different faces of geometry}, pages 257--300. Springer, 2004.

\bibitem{mik}
Grigory Mikhalkin.
\newblock Enumerative tropical algebraic geometry in $\mathbb{R}^2$.
\newblock {\em Journal of the American Mathematical Society}, 18(2):313--377,
  2005.

\bibitem{milne_et_book}
J.~Milne.
\newblock {\em \'Etale cohomology}, volume~33 of {\em Princeton Mathematical
  Series}.
\newblock Princeton University Press, Princeton, N.J., 1980.

\bibitem{Nishinou}
Takeo Nishinou.
\newblock Disc counting on toric varieties via tropical curves.
\newblock {\em arXiv preprint math/0610660}, 2006.

\bibitem{NS06}
Takeo Nishinou and Bernd Siebert.
\newblock Toric degenerations of toric varieties and tropical curves.
\newblock {\em Duke Mathematical Journal}, 135(1):1--51, 2006.

\bibitem{Ru10}
Helge Ruddat.
\newblock Log {H}odge groups on a toric {C}alabi-{Y}au degeneration.
\newblock {\em Mirror Symmetry and Tropical Geometry, Contemp. Mathematics},
  527:113--164, 2008.

\bibitem{RS14}
Helge Ruddat and Bernd Siebert.
\newblock Canonical coordinates in toric degenerations, 2014.

\bibitem{Sh06}
Eugenii Shustin.
\newblock A tropical calculation of the {W}elschinger invariants of real toric
  {D}el {P}ezzo surfaces.
\newblock {\em arXiv preprint math/0406099}, 2004.

\bibitem{lim_hodge_str}
J.~Steenbrink.
\newblock Limits of {H}odge structures.
\newblock {\em Invent. Math.}, 31(3):229--257, 1975/76.

\bibitem{SYZ96}
Andrew Strominger, Shing-Tung Yau, and Eric Zaslow.
\newblock Mirror symmetry is {T}-duality.
\newblock {\em Nuclear Physics B}, 479(1):243--259, 1996.

\bibitem{welsch}
J.-Y. Welschinger.
\newblock Invariants of real rational symplectic 4-manifolds and lower bounds
  in real enumerative geometry.
\newblock {\em C. R. Math. Acad. Sci. Paris}, (336(4)):341--344, 2003.

\end{thebibliography}

\end{document}